\definecolor{darkgreen}{rgb}{0.00,0.50,0.10}
\definecolor{lightgreen}{rgb}{0.20,0.70,0.30}
\newtheorem{theorem}{Theorem}
\newtheorem{lemma}[theorem]{Lemma} 
\newtheorem{corollary}[theorem]{Corollary} 
\newtheorem{proposition}[theorem]{Proposition}
\newtheorem{definition}[theorem]{Definition}
\newtheorem{conjecture}[theorem]{Conjecture}
\newcommand{\By}[2]{\overset{\mbox{\tiny{#1}}}{#2}}
\newcommand{\Z}{\mathbb{Z}}
\newcommand{\F}{\mathbb{F}}
\newcommand{\Q}{\mathbb{Q}}
\newcommand{\upc}{\mathrm{c}}
\newcommand{\upp}{\mathrm{p}}
\newcommand{\upB}{\mathrm{B}}
\newcommand{\upC}{\mathrm{C}}
\newcommand{\upE}{\mathrm{E}}
\newcommand{\upV}{\mathrm{V}}
\newcommand{\upX}{\mathrm{X}}
\newcommand{\upZ}{\mathrm{Z}}
\newcommand{\Cir}{\mathrm{Cir}}
\newcommand{\Perm}{\mathrm{Perm}}
\newcommand{\dom}{\mathrm{dom}}
\newcommand{\Dom}{\mathrm{Dom}}
\newcommand{\supp}{\mathrm{supp}}
\newcommand{\Supp}{\mathrm{Supp}}
\newcommand{\Ra}{\mathrm{Ra}}
\newcommand{\BG}{\mathrm{BG}}
\newcommand{\rk}{\mathrm{rk}}
\newcommand{\Fix}{\mathrm{Fix}}
\newcommand{\bal}{\mathrm{bal}}
\newcommand{\ul}{\mathrm{ul}}
\newcommand{\lcf}{\mathrm{lcf}}
\newcommand{\chio}{\mathrm{chio}}
\newcommand{\Chio}{\mathrm{Chio}}
\newcommand{\leftsubscript}[2]{{\vphantom{#2}}_{#1}{#2}}
\newcommand{\upXul}{\leftsubscript{\mathrm{ul}}{\upX}}
\newcommand{\Col}{\mathrm{Col}}
\newcommand{\Prob}{\textup{\textsf{P}}}
\DeclareMathOperator{\im}{im}
\DeclareMathOperator{\Aut}{Aut}
\newcommand{\Sym}{\mathrm{Sym}}
\author{Peter Heinig}
\address{Zentrum Mathematik, M9, Technische Universit{\"a}t M{\"u}nchen, 
Boltzmannstra{\ss}e~3, D-85747 Garching bei M{\"u}nchen, Germany} 
\email{heinig@ma.tum.de}
\date{\today}
\thanks{The author is partially supported by DFG grant TA 309/2-2, 
by the ENB graduate program TopMath and by TUM Graduate School.}
\title[Chio condensation and random sign matrices]{
Chio condensation \\ and random sign matrices
}
\begin{document}

\begin{abstract}
This is to suggest a new approach to the old and open problem of 
counting the number $f_n$ of $\Z$-singular $n\times n$ matrices with entries 
from $\{-1,+1\}$: comparison of two measures, none of them the uniform measure, 
one of them closely related to it, the other asymptotically under control by a 
recent theorem of Bourgain, Vu and Wood.

We will define a measure $\Prob_{\chio}$ on the set $\{-1,0,+1\}^{[n-1]^2}$ of 
all $(n-1)\times (n-1)$-matrices with entries from $\{-1,0,+1\}$ which 
(owing to a determinant identity published by M. F. Chio in 1853) is closely 
related to the uniform measures on $\{-1,+1\}^{[n]^2}$ and $\{0,1\}^{[n-1]^2}$ and at 
the same time it intriguingly mimics the so-called 
\emph{lazy coin flip distribution} $\Prob_{\lcf}$ on $\{-1,0,+1\}^{[n-1]^2}$, with 
the resemblance fading more and more as the events get smaller. This is relevant 
in view of a recent theorem of J. Bourgain, V. H. Vu and P. M. Wood 
(J. Funct. Anal. 258 (2010), 559--603) which proves that if the entries of 
an $n\times n$ matrix  whose $\{-1,0,+1\}$-entries are governed by $\Prob_{\lcf}$ 
and fully independent (they are not when governed by $\Prob_{\chio}$), then an 
asymptotically optimal bound on the singularity probability over $\Z$ can be proved. 
We will characterize $\Prob_{\chio}$ graph-theoretically and use the 
characterization to prove that given a $B\in \{-1,0,+1\}^{[n-1]^2}$, deciding whether 
$\Prob_{\chio}[B] = \Prob_{\lcf}[B]$ is equivalent to deciding an evasive graph 
property, hence the time complexity of this decision is $\Omega(n^2)$. 
Moreover, we will prove $k$-wise independence properties of $\Prob_{\chio}$. 
Many questions suggest themselves that call for further work. In particular, 
the present paper will close with more constrained equivalent formulations of 
the conjecture $f_n/2^{n^2} \sim (\frac12 + o(1))^n$.

\medskip
\noindent
{\it Keywords:} Asymptotic enumeration, Betti number, Chio condensation, 
coboundary space, cut space, cycle space, determinant identities, 
discrete random matrices, lazy coin flip distribution, $k$-wise independence, 
signed graphs
\end{abstract}

\maketitle
{\scriptsize
\tableofcontents
}
\section{Introduction}

For a commutative ring $R$ and a finite subset $U\subseteq R$ one may ask how many 
among the $\lvert U \rvert^{n^2}$ matrices $A\in U^{[n]^2}$ have $\det(A) = 0$. 
Much is known precisely when $R$ is the finite field $\F_{q}$ ($q$ power of a prime) 
and $U=R$. For instance, it follows from elementary linear algebra that the number 
of singular $n\times n$ matrices with entries from $\F_q$ is 
precisely $q^{n^2} - \prod_{0\leq i \leq n-1} (q^n-q^i)$. As an advanced example, very 
precise statements can be proved for matrices over finite fields even if the 
entries are i.i.d. according to (quite) arbitrary distributions (cf. the work 
of Kahn--Koml{\'o}s \cite{MR1833067} and Maples \cite{arXiv:1012.2372v1}. 

In stark contrast, if $R=\Z$ and $U=\{-1,+1\}$, the 
correct order of decay of the density of singular matrices is still not known, 
but there is an old and plausible, yet still unproved conjecture of uncertain 
origin, on which the last two decades have brought several remarkable advances:
\begin{conjecture}\label{conj:socalledfolkloreconjecture}
For $n\rightarrow \infty$, 
$\frac{\lvert \{A\in \{-1,+1\}^{[n]^2}\colon \det(A) = 0 \in \Z\} \rvert}{2^{n^2}} 
\sim (\frac12 + o(1))^n$.
\end{conjecture}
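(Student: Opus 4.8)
\medskip
\noindent\emph{Proof strategy.}
The plan is to trade the uniform measure on $\{-1,+1\}^{[n]^2}$ for the measure $\Prob_{\chio}$ on $\{-1,0,+1\}^{[n-1]^2}$ and then import asymptotic information from $\Prob_{\lcf}$. The first step is an exact reduction. The set of $\Z$-singular $\{-1,+1\}$-matrices is stable under negating any row or column, so multiplying the first column by the sign of the $(1,1)$-entry gives a singularity-preserving $2$-to-$1$ map onto $\{A\colon a_{11}=1\}$; hence $f_n/2^{n^2}$ equals the probability, under the uniform measure on $\{A\in\{-1,+1\}^{[n]^2}\colon a_{11}=1\}$, that $\det(A)=0$. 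On this set Chio's $1853$ identity
\[
\det(A)\;=\;a_{11}^{-(n-2)}\,\det\!\bigl((a_{11}a_{ij}-a_{i1}a_{1j})_{2\le i,j\le n}\bigr)
\]
reads $\det(A)=\det\!\bigl((a_{ij}-a_{i1}a_{1j})_{i,j\ge 2}\bigr)=2^{n-1}\det\!\bigl(\chio(A)\bigr)$ with $\chio(A):=\tfrac12(a_{ij}-a_{i1}a_{1j})_{i,j\ge 2}\in\{-1,0,+1\}^{[n-1]^2}$, and $\Prob_{\chio}$ is by definition the law of $\chio(A)$. Thus $A$ is $\Z$-singular iff $\chio(A)$ is, and
\[
f_n/2^{n^2}\;=\;\Prob_{\chio}\bigl[\,\{B\colon\det(B)=0\in\Z\}\,\bigr],
\]
so Conjecture~\ref{conj:socalledfolkloreconjecture} is equivalent to $\Prob_{\chio}[\det=0]=(\tfrac12+o(1))^n$.

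Second, I would invoke what is already available for the independent companion. Under $\Prob_{\lcf}$ each interior entry is $0$ with probability $\tfrac12$ and $\pm1$ with probability $\tfrac14$ each, independently; the most likely single cause of singularity is a vanishing row or column, of probability $\sim 2(n-1)\,2^{-(n-1)}=(\tfrac12+o(1))^n$, and the Bourgain--Vu--Wood theorem is precisely the matching upper bound, so $\Prob_{\lcf}[\det=0]=(\tfrac12+o(1))^n$. Since $\Prob_{\chio}$ and $\Prob_{\lcf}$ have identical one-coordinate marginals and, by the $k$-wise independence statements proved below, identical joint behaviour on bounded sets of cells, the proposal is to run the Bourgain--Vu--Wood argument --- a cover of the sphere by structured and unstructured candidate kernel vectors, with a Hal{\'a}sz-type inverse Littlewood--Offord estimate bounding, for each fixed vector, the chance that a random row annihilates it --- directly under $\Prob_{\chio}$: wherever the independence of the entries enters only through the joint law of $O(1)$ of them, $k$-wise independence suffices, and the genuinely structured vectors (supported on few coordinates) would be handled through the explicit graph-theoretic description of $\Prob_{\chio}$.

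The hard part --- and the reason this remains a plan rather than a proof --- is the step where the two measures genuinely part ways. Conditioning $\Prob_{\chio}$ on the ``border'' $(a_{21},\dots,a_{n1};a_{12},\dots,a_{1n})$ pins down, cell by cell, whether the interior entry lies in $\{-1,0\}$ or in $\{0,1\}$: it superimposes a rank-one sign pattern $-\sigma\tau^{\!\top}$ on an otherwise uniformly random support. These correlations are invisible on large events but dominate on events of size $2^{-\Theta(n)}$, exactly the regime of $\{\det=0\}$ --- this is what the abstract's warning that the resemblance ``fades as the events get smaller'' refers to. Carrying the argument through therefore requires showing that for \emph{every} border the residual interior randomness still obeys an inverse Littlewood--Offord bound with the right exponent, and that the exceptional borders (and exceptional kernel vectors) together contribute at most $(\tfrac12+o(1))^n$. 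I do not expect this to be achievable with current technology for all $B$; accordingly, the realistic output of the programme is not a proof of Conjecture~\ref{conj:socalledfolkloreconjecture} but the battery of more constrained equivalent reformulations announced in the abstract, obtained by localising precisely where the $\Prob_{\chio}$-to-$\Prob_{\lcf}$ comparison can and cannot be pushed.
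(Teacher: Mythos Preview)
The statement is labelled \emph{Conjecture} in the paper and is not proved there; it is the well-known open problem on the singularity probability of random sign matrices. There is therefore no ``paper's proof'' to compare against. What the paper does is set up exactly the programme you sketch: it defines $\Prob_{\chio}$, establishes the exact reduction $\Prob[\Ra_{<n}(\{\pm\}^{[n]^2})]=\Prob_{\chio}[\Ra_{<n-1}(\{0,\pm\}^{[n-1]^2})]$ (your first step, with pivot $a_{n,n}$ rather than $a_{1,1}$; see Corollary~\ref{cor:uniformmeasureofranksublevelsetaschiomeasureofranksublevelsetoneranklower}), quantifies the partial agreement of $\Prob_{\chio}$ with $\Prob_{\lcf}$ (Theorems~\ref{thm:graphtheoreticalcharacterizationofthechiomeasure} and~\ref{thm:comparativecountingtheorem}: the entries are exactly $3$-wise independent, so ``joint law of $O(1)$ of them'' should be read as ``at most three''), and ends with equivalent reformulations (Proposition~\ref{prop:equivalentnewformulationofoldconjecture}) rather than a proof.

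Your diagnosis of the obstruction is accurate and matches the paper's own. By~\ref{relationbeweenchiomeasureandlazycoinflipmeasuregovernedbyfirstbettinumber} the singleton ratio $\Prob_{\chio}[B]/\Prob_{\lcf}[B]$ equals $2^{\beta_1(\upX_B)}$ and can reach $2^{(n-2)^2}$ (Section~\ref{worstpossiblefailure}), so the two measures diverge violently precisely at the scale of the event $\{\det=0\}$; the paper also notes that the desired inequality necessarily \emph{fails} on every entry-specification event with nonzero $\Prob_{\chio}$-mass. Your description of the conditional structure given the ``border'' --- a uniformly random support carrying a superimposed rank-one sign pattern $-\sigma\tau^{\top}$ --- is correct and is exactly why the Bourgain--Vu--Wood argument does not port over verbatim. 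You rightly conclude that the honest output of this route is a collection of constrained reformulations rather than a proof; that is where the paper itself lands, and the conjecture remains open.
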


Let us employ the abbreviations $-:=-1$, $+:=+1$, 
$\{\pm\}:=\{-1,+1\}$, $\{0,\pm\}:=\{-1,0,+1\}$, 
$\Prob[\Ra_{<n}(\{\pm\}^{[n]^2})] := \{A\in\{\pm\}^{[n]^2}\colon\det(A) = 0\}$, and 
let $\Prob[\cdot]$ denote the uniform measure on $\{\pm\}^{[n]^2}$, (so that in 
particular Conjecture \ref{conj:socalledfolkloreconjecture} now 
reads $\Prob[\{A\in\{\pm\}^{[n]^2}\colon\det(A) = 0\}] \sim (\frac12 + o(1))^n$). 
Moreover, let us introduce one of the two main protagonists of the present paper:

\begin{definition}[the measure $\Prob_{\lcf}$]
\label{def:definitionofthelazycoinflipmeasure}
For $(s,t)\in \Z_{\geq 2}^2$ and $\emptyset\subseteq I\subseteq [s-1]\times [t-1]$ 
let $\Prob_{\lcf}$ denote the \emph{lazy coin flip distribution} on $\{0,\pm\}^I$, 
i.e. the probability measure on $\{0,\pm\}^I$ defined by considering the values of 
a $B\in \{0,\pm\}^I$ as independent identically distributed random variables, each 
governed by the symmetric discrete distribution with values $-1$, $0$, $+1$ and 
probabilities $\frac14$, $\frac12$, $\frac14$. 
\end{definition}
The name of $\Prob_{\lcf}$ may stem from the fact that this is the 
distribution obtained when someone sets out to generate the entries of some 
$B\in \{0,\pm\}^I$ by performing $\lvert I \rvert$ independent fair coin flips, 
but there is a probability of $\frac12$ at every single trial that out of a 
fleeting laziness the person decides to simply write $0$ instead of flipping 
the coin. 

The lazy coin flip distribution $\Prob_{\lcf}$ plays a role in the recent article 
\cite{MR2557947} of J. Bourgain, V. H. Vu and P. M. Wood in which the authors 
set the current record in a chain of successive improvements of upper bounds for 
$\Prob[\Ra_{<n}(\{\pm\}^{[n]^2})]$ (see Koml{\'o}s \cite{MR0221962}, 
Kahn--Koml{\'o}s-Szemer{\'e}di \cite{MR1260107} 
and Tao--Vu \cite{MR2187480} \cite{MR2291914}):

\begin{theorem}[Bourgain--Vu--Wood \cite{MR2557947}]
\label{thm:bourgainvuwood}
For $n\rightarrow \infty$ it is true that 
\begin{align}
\Prob[\{ A\in \{\pm\}^{[n]^2}\colon \det(A) = 0 \}] & 
\leq (\frac{1}{\sqrt{2}} + o(1))^n \quad , 
\label{thm:bourgainvuwood:uniformdistribution}\\
\Prob_{\lcf}[\{ B\in \{0,\pm\}^{[n]^2}\colon \det(B) = 0\}] & 
\sim (\frac12 + o(1))^n \label{thm:bourgainvuwood:lazycoinflip} \quad .
\end{align}
\end{theorem}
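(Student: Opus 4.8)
\medskip
\noindent\emph{Proof strategy.}
The plan is to prove the two displayed estimates by rather different routes. Estimate \eqref{thm:bourgainvuwood:lazycoinflip} will be obtained by a sandwich: an elementary lower bound coming from the vanishing of a single row, and an upper bound obtained by first peeling off the ``obvious'' singular configurations and then running a Littlewood--Offord analysis on what is left. Estimate \eqref{thm:bourgainvuwood:uniformdistribution} is the genuinely hard one; it will rest on the Kahn--Koml\'os--Szemer\'edi / Tao--Vu inverse-Littlewood--Offord technology for random $\pm1$ matrices, sharpened by a Hal\'asz-type concentration inequality together with an entrywise replacement (``swapping'') step, and it is that last sharpening which lowers the Tao--Vu exponent base $\tfrac34$ to $\tfrac1{\sqrt2}$.

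\noindent\emph{Lower bound in} \eqref{thm:bourgainvuwood:lazycoinflip}. For $i\in[n]$ let $E_i$ be the event that the $i$-th row of $B$ vanishes identically; since the $n^2$ entries are independent lazy coin flips, $\Prob_{\lcf}[E_i]=2^{-n}$ and $\det B=0$ on $E_i$, while two rows vanish simultaneously only with probability $4^{-n}$. Hence, by Bonferroni's inequality,
\[
\Prob_{\lcf}[\det B=0]\ \ge\ \Prob_{\lcf}\Bigl[{\textstyle\bigcup_{i=1}^{n}}E_i\Bigr]\ \ge\ n\,2^{-n}-\tbinom{n}{2}\,4^{-n}\ =\ (1+o(1))\,n\,2^{-n}\ =\ \bigl(\tfrac12+o(1)\bigr)^{n},
\]
the final equality because $n^{1/n}\to1$. (One checks that no other local obstruction competes: a prescribed pair of rows coincides only with probability $(3/8)^n$, and so on, so the vanishing of a line indeed dictates the exponential order; passing from rows to columns merely doubles the main term, which does not affect the exponent.)

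\noindent\emph{Upper bound in} \eqref{thm:bourgainvuwood:lazycoinflip} \emph{and proof of} \eqref{thm:bourgainvuwood:uniformdistribution}. In both models one writes ``$\det=0$'' as ``the rows $v_1,\dots,v_n$ are $\Z$-linearly dependent,'' unions over the least index $k$ with $v_k\in\mathrm{span}(v_1,\dots,v_{k-1})$, and reduces --- the cases $k<n$ being handled by a similar but easier argument --- to bounding $\Prob[\,v_1,\dots,v_{n-1}\ \text{span a hyperplane}\ H,\ v_n\in H\,]$, where $\Prob$ is $\Prob_{\lcf}$ in the first model and the uniform measure in the second. Conditioning on $H$ with primitive integer normal $a$, one has $\Prob[v_n\in H\mid H]\le\rho(a):=\max_{t\in\Z}\Prob[\langle v,a\rangle=t]$, and (disjointly over $H$) $\sum_H\Prob[v_1,\dots,v_{n-1}\ \text{span}\ H]\le1$, so the real task is to show that the hyperplanes on which a random row concentrates more than exponentially little are themselves exponentially unlikely to be spanned. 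This is precisely what inverse Littlewood--Offord delivers: a large $\rho(a)$ forces $a$ into a generalised arithmetic progression of small size, hence forces $H$ to be structured, hence rare and hard to span. In the lazy model this dichotomy is especially sharp --- the only normals with $\rho(a)=\tfrac12$ are the $a=c\,e_j$, whose hyperplanes $\{v_j=0\}$ can be spanned by $v_1,\dots,v_{n-1}$ only when the $j$-th column vanishes (an event already peeled off), while every other normal has $\rho(a)\le\tfrac38$ --- so after stratifying by the size of $\rho(a)$ and using the Fourier-analytic (Hal\'asz) estimate, which is tight here because the entry distribution has characteristic function $\cos^{2}(\theta/2)$ with a unique maximum at $\theta=0$, one obtains the conjecturally optimal base $\tfrac12$. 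In the pure $\pm1$ model the same stratification gives Tao--Vu's $(\tfrac34+o(1))^n$; to push it to $(\tfrac1{\sqrt2}+o(1))^n$ one combines the refined Hal\'asz inequality for the intermediate-range normals with the cofactor identity
\[
\det(A')-\det(A)\ =\ \pm\,2\,C_{ij}\qquad\text{whenever $A'$ differs from $A$ only in entry $(i,j)$}
\]
($C_{ij}$ the $(i,j)$-cofactor, the same for $A$ and $A'$), which shows that rank-$(n-1)$ singular matrices are destabilised by single-entry flips and can thus be injected efficiently into the non-singular ones; re-optimising the stratification threshold then yields $\tfrac1{\sqrt2}$.

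\noindent\emph{Where the difficulty lies.} The soft part is the plain Littlewood--Offord bound for spread-out normals; the hard core, both times, is the census of the hyperplanes on which a random $\pm1$ (resp.\ lazy) row concentrates abnormally. In the lazy model that census closes exactly at the vanishing-column configurations that were removed at the outset, which is why the exponent there is the conjectured $\tfrac12$. In the $\pm1$ model the atoms $\pm1$ leave a congruence-type obstruction --- roughly, that the values $\langle v,a\rangle$ are constrained modulo $2$ --- which the inverse theorems do not fully dissolve; even after the Hal\'asz sharpening and the swapping step the balance settles at $\tfrac1{\sqrt2}$, and removing the residual slack down to the $\tfrac12$ of Conjecture \ref{conj:socalledfolkloreconjecture} is exactly what remains open.
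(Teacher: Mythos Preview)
The paper does not actually prove this theorem: it is quoted as an external result, and the paper's ``proof'' is really a \emph{Comments} block that merely pins down the statements as special cases of results in \cite{MR2557947} (the upper bound in \eqref{thm:bourgainvuwood:lazycoinflip} is \cite[Corollary~3.1]{MR2557947} with $\mu=\tfrac12$; the lower bound is the obvious single-zero-column event; and \eqref{thm:bourgainvuwood:uniformdistribution} is \cite[Corollary~4.3]{MR2557947} with $S=\{\pm\}$, $p=\tfrac12$). So you are attempting something the paper deliberately does not do.

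That said, your sketch is a faithful high-level account of the Bourgain--Vu--Wood strategy: the lower bound via a vanishing row/column and Bonferroni is correct and matches the paper's one-line remark; the hyperplane-conditioning plus inverse Littlewood--Offord stratification is the right framework; and your observation that the lazy characteristic function $\cos^2(\theta/2)$ has a single peak is indeed what makes the $\tfrac12$ base attainable in that model. Where your write-up falls short of an actual proof is in the $\pm1$ upper bound: the passage from $\tfrac34$ to $\tfrac{1}{\sqrt2}$ is only gestured at. The cofactor identity you wrote down is correct, but you have not explained how it is combined with the Hal\'asz bound to re-balance the stratification --- in particular, you have not specified the threshold, nor argued why the structured hyperplanes (those with large $\rho(a)$) are spanned with probability at most $(\tfrac{1}{\sqrt2}+o(1))^n$. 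That is the genuinely delicate step in \cite{MR2557947}, and without it your outline remains a plausibility argument rather than a proof.
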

\begin{proof}[Comments]
The upper bound within $\sim$ in \eqref{thm:bourgainvuwood:lazycoinflip} 
is the special case $\mu = \frac12$ of \cite[Corollary 3.1, p. 567]{MR2557947}. 
The lower bound within the $\sim$ is obvious: consider the event that the first 
column has only zero entries (the lower bound is also explitly stated 
in \cite[formula (7), p. 561]{MR2557947}). The upper bound in 
\eqref{thm:bourgainvuwood:uniformdistribution} is the special 
case $S = \{\pm\}$ and $p=\frac12$ in \cite[Corollary 4.3, p. 576]{MR2557947}. 
\end{proof}

So Bourgain--Vu--Wood proved that the correct order of decay of 
$\Prob_{\lcf}[\{ B\in \{0,\pm\}^{[n]^2}\colon \det(B) = 0\}]$ 
is $(\frac12 + o(1))^n$---which is also the conjectured one 
for $\Prob[\{ A\in \{\pm\}^{[n]^2}\colon \det(A) = 0\}]$. It is this latter 
achievement, combined with an observation made by the present author, 
which spurred the present paper. Note that using the uniform distribution 
on $\{\pm\}^{[n]^2}$ is equivalent to considering the $n^2$ entries 
as i.i.d. Bernoulli variables with probability $\frac12$. The observation is this: 
When we apply one step of Chio condensation (see Definition \ref{def:chiocondensation}) 
to this Bernoulli matrix, the result is a matrix whose entries are $3$-wise 
(and `almost' $6$-wise, see Theorem \ref{thm:comparativecountingtheorem} below) 
stochastically independent with $\{-2,0,+2\}$-values which are distributed as if by 
the lazy coin flip distribution. Since Bourgain--Vu--Wood demonstrated that for 
$\Prob_{\lcf}$-distributed entries an asymptotically correct order of decay can be 
proved, the observation feels like a hint at deeper connections and makes it seem 
imperative to investigate Chio condensation of sign matrices. A first step is taken 
in the present paper. 

\section{Definitions}

Let $[n]:=\{1,\dotsc,n\}$. For any $A = (a_{i,j})_{(i,j)\in [n]^2}\in \{\pm\}^{[n]^2}$, 
any $(i,j)\in [n-1]^2$ and any $\emptyset\subseteq I \subseteq [n-1]^2$ let 
$A[i,j] := a_{i,j}$ and $A[I] := (a_{i,j})_{(i,j)\in I}$, hence in particular 
$A[\emptyset]$ is the empty function and $A\bigl[[n]^2\bigr]  = A$. Let 
$\mathfrak{P}(X)$ denote the power set of a set $X$. 

For a cartesian product $M\times N$ of two sets $M$ and $N$ let 
$\upp_1\colon M\times N \rightarrow M$ be the projection onto the first, 
and $\upp_2\colon M\times N \rightarrow N$ the projection onto the second factor. 
If $M$ and $N$ are finite and $\emptyset \subseteq I\subseteq M\times N$ is 
some subset, then $I$ is called \emph{rectangular} if and only if 
$\lvert I\rvert =\lvert \upp_1(I) \rvert \cdot \lvert \upp_2(I) \rvert$. 

Let us view functions as sets and  matrices as functions. If $D$ is a set and 
$f\colon D\rightarrow \Z$ is a function let us write  
$D=:\Dom(f)\supseteq \Supp(f):=\{ d\in D\colon f(d)\neq 0\}$ for its domain and 
support, and let us employ the abbreviations 
$\lvert \Dom(f) \rvert=:\dom(f)\geq\supp(f) := \lvert \Supp(f) \rvert$. We have 
$\Dom(\emptyset)=\Supp(\emptyset)=\emptyset$ and 
therefore $\dom(\emptyset)=\supp(\emptyset)=0$.  If $U\subseteq \Z$, 
$\emptyset\subseteq I \subseteq [s-1]\times [t-1]$ and $B\in U^I$, then we have 
$[s-1]\times [t-1]\supseteq I = \Dom(B) \supseteq \Supp(B) = 
\{(i,j)\in [s-1]\times [t-1]\colon B[(i,j)]\neq 0\}$. 
For a matrix $M = (m_{i,j})_{(i,j)\in I} \in \Q^I$ and a $q\in \Q$ we define, as usual, 
$q\cdot M := (q\cdot m_{i,j})_{(i,j)\in I}$. The symbol $\sqcup$ denotes a set union 
$\cup$ and at the same time makes the claim that the union is disjoint. The term 
\emph{rank} of matrix has its usual meaning (and we will only use it in the context 
of integral domains, so that row-rank, column-rank and determinantal rank are all 
the same). For a set $S$, the group of all permutations of $S$ is denoted 
by $\Sym(S)$. 

The word `graph' without any further qualifications means `finite simple graph' 
(i.e. `finite $1$-dimensional simplicial complex'). We will use $\upV(X)$ 
(resp. $\upE(X)$) to denote the vertex set (resp. edge set) of a graph $X$, and we 
follow \cite{MR1373656} in reserving the more specific term \emph{circuit} for what 
is called a \emph{cycle} in \cite{MR2159259} (i. e. closed walk 
without self-intersections). Moreover, we will use $f_1(X)$ for the number 
of edges of a graph $X$ and $f_0(X)$ for the 
number of its of vertices. The \emph{cycle space of $X$} (i.e. $1$-dimensional 
cycle group with $\Z/2$-coefficients in the sense of simplicial homology theory) 
will be denoted by $\upZ_1(X;\Z/2)$ and the \emph{coboundary space of $X$} 
by $\upB^1(X;\;\Z/2)$ (this is the $1$-dimensional coboundary group 
with $\Z/2$-coefficients in the sense of simplicial cohomology theory; a synonym 
is `cut space of $X$'). Let $\beta_0(X)$ denote the number of connected components 
of a graph $X$ and $\beta_1(X) := \dim_{\Z/2}\ Z_1(X;\Z/2)$ the first Betti 
number (a synonym in the graph-theoretical literature is `cyclomatic number' 
\cite{MR1847424}). We will (without further notification) use the $1$-dimensional 
case of the alternating sum relation between the ranks of the chain groups and 
the ranks of the homology groups of a free chain complex, 
i.e.  $\beta_1(X) - \beta_0(X) = f_1(X) - f_0(X)$ for every graph $X$. 
For any two disjoint graphs $X_1$ and $X_2$, the graph obtained by identifying 
exactly one vertex of $X_1$ with exactly one vertex of $X_2$ is called 
the \emph{(one-point) wedge of $X_1$ and $X_2$} and denoted by $X_1 \vee X_2$. 
This is the standard wedge product of pointed topological spaces (but only 
vertices of a graph are allowed as basepoints); a synonym within the 
graph-theoretical literature is `coalescence' \cite[p. 140]{MR1054137}. 

We will use the language of \emph{signed graphs}  (see \cite{MR1744869} for a 
comprehensive overview). It is customary in signed graph theory to work with 
multigraphs (i.e. finite $1$-dimensional CW-complexes) for reasons of higher 
flexibility in proofs and applications. However, in the present paper, all we will 
need are signed simple graphs, i.e. for us a signed graph $(X,\sigma)$ will 
simply consist of a  graph $X = (V,E)$ together with an 
arbitrary \emph{sign function} $\sigma\colon E\rightarrow \{\pm\}$. 
We call \emph{$(+)$-edge} (resp. $(-)$-edge) every $e\in \upE(X)$ with 
$\sigma(e) = +$ (resp. $\sigma(e) = - $). Define $(+)$-paths (resp. $(-)$-paths) 
as paths all of whose edges are $(+)$-edges (resp. $(-)$-edges). For emphasizing the 
sign function we employ the notation `($\sigma$, $+$)-edge'.  
If $(X,\sigma)$ is a signed graph let 
$f_1^{(-)}(X,\sigma):=\lvert \{ e\in \upE(X)\colon \sigma(e) = - \}\rvert$ denote
the number of ($\sigma$, $-$)-edges in it. A signed graph $(X,\sigma)$ is 
called \emph{balanced}\footnote{The use of this term seems to have been initiated 
in \cite{MR0067468}. The notion itself was already studied over seventy years ago by 
D. K{\H{o}}nig \cite[p. 149, Paragraph 3]{MR0036989} under the 
name `$p$-Teilgraph'.} if and only if $f_1^{(-)}(C,\sigma)$ is even for every 
circuit $C$ of $X$. We will denote the set of all balanced signings of $X$ by 
$S_{\bal}(X) := 
\{ \sigma\in\{\pm\}^{\upE(X)}\colon \text{$(X,\sigma)$ balanced} \}$.

\begin{definition}[{Chio\footnote{In earlier versions I wrote `Chi{\`o}' but this 
now seems wrong. All three spellings Chi{\`o}, `Chio' and `Chi{\'o}' are to be 
found in the literature. My sole reason for using `{\`o}' was that 
in \cite{MR1500275} the authors consistently use the 
spelling `Chi{\`o}' and it is said \cite[p. 790]{MR1500275} that a copy of 
Chio's original paper had been at the authors' disposal. However, 
an original 1853 copy of \cite{chio} which I recently bought from an 
antiquarian bookstore in Italy gives strong circumstantial evidence in favour of 
the spelling `Chio': on the title page and the inside-cover his given name 
`F{\'e}lix' is written with an accent whereas `Chio' does not carry any accent. 
Moreover, the title page bears a hand-written dedication to a colleague, 
signed `L'autore'. Therefore, to all appearances, Chio signed this title page 
himself, 158 years ago. Possibly extant autographs aside, putting this on record 
might come as close to a personal statement by Chio as one will ever get nowadays.
Moreover, the spelling is further corroborated by the usage adopted by 
Cauchy in \cite{cauchyoeuvrescompletespremieresserietomex}. Cauchy on several 
occasions consistently writes `M. F{\'e}lix Chio' \cite[p. 110, pp. 112--113]{cauchyoeuvrescompletespremieresserietomex}.} set}] 
Let $(s,t)\in \Z_{\geq 2}^2$ and $I\subseteq [s]\times [t]$. Then $I$ is 
called a \emph{Chio set} if and only if $(s,t)\in I$ and 
for every $(i,j)\in I$ we have $(i,t)\in I$ and $(s,j)\in I$. 
\end{definition}

\begin{definition}[Chio extension\footnote{Due to the change of spelling explained 
in the previous footnote I now use a breve instead of a grave accent to denote 
Chio extension.}]\label{def:chioextensionofaset} 
For every $(s,t)\in\Z_{\geq 2}^2$ and every 
$\emptyset \subseteq I \subseteq [s-1]\times [t-1]$, 
\begin{equation}\label{eq:definitionofchiodom}
\breve{I}:= \{ (s,t)\} \sqcup \bigcup_{i\in\upp_1(I)}\{ (i,t) \} 
\sqcup\bigcup_{j\in\upp_2(I)} \{ (s,j)\} \sqcup I \quad.
\end{equation}
\end{definition}

Note that $\breve{I} \subseteq [s]\times [t]$ for every 
$\emptyset \subseteq I \subseteq [s-1]\times [t-1]$, in particular 
$\breve{\emptyset} = \{(s,t)\}$ and $([s-1]\times [t-1])^{\breve{}} = [s]\times [t]$.
Moreover, a set $I'\subseteq [s]\times [t]$ is a Chio set if and only if there 
exist an $I\subseteq[s-1]\times [t-1]$ with $I' = \breve{I}$. 

Now we come to Chio condensation. In the special (and very common) 
case of $s=t$ (hence $[s]\times [t] = [n]^2$) the following definition differs 
from the standard convention (as is to be found in \cite{MR1500275} and 
\cite{MR649067}) in that the entry $a_{n,n}$ instead of $a_{1,1,}$ is taken to be 
the pivot. This seems to be more convenient for handling the indices of a Chio 
condensate. There does not appear to  be  any logical necessity  for multiplying 
by $\tfrac12$, but the author decided to keep the discussion within the 
realm of $\{0,\pm\}$-matrices (instead of $\{-2,0,+2\}$-matrices). 

\begin{definition}[Chio condensation, $\tfrac12\upC_{(s,t)}^{\breve{I}}$]
\label{def:chiocondensation}
For  every $(s,t)\in \Z_{\geq 2}^2$, and every $I\subseteq [s-1]\times [t-1]$ define 
the \emph{Chio map with pivot $a_{s,t}$} as 
\begin{equation}
\tfrac12\upC_{(s,t)}^{\breve{I}}\colon \{\pm\}^{\breve{I}} 
\longrightarrow \{0,\pm\}^I \quad , \qquad 
A \longmapsto \tfrac12\cdot  \upC_{(s,t)}(A) \quad ,
\end{equation}
where $\upC_{(s,t)}(A) :=\bigr (\det(
\begin{smallmatrix} 
a_{i,j} & a_{i,t} \\
a_{s,j} & a_{s,t}
\end{smallmatrix})\bigr)_{(i,j)\in I} \in \{-2,0,+2\}$. An image $\upC_{(s,t)}(A)$ of 
some $A\in \{\pm\}^{\breve{I}}$ is referred to as the 
\emph{Chio condensate of $A$}. 
\end{definition}

\begin{definition}[the measure $\Prob_{\chio}$]\label{def:measurePchio}
For every $(s,t)\in\Z_{\geq 2}^2$ and every $I\subseteq [s-1]\times [t-1]$ define
\begin{equation}\label{eq:definitionmeasurePchio}
\Prob_{\chio} \colon \mathfrak{P}\bigl ( \{0,\pm\}^I  \bigr ) 
\longrightarrow [0,1] \quad , \qquad 
\mathcal{B} \longmapsto \frac{1}{2^{\lvert \breve{I} \rvert}}\ \sum_{B\in\mathcal{B}}\; 
\lvert(\tfrac12 \upC_{(s,t)}^{\breve{I}})^{-1}(B)\rvert \quad .
\end{equation}
\end{definition}

Note that in the special case of $s:=t:=n$ and $I:=[n-1]^2$, 
the measure $\Prob_{\chio}$ maps a single 
$B\in \{0,\pm\}^{[n-1]^2}$ to $\Prob_{\chio}[B] := \Prob_{\chio}[\{B\}] = 2^{-n^2}\cdot 
\bigl \lvert \{ A\in \{\pm\}^{[n]^2}\colon 
B = \tfrac12\cdot \upC_{(n,n)}(A) \} \bigr \rvert$.

We now define two additional measures. Later we will recognize both of 
them as familiar ones: 

\begin{definition}[averaged Chio measure]
For every $\emptyset\subseteq I \subseteq [n-1]^2$ define
\begin{equation}
\overline{\Prob}_{\chio} \colon  \mathfrak{P}(\{0,\pm\}^I) 
\longrightarrow [0,1]\quad , \qquad \mathcal{B} \longmapsto 
\sum_{B\in\mathcal{B}} \frac{1}{2^{\supp (B)}} 
\sum_{\tilde{B} \in \{0,\pm\}^I\colon \Supp(\tilde{B}) = \Supp(B)} 
\Prob_{\chio}[\tilde{B}]\quad .
\end{equation}
\end{definition}

\begin{definition}[$\lvert \frac12\upC_{(s,t)}^{\breve{I}}(\cdot)\rvert$ and 
$\Prob_{\chio}^{\lvert\cdot\rvert, I}$]
For every $(s,t)\in\Z_{\geq 2}^2$ and every $I\subseteq [s-1]\times [t-1]$ define 
a map 
\begin{equation}
\lvert\tfrac12\upC_{(s,t)}^{\breve{I}}\rvert\colon \{\pm\}^{\breve{I}} 
\longrightarrow \{0,1\}^I \quad , \qquad 
A  \longmapsto \tfrac12\cdot \lvert \upC_{(s,t)}(A) \rvert \quad ,
\end{equation}
where $\lvert\upC_{(s,t)} (A)\rvert := \bigl (\lvert \det
\left(\begin{smallmatrix} 
a_{i,j} & a_{i,t} \\
a_{s,j} & a_{s,t}
\end{smallmatrix}\right)\rvert\bigr)_{(i,j)\in I} \in \{0,2\}^I$. 
Furthermore, define
\begin{equation}
 \Prob_{\chio}^{\lvert\cdot\rvert, I}\colon\mathfrak{P}(\{0,1\}^I) 
\longrightarrow [0,1] \cap \Q \quad , \qquad \mathcal{B} 
\longmapsto \frac{1}{2^{\lvert\breve{I}\rvert}}\sum_{B\in\mathcal{B}} 
\bigl \lvert (\lvert\tfrac12\upC_{(s,t)}^{\breve{I}}\rvert)^{-1}(B) \bigr \rvert \quad .
\end{equation}
\end{definition}

\begin{definition}[the entry-specification-events $\mathcal{E}_B^J$]
\label{def:entryspecificationevents}
For $(s,t)\in\Z_{\geq 2}^2$, 
$\emptyset \subseteq I \subseteq J \subseteq [s-1]\times[t-1]$, 
and $B\in \{0,\pm\}^I$ let $\mathcal{E}_B^J := 
\bigl \{ \tilde{B} \in \{0,\pm\}^J\colon \tilde{B}\mid_{\Dom(B)} = B \bigr \}$.
\end{definition}

Note that $\Dom(B) = I \subseteq J =\Dom(\tilde{B})$, hence 
$\tilde{B}\mid_{\Dom(B)}$ is defined. If $\Dom(B)=\emptyset$, i.e. $B=\emptyset$,  then 
$\mathcal{E}_{\emptyset}^J = \{0,\pm\}^J$, and if $\Dom(B)=J$, 
then $\mathcal{E}_B^J = \{B\}$. Furthermore, 
$\lvert \mathcal{E}_B^J \rvert = 3^{\lvert J \rvert - \lvert I \rvert}$ for 
arbitrary $\emptyset\subseteq I \subseteq J \subseteq [s-1]\times [t-1]$ 
and $B\in\{0,\pm\}^I$.

In this paper we intend to use graph-theoretical language. For the sake of 
specificity and ease of reference, we will explicitly name the set of auxiliary 
labelled bipartite graphs that we will talk about (and give it a vertex set which 
blends well with the matrix setting).

\begin{definition}
For every $(s,t)\in\Z_{\geq 2}^2$ denote by $\BG_{s,t}$ the $2^{(s-1)\cdot (t-1)}$-element 
set of all bipartite graphs $X=(V_1\sqcup V_2,E)$  with 
$V_1 =\{(i,t)\colon 1\leq i \leq s-1\}$ and $V_2 =\{(s,j)\colon 1\leq j \leq t-1\}$. 
\end{definition}
There is an obvious bijection $\BG_{s,t} \longleftrightarrow \{0,1\}^{[s-1]\times [t-1]}$. 
Associating with a (partially specified) $\{0,\pm\}$-matrix the following bipartite 
signed graph will be helpful in our study of $\Prob_{\chio}$. The definition can be 
summarized by saying that $\upX$ interprets a $B\in\{0,\pm\}^I$ as a bipartite 
adjacency matrix in the natural way (while ignoring the signs), and that $\sigma$ takes 
the signs in $B$ as a definition of a sign function.

\begin{definition}[$\upX$ and $\sigma$]\label{def:XBandecXB}
For every $(s,t)\in\Z_{\geq 2}^2$ and every $0\leq k \leq (s-1)(t-1)$ define 
\begin{equation}\label{eq:def:XBandecXB}
\upX^{k,s,t}\colon \bigsqcup_{I \in \binom{[s-1]\times [t-1]}{k}} \{ 0, \pm \}^I 
\longrightarrow \BG_{s,t},\quad B \longmapsto \upX_B^{k,s,t}
\end{equation}
by letting vertex-set and edge-set be defined as  
\begin{align}
\upV(\upX_B^{k,s,t}) & := 
(\Dom(B))^{\breve{}}\setminus \Dom(B) \setminus \{(s,t)\} \quad , 
\label{def:XBandecXB:definitionofvertexset} \\
\upE(\upX_B^{k,s,t}) & := 
\bigsqcup_{(i,j) \in \Supp(B)} \bigl \{ \{(i,t),(s,j)\} \bigr \}\quad .
\label{def:XBandecXB:definitionofedgeset}
\end{align}
Define  $\sigma_B\colon \upE(\upX_B) \rightarrow \{\pm\}$ by 
$\sigma_B (\{(i,t),(s,j)\}) := b_{i,j} \in \{\pm\}$ 
for every $\{(i,t),(s,j)\}\in \upE(\upX_B)$. \\
\end{definition}
If $k=0$, hence $I=\emptyset$, hence $B=\emptyset$ is the empty matrix, 
then $\upX_B^{k,s,t}$ is the empty graph $(\emptyset,\emptyset)$ and 
$\sigma_B = \emptyset$ is the empty function. Note that while for 
a $B\in\{0,\pm\}^I$ the set $\upV(\upX_B)$ depends only on $I=\Dom(B)$, 
the set $\upE(\upX_B)$ depends on $\Supp(B)$ and $\sigma_B$ even depends 
on $B$ itself. 

When we take the \emph{image} of a matrix $B\in \{0,\pm\}^I$ under $\upX^{k,s,t}$, 
then usually we will know what $I\in\binom{[s-1]\times [t-1]}{k}$ we are talking 
about and then the superscripts $k,s,t$ give redundant information. 
Whenever possible we will suppress the superscripts in such a situation 
and only write $\upX_B$. When we take \emph{preimages} of a graph $X\in\BG_{s,t}$ 
under $\upX^{k,s,t}$, however, the full notation has to be used since in general 
it is not possible to tell $k$ from the labelled graph $X$ (let alone from its 
isomorphism type). As an example for this, consider the graph $X\in\BG_{4,4}$ 
with vertex set $\{(1,4),(2,4),(3,4)\}\sqcup\{(4,1),(4,2),(4,3)\}$ 
and edge set $\{ \{(1,4),(4,1)\},\{(2,4),(4,1)\},\{(2,4),(4,2)\},\{(1,4),(4,2)\}\}$, 
which is isomorphic to a $4$-circuit with two additional isolated vertices. Then we 
have $\upX^{5,4,4}_{B_1} = \upX_{B_2}^{6,4,4} = X$ for $B_1 :=\left(\begin{smallmatrix} 
1 & 1 &  \\ 1 & 1 &  \\  &  & 0 
\end{smallmatrix}\right)\neq B_2 :=\left(\begin{smallmatrix} 
1 & 1 &  \\ 1 & 1 & 0 \\  & 0 &  
\end{smallmatrix}\right)$. Here, $\dom(B_1) = 5 \neq 6 = \dom(B_2)$.

In the following, we deliberately do not define `isomorphism type of a graph' more 
precisely. We would not have much use for any of the existing formalizations of an 
unlabelled graph.

\begin{definition}[$\ul$, $\beta_1^{\ul}$]
Let $\ul$ be the map which assigns a labelled graph to its isomorphism type. 
Let $\beta_1^{\ul}\colon \ul(\BG_{n,n}) \to \Z_{\geq 0}$ be the map which assigns an 
unlabelled bipartite graph to its $1$-dimensional Betti number.
\end{definition}

\begin{definition}[$\upXul^{k,s,t}$]\label{def:upXul}
For   $(s,t) \in \Z_{\geq 2}^2$ and $0\leq k \leq (s-1)(t-1)$ let 
$\upXul^{k,s,t} := \ul\circ\upX^{k,s,t}$. 
\end{definition}

If $\mathfrak{X}$ is some (verbal, pictorial, ...) description of an isomorphism 
type of graphs, we can now take its preimage 
$(\upXul^{k,s,t})^{-1}(\mathfrak{X}) \subseteq \bigsqcup_{I\in \binom{[s-1]\times [t-1]}{k}} 
\{0,\pm\}^I$. To analyse how $\Prob_{\chio}$ and $\Prob_{\lcf}$ relate to one another, 
it is useful to have the following notations. 

\begin{definition}[failure sets]\label{def:parametrizedfailuresets}
For every $k\geq 0$, $n\geq 2$, $\ell\in \Q_{\geq 0}$ and $p \in [0,1]\cap \Q$ let 
\begin{enumerate}[label={\rm(\arabic{*})}]
\item $\mathcal{F}^{\mathrm{M}}(k,n)$ $:=$ 
$\{$ $B\in\{0,\pm\}^I\colon I\in\binom{[n-1]^2}{k},\; 
\Prob_{\chio}[\mathcal{E}_B^{[n-1]^2}] \neq \Prob_{\lcf}[\mathcal{E}_B^{[n-1]^2}]$ $\}$ \quad ,
\item $\mathcal{F}_{\cdot\ell}^{\mathrm{M}}(k,n)$ $:=$ 
$\{$ $B\in\mathcal{F}^{\mathrm{M}}(k,n)\colon 
\Prob_{\chio}[\mathcal{E}_B^{[n-1]^2}] = \ell\cdot \Prob_{\lcf}[\mathcal{E}_B^{[n-1]^2}]$ $\}$ 
 $\subseteq$ $\mathcal{F}^{\mathrm{M}}(k,n)$ \quad ,
\item $\mathcal{F}_{=p}^{\mathrm{M}}(k,n) := $ 
$\{$ $B\in \mathcal{F}^{\mathrm{M}}(k,n)\colon 
\Prob_{\chio}[\mathcal{E}_B^{[n-1]^2}] = p$ $\}$ 
$\subseteq$ $\mathcal{F}^{\mathrm{M}}(k,n)$ \quad ,
\item\label{def:graphversionsofthefailuresets} 
$\mathcal{F}^{\mathrm{G}}(k,n) := 
\upXul^{k,n,n}(\mathcal{F}^{\mathrm{M}}(k,n))$, 
$\mathcal{F}_{\cdot \ell}^{\mathrm{G}}(k,n) := 
\upXul^{k,n,n}(\mathcal{F}_{\cdot\ell}^{\mathrm{M}}(k,n))$, \\ 
and $\mathcal{F}_{=p}^{\mathrm{G}}(k,n) := 
\upXul^{k,n,n}(\mathcal{F}_{=p}^{\mathrm{M}}(k,n))$.
\end{enumerate}
We abbreviate $\mathcal{F}^{\mathrm{M}}(k,n) := \mathcal{F}^{\mathrm{M}}(k,n,n)$, and 
analogously for all the other sets just defined.  
\end{definition}

Obviously, $\mathcal{F}_{\cdot 1}^{\mathrm{M}}(k,n) = \emptyset$ 
and $\mathcal{F}_{\cdot 0}^{\mathrm{M}}(k,n) = \mathcal{F}_{= 0}^{\mathrm{M}}(k,n)$ for 
all $k$ and $n$. Item \ref{relationbeweenchiomeasureandlazycoinflipmeasuregovernedbyfirstbettinumber} in 
Theorem \ref{thm:graphtheoreticalcharacterizationofthechiomeasure} will teach us 
that $\mathcal{F}_{\cdot \ell}^{\mathrm{M}}(k,n) = \emptyset$ for every 
$\ell\notin \{ 0 \} \sqcup \{2^i\colon i\in\Z_{\geq 0}\}$ (hence in particular 
$\Prob_{\chio}[\mathcal{E}_B^{[n-1]^2}] \geq \Prob_{\lcf}[\mathcal{E}_B^{[n-1]^2}]$ for 
every $B\in\mathcal{F}^{\mathrm{M}}(k,n)$ with 
$\Prob_{\chio}[\mathcal{E}_B^{[n-1]^2}] > 0$).

\begin{definition}[matrix-circuit, $\Cir(s,n)$]
\label{def:circuitofmatrixentrypositions}
For every $(s,t)\in\Z_{\geq 2}^2$ and every $L \subseteq [s-1]\times [t-1]$ 
with even $l := \lvert L \rvert$, the set $L$ is called a \emph{matrix-$l$-circuit} 
if and only if $\upX_{\{1\}^L}$ is a graph-theoretical $l$-circuit. 
Moreover, $\Cir(l,s,t):=\{ L\subseteq [s-1]\times [t-1]\colon \lvert L \rvert = l,\, 
\text{$L$ is a matrix-$l$-circuit}\}$ and $\Cir(l,n):=\Cir(l,n,n)$. 
\end{definition}

\begin{definition}[$(-)$-constant, $(+)$-proper vertex $2$-colouring of a 
signed graph]
For a graph $X=(V,E)$ and a $\sigma\in \{\pm\}^E$, a function 
$\upc\in \{\pm\}^V$ is called 
\emph{($\sigma$, $-$)-constant, ($\sigma$, $+$)-proper} if and only if 
$\upc(u) = \upc(v)$ for every $e = uv\in \upE(X)$ with $\sigma(e) = -$ 
and 
$\upc(u) \neq \upc(v)$ for every $e = uv\in \upE(X)$ with $\sigma(e) = +$. 
\end{definition}

\begin{definition}[$\Col(X,\sigma)$]\label{def:ColecX}
For a graph $X=(V,E)$ and a $\sigma\in \{\pm\}^E$ let $\Col(X,\sigma)$ be the set of 
all ($\sigma$, $-$)-constant, ($\sigma$, $+$)-proper vertex-$2$-colourings 
$\upc\in \{\pm\}^V$.
\end{definition}

\begin{definition}[rank-level-sets of matrices]\label{def:ranksets}
For $(s,t)\in\Z_{\geq 2}^2$, $0\leq r \leq \min(s,t)$, $R$ an integral domain, 
$U\subseteq R$ and $\mathcal{R}\in\mathfrak{P}(\{0,1,\dotsc,\min(s,t)\})$  let 
$\Ra_r(U^{[s]\times[t]} )$ $:=$ $\{ A\in U^{[s]\times[t]}\colon \rk(A) = r\}$, 
$\Ra_{\mathcal{R}}(\{\pm\}^{[s]\times [t]}) := \bigsqcup_{r\in\mathcal{R}} 
\Ra_r(\{\pm\}^{[s]\times [t]})$ and 
$\Ra_{<r}(U^{[s]\times[t]} )$ $:=$ $\Ra_{\{0,1,\dotsc,r-1\}}(\{\pm\}^{[s]\times [t]})$. 
\end{definition}

\section{Lemmas}

We will use the following elementary fact: 

\begin{lemma}\label{lem:elementarypreimagestatements:finvffinv}
$f^{-1}(f(f^{-1}(U))) = f^{-1}(U)$ for any map $f\colon A\to B$ and any 
subset $U\subseteq B$. \hfill $\Box$
\end{lemma}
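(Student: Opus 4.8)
The final statement is Lemma~\ref{lem:elementarypreimagestatements:finvffinv}: for any map $f\colon A\to B$ and any $U\subseteq B$, one has $f^{-1}(f(f^{-1}(U))) = f^{-1}(U)$.

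\textbf{Proof proposal.} The plan is to deduce the equality from two entirely standard monotonicity facts about direct and inverse images: for every $V\subseteq A$ one has $V\subseteq f^{-1}(f(V))$, for every $W\subseteq B$ one has $f(f^{-1}(W))\subseteq W$, and $f^{-1}$ preserves inclusions. Specializing the first fact to $V:=f^{-1}(U)$ yields the inclusion $f^{-1}(U)\subseteq f^{-1}(f(f^{-1}(U)))$. For the reverse inclusion, specialize the second fact to $W:=U$, obtaining $f(f^{-1}(U))\subseteq U$, and then apply the monotone operation $f^{-1}$ to both sides to get $f^{-1}(f(f^{-1}(U)))\subseteq f^{-1}(U)$. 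Combining the two inclusions gives the asserted equality.

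If a self-contained element chase is preferred over citing the monotonicity facts, it is equally short. For ``$\supseteq$'': if $a\in f^{-1}(U)$ then $f(a)\in f(f^{-1}(U))$ by definition of direct image, hence $a\in f^{-1}(f(f^{-1}(U)))$. For ``$\subseteq$'': if $a\in f^{-1}(f(f^{-1}(U)))$ then $f(a)\in f(f^{-1}(U))$, so $f(a)=f(a')$ for some $a'\in f^{-1}(U)$; since $f(a')\in U$ we get $f(a)\in U$, i.e. $a\in f^{-1}(U)$.

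There is no genuine obstacle here: the statement is just the observation that the pair $(f^{-1},f)$ forms a Galois connection between the power sets $\mathfrak{P}(B)$ and $\mathfrak{P}(A)$, so that $f^{-1}\circ f$ is a closure operator and in particular restricts to the identity on sets of the form $f^{-1}(U)$. I would therefore simply present the two-line element chase above, since it needs no auxiliary lemmas and fits the elementary character of the statement.
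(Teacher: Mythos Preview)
Your proof is correct. The paper does not actually prove this lemma; it merely states it with a $\Box$, treating it as an obvious set-theoretic fact, so your two-line element chase (or the monotonicity argument) is exactly the kind of verification the author is taking for granted.
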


The following simple statement is essential for the approach developed in the 
present paper. More information on this identity can be found in 
\cite[last paragraph of Section 9]{MR1500275} and 
\cite[Ch. 4, p. 282, Exerc. 43]{MR1181420}. The formulation given here differs from 
those in \cite[p. 11]{chio} and \cite{MR1500275} in that $a_{n,n}$ instead of $a_{1,1}$ 
is taken to be the pivot. This seems more convenient for handling the indices of a 
Chio condensate. 

\begin{lemma}[Chio's identity]\label{lem:chioidentity}
For $n\geq 2$, $R$ an integral domain and $(a_{i,j}) = A\in R^{[n]^2}$, 
\begin{equation}\label{eq:chiocondensation}
\det \bigl ( \upC_{(n,n)}(A) \bigr ) = a_{n,n}^{n-2} \cdot \det(A) \quad .
\end{equation}
\end{lemma}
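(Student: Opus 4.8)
The plan is the classical row-reduction argument for Chio's identity, with one point requiring care because $R$ is only assumed to be an integral domain, not a field. First I would reduce \eqref{eq:chiocondensation} to a \emph{generic} statement: both sides are polynomials with integer coefficients in the $n^2$ matrix entries, so it suffices to prove the identity when the $a_{i,j}$ are regarded as algebraically independent indeterminates over $\Z$, i.e. in the ring $\Z[a_{i,j}\colon (i,j)\in[n]^2]$, and then push the result forward along the unique ring homomorphism $\Z[a_{i,j}\colon(i,j)\in[n]^2]\to R$ sending each $a_{i,j}$ to the given entry. The point of this reduction is that $\Z[a_{i,j}\colon(i,j)\in[n]^2]$ is itself an integral domain in which $a_{n,n}$ is nonzero, hence a non-zero-divisor, so it may legitimately be cancelled from an equation; over a general $R$ this cancellation would be invalid exactly when $a_{n,n}=0$.

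In this generic setting, for each $i\in[n-1]$ I would replace row $i$ of $A$ by $a_{n,n}\cdot(\text{row }i)-a_{i,n}\cdot(\text{row }n)$. Performing first the $n-1$ scalings of rows $1,\dotsc,n-1$ by $a_{n,n}$ and then the $n-1$ subtractions of multiples of row $n$, the determinant gets multiplied by $a_{n,n}^{n-1}$ in total; call the resulting matrix $A'$, so that $\det(A')=a_{n,n}^{n-1}\det(A)$. A direct inspection of $A'$ shows three things: its top-left $(n-1)\times(n-1)$ block is exactly $\upC_{(n,n)}(A)$, since its $(i,j)$-entry is $a_{i,j}a_{n,n}-a_{i,n}a_{n,j}$; each of the first $n-1$ entries of its last column is $0$, since $a_{i,n}a_{n,n}-a_{i,n}a_{n,n}=0$; and its bottom-right entry is $a_{n,n}$. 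Expanding $\det(A')$ along this last column therefore gives $\det(A')=a_{n,n}\cdot\det\bigl(\upC_{(n,n)}(A)\bigr)$.

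Equating the two expressions for $\det(A')$ gives $a_{n,n}\cdot\det(\upC_{(n,n)}(A))=a_{n,n}^{n-1}\det(A)=a_{n,n}\cdot a_{n,n}^{n-2}\det(A)$, where the last step uses the hypothesis $n\geq 2$ so that $n-2\geq 0$. Cancelling the non-zero-divisor $a_{n,n}$ yields \eqref{eq:chiocondensation} in $\Z[a_{i,j}\colon(i,j)\in[n]^2]$, and specializing the indeterminates back to the entries of $A$ proves it over $R$. The only genuinely delicate step is this cancellation, which is why I would route the whole argument through the generic identity; equivalently one could pass to the fraction field $\Q(a_{i,j}\colon(i,j)\in[n]^2)$, where $a_{n,n}$ is invertible and the same row operations go through verbatim, or derive the identity from the Schur-complement determinant formula applied to the block decomposition of $A$ whose bottom-right $1\times 1$ corner is $a_{n,n}$ (again valid generically).
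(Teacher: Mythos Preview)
Your argument is correct and is precisely the row-reduction proof that the paper alludes to under the phrase ``behavior of determinants under linear transformations''; the paper itself does not spell out a proof but only points to Chio's original memoir, to Eves, and to Sylvester's identity as a generalisation. Your additional step of passing to the generic polynomial ring $\Z[a_{i,j}]$ before cancelling $a_{n,n}$ is a clean way to make the argument go through over an arbitrary integral domain even when $a_{n,n}=0$, a point the paper's references leave implicit.
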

\begin{proof}
This is stated by Chio in \cite[p. 11, Th{\'e}or{\`e}me 4, equation (20)]{chio} and 
he proves it on pp. 6--11 (the notation `$\pm a_0b_1$' employed in 
\cite[equation (13'')]{chio} is defined at the beginning of p. 6 of \cite{chio}). 
To contemporary eyes, this is an easy consequence of the behavior of determinants 
under linear transformations, cf. \cite{MR649067} for a direct proof of the version 
with pivot $a_{1,1}$. Moreover, this is a special case of 
\emph{Sylvester's determinant identity}. To see this, set $k=1$ in formula $(8)$ of 
\cite{MR1500275} to get a version of \eqref{eq:chiocondensation} with pivot 
$a_{1,1}$. Obvious modifications in the proof in \cite{MR1500275} yield the version 
with pivot $a_{n,n}$. 
\end{proof}

The following three assertions are obviously true: 

\begin{corollary}\label{lem:equivalenceofvanishingofdeterminants}
For every $A\in \{\pm\}^{[n]^2}$, $\det(A) = 0$ if and only if 
$\det(\tfrac12\upC_{(n,n)}(A)) = 0$. \hfill $\Box$
\end{corollary}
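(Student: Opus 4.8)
The plan is to read this off immediately from Chio's identity (Lemma~\ref{lem:chioidentity}), the point being only that for sign matrices the scalar appearing there is invertible. First I would note that since $A\in\{\pm\}^{[n]^2}$ the pivot entry $a_{n,n}$ lies in $\{-1,+1\}$, so $a_{n,n}^{\,n-2}\in\{-1,+1\}$ (for $n=2$ this is the empty product $1$); in particular it is a nonzero element of $\Z$. Applying Lemma~\ref{lem:chioidentity} with $R=\Z$ gives $\det\bigl(\upC_{(n,n)}(A)\bigr) = a_{n,n}^{\,n-2}\cdot\det(A)$ inside the integral domain $\Z$, whence $\det\bigl(\upC_{(n,n)}(A)\bigr)=0$ if and only if $\det(A)=0$.

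Second, I would pass from $\upC_{(n,n)}(A)$ to $\tfrac12\upC_{(n,n)}(A)$. Since $\upC_{(n,n)}(A)$ is an $(n-1)\times(n-1)$ matrix, $\det\bigl(\tfrac12\upC_{(n,n)}(A)\bigr) = \bigl(\tfrac12\bigr)^{n-1}\det\bigl(\upC_{(n,n)}(A)\bigr)$, and $\bigl(\tfrac12\bigr)^{n-1}$ is a nonzero rational. Hence $\det\bigl(\tfrac12\upC_{(n,n)}(A)\bigr)=0$ if and only if $\det\bigl(\upC_{(n,n)}(A)\bigr)=0$. Chaining this with the previous equivalence yields the corollary.

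There is essentially no obstacle here; the only point requiring a moment's care is the degenerate exponent $n-2$ in Chio's identity at $n=2$, where it must be read as the empty product $1$, making the identity $\det\bigl(\upC_{(2,2)}(A)\bigr)=\det(A)$ — still nonzero-scalar, so the argument is uniform in $n\geq 2$. One could equally run the whole argument over $\Q$ (or any field of characteristic zero) rather than $\Z$, but $\Z$ already suffices since it is an integral domain and that is all that is used.
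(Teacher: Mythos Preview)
Your argument is correct and is exactly the reasoning the paper has in mind: the corollary is stated immediately after Chio's identity with a bare $\Box$ (the paper calls it ``obviously true''), and the only content is that $a_{n,n}^{\,n-2}\neq 0$ for sign matrices and that scaling by $\tfrac12$ does not affect vanishing. You have simply spelled out the two-line justification the paper omits.
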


\begin{lemma}[value of lazy coin flip distribution on single matrix]
\label{lem:valueoflazycoinflipdistribution}
For every $\emptyset\subseteq I \subseteq [n-1]^2$ and every 
$B \in \{0,\pm\}^{I}$, $\Prob_{\lcf}[\mathcal{E}_B] = (\frac12)^{ \dom(B) + \supp(B) }$. 
\hfill $\Box$
\end{lemma}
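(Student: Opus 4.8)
The plan is simply to unwind the definition of $\Prob_{\lcf}$ as a product of i.i.d. coordinates and to factor the cylinder event $\mathcal{E}_B = \mathcal{E}_B^{[n-1]^2}$ over the matrix positions. First I would observe that, by Definition \ref{def:entryspecificationevents} applied with $J := [n-1]^2$, the set $\mathcal{E}_B$ consists exactly of those $\tilde{B}\in\{0,\pm\}^{[n-1]^2}$ with $\tilde{B}[(i,j)] = B[(i,j)]$ for all $(i,j)\in\Dom(B)$; that is, it constrains precisely the $\dom(B)$ coordinates in $\Dom(B)=I$ and leaves the remaining $(n-1)^2 - \dom(B)$ coordinates entirely free.

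Next I would invoke the \emph{full} independence built into Definition \ref{def:definitionofthelazycoinflipmeasure}: since under $\Prob_{\lcf}$ all $(n-1)^2$ entries of $\tilde{B}$ are independent, the probability of this cylinder set factors as $\Prob_{\lcf}[\mathcal{E}_B] = \prod_{(i,j)\in\Dom(B)} q_{B[(i,j)]}$, where the unconstrained coordinates each contribute a factor $1$ and where $q_{+1} = q_{-1} = \tfrac14$, $q_0 = \tfrac12$ are the single-trial probabilities of the defining symmetric distribution on $\{-1,0,+1\}$. Now the coordinates $(i,j)\in\Dom(B)$ split into the $\supp(B) = \lvert\Supp(B)\rvert$ many with $B[(i,j)]\neq 0$, each contributing a factor $\tfrac14$, and the $\dom(B) - \supp(B)$ many with $B[(i,j)] = 0$, each contributing a factor $\tfrac12$. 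Hence $\Prob_{\lcf}[\mathcal{E}_B] = (\tfrac14)^{\supp(B)}\cdot(\tfrac12)^{\dom(B)-\supp(B)} = (\tfrac12)^{2\supp(B)}\cdot(\tfrac12)^{\dom(B)-\supp(B)} = (\tfrac12)^{\dom(B)+\supp(B)}$, as claimed. The degenerate case $I = \emptyset$ (so $B = \emptyset$, $\mathcal{E}_\emptyset = \{0,\pm\}^{[n-1]^2}$) is subsumed, the empty product giving $1 = (\tfrac12)^{0}$ in agreement with $\dom(\emptyset) = \supp(\emptyset) = 0$.

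There is essentially no obstacle here; the statement is a one-line consequence of independence. The only points requiring a modicum of care are purely bookkeeping: keeping straight that $\dom(B)$ counts \emph{all} constrained positions while $\supp(B)$ counts only the nonzero ones (so that the number of constrained zero-positions is the difference $\dom(B) - \supp(B)$), and noting that it is genuinely the full mutual independence of the $\Prob_{\lcf}$-coordinates — not merely pairwise independence — that licenses multiplying the per-coordinate probabilities. This lemma is included precisely so that later, in combination with the graph-theoretic description of $\Prob_{\chio}$, the ratio $\Prob_{\chio}[\mathcal{E}_B]/\Prob_{\lcf}[\mathcal{E}_B]$ can be read off cleanly.
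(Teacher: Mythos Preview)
Your proof is correct and is exactly the elementary argument the paper has in mind: the paper states this lemma under the heading ``the following three assertions are obviously true'' and marks it with a bare $\Box$, giving no proof at all. You have simply written out the independence-and-factoring computation that the author deemed too routine to include.
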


\begin{lemma}\label{lem:onepointwedgepreservesbalancedness}
For any two disjoint graphs $X_1$ and $X_2$ and any two sign functions
$\sigma_{X_1}\in \{\pm\}^{\upE(X_1)}$ and  $\sigma_{X_2}\in\{\pm\}^{\upE(X_2)}$, and for 
every graph $X$ obtained by a one-point wedge of $X_1$ and $X_2$ at two arbitrary 
vertices, the sign function  $\sigma_X\in \{\pm\}^{\upE(X)}$ obtained by uniting the 
maps $\sigma_{X_1}$ and $\sigma_{X_2}$ is balanced  if and only if both 
$(X_1,\sigma_{X_1})$ and $(X_2,\sigma_{X_2})$ are balanced. \hfill $\Box$
\end{lemma}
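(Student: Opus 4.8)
The statement is about how balancedness of a signed graph behaves under a one-point wedge, so the natural approach is to analyze the circuits of $X = X_1 \vee X_2$ in terms of the circuits of $X_1$ and $X_2$. First I would recall the basic topological fact about wedges at a single vertex: if $X$ is obtained by identifying a vertex $v_1 \in \upV(X_1)$ with a vertex $v_2 \in \upV(X_2)$, then every circuit of $X$ is entirely contained in (the copy of) $X_1$ or entirely contained in (the copy of) $X_2$. Indeed, a circuit that used edges from both $X_1$ and $X_2$ would have to pass through the single cut vertex at least twice (once to cross over, once to cross back), contradicting the requirement that a circuit visits no vertex more than once. This is the key structural observation, and it is where the hypothesis that the wedge is at \emph{one} point is essential.

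Granting that, the proof is a short bookkeeping argument. Since $\sigma_X$ restricts to $\sigma_{X_1}$ on $\upE(X_1)$ and to $\sigma_{X_2}$ on $\upE(X_2)$, and since $f_1^{(-)}$ of a circuit depends only on the restriction of the sign function to that circuit's edges, we get for every circuit $C$ of $X$: if $C \subseteq X_i$, then $f_1^{(-)}(C, \sigma_X) = f_1^{(-)}(C, \sigma_{X_i})$. Hence $f_1^{(-)}(C,\sigma_X)$ is even for every circuit $C$ of $X$ if and only if $f_1^{(-)}(C,\sigma_{X_1})$ is even for every circuit $C$ of $X_1$ and $f_1^{(-)}(C,\sigma_{X_2})$ is even for every circuit $C$ of $X_2$ — which is precisely the assertion that $(X,\sigma_X)$ is balanced iff both $(X_1,\sigma_{X_1})$ and $(X_2,\sigma_{X_2})$ are balanced. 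The only mild subtlety is the logical direction ``$\Leftarrow$'': one must note that, conversely, every circuit of $X_1$ (and of $X_2$) is again a circuit of $X$, since the identification of a single vertex does not destroy or create edges, so no circuit is lost when passing from $X_i$ to $X$.

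The main obstacle — really the only nontrivial point — is a clean justification of the claim that no circuit of $X_1 \vee X_2$ straddles both sides. I would argue it directly: let $C = v^{(0)} e^{(1)} v^{(1)} \cdots e^{(l)} v^{(l)} = v^{(0)}$ be a circuit of $X$, let $w$ be the identified vertex, and suppose some edge of $C$ lies in $X_1$ and some other edge of $C$ lies in $X_2$. Walking along $C$ and recording which side each edge belongs to, the side can only change at $w$ (since $w$ is the unique vertex incident to edges of both $X_1$ and $X_2$), so the cyclic sequence of ``sides'' must change value at least twice, forcing the closed walk $C$ to pass through $w$ at least twice among $v^{(0)}, \dots, v^{(l-1)}$. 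But a circuit has no repeated vertices, a contradiction. (If $C$ avoids $w$ altogether, then since the only vertices of $X$ incident to edges of both sides is $w$, the vertex set of $C$ lies on a single side, hence so do its edges.) With this in hand, everything else is immediate and the proof is complete.
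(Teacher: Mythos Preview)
Your proof is correct and is exactly the natural argument; the paper itself gives no proof at all, listing this lemma among three ``obviously true'' assertions and marking it with a bare $\Box$. Your write-up spells out precisely the cut-vertex reasoning the paper leaves implicit.
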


The following will be needed for counting failures of equality 
of $\Prob_{\chio}$ and $\Prob_{\lcf}$. 

\begin{lemma}
\label{lem:numberofmatrixcircuitsofgivenlengthwithingivencartesianproduct} 
$\lvert \Cir(2j,s,t) \rvert = \binom{s-1}{j}\cdot\binom{t-1}{j}\cdot 
\frac{j!(j-1)!}{2}$ for every $n\geq 2$ and every 
$1\leq j \leq \min(\lfloor\frac{s}{2}\rfloor, \lfloor\frac{t}{2}\rfloor)$. 
\end{lemma}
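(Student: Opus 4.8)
The plan is to count matrix-$2j$-circuits $L\subseteq [s-1]\times[t-1]$ by directly unpacking Definition~\ref{def:circuitofmatrixentrypositions}: such an $L$ is exactly a set of positions in the grid whose associated bipartite graph $\upX_{\{1\}^L}$ is a single graph-theoretical $2j$-circuit. First I would observe that a $2j$-circuit in a bipartite graph alternates between the two sides $V_1=\{(i,t)\colon 1\le i\le s-1\}$ and $V_2=\{(s,j)\colon 1\le j\le t-1\}$, so it uses exactly $j$ vertices from $V_1$ and $j$ from $V_2$; equivalently, $\upp_1(L)$ is a $j$-subset of $[s-1]$ and $\upp_2(L)$ is a $j$-subset of $[t-1]$. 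This immediately produces the factors $\binom{s-1}{j}\binom{t-1}{j}$, and reduces the problem to: given a fixed $j$-set $R$ of row indices and a fixed $j$-set $C$ of column indices, how many subsets $L\subseteq R\times C$ of size $2j$ form a single $2j$-circuit on the vertex set $R\sqcup C$?

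Next I would identify that last count with the number of Hamiltonian cycles in the complete bipartite graph $K_{j,j}$, since a matrix-$2j$-circuit on $R\times C$ is precisely an edge set of $K_{j,j}$ (with parts $R$ and $C$) forming a spanning cycle. The standard count is $\frac{j!\,(j-1)!}{2}$: fix a cyclic starting vertex on the $R$-side, interleave the $j$ vertices of $C$ in one of $j!$ orders and the remaining $j-1$ vertices of $R$ in one of $(j-1)!$ orders, then divide by $2$ for the two traversal directions; alternatively, cite that the number of Hamiltonian cycles in $K_{j,j}$ is well known to be $\tfrac12 j!(j-1)!$. Multiplying the three independent choices gives the claimed formula. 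The range $1\le j\le \min(\lfloor s/2\rfloor,\lfloor t/2\rfloor)$ is exactly the regime in which both $\binom{s-1}{j}$ and $\binom{t-1}{j}$ are the relevant binomial coefficients and a $2j$-circuit can fit inside the grid; I would note the slight infelicity that the lemma is stated for ``every $n\ge 2$'' while the quantity depends on $(s,t)$, but this does not affect the argument.

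The only genuinely careful point — the ``main obstacle'', though it is mild — is verifying that no overcounting or undercounting occurs when one passes from ``labelled cyclic sequence alternating between $R$ and $C$'' to ``subset $L\subseteq R\times C$'': distinct cyclic orderings can yield the same edge set only through the rotation and reflection symmetries of the cycle, and one must check that fixing a starting $R$-vertex kills all rotations (there are $2j$ rotations, $j$ of which land on an $R$-vertex, and fixing one particular $R$-vertex as the start leaves exactly the $j$ rotations; but since we also freely permute the remaining $j-1$ $R$-vertices, the bookkeeping must be that we pick which $C$-vertex follows the fixed start, etc.). The cleanest way to avoid any slip is to quote the Hamiltonian-cycle count for $K_{j,j}$ as a black box and simply assemble it with the two binomial factors; I would take that route, and only spell out the elementary $\frac{j!(j-1)!}{2}$ derivation if a self-contained argument is wanted.
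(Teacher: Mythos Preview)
Your proposal is correct. Both you and the paper first partition according to the $j$-subsets $N'=\upp_1(L)\subseteq[s-1]$ and $N''=\upp_2(L)\subseteq[t-1]$, producing the binomial factors, and then reduce to counting Hamiltonian cycles in $K_{j,j}$ on the fixed vertex sets $N'\sqcup N''$.

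The one genuine difference is in how that inner count $\tfrac{j!(j-1)!}{2}$ is obtained. You argue directly via cyclic sequences (fix a starting $R$-vertex, interleave, divide by $2$ for reflection), or alternatively cite the Hamiltonian-cycle count for $K_{j,j}$ as known. The paper instead uses a double-counting argument: it sets up an incidence relation $\mathfrak{R}$ between matrix-$2j$-circuits $S$ with $\upp_1(S)=N'$, $\upp_2(S)=N''$ and permutation matrices $P$ supported in $N'\times N''$, declaring $(S,P)\in\mathfrak{R}$ iff $\Supp(P)\subseteq\Supp(S)$. Each circuit contains exactly $2$ such permutation matrices (a Hamiltonian cycle in $K_{j,j}$ decomposes into two perfect matchings), while each permutation matrix lies in exactly $(j-1)!$ circuits; equating the two ways of counting $\lvert\mathfrak{R}\rvert$ gives $2\cdot\lvert\Cir(2j,s,t,N',N'')\rvert=(j-1)!\cdot j!$. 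Your route is the more elementary textbook argument and avoids any auxiliary structure; the paper's double-counting is arguably cleaner in that it sidesteps the rotation/reflection bookkeeping you flagged as the ``main obstacle''.
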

\begin{proof}
For every $N'\in \binom{[s-1]}{j}$ and $N''\in \binom{[t-1]}{j}$ let 
$\Cir(2j,s,t,N',N'') := \{ S\in \Cir(2j,s,t)\colon \upp_1(S) = N',\; 
\upp_2(S) = N'' \}$. Obviously, $\lvert\Cir(2j,s,t)\rvert = 
\sum_{N'\in \binom{[s-1]}{j}} \sum_{N''\in \binom{[t-1]}{j}} \lvert\Cir(2j,s,t,N',N'')\rvert$.
To count $\Cir(2j,s,t,N',N'')$, define $\Perm(N',N'')$ $:=$ $\{$ $P$ $\colon$ $P$ is 
a permutation matrix, $\Supp(P)$ $\subseteq$ $N'\times N''$ $\}$. Define a binary 
relation $\mathfrak{R} \subseteq \Cir(2j,s,t,N',N'')\times \Perm(N',N'')$ by letting 
$(S,P) \in \mathfrak{R}$ if and only if $\Supp(S)\supseteq \Supp(P)$. For every 
$S\in\Cir(2j,s,t,N',N'')$ we have 
$\lvert \{ P\in\Perm(N',N'')\colon (S,P)\in\mathfrak{R} \} \rvert = 2$. On the other 
hand, for every $P\in\Perm(N',N'')$ we have 
$\lvert \{ S\in\Cir(2j,s,t,N',N'')\colon (S,P)\in\mathfrak{R} \} \rvert = (j-1)!$. 
Therefore, $(j-1)!$ $\cdot$ $j!$ $=$ $(j-1)!$ $\cdot$ $\lvert \Perm(N',N'')\rvert$ 
$=$ $\sum_{P\in\Perm(N',N'')}$ $\lvert \{$ $S\in\Cir(2j,s,t,N',N'')\colon$ 
$(S,P)$ $\in$ $\mathfrak{R}$ $\} \rvert$ $=$ $\sum_{S\in\Cir(2j,s,t,N',N'')}$ 
$\lvert \{$ $P$ $\in$ $\Perm(N',N'')\colon$ $(S,P)$ $\in$ $\mathfrak{R}$ 
$\} \rvert$ $=$ $2$ $\cdot$ $\lvert \Cir(2j,s,t,N',N'') \rvert$. 
\end{proof}

The following is contained in K{\H{o}}nig's 1936 classic \cite{MR0036989}. 

\begin{lemma}[D. K{\H{o}}nig]\label{lem:equivalenceofexistenceofbconstantrpropervertex2coloringandcyclicallyrevenness}
Let $\mathfrak{X}$ be a labelled or an unlabelled  graph. Then:
\begin{enumerate}[label={\rm(K{\H{o}}\arabic{*})}]
\item\label{item:equivalenceofexistenceofbconstantrpropervertex2coloringandcyclicallyrevenness}  $S_{\bal}(\mathfrak{X})\neq\emptyset$  if and only if 
$\Col(\mathfrak{X},\sigma)\neq\emptyset$. 
\item\label{item:cardinalityofsetofbconstantrproper2colorings}  
Let $\sigma\colon \{\pm\}^E$ be arbitrary. Then 
$\Col(\mathfrak{X},\sigma)\neq \emptyset$ if and only if 
$\lvert\Col(\mathfrak{X},\sigma)\rvert = 2^{\beta_0(\mathfrak{X})}$.
\item\label{item:numberofbalancedsignfunctions} 
$\lvert \{\sigma\in\{\pm\}^{\upE(\mathfrak{X})}\colon 
(\mathfrak{X},\sigma)\;\mathrm{balanced}\}\rvert = 
2^{f_0(\mathfrak{X}) - \beta_0(\mathfrak{X})}$.
\end{enumerate}
\end{lemma}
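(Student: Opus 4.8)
The plan is to derive all three items from a single elementary device: fix a spanning forest $F$ of $\mathfrak{X}$ (a spanning tree of each connected component), propagate a colouring resp.\ a signing along $F$, and then test the remaining edges against their fundamental circuits. The first step is a purely local reformulation. A colouring $\upc\in\{\pm\}^{\upV(\mathfrak{X})}$ lies in $\Col(\mathfrak{X},\sigma)$ if and only if $\upc(u)\cdot\upc(v) = -\sigma(uv)$ for every edge $uv\in\upE(\mathfrak{X})$ (so the $(\sigma,-)$-edges force $\upc(u)=\upc(v)$ and the $(\sigma,+)$-edges force $\upc(u)\neq\upc(v)$). Since for every circuit $C$ one has $\prod_{uv\in\upE(C)}\upc(u)\upc(v)=\prod_{w\in\upV(C)}\upc(w)^2=1$, the mere existence of such a $\upc$ forces $\prod_{e\in\upE(C)}(-\sigma(e))=1$, i.e.\ $f_1^{(-)}(C,\sigma)\equiv\lvert\upE(C)\rvert\pmod 2$, equivalently an \emph{even number of $(\sigma,+)$-edges in every circuit} — which, up to the customary global sign convention, is exactly the balancedness condition. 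This equivalence is the content of \itm{K{\H{o}}1}; note in passing that its two displayed sides are each always true (the all-$(+)$ signing is balanced, and the all-$(-)$ signing has no $(+)$-edge on any circuit), the substance being the cyclic-parity characterization just stated, which the next step completes.

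For the sufficiency direction, assume every circuit of $\mathfrak{X}$ carries an even number of $(\sigma,+)$-edges. Fix $F$, colour one vertex of each component arbitrarily, and extend along $F$ by the forced rule $\upc(v):=-\sigma(uv)\,\upc(u)$ whenever $uv\in\upE(F)$ and $u$ is already coloured; this is consistent and unique because $F$ contains no circuit. It then remains to check $\upc(u)\upc(v)=-\sigma(uv)$ for the non-forest edges $e=uv$: the fundamental circuit $C_e$ of $e$ with respect to $F$ is a circuit, the local relation already holds on every edge of $C_e$ except possibly $e$, and the product around $C_e$ telescopes to $1$, so the hypothesis $\prod_{f\in\upE(C_e)}(-\sigma(f))=1$ forces the relation to hold on $e$ as well. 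Hence $\upc\in\Col(\mathfrak{X},\sigma)$.

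For \itm{K{\H{o}}2}: if $\Col(\mathfrak{X},\sigma)=\emptyset$ there is nothing to show, so fix some $\upc_0\in\Col(\mathfrak{X},\sigma)$. For any $\upc\in\{\pm\}^{\upV(\mathfrak{X})}$ the pointwise quotient $\upc/\upc_0$ lies in $\Col(\mathfrak{X},\sigma)$ precisely when $(\upc/\upc_0)(u)=(\upc/\upc_0)(v)$ for every edge $uv$ (the two copies of $\sigma(uv)$ cancel), i.e.\ precisely when $\upc/\upc_0$ is constant on each connected component; there are exactly $2^{\beta_0(\mathfrak{X})}$ such functions, and $\upc\mapsto\upc/\upc_0$ restricts to a bijection from $\Col(\mathfrak{X},\sigma)$ onto that set. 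Thus $\lvert\Col(\mathfrak{X},\sigma)\rvert=2^{\beta_0(\mathfrak{X})}$, and the converse implication is immediate since $2^{\beta_0(\mathfrak{X})}\geq 1$.

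For \itm{K{\H{o}}3}: fix a spanning forest $F$, which has exactly $f_0(\mathfrak{X})-\beta_0(\mathfrak{X})$ edges, and I would show that restriction $\sigma\mapsto\sigma\mid_{\upE(F)}$ is a bijection from $S_{\bal}(\mathfrak{X})$ onto $\{\pm\}^{\upE(F)}$. Given $\tau\in\{\pm\}^{\upE(F)}$, balancedness forces each non-forest edge $e$ to receive $\sigma(e):=\prod_{f\in\upE(C_e)\setminus\{e\}}\sigma(f)$ — the unique value making $f_1^{(-)}(C_e,\sigma)$ even — whence at most one balanced extension; conversely one verifies that this candidate is balanced on \emph{every} circuit, and this last verification is the only point that is not entirely routine and is the main obstacle: it is the standard fact that the fundamental circuits of $F$ span the cycle space $\upZ_1(\mathfrak{X};\Z/2)$ (equivalently, that cycle space and cut space are orthogonal complements over $\Z/2$), so that the mod-$2$ count of $(-)$-edges, being additive along symmetric differences of circuits on which it vanishes, vanishes on all circuits once it vanishes on the fundamental ones. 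Hence $\lvert S_{\bal}(\mathfrak{X})\rvert=\lvert\{\pm\}^{\upE(F)}\rvert=2^{f_0(\mathfrak{X})-\beta_0(\mathfrak{X})}$. Finally, nothing above depends on whether $\mathfrak{X}$ is labelled or unlabelled, since $\beta_0$, $f_0$, the notion of circuit, and all the counted sets are isomorphism-invariant.
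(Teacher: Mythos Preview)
The paper's own ``proof'' consists entirely of citations to K\H{o}nig and Harary; you have supplied a genuine self-contained argument via a spanning forest and fundamental circuits, which is more informative and is essentially the argument behind those references.

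Your proofs of \ref{item:cardinalityofsetofbconstantrproper2colorings} and \ref{item:numberofbalancedsignfunctions} are correct. For \ref{item:numberofbalancedsignfunctions}, the appeal to the fact that fundamental circuits span $\upZ_1(\mathfrak{X};\Z/2)$ is exactly what is needed; the paper later (Lemma~\ref{lem:setofbalancedsigningsdeterminedbysigningsofarbitraryspanningtree}) sketches the same bijection between balanced signings and signings of a spanning tree.

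There is one subtlety in your treatment of \ref{item:equivalenceofexistenceofbconstantrpropervertex2coloringandcyclicallyrevenness}. The printed statement is awkward (the left-hand side $S_{\bal}(\mathfrak{X})\neq\emptyset$ does not mention $\sigma$ while the right-hand side does), and you rightly flag this; the paper's actual \emph{use} of the item, e.g.\ in Lemma~\ref{lem:graphtheoreticalcharacterizationofchiorealizability}, is as ``$(\mathfrak{X},\sigma)$ balanced $\Leftrightarrow \Col(\mathfrak{X},\sigma)\neq\emptyset$'' for a fixed $\sigma$. Your local identity $\upc(u)\upc(v)=-\sigma(uv)$ is correct, and the cyclic condition you derive from it --- an even number of $(\sigma,+)$-edges on every circuit --- is the correct characterization of $\Col(\mathfrak{X},\sigma)\neq\emptyset$. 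However, the paper's definition of \emph{balanced} requires an even number of $(\sigma,-)$-edges on every circuit, the opposite parity. Your phrase ``up to the customary global sign convention'' glosses over a genuine discrepancy: the two parities agree precisely when every circuit has even length, i.e.\ when $\mathfrak{X}$ is bipartite. Since every graph $\upX_B$ to which the paper applies the lemma is bipartite by construction, your argument suffices for all applications here; but in the generality stated (arbitrary $\mathfrak{X}$) the intended equivalence actually fails --- $K_3$ with all edges $(+)$ is balanced in the paper's sense yet has $\Col=\emptyset$ --- so this is a flaw in the paper's formulation (the definitions of $\Col$ and ``balanced'' are mismatched by a global sign) rather than in your reasoning.
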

\begin{proof}
Modulo terminology a proof for \ref{item:equivalenceofexistenceofbconstantrpropervertex2coloringandcyclicallyrevenness} can be found in \cite[p. 152, Satz 11]{MR0036989} 
(for the definition of `$p$-Teilgraph' cf. \cite[p. 149, Paragraph 3]{MR0036989}). 
Statement \ref{item:equivalenceofexistenceofbconstantrpropervertex2coloringandcyclicallyrevenness} is also proved in \cite[Theorem 3]{MR0067468}. 
Statement \ref{item:cardinalityofsetofbconstantrproper2colorings} is implicit 
in the proof of \cite[p. 152, Satz 14]{MR0036989} and can easily be proved 
directly by induction on $\lvert E \rvert$. For a proof of 
\ref{item:numberofbalancedsignfunctions} cf. e.g. \cite[p. 152, Satz 14]{MR0036989}. 
\end{proof}

While for a given graph $X = (V,E)$ and a given sign function 
$\sigma\colon E\rightarrow \{\pm\}$, the decision problem of whether $(X,\sigma)$ 
balanced is trivially in co-NP, the less obvious fact that it is also in NP 
follows from \ref{item:equivalenceofexistenceofbconstantrpropervertex2coloringandcyclicallyrevenness}: any ($\sigma$, $-$)-constant, ($\sigma$, $+$)-proper 
vertex-$2$-colouring $\upc\colon V \rightarrow \{\pm\}$ is a polynomially-sized 
certificate for $(X,\sigma)$ being balanced. However, the problem is not only in the 
intersection of these two classes but easily seen to be in P:

\begin{corollary}\label{cor:decidingwhetherecXiscyclicallyreven}
For every graph $X = (V,E)$ and every sign function $\sigma\colon E \rightarrow \{\pm\}$, the decision problem whether $(X,\sigma)$ is balanced can be solved in 
time $O( f_0(X) + f_1(X))$.
\end{corollary}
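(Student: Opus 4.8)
The plan is to reduce the balancedness decision to a single connected-component-by-component traversal, exploiting the characterization in Lemma~\ref{lem:equivalenceofexistenceofbconstantrpropervertex2coloringandcyclicallyrevenness}\ref{item:equivalenceofexistenceofbconstantrpropervertex2coloringandcyclicallyrevenness}, which tells us that $(X,\sigma)$ is balanced if and only if there exists a $(\sigma,-)$-constant, $(\sigma,+)$-proper vertex $2$-colouring $\upc\in\{\pm\}^V$. So instead of inspecting all circuits of $X$ (of which there may be exponentially many) we only have to \emph{attempt to construct} one such $\upc$, and report ``balanced'' if and only if the attempt succeeds.

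The algorithm I would describe: first compute the connected components of $X$ in time $O(f_0(X)+f_1(X))$ by a standard breadth-first or depth-first search over the adjacency-list representation (which is linear in the number of vertices and edges). Then process each component independently. Within a component, pick an arbitrary root $v_0$, set $\upc(v_0):=+$, and run a single graph search (again BFS or DFS): whenever the search traverses an edge $e=uv$ from an already-coloured vertex $u$ to a vertex $v$, if $v$ is uncoloured, set $\upc(v):=\upc(u)$ when $\sigma(e)=-$ and $\upc(v):=-\upc(u)$ when $\sigma(e)=+$; if $v$ is already coloured, check that the relation between $\upc(u)$ and $\upc(v)$ demanded by $\sigma(e)$ holds, and if it fails, halt and output ``not balanced''. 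If all components are processed without a conflict, output ``balanced''. Each edge is examined a bounded number of times, so the total running time over all components is $O(f_0(X)+f_1(X))$ as claimed.

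For correctness I would argue two directions. If the algorithm outputs ``balanced'' it has produced a function $\upc\in\{\pm\}^V$ that, by construction, is $(\sigma,-)$-constant and $(\sigma,+)$-proper on every edge (the tree edges satisfy the constraint by the way $\upc$ was assigned, and the non-tree edges were explicitly verified), so $\Col(X,\sigma)\neq\emptyset$ and hence $(X,\sigma)$ is balanced by \ref{item:equivalenceofexistenceofbconstantrpropervertex2coloringandcyclicallyrevenness}. Conversely, if $(X,\sigma)$ is balanced, then such a $\upc$ exists; restricted to the component containing $v_0$ it is, up to the global sign flip fixing $\upc(v_0)=+$, forced on every vertex by the colour of the root together with the sign pattern along any path from the root (this is exactly the ``spanning tree determines the colouring'' phenomenon underlying \ref{item:cardinalityofsetofbconstantrproper2colorings}), so the algorithm's assignment agrees with a genuine valid colouring and no conflict can arise at a non-tree edge. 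Hence the algorithm never falsely reports ``not balanced''.

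The only mildly delicate point is the implicit assumption about the input encoding: the $O(f_0(X)+f_1(X))$ bound presumes an adjacency-list style representation in which one can enumerate the edges and their endpoints in time linear in their number; with an adjacency-matrix encoding the cost would be $O(f_0(X)^2)$ instead. I would state this representation assumption explicitly (it is the standard one in this context) and otherwise regard the argument as routine — the substantive content is entirely carried by Lemma~\ref{lem:equivalenceofexistenceofbconstantrpropervertex2coloringandcyclicallyrevenness}, and the remaining work is the familiar observation that $2$-colouring-type constraint propagation along a spanning forest is a linear-time procedure.
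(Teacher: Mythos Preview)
Your proposal is correct and follows essentially the same approach as the paper: both invoke \ref{item:equivalenceofexistenceofbconstantrpropervertex2coloringandcyclicallyrevenness} to reduce balancedness to the existence of a $(\sigma,-)$-constant, $(\sigma,+)$-proper vertex $2$-colouring, and both observe that a greedy graph-search (DFS/BFS) finds such a colouring exactly when one exists, in time $O(f_0(X)+f_1(X))$. Your version is simply more explicit where the paper writes ``it is easy to see''; the added remark about the adjacency-list assumption is a reasonable clarification but not something the paper makes explicit.
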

\begin{proof}
By \ref{item:equivalenceofexistenceofbconstantrpropervertex2coloringandcyclicallyrevenness}, the question is equivalent to whether there exists a a $(-)$-constant, 
$(+)$-proper vertex-$2$-colouring $\upc\colon V\rightarrow \{\pm\}$. It is easy to 
see that an obvious greedy algorithm via a (e.g.) depth-first search on $X$ succeeds 
in finding such a colouring if and only if such a colouring exists. Moreover, the 
algorithm requires time $O(f_0(X) + f_1( X ))$. 
\end{proof}

The following simple lemma encapsulates a basic mechanism linking  Chio 
condensation with the auxiliary graph-theoretical viewpoint. For want of topologies 
on source or target, `$k$-fold cover' is nothing but shorthand for `surjective map 
each of whose fibres has cardinality $k$'. 

\begin{lemma}\label{lem:theparametrizedcover}
For every $(s,t)\in\Z_{\geq 2}^2$, arbitrary  
$\emptyset \subseteq I \subseteq J \subseteq [s-1]\times [t-1]$, every 
$B\in \{0,\pm\}^I$,  and with $h(I,J) := \lvert \breve{J} \rvert - \lvert  I \rvert 
- \lvert \upp_1(I)\rvert - \lvert \upp_2(I)\rvert 
\in \Z_{\geq 1}$, there exists an $2^{h(I,J)}$-fold cover 
\begin{equation}
\Phi\colon (\tfrac12\upC_{(s,t)}^{\breve{J}})^{-1}(\mathcal{E}_{B}^J) \longrightarrow 
\Col(\upX_B,\sigma_B)\quad .
\end{equation}
\end{lemma}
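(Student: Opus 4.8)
The plan is to describe the claimed cover $\Phi$ explicitly and then verify surjectivity and the uniform fibre size. First I would unwind what a preimage $A \in (\tfrac12\upC_{(s,t)}^{\breve J})^{-1}(\mathcal{E}_B^J)$ actually is: it is an $A \in \{\pm\}^{\breve J}$ such that the Chio condensate $\tfrac12\upC_{(s,t)}(A)$, which lives in $\{0,\pm\}^J$, restricts on $\Dom(B)=I$ to $B$. The key pointwise fact is that for $(i,j)\in J$ one has $\tfrac12\det\begin{smallmatrix} a_{i,j} & a_{i,t}\\ a_{s,j} & a_{s,t}\end{smallmatrix} = \tfrac12(a_{i,j}a_{s,t} - a_{i,t}a_{s,j})$, and since all four entries are in $\{\pm\}$ this is $0$ iff $a_{i,j}a_{s,t} = a_{i,t}a_{s,j}$, and otherwise $\pm 1$ with sign $a_{i,j}a_{s,t}$ (equivalently $-a_{i,t}a_{s,j}$). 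So: the $(i,j)$-entry of the condensate is $0$ iff $a_{i,j}a_{s,t}a_{i,t}a_{s,j} = 1$, i.e. the product of the four entries on the ``$2\times 2$ pattern'' is $+1$; and when nonzero, its value is $a_{i,j}a_{s,t}$.

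**Definition of $\Phi$ and the map to colourings.**

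Given such an $A$, define $\upc = \Phi(A)$ on $\upV(\upX_B) = \breve I \setminus I \setminus\{(s,t)\}$ by simply reading off the relevant border entries of $A$, normalized by the pivot: set $\upc((i,t)) := a_{i,t}\cdot a_{s,t}$ for $(i,t)\in\upp_1$-border and $\upc((s,j)) := a_{s,j}\cdot a_{s,t}$ for $(s,j)\in\upp_2$-border. (The normalization by $a_{s,t}$ is the natural ``switching to make the pivot $+1$'' move.) Then for an edge $\{(i,t),(s,j)\}\in\upE(\upX_B)$, i.e. $(i,j)\in\Supp(B)$, we compute $\upc((i,t))\upc((s,j)) = a_{i,t}a_{s,j}$, and the Chio condition forces $\tfrac12\upC_{(s,t)}(A)[(i,j)] = B[(i,j)]$, so by the pointwise fact $a_{i,j}a_{s,t} = B[(i,j)] \neq 0$ and $a_{i,t}a_{s,j} = -a_{i,j}a_{s,t} = -B[(i,j)] = -\sigma_B(\{(i,t),(s,j)\})$. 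Hence $\upc$ is $(\sigma_B,-)$-constant and $(\sigma_B,+)$-proper up to a global sign convention — I would fix the sign convention in the definition of $\sigma_B$ (or of $\Col$) so this comes out exactly as $\upc((i,t))\neq\upc((s,j))$ when $\sigma_B = +$ and $=$ when $\sigma_B = -$; the bookkeeping sign $-1$ is harmless and I would absorb it cleanly rather than carry it. So $\Phi(A)\in\Col(\upX_B,\sigma_B)$ is well-defined.

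**Surjectivity and fibre size.**

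For surjectivity: given $\upc\in\Col(\upX_B,\sigma_B)$, I build an $A$. The pivot $a_{s,t}$ is a free binary choice; fix it (say $+1$), then set the border entries $a_{i,t} := \upc((i,t))a_{s,t}$ and $a_{s,j}:=\upc((s,j))a_{s,t}$ on vertices of $\upX_B$, which are exactly the positions $\breve I \setminus I \setminus\{(s,t)\}$; on $(i,j)\in I$ set $a_{i,j}$ to the unique $\{\pm\}$ value making $\tfrac12\det = B[(i,j)]$ — this is forced and possible because, when $B[(i,j)]\neq 0$, the pointwise fact gives $a_{i,j} = B[(i,j)]a_{s,t}$, and when $B[(i,j)] = 0$, we need the four-product $=+1$, i.e. $a_{i,j} = a_{i,t}a_{s,j}a_{s,t} = a_{i,t}a_{s,j}/a_{s,t}$, again forced — crucially the colouring property of $\upc$ on the support positions (= edges of $\upX_B$) guarantees that for $(i,j)\in\Supp(B)$ the forced value is consistent, and for $(i,j)\in I\setminus\Supp(B)$ there is no constraint from $\upX_B$ since those are non-edges, but the defining equation still determines $a_{i,j}$ uniquely. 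The remaining entries of $A$ are those in positions $\breve J \setminus \breve I \setminus (\text{the $I$-positions already fixed})$ together with the pivot choice; I would count these to see there are exactly $h(I,J) = \lvert\breve J\rvert - \lvert I\rvert - \lvert\upp_1(I)\rvert - \lvert\upp_2(I)\rvert$ of them, and each is a free $\{\pm\}$ choice: the positions in $J\setminus I$ feed into condensate entries lying outside $\Dom(B)$, so $\mathcal{E}_B^J$ imposes nothing there, and the border positions $\breve J\setminus\breve I$ beyond those already pinned by $\upc$ are likewise unconstrained (and indeed one checks each such free entry, once chosen, forces a corresponding $J\setminus I$ entry, or vice versa, but the net number of free binary parameters is $h(I,J)$). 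Hence every fibre $\Phi^{-1}(\upc)$ has exactly $2^{h(I,J)}$ elements, as claimed, and $h(I,J)\geq 1$ since the pivot alone is always free.

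**Main obstacle.**

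The routine part is the pointwise $2\times 2$ determinant fact; the delicate part — the step I expect to cost the most care — is the exact counting of free binary parameters to land on $h(I,J)$ and not be off by the sizes of $\upp_1(I)$, $\upp_2(I)$ versus the full border $\upp_1(J)$, $\upp_2(J)$. One must partition $\breve J$ carefully into: the pivot ($1$ position); the $I$-positions ($\lvert I\rvert$, all forced); the ``$\upX_B$-border'' $\breve I\setminus I\setminus\{(s,t)\}$ of size $\lvert\upp_1(I)\rvert + \lvert\upp_2(I)\rvert$ (pinned by $\upc$); and everything else, which has size $\lvert\breve J\rvert - 1 - \lvert I\rvert - \lvert\upp_1(I)\rvert - \lvert\upp_2(I)\rvert = h(I,J) - 1$ — then add back the $1$ free pivot choice to get $2^{h(I,J)}$. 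I would present this as a clean disjoint-union count of $\breve J$, justify that the last block's entries are genuinely free (each lies either in a ``new row/column'' of $J\setminus I$ whose condensate entries fall outside $\Dom(B)$, or is a border entry outside the $\upX_B$-vertex set), and conclude.
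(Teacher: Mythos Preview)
Your proposal is correct and follows essentially the same route as the paper: define $\Phi$ by reading off the border entries of $A$, verify the colouring property via the pointwise $2\times 2$ analysis, and count free parameters by partitioning $\breve J$. Two remarks. First, your normalization by $a_{s,t}$ is unnecessary: the paper takes $\Phi(A)((i,t)) := a_{i,t}$ and $\Phi(A)((s,j)) := a_{s,j}$ directly, and this already lands in $\Col(\upX_B,\sigma_B)$ (the fibre-doubling from the pivot still works because flipping $a_{s,t}$ forces all the $I$-indexed entries to flip while leaving the border values intact). Second, your sign worry is a false alarm: you computed $\upc((i,t))\upc((s,j)) = -\sigma_B(e)$, but this is exactly what you want, since $\upc(u)\upc(v)=-1$ means $\upc(u)\neq\upc(v)$ (proper on $(+)$-edges) and $\upc(u)\upc(v)=+1$ means $\upc(u)=\upc(v)$ (constant on $(-)$-edges). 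There is nothing to absorb.
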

\begin{proof}
Let $s$, $t$, $I$, $J$ and $B = (b_{i,j})_{(i,j)\in I}$ be given as stated. The claim 
$h(I,J)\in \Z_{\geq 1}$ is true since Definition \ref{def:chioextensionofaset} 
implies $\lvert \breve{J} \rvert = 
1 + \lvert \upp_1(J)\rvert + \lvert \upp_2(J)\rvert + \lvert J \rvert$
and because $J\supseteq I$ implies $\lvert J \rvert \geq \lvert I \rvert$, 
$\lvert \upp_1(J) \rvert\geq\lvert\upp_1(I)\rvert$ 
and $\lvert\upp_2(J)\rvert\geq \lvert\upp_1(I)\rvert$. 

If $\Col(\upX_B,\sigma_B) = \emptyset$, the statement of the lemma is vacuously 
true (there not being any point of the target for which the definition of a 
covering would have to hold). We therefore can assume 
that $\Col(\upX_B,\sigma_B)\neq \emptyset$. We now first 
show that this implies that $(\tfrac12\upC_{(s,t)}^{\breve{J}})^{-1}(\mathcal{E}_B^J) 
\neq \emptyset$. Then we will construct a cover of the stated kind. 

To prove $(\tfrac12\upC_{(s,t)}^{\breve{J}})^{-1}(\mathcal{E}_B^J) \neq \emptyset$, 
choose an arbitrary  $\upc\in \Col(\upX_B,\sigma_B)$ and define 
$A = (a_{i,j})\in\{\pm\}^{\breve{J}}$ by $a_{s,t} := +$, 
$a_{i,t} := \upc((i,t))$ for every $i\in\upp_1(J)$, 
$a_{s,j} := \upc((s,j))$ for every $j\in\upp_2(J)$. Moreover, 
for every $(i,j)\in J$ let $a_{i,j} := b_{i,j}$ if $b_{i,j}\neq 0$ and 
$a_{i,j}:=\upc((i,t))\cdot\upc((s,j))$ if $b_{i,j} = 0$. We now show that 
$\tfrac12\upC_{(s,t)}^{\breve{J}}(A)\mid_{\Dom(B)} = B$. Let $(i,j)\in I = \Dom(B)$ be 
arbitrary. If $b_{i,j}=0$, then $\tfrac12\upC_{(s,t)}^{\breve{J}}(A)[i,j] = 
\tfrac12( a_{i,j} a_{s,t} - a_{i,t} a_{s,j} ) 
= \tfrac12\bigl ( a_{i,j} - \upc((i,t)) \upc((s,j)) \bigr ) = 
\tfrac12\bigl(\upc((i,t)) \upc((s,j)) - \upc((i,t)) \upc((s,j)) \bigr) 
= 0 = b_{i,j}$. If $b_{i,j}\neq 0$, then 
$\tfrac12\upC_{(s,t)}^{\breve{J}}(A)[i,j] = \tfrac12( a_{i,j} a_{s,t} - a_{i,t} a_{s,j} ) 
= \tfrac12\bigl (b_{i,j} - \upc((i,t)) \upc((s,j)) \bigr )$. 
Now if $b_{i,j} = -$, then $\upc\in\Col(\upX_B,\sigma_B)$ implies 
$\upc((i,t)) = \upc((s,j))$, hence 
$\tfrac12\upC_{(s,t)}^{\breve{J}}(A)[i,j] = \tfrac12((-) - (+)) = - = b_{i,j}$, 
and if $b_{i,j} = +$, then $\upc\in\Col(\upX_B,\sigma_B)$ implies 
$\upc((i,t)) \neq \upc((s,j))$, hence 
$\tfrac12\upC_{(s,t)}^{\breve{J}}(A)[i,j] = \tfrac12((+) - (-)) = + = b_{i,j}$. 

We have proved that $A\in(\tfrac12\upC_{(s,t)}^{\breve{J}})^{-1}(\mathcal{E}_B^J)$, and 
therefore $(\tfrac12\upC_{(s,t)}^{\breve{J}})^{-1}(\mathcal{E}_B^J) \neq \emptyset$. 
We  can therefore define a nonempty map 
$\Phi\colon (\tfrac12\upC_{(s,t)}^{\breve{J}})^{-1}(\mathcal{E}_B^J) 
\rightarrow \Col(\upX_B,\sigma_B)$ as follows: for every 
$A$ $=$ $(a_{i,j})_{(i,j)\in\breve{J}}$ $\in$ 
$(\tfrac12\upC_{(s,t)}^{\breve{J}})^{-1}(\mathcal{E}_B^J) 
\subseteq \{\pm\}^{\breve{J}}$ we let $\Phi(A)$ be the function 
$\upV(X_B) \rightarrow \{\pm\}$ defined by $\Phi(A)((i,t)) := a_{i,t}$ 
and $\Phi(A)((s,j)) := a_{s,j}$ for all $i\in\upp_1(I)$ and $j\in \upp_2(I)$.

Claim 1. $\Phi$ is indeed a map of the stated kind, i.e. 
$\Phi(A)\in \Col(\upX_B,\sigma_B)$. Proof: Let $\{ (i,t),(s,j) \} \in \upE(X_B)$ be 
arbitrary. There are two cases. If $\sigma_B(\{ (i,t),(s,j) \}) = -$ then 
$b_{i,j} = -$ by Definition \ref{def:XBandecXB}. Moreover, since 
$(\tfrac12\upC_{(s,t)}^{\breve{J}}(A))\rvert_{\Dom(B)}= B$ by the choice of $A$, it 
follows that for every $(i,j)\in I\subseteq J$ we have 
$- = b_{i,j} = (\tfrac12\upC_{(s,t)}^{\breve{J}}(A))[i,j] 
= \tfrac12\cdot (a_{i,j}a_{s,t} - a_{i,t}a_{s,j})$. In view of 
$a_{i,j},\ a_{s,t},\ a_{i,t},\ a_{s,j}\in \{\pm\}$, this equation 
implies $\Phi(A)((i,t)) = a_{i,t} = a_{s,j} = \Phi(A)((s,j))$. This 
proves that $\Phi(A)$ is ($\sigma_B$, $-$)-constant. If  
$\sigma_B(\{(i,t),(s,j)\}) = +$ then $b_{i,j} = +$ by Definition 
\ref{def:XBandecXB}. Again by the choice of $A$, it is true 
that $+ = b_{i,j} = (\tfrac12\upC_{(s,t)}^{\breve{J}}(A))[i,j] = 
\tfrac12\cdot (a_{i,j}a_{s,t} - a_{i,t}a_{s,j})$. Again in view of 
$a_{i,j},\ a_{s,t},\ a_{i,t},\ a_{s,j}\in \{\pm\}$, this equation 
implies $\Phi(A)((i,t)) = a_{i,t} \neq a_{s,j} = \Phi(A)((s,j))$. This proves 
that $\Phi(A)$ is ($\sigma_B$, $+$)-proper and hence Claim 1.

Claim 2. $\Phi$ is surjective and every fibre under $\Phi$ has cardinality 
$2^{h(I,J)}$. Proof: Let an arbitrary $\upc \in \Col(\upX_B,\sigma_B)$ be 
given. We are now looking for those 
$A\in (\tfrac12\upC_{(s,t)}^{\breve{J}})^{-1}(\mathcal{E}_B^J)$ 
with $\Phi(A) = \upc$. Since the definition of $\Phi$ demands  
$\Phi(A)((i,t)) = a_{i,t}$ and $\Phi(A)((s,j)) =  a_{s,j}$ 
for all $(i,t)$ and $(s,j)$ $\in \upV(X_B)$, it follows that with regard to the 
$\lvert \upp_1(I) \rvert + \lvert \upp_2(I) \rvert$ different entries $a_{i,j}$
with $(i,j)\in \upV(\upX_{B})$ we know from the outset that we have no choice 
but to define $a_{i,t} := \upc((i,t))$ and $a_{s,j} := \upc((s,j))$. 

Furthermore, since $A$ must be in 
$(\tfrac12\upC_{(s,t)}^{\breve{J}})^{-1}(\mathcal{E}_B^J)$, there is, 
for every $(i,j)\in I\subseteq J$, the condition that 
$b_{i,j} = (\tfrac12\upC_{(s,t)}^{\breve{J}}(A))[i,j] = 
\tfrac12\cdot (\upC_{(s,t)}^{\breve{J}}(A)[i,j]) = 
\tfrac12\cdot( a_{i,j}a_{s,t} - a_{i,t}a_{s,j} ) = 
\tfrac12( a_{i,j}a_{s,t} - \upc((i,t))\ \upc((s,j)) )$, 
where in the last step we have used the information about $A$ that we 
already have. Now there are three cases that can occur. 

\textsl{Case 1.} $b_{i,j} = -$. Then by Definition \ref{def:XBandecXB} we have 
$\{(i,t),(s,j)\} \in \upE(X_B)$ and $\sigma_B(\{(i,t),(s,j)\}) = -$. Therefore, 
due to the fact that $\upc \in \Col(\upX_B,\sigma_B)$ is 
($\sigma_B$, $-$)-constant, $\upc((i,t)) = \upc((s,j))$. 
Thus, in this case, $- = \tfrac12( a_{i,j}a_{s,t} - 1 )$, 
equivalently, $a_{i,j} = - a_{s,t}$.

\textsl{Case 2.} $b_{i,j} = 0$. Then by Definition \ref{def:XBandecXB}, 
$\{(i,t),(s,j)\} \notin \upE(X_B)$, hence $\sigma_B(\{(i,t),(s,j)\})$ is 
not defined  and therefore the product $\upc((i,t))\cdot \upc((s,j))$ in the 
equation $0 = \tfrac12\cdot( a_{i,j}a_{s,t} - \upc((i,t))\cdot \upc((s,j)) )$
cannot be simplified further, but the equation itself can: it is equivalent 
to $a_{i,j} = \upc((i,t))\cdot \upc((s,j))\cdot a_{s,t} \in \{\pm\}$ 
(where we used that $a_{s,t}^{-1} = a_{s,t}$). 

\textsl{Case 3.} $b_{i,j} = +$. Then an entirely analogous argument as in 
Case 1, but this time using the ($\sigma_B$, $+$)-properness of $\upc$, shows 
that then there is the equation $a_{i,j} = a_{s,t}$.

We now know what it means to require 
$A\in (\tfrac12\upC_{(s,t)}^{\breve{J}})^{-1}(\mathcal{E}_B^J)$ in the present situation: 
among the $\lvert J \rvert$ entries of $A = (a_{i,j}) \in \{\pm\}^J$, there are the 
$\lvert \upp_1(I) \rvert + \lvert \upp_2(I) \rvert$ `immediately determined' entries 
$a_{i,j}$ which have ($i\in \upp_1(I)$ and $j=t$) or ($i=s$ and $j\in \upp_2(I)$), 
and moreover the $\lvert I \rvert$ different entries $a_{i,j}$ with $(i,j)\in I$ 
which are determined by a system $ \{ a_{i,j} = h_{i,j} \colon (i,j)\in I\}$ 
of $\lvert I\rvert$ equations where the right-hand sides $h_{i,j}$ are defined by 
the Cases 1-3 above. For the remaining $h(I,J)$ different 
entries $a_{i,j}\in\{\pm\}$ (note that the pivot $a_{s,t}$ is among them since it 
is on the right-hand side in Case 2, hence not determined by the system), the choice 
of their value is free; any of the $2^{h(I,J)}$ possible choices gives an 
$A\in (\tfrac12\upC_{(s,t)}^{\breve{J}})^{-1}(\mathcal{E}_B^J)$. This proves that the 
cardinality of the fibre $\Phi^{-1}(\upc)$ is indeed $2^{h(I,J)}$, and in particular 
that $\Phi$ is surjective. Now Claim 2 and Lemma \ref{lem:theparametrizedcover} are 
proved.  
\end{proof}
Note that in the special case $I=J$, i.e. when all entries are specified, then
$h(I,J)=1$  and the statement says that there is a 
double cover 
$\Phi\colon (\tfrac12\upC_{(s,t)}^{\breve{J}})^{-1}(\{B\}) \rightarrow 
\Col(\upX_B,\sigma_B)$. This corresponds to the freedom of choosing the sign 
of the pivot. Now we can relate Chio condensation to balancedness: 

\begin{lemma}\label{lem:graphtheoreticalcharacterizationofchiorealizability}
For every $(s,t)\in \Z_{\geq 2}^2$, every 
$\emptyset \subseteq I\subseteq J\subseteq [s-1]\times [t-1]$  
and every $B\in \{0,\pm\}^I$, the following statements are equivalent:

\begin{minipage}[b]{0.35\linewidth}
\begin{enumerate}[label={\rm(\arabic{*})}]
\item\label{lem:prop:associatededge2coloringofBiscyclicallyreven} 
$(\upX_B,\sigma_B)$ is balanced\quad , 
\item\label{lem:prop:thereexistsbconstantrpropervertex2coloring} 
$\Col(\upX_B,\sigma_B)\neq \emptyset$ \quad ,
\end{enumerate}
\end{minipage}
\begin{minipage}[b]{0.65\linewidth}
\begin{enumerate}[label={\rm(\arabic{*})},start=3]
\item\label{lem:prop:BisChiorealizable} 
$(\tfrac12\upC_{(s,t)}^{\breve{J}})^{-1}(\mathcal{E}_B^J) \neq \emptyset$ \quad ,
\item\label{lem:prop:cardinalityofinverseimage} 
$\bigl \lvert (\tfrac12\upC_{(s,t)}^{\breve{J}})^{-1}(\mathcal{E}_B^J) 
\bigr \rvert = 2^{ \lvert \breve{J} \rvert - \dom(B) - f_0(\upX_B) + \beta_0(\upX_B) }$ \quad .
\end{enumerate}
\end{minipage}

\end{lemma}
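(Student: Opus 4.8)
The plan is to reduce everything to two already-available ingredients: the fibre-counting cover of Lemma~\ref{lem:theparametrizedcover} and König's Lemma~\ref{lem:equivalenceofexistenceofbconstantrpropervertex2coloringandcyclicallyrevenness}. Writing $D:=(\tfrac12\upC_{(s,t)}^{\breve J})^{-1}(\mathcal{E}_B^J)$, the first step is to record the unconditional identity $\lvert D\rvert = 2^{h(I,J)}\cdot\lvert\Col(\upX_B,\sigma_B)\rvert$: indeed Lemma~\ref{lem:theparametrizedcover} (applied with these $I$, $J$, $B$) furnishes a surjection $\Phi\colon D\to\Col(\upX_B,\sigma_B)$ all of whose fibres have cardinality $2^{h(I,J)}$, so partitioning $D$ into fibres gives the identity when $\Col(\upX_B,\sigma_B)\neq\emptyset$, and when $\Col(\upX_B,\sigma_B)=\emptyset$ the domain $D$ of $\Phi$ is necessarily empty (a function into $\emptyset$), so both sides vanish. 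With this in hand I would close the loop $\ref{lem:prop:associatededge2coloringofBiscyclicallyreven}\Leftrightarrow\ref{lem:prop:thereexistsbconstantrpropervertex2coloring}\Rightarrow\ref{lem:prop:cardinalityofinverseimage}\Rightarrow\ref{lem:prop:BisChiorealizable}\Rightarrow\ref{lem:prop:thereexistsbconstantrpropervertex2coloring}$.

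Concretely: $\ref{lem:prop:associatededge2coloringofBiscyclicallyreven}\Leftrightarrow\ref{lem:prop:thereexistsbconstantrpropervertex2coloring}$ is item~\ref{item:equivalenceofexistenceofbconstantrpropervertex2coloringandcyclicallyrevenness} of Lemma~\ref{lem:equivalenceofexistenceofbconstantrpropervertex2coloringandcyclicallyrevenness} for the labelled signed graph $(\upX_B,\sigma_B)$. For $\ref{lem:prop:thereexistsbconstantrpropervertex2coloring}\Rightarrow\ref{lem:prop:cardinalityofinverseimage}$, item~\ref{item:cardinalityofsetofbconstantrproper2colorings} of the same lemma gives $\lvert\Col(\upX_B,\sigma_B)\rvert=2^{\beta_0(\upX_B)}$, so the displayed identity yields $\lvert D\rvert=2^{h(I,J)+\beta_0(\upX_B)}$, and it remains only to check that this exponent equals $\lvert\breve J\rvert-\dom(B)-f_0(\upX_B)+\beta_0(\upX_B)$, i.e.\ that $h(I,J)=\lvert\breve J\rvert-\dom(B)-f_0(\upX_B)$. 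Since $h(I,J)=\lvert\breve J\rvert-\lvert I\rvert-\lvert\upp_1(I)\rvert-\lvert\upp_2(I)\rvert$ by definition and $\dom(B)=\lvert I\rvert$, this comes down to the combinatorial identity $f_0(\upX_B)=\lvert\upp_1(I)\rvert+\lvert\upp_2(I)\rvert$, which is read off Definition~\ref{def:XBandecXB}: one has $\upV(\upX_B)=\breve I\setminus I\setminus\{(s,t)\}$, while the disjoint union \eqref{eq:definitionofchiodom} shows $\lvert\breve I\rvert=1+\lvert\upp_1(I)\rvert+\lvert\upp_2(I)\rvert+\lvert I\rvert$, so $f_0(\upX_B)=\lvert\breve I\rvert-\lvert I\rvert-1=\lvert\upp_1(I)\rvert+\lvert\upp_2(I)\rvert$. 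Finally $\ref{lem:prop:cardinalityofinverseimage}\Rightarrow\ref{lem:prop:BisChiorealizable}$ is free because $h(I,J)\geq 1$ forces the right-hand side of \ref{lem:prop:cardinalityofinverseimage} to be at least $2$, whence $\lvert D\rvert>0$; and $\ref{lem:prop:BisChiorealizable}\Rightarrow\ref{lem:prop:thereexistsbconstantrpropervertex2coloring}$ because a nonempty set cannot surject onto $\emptyset$, so $D\neq\emptyset$ forces $\Col(\upX_B,\sigma_B)\neq\emptyset$ via $\Phi$.

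The hard part — such as it is — is really just the exponent bookkeeping in the previous paragraph together with keeping honest track of the degenerate case $\Col(\upX_B,\sigma_B)=\emptyset$; there one should note that all four statements are then false and the list is internally consistent: \ref{lem:prop:associatededge2coloringofBiscyclicallyreven} fails by item~\ref{item:equivalenceofexistenceofbconstantrpropervertex2coloringandcyclicallyrevenness} of Lemma~\ref{lem:equivalenceofexistenceofbconstantrpropervertex2coloringandcyclicallyrevenness}, \ref{lem:prop:thereexistsbconstantrpropervertex2coloring} by assumption, \ref{lem:prop:BisChiorealizable} because $D=\emptyset$ as noted, and \ref{lem:prop:cardinalityofinverseimage} because a power of $2$ with nonnegative exponent is never $0=\lvert D\rvert$. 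No genuinely new idea beyond Lemmas~\ref{lem:theparametrizedcover} and \ref{lem:equivalenceofexistenceofbconstantrpropervertex2coloringandcyclicallyrevenness} is needed.
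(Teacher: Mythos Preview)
Your proof is correct and follows essentially the same approach as the paper's: both reduce everything to Lemma~\ref{lem:theparametrizedcover} for the fibre count and Lemma~\ref{lem:equivalenceofexistenceofbconstantrpropervertex2coloringandcyclicallyrevenness} for the balance/colouring equivalence and the value $\lvert\Col(\upX_B,\sigma_B)\rvert=2^{\beta_0(\upX_B)}$. The only cosmetic difference is that you close a single implication loop and spell out the identity $f_0(\upX_B)=\lvert\upp_1(I)\rvert+\lvert\upp_2(I)\rvert$ explicitly, whereas the paper proves the equivalences pairwise and absorbs that identity into the exponent without comment.
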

\begin{proof}
Equivalence \ref{lem:prop:associatededge2coloringofBiscyclicallyreven} 
$\Leftrightarrow$ \ref{lem:prop:thereexistsbconstantrpropervertex2coloring} 
is true by \ref{item:equivalenceofexistenceofbconstantrpropervertex2coloringandcyclicallyrevenness} with $X:=\upX_B$. Equivalence 
\ref{lem:prop:thereexistsbconstantrpropervertex2coloring} 
$\Leftrightarrow$ \ref{lem:prop:BisChiorealizable} is an immediate consequence of 
Lemma \ref{lem:theparametrizedcover} (non-emptiness of the target of a surjective 
map implies non-emptiness of its source; non-emptiness of the source 
of \emph{any} map implies non-emptiness of its target). 
As to \ref{lem:prop:BisChiorealizable} 
$\Leftrightarrow$ \ref{lem:prop:cardinalityofinverseimage}, note that 
by Lemma \ref{lem:theparametrizedcover}, there is the equation 
$\lvert (\tfrac12\upC_{(s,t)}^{\breve{J}})^{-1}(\mathcal{E}_B^J)\rvert
= 2^{\lvert \breve{J} \rvert - \dom(B) - f_0(\upX_B)}\cdot 
\lvert \Col(\upX_B,\sigma_B) \rvert$, which may have the form $0=0$. 
Now if \ref{lem:prop:BisChiorealizable}, then 
$\Col(\upX_B,\sigma_B) \neq \emptyset$ by the already proved equivalence 
\ref{lem:prop:thereexistsbconstantrpropervertex2coloring} 
$\Leftrightarrow$ \ref{lem:prop:BisChiorealizable}, therefore  
Lemma \ref{item:cardinalityofsetofbconstantrproper2colorings} implies 
$\lvert \Col(\upX_B,\sigma_B) \rvert = 2^{\beta_0(\upX_B)}$ and hence 
\ref{lem:prop:cardinalityofinverseimage} is true. Conversely, if 
\ref{lem:prop:cardinalityofinverseimage} is true, then this formula alone implies 
\ref{lem:prop:BisChiorealizable}. This completes the proof of 
\ref{lem:prop:BisChiorealizable} $\Leftrightarrow$ 
\ref{lem:prop:cardinalityofinverseimage} and also the proof of Lemma 
\ref{lem:graphtheoreticalcharacterizationofchiorealizability}. 
\end{proof}

As an example, consider the special case $s:=t:=n$, 
$\{(1,1)\} =: I \subseteq J := [n-1]^2$, $B[(1,1)]:=0$, 
i.e. $\mathcal{E}_B^J$ is the event that a 
$\tilde{B} = (\tilde{b}_{i,j})\in \{0,\pm\}^{[n-1]^2}$ has $\tilde{b}_{1,1} = 0$. 
For these data \ref{lem:prop:cardinalityofinverseimage} 
in Lemma \ref{lem:graphtheoreticalcharacterizationofchiorealizability} 
yields $2^{n^2-1}$. And indeed, it is easy to convince oneself directly that 
there are $2^{n^2-1}$ possibilities to realize this event by 
Chio condensates of sign matrices $A\in \{\pm\}^{[n]^2}$.

\section{Understanding the Chio measure}

\begin{theorem}[graph-theoretical characterization of the Chio measure 
of entry-specification events]
\label{thm:graphtheoreticalcharacterizationofthechiomeasure}
For every $(s,t)\in\Z_{\geq 2}^2$, arbitrary 
$\emptyset \subseteq I \subseteq J \subseteq [s-1]\times [t-1]$ 
and every $B\in \{0,\pm\}^I$:
\begin{enumerate}[label={\rm(C\arabic{*})}]
\item\label{characterizationofwhenchiomeasureispositive} 
\textbf{positivity is determined by balancedness:} \\ 
$\Prob_{\chio}[\mathcal{E}_B^J] > 0$ if and only if $(\upX_B,\sigma_B)$ is 
balanced\quad ,
\item\label{explicitformulaforchiomeasureinthecaseofrealizability} 
\textbf{absolute value is determined by the coboundary space:}  \\
$\Prob_{\chio}[\mathcal{E}_B^J]>0$ if and only if  
\begin{equation}
\Prob_{\chio}[\mathcal{E}_B^J]  = (\tfrac12)^{\dom(B) + f_0(\upX_B) - \beta_0(\upX_B)} 
= \frac{(\tfrac12)^{\dom(B)}}{\lvert \upB^1(\upX_B;\; \Z/2)\rvert} \quad ,
\end{equation}
\item\label{relationbeweenchiomeasureandlazycoinflipmeasuregovernedbyfirstbettinumber}
\textbf{relative value is determined by the cycle space:} \\
$\Prob_{\chio}[\mathcal{E}_B^J]>0$ if and only if 
\begin{equation}
  \Prob_{\chio}[\mathcal{E}_B^J] = 2^{\beta_1(\upX_B)}\cdot \Prob_{\lcf}[\mathcal{E}_B^J] 
= \lvert \upZ_1(\upX_B;\; \Z/2) \rvert \cdot \Prob_{\lcf}[\mathcal{E}_B^J] \quad .
\end{equation}
\end{enumerate}
\end{theorem}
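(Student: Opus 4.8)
The strategy is to reduce everything to Lemma~\ref{lem:graphtheoreticalcharacterizationofchiorealizability} and the formula for $\lvert(\tfrac12\upC_{(s,t)}^{\breve{J}})^{-1}(\mathcal{E}_B^J)\rvert$ found there, then translate the exponent into the three topological invariants $f_0(\upX_B)-\beta_0(\upX_B)$, $\lvert\upB^1(\upX_B;\Z/2)\rvert$ and $\lvert\upZ_1(\upX_B;\Z/2)\rvert$ by means of the standard dimension identities for a graph. First I would observe that by the very definition of $\Prob_{\chio}$ (Definition~\ref{def:measurePchio}), $\Prob_{\chio}[\mathcal{E}_B^J] = 2^{-\lvert\breve{J}\rvert}\cdot\lvert(\tfrac12\upC_{(s,t)}^{\breve{J}})^{-1}(\mathcal{E}_B^J)\rvert$, so all three parts are statements purely about this cardinality. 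Part~\ref{characterizationofwhenchiomeasureispositive} is then immediate: $\Prob_{\chio}[\mathcal{E}_B^J]>0$ iff the preimage is nonempty, which by the equivalence \ref{lem:prop:BisChiorealizable}~$\Leftrightarrow$~\ref{lem:prop:associatededge2coloringofBiscyclicallyreven} in Lemma~\ref{lem:graphtheoreticalcharacterizationofchiorealizability} holds iff $(\upX_B,\sigma_B)$ is balanced.

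For part~\ref{explicitformulaforchiomeasureinthecaseofrealizability}, under the positivity hypothesis I would substitute the cardinality formula \ref{lem:prop:cardinalityofinverseimage} from Lemma~\ref{lem:graphtheoreticalcharacterizationofchiorealizability}, namely $\lvert(\tfrac12\upC_{(s,t)}^{\breve{J}})^{-1}(\mathcal{E}_B^J)\rvert = 2^{\lvert\breve{J}\rvert - \dom(B) - f_0(\upX_B) + \beta_0(\upX_B)}$, into the defining expression for $\Prob_{\chio}[\mathcal{E}_B^J]$; the $2^{\lvert\breve{J}\rvert}$ cancels, leaving $(\tfrac12)^{\dom(B) + f_0(\upX_B) - \beta_0(\upX_B)}$. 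For the second equality I would use that the coboundary (cut) space of a graph has dimension $f_0(\upX_B) - \beta_0(\upX_B)$ over $\Z/2$, hence $\lvert\upB^1(\upX_B;\Z/2)\rvert = 2^{f_0(\upX_B) - \beta_0(\upX_B)}$, so $(\tfrac12)^{f_0(\upX_B)-\beta_0(\upX_B)} = 1/\lvert\upB^1(\upX_B;\Z/2)\rvert$.

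For part~\ref{relationbeweenchiomeasureandlazycoinflipmeasuregovernedbyfirstbettinumber}, I would combine part~\ref{explicitformulaforchiomeasureinthecaseofrealizability} with Lemma~\ref{lem:valueoflazycoinflipdistribution}, which gives $\Prob_{\lcf}[\mathcal{E}_B^J] = (\tfrac12)^{\dom(B)+\supp(B)}$; here one must note $\supp(B) = f_1(\upX_B)$, since by Definition~\ref{def:XBandecXB} the edges of $\upX_B$ are in bijection with $\Supp(B)$. Then the ratio is $\Prob_{\chio}[\mathcal{E}_B^J]/\Prob_{\lcf}[\mathcal{E}_B^J] = (\tfrac12)^{f_0(\upX_B) - \beta_0(\upX_B) - f_1(\upX_B)} = 2^{f_1(\upX_B) - f_0(\upX_B) + \beta_0(\upX_B)}$, and the alternating-sum relation $\beta_1(\upX_B) - \beta_0(\upX_B) = f_1(\upX_B) - f_0(\upX_B)$ quoted in the Definitions section identifies this exponent as $\beta_1(\upX_B)$; finally $\lvert\upZ_1(\upX_B;\Z/2)\rvert = 2^{\beta_1(\upX_B)}$ by definition of the first Betti number. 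The one subtlety to handle carefully is the direction of the biconditionals as stated (each part asserts ``$\Prob_{\chio}[\mathcal{E}_B^J]>0$ iff [equation]''): the forward direction is the computation just sketched, and the backward direction follows because the displayed right-hand sides are strictly positive. I do not anticipate a serious obstacle here; the only thing to be careful about is bookkeeping of the exponents and making sure the graph-invariant translations ($\dim\upB^1 = f_0-\beta_0$, $\supp(B)=f_1(\upX_B)$, the Euler relation) are invoked in the right order.
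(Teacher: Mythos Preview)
Your proposal is correct and follows essentially the same approach as the paper's own proof: reduce positivity to nonemptiness of the preimage and invoke the equivalences in Lemma~\ref{lem:graphtheoreticalcharacterizationofchiorealizability} for~\ref{characterizationofwhenchiomeasureispositive}, plug the cardinality formula~\ref{lem:prop:cardinalityofinverseimage} into Definition~\ref{def:measurePchio} for~\ref{explicitformulaforchiomeasureinthecaseofrealizability}, and combine~\ref{explicitformulaforchiomeasureinthecaseofrealizability} with Lemma~\ref{lem:valueoflazycoinflipdistribution}, $\supp(B)=f_1(\upX_B)$, and the Euler relation for~\ref{relationbeweenchiomeasureandlazycoinflipmeasuregovernedbyfirstbettinumber}. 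Your explicit remark on the backward direction of the biconditionals is a welcome addition that the paper leaves implicit.
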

\begin{proof}
As to \ref{characterizationofwhenchiomeasureispositive}, Definition 
\ref{def:measurePchio} implies that $\Prob_{\chio}[\mathcal{E}_B^J] > 0$ 
if and only if $(\tfrac12\upC_{(s,t)}^{\breve{J}})^{-1}(\mathcal{E}_B^J) \neq \emptyset$, 
hence item \ref{characterizationofwhenchiomeasureispositive} follows from 
the equivalence \ref{lem:prop:associatededge2coloringofBiscyclicallyreven} 
$\Leftrightarrow$ \ref{lem:prop:BisChiorealizable}  in 
Lemma \ref{lem:graphtheoreticalcharacterizationofchiorealizability}. 

As to \ref{explicitformulaforchiomeasureinthecaseofrealizability}, by the just 
proved item \ref{characterizationofwhenchiomeasureispositive}  we have
$\Prob_{\chio}[\mathcal{E}_B^J]>0$ if and only if $(\upX_B,\sigma_B)$ is balanced, 
and by equivalence \ref{lem:prop:associatededge2coloringofBiscyclicallyreven} 
$\Leftrightarrow$ \ref{lem:prop:cardinalityofinverseimage} 
in Lemma \ref{lem:graphtheoreticalcharacterizationofchiorealizability} 
this is equivalent to 
$\bigl \lvert (\tfrac12\upC_{(s,t)})^{-1}(\mathcal{E}_B^J) 
\bigr \rvert = 2^{\lvert \breve{J} \rvert - \dom(B) - f_0( \upX_B  ) + \beta_0(\upX_B)}$. Dividing 
by $2^{\lvert \breve{J} \rvert}$ in accordance with Definition \ref{def:measurePchio} 
gives the first equality claimed 
in \ref{explicitformulaforchiomeasureinthecaseofrealizability}. As to the 
second equality, this is a reformulation not necessary for the equivalence and 
is true by the known formula (e.g. \cite[Theorem 14.1.1]{MR1829620}) for the 
dimension of the coboundary space of a graph, together with the obvious formula for 
the number of elements of a finite-dimensional vector space over a finite field. 

As to \ref{relationbeweenchiomeasureandlazycoinflipmeasuregovernedbyfirstbettinumber}, this follows by a simple calculation from \ref{explicitformulaforchiomeasureinthecaseofrealizability}, Lemma \ref{lem:valueoflazycoinflipdistribution} 
and $\supp(X_B) = f_1(X_B)$. The second equality in \ref{relationbeweenchiomeasureandlazycoinflipmeasuregovernedbyfirstbettinumber} is true by definition 
of $\beta_1(\cdot)$ (and therefore again a reformulation not necessary for the 
equivalence). The proof of Theorem \ref{thm:graphtheoreticalcharacterizationofthechiomeasure} is now complete. 
\end{proof}

We will now derive several consequences of Theorem \ref{thm:graphtheoreticalcharacterizationofthechiomeasure}.  Let us start with:
 
\begin{corollary}\label{cor:quickconsequencesofthecharacterizations}
Let $(s,t)\in\Z_{\geq 2}^2$, $B\in \{0,\pm\}^{[s-1]\times [t-1]}$, 
$\emptyset \subseteq I_1 \subseteq J_1 \subseteq [s-1]\times [t-1]$,  
$\emptyset \subseteq I_2 \subseteq J_2 \subseteq [s-1]\times [t-1]$ with 
$\lvert I_1 \rvert = \lvert I_2 \rvert$, $B_1\in \{0,\pm\}^{I_1}$ and 
$B_2\in\{0,\pm\}^{I_2}$ be arbitrary. Then 
\begin{enumerate}[label={\rm(\arabic{*})}]
\item\label{cor:graphfailuresetaspreimage}
$\mathcal{F}^{\mathrm{M}}(k,n) = (\beta_1\circ \upX^{k,n,n})^{-1}(\Z_{\geq 1}) = 
(\beta_1^{\ul}\circ \upXul^{k,n,n})^{-1}(\Z_{\geq 1})$ \quad ,
\item\label{cor:chiomeasureofasinglematrix} 
$B\in \im (\tfrac12 \upC_{(s,t)}\colon 
\{\pm\}^{[s]\times [t]} \rightarrow \{0,\pm\}^{[s-1]\times [t-1]})$ if and only if 
$\Prob_{\chio}[B] = \frac{2\cdot 2^{\beta_0(\upX_B)}}{2^{s\cdot t}}$ \quad , 
\item\label{cor:equalityofChiomeasuresofeventswithcoalescedassociatedgraphs} 
$\Prob_{\chio}[\mathcal{E}_{B_1}^{J_1}] = \Prob_{\chio}[\mathcal{E}_{B_2}^{J_2}]$ if 
$\upX_{B_1}$ is a one-point wedge product of two components of $\upX_{B_2}$ \quad . 
\end{enumerate}
\end{corollary}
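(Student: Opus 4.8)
The plan is to deduce all three parts of Corollary \ref{cor:quickconsequencesofthecharacterizations} directly from Theorem \ref{thm:graphtheoreticalcharacterizationofthechiomeasure} together with the elementary preimage and wedge lemmas already established, doing essentially no new combinatorics.

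For \ref{cor:graphfailuresetaspreimage}, I would unwind the definition of $\mathcal{F}^{\mathrm{M}}(k,n)$ from Definition \ref{def:parametrizedfailuresets}: a matrix $B\in\{0,\pm\}^I$ lies in $\mathcal{F}^{\mathrm{M}}(k,n)$ precisely when $\Prob_{\chio}[\mathcal{E}_B^{[n-1]^2}] \neq \Prob_{\lcf}[\mathcal{E}_B^{[n-1]^2}]$. By Lemma \ref{lem:valueoflazycoinflipdistribution} the right-hand side is always strictly positive, so the inequality forces $\Prob_{\chio}[\mathcal{E}_B^{[n-1]^2}] > 0$ as well (the only alternative allowed by item \ref{relationbeweenchiomeasureandlazycoinflipmeasuregovernedbyfirstbettinumber} being $\Prob_{\chio} = 0$, which would also be an inequality — I should check which reading is intended, but in either case positivity of $\Prob_{\chio}$ is equivalent to balancedness by \ref{characterizationofwhenchiomeasureispositive}). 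Given balancedness, item \ref{relationbeweenchiomeasureandlazycoinflipmeasuregovernedbyfirstbettinumber} says $\Prob_{\chio}[\mathcal{E}_B^{[n-1]^2}] = 2^{\beta_1(\upX_B)}\cdot\Prob_{\lcf}[\mathcal{E}_B^{[n-1]^2}]$, so the two measures differ if and only if $\beta_1(\upX_B)\geq 1$. Hence $B\in\mathcal{F}^{\mathrm{M}}(k,n)$ iff $\beta_1(\upX_B^{k,n,n})\geq 1$, which is the first preimage equality; the second equality then follows from $\beta_1^{\ul}\circ\ul = \beta_1$ on labelled graphs, i.e. $\beta_1^{\ul}\circ\upXul^{k,n,n} = \beta_1^{\ul}\circ\ul\circ\upX^{k,n,n} = \beta_1\circ\upX^{k,n,n}$, using Definition \ref{def:upXul}. (One subtlety: $\beta_1$ of the empty graph is $0$, so for the unbalanced case I should note that unbalancedness implies $\upX_B$ contains an unbalanced circuit, hence $\beta_1(\upX_B)\geq 1$ automatically; thus unbalanced matrices also lie in the $\beta_1^{-1}(\Z_{\geq 1})$ set, consistent with their lying in $\mathcal{F}^{\mathrm{M}}$.)

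For \ref{cor:chiomeasureofasinglematrix}, I would apply Theorem \ref{thm:graphtheoreticalcharacterizationofthechiomeasure} in the special case $I = J = [s-1]\times[t-1]$, so that $\mathcal{E}_B^J = \{B\}$ and $\dom(B) = (s-1)(t-1)$. The membership $B\in\im(\tfrac12\upC_{(s,t)})$ is, by Definitions \ref{def:chiocondensation} and \ref{def:measurePchio}, equivalent to $\Prob_{\chio}[B] > 0$, which by \ref{characterizationofwhenchiomeasureispositive} is equivalent to $(\upX_B,\sigma_B)$ being balanced; and when this holds, \ref{explicitformulaforchiomeasureinthecaseofrealizability} gives $\Prob_{\chio}[B] = (\tfrac12)^{\dom(B) + f_0(\upX_B) - \beta_0(\upX_B)}$. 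The remaining task is purely arithmetic: for $I = [s-1]\times[t-1]$ we have $\breve{I} = [s]\times[t]$, and the vertex set $\upV(\upX_B) = \breve{I}\setminus I\setminus\{(s,t)\}$ has exactly $(s-1)+(t-1)$ elements, so $\dom(B) + f_0(\upX_B) = (s-1)(t-1) + (s-1) + (t-1) = st - 1$. Thus $\Prob_{\chio}[B] = (\tfrac12)^{st-1}\cdot 2^{\beta_0(\upX_B)} = \frac{2\cdot 2^{\beta_0(\upX_B)}}{2^{st}}$, as claimed. I should also note the converse direction is immediate since the displayed value is positive.

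For \ref{cor:equalityofChiomeasuresofeventswithcoalescedassociatedgraphs}, the hypothesis is $\upX_{B_1}$ is a one-point wedge of two components of $\upX_{B_2}$, together with $\lvert I_1\rvert = \lvert I_2\rvert$ (i.e. $\dom(B_1) = \dom(B_2)$). I would argue that $\Prob_{\chio}[\mathcal{E}_{B_1}^{J_1}]$ and $\Prob_{\chio}[\mathcal{E}_{B_2}^{J_2}]$ are equal by checking that the formula in \ref{explicitformulaforchiomeasureinthecaseofrealizability} gives the same value — and that both are positive or both zero. For positivity: Lemma \ref{lem:onepointwedgepreservesbalancedness} shows that a one-point wedge of two signed graphs is balanced iff each factor is balanced; applied to the two components of $\upX_{B_2}$, balancedness of $\upX_{B_2}$ (equivalently of its two relevant components, the rest being components too) is equivalent to balancedness of their wedge $\upX_{B_1}$. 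For the absolute value, when balanced the formula is $(\tfrac12)^{\dom(B) + f_0(\upX_B) - \beta_0(\upX_B)}$; I must check $f_0(\upX_{B_1}) - \beta_0(\upX_{B_1}) = f_0(\upX_{B_2}) - \beta_0(\upX_{B_2})$. Forming a one-point wedge of two components identifies two vertices into one, decreasing $f_0$ by exactly $1$ and $\beta_0$ by exactly $1$ (two components become one); it leaves $f_1$ and hence $\beta_1$ unchanged. So $f_0 - \beta_0$ is invariant, and since $\dom(B_1) = \dom(B_2)$ by hypothesis, the two Chio-measure values coincide. The main obstacle here is bookkeeping: making sure the hypothesis "$\upX_{B_1}$ is a one-point wedge of two components of $\upX_{B_2}$" is interpreted so that the remaining components of $\upX_{B_2}$ are carried along unchanged in $\upX_{B_1}$ (otherwise $f_0$, $\beta_0$, $\dom$ would not match up) — but since $\dom(B_1)=\dom(B_2)$ is assumed and the edge sets biject under the wedge identification, this is forced, and the invariance of $f_0 - \beta_0$ under coalescence is the one genuinely graph-theoretic point, handled by the Euler-characteristic relation $\beta_1 - \beta_0 = f_1 - f_0$ stated in Section 2.
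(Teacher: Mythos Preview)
Your proposal is correct and follows essentially the same approach as the paper: all three parts are deduced directly from Theorem~\ref{thm:graphtheoreticalcharacterizationofthechiomeasure} together with Lemma~\ref{lem:onepointwedgepreservesbalancedness} and the invariance of $f_0 - \beta_0$ under one-point wedge. If anything, you are more careful than the paper in part~\ref{cor:graphfailuresetaspreimage}, where you explicitly treat the unbalanced case (noting that unbalancedness forces a circuit and hence $\beta_1\geq 1$), whereas the paper simply declares the claim ``immediate from \ref{relationbeweenchiomeasureandlazycoinflipmeasuregovernedbyfirstbettinumber}''.
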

\begin{proof}
As to \ref{cor:graphfailuresetaspreimage}, this is immediate from 
\ref{relationbeweenchiomeasureandlazycoinflipmeasuregovernedbyfirstbettinumber} in 
Theorem \ref{thm:graphtheoreticalcharacterizationofthechiomeasure}. 
As to \ref{cor:chiomeasureofasinglematrix}, this follows by setting 
$I:=J:=[s-1]\times [t-1]$ and combining the equivalence 
\ref{lem:prop:associatededge2coloringofBiscyclicallyreven} $\Leftrightarrow$ 
\ref{lem:prop:BisChiorealizable} in Lemma 
\ref{lem:graphtheoreticalcharacterizationofchiorealizability} with 
\ref{characterizationofwhenchiomeasureispositive} $\Leftrightarrow$ 
\ref{explicitformulaforchiomeasureinthecaseofrealizability} in Theorem 
\ref{thm:graphtheoreticalcharacterizationofthechiomeasure}.

As to \ref{cor:equalityofChiomeasuresofeventswithcoalescedassociatedgraphs}, let us 
first note that Lemma \ref{lem:onepointwedgepreservesbalancedness} implies that 
either $(\upX_{B_1},\sigma_{B_1})$ and $(\upX_{B_2},\sigma_{B_2})$ are both not balanced, 
or both are. If both are not balanced, then by item 
\ref{characterizationofwhenchiomeasureispositive} in Theorem 
\ref{thm:graphtheoreticalcharacterizationofthechiomeasure}, the claim is 
true in the form of $0=0$. If both are, then by item 
\ref{explicitformulaforchiomeasureinthecaseofrealizability} in Theorem 
\ref{thm:graphtheoreticalcharacterizationofthechiomeasure}, and using 
$\lvert I_1 \rvert  = \lvert I_2 \rvert$, the equation  
$\Prob_{\chio}[\mathcal{E}_{B_1}^{J_1}] = \Prob_{\chio}[\mathcal{E}_{B_2}^{J_2}]$ is 
equivalent to $f_0(\upX_{B_1})-\beta_0(\upX_{B_1}) = 
f_0(\upX_{B_2}) - \beta_0(\upX_{B_2})$. Since the one-point wedge product of two 
graphs keeps $f_0(\cdot) - \beta_0(\cdot)$ invariant, the equation is true also in 
this case and the proof is complete. 
\end{proof}

\begin{corollary}[the lazy coin flip measure is an averaged Chio measure]\label{cor:lazycoinflipmeasureisanaveragedchiomeasure}
$\Prob_{\lcf}\bigl[B\bigr] = \overline{\Prob}_{\chio}\bigl[B\bigr]$ for every 
$\emptyset\subseteq I \subseteq [n-1]^2$ and every $B\in\{0,\pm\}^I$.
\end{corollary}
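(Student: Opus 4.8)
The plan is to collapse everything, via Lemma~\ref{lem:valueoflazycoinflipdistribution}, to a single identity about summing $\Prob_{\chio}$ over one support class. I would fix $\emptyset\subseteq I\subseteq [n-1]^2$ and $B\in\{0,\pm\}^I$. Since $\Dom(B)=I$ we have $\{B\}=\mathcal{E}_B^I$, so Lemma~\ref{lem:valueoflazycoinflipdistribution} gives $\Prob_{\lcf}[B]=(\tfrac12)^{\dom(B)+\supp(B)}$; unwinding the definition of $\overline{\Prob}_{\chio}$ on the singleton $\{B\}$, the claim is therefore equivalent to
\[
\sum_{\tilde B\in\{0,\pm\}^I\colon\Supp(\tilde B)=\Supp(B)}\Prob_{\chio}[\tilde B]=\left(\tfrac12\right)^{\dom(B)}\quad,
\]
where $\Prob_{\chio}[\tilde B]$ stands for $\Prob_{\chio}[\{\tilde B\}]=\Prob_{\chio}[\mathcal{E}_{\tilde B}^I]$.

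The key observation is that every $\tilde B$ appearing in this sum has the same domain $I=\Dom(B)$ and the same support $\Supp(B)$, so by Definition~\ref{def:XBandecXB} — where the vertex set of $\upX_{\tilde B}$ depends only on $\Dom(\tilde B)$ and the edge set only on $\Supp(\tilde B)$ — all these $\tilde B$ share one and the same underlying bipartite graph $X:=\upX_{\tilde B}$; only $\sigma_{\tilde B}$ varies. Moreover $\tilde B\mapsto\sigma_{\tilde B}$ is a bijection from $\{\tilde B\colon\Supp(\tilde B)=\Supp(B)\}$ onto $\{\pm\}^{\upE(X)}$ which restricts to a bijection onto $S_{\bal}(X)$ on the set of those $\tilde B$ with $(X,\sigma_{\tilde B})$ balanced. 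By items~\ref{characterizationofwhenchiomeasureispositive} and~\ref{relationbeweenchiomeasureandlazycoinflipmeasuregovernedbyfirstbettinumber} of Theorem~\ref{thm:graphtheoreticalcharacterizationofthechiomeasure} together with Lemma~\ref{lem:valueoflazycoinflipdistribution}, for each such $\tilde B$ one has $\Prob_{\chio}[\tilde B]=2^{\beta_1(X)}\cdot(\tfrac12)^{\dom(B)+\supp(B)}$ when $(X,\sigma_{\tilde B})$ is balanced and $\Prob_{\chio}[\tilde B]=0$ otherwise; both $\beta_1(X)$ and the $\Prob_{\lcf}$-value are constant over the whole support class. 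Hence the sum equals $2^{\beta_1(X)}\cdot(\tfrac12)^{\dom(B)+\supp(B)}\cdot\lvert S_{\bal}(X)\rvert$.

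To finish I would invoke item~\ref{item:numberofbalancedsignfunctions} of Lemma~\ref{lem:equivalenceofexistenceofbconstantrpropervertex2coloringandcyclicallyrevenness} (K{\H{o}}nig's count of balanced signings), giving $\lvert S_{\bal}(X)\rvert=2^{f_0(X)-\beta_0(X)}$, and note $f_1(X)=\supp(B)$ since the edges of $X$ biject with $\Supp(B)$. Substituting and using the alternating-sum relation $\beta_1(X)-\beta_0(X)=f_1(X)-f_0(X)$ makes the exponent $\beta_1(X)+f_0(X)-\beta_0(X)$ collapse to $f_1(X)=\supp(B)$, so the sum equals $2^{\supp(B)}\cdot(\tfrac12)^{\dom(B)+\supp(B)}=(\tfrac12)^{\dom(B)}$, as required; dividing by $2^{\supp(B)}$ as in the definition of $\overline{\Prob}_{\chio}$ then yields $\overline{\Prob}_{\chio}[B]=(\tfrac12)^{\dom(B)+\supp(B)}=\Prob_{\lcf}[B]$.

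I do not expect a genuine obstacle: once Theorem~\ref{thm:graphtheoreticalcharacterizationofthechiomeasure} is available this is pure bookkeeping. The two points that must not be overlooked are (i) that $\Prob_{\chio}$ vanishes on the non-balanced $\tilde B$, so the averaging in $\overline{\Prob}_{\chio}$ secretly runs over $S_{\bal}(X)$ rather than over all of $\{\pm\}^{\upE(X)}$, and (ii) that the factor $2^{\beta_1(X)}$ by which $\Prob_{\chio}$ overshoots $\Prob_{\lcf}$ on each balanced $\tilde B$ is exactly cancelled by the fact that only $2^{f_0(X)-\beta_0(X)}$ of the $2^{f_1(X)}$ sign patterns are balanced — a cancellation which is, at bottom, the Euler relation $\beta_1-\beta_0=f_1-f_0$ for graphs.
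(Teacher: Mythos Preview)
Your proof is correct and follows essentially the same route as the paper's: both arguments observe that all $\tilde B$ with the given support share the same underlying graph $X$, drop the non-balanced terms via \ref{characterizationofwhenchiomeasureispositive}, evaluate each surviving summand via \ref{relationbeweenchiomeasureandlazycoinflipmeasuregovernedbyfirstbettinumber}, count the balanced signings by \ref{item:numberofbalancedsignfunctions}, and finish with the Euler relation $\beta_1-\beta_0=f_1-f_0$. The only difference is cosmetic organisation---you expand $\Prob_{\lcf}$ explicitly and chase the exponent, while the paper leaves $\Prob_{\lcf}[B]$ symbolic and multiplies through by $2^{f_1(\upX_B)}$ first.
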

\begin{proof}
It follows from Definition \ref{def:XBandecXB} that $\supp(B) = f_1(\upX_B)$ 
and that $\{ \tilde{B} \in \{0,\pm\}^I\colon \Supp(\tilde{B}) = \Supp(B) \} = 
\{ \tilde{B} \in \{0,\pm\}^I\colon \upX_{\tilde{B}} = \upX_B \}$. Moreover, 
by \ref{characterizationofwhenchiomeasureispositive} in 
Theorem \ref{thm:graphtheoreticalcharacterizationofthechiomeasure}, every 
summand with the property that $(\upX_{\tilde{B}},\sigma_{\tilde{B}})$ is not balanced 
vanishes. Thus, for every $B\in\{0,\pm\}^I$, 
\begin{align}
2^{f_1(\upX_{B})}\cdot \overline{\Prob}_{\chio}\bigl[B \bigr] & = 
 \sum_{\substack{\mathrm{all}\; \tilde{B}\in \{0,\pm\}^I\; 
\mathrm{with} \\ \upX_{\tilde{B}} = \upX_{B}\; \mathrm{and}\; 
(\upX_{\tilde{B}},\sigma_{\tilde{B}})\; \mathrm{balanced} } } \Prob_{\chio}[\tilde{B}] 
\notag \\
& \By{\ref{relationbeweenchiomeasureandlazycoinflipmeasuregovernedbyfirstbettinumber}}{=} 2^{\beta_1(\upX_{\tilde{B}})}\cdot \Prob_{\lcf}[B] \cdot 
\bigl\lvert \bigl\{ \tilde{B}\in \{0,\pm\}^I\colon 
\text{$\upX_{\tilde{B}} = \upX_{B}$ and $(\upX_{\tilde{B}},\sigma_{\tilde{B}})$ balanced} 
\bigr \} \bigr \rvert\notag \\
& \By{\ref{item:numberofbalancedsignfunctions}}{=} 
2^{\beta_1(\upX_{\tilde{B}})}\cdot \Prob_{\lcf}[B] \cdot 
2^{f_0(\upX_{B}) - \beta_0(\upX_{B})} = 2^{f_1(\upX_{B})}\cdot \Prob_{\lcf}[B]\quad . \qedhere
\end{align}
\end{proof}

\begin{corollary}[$\Prob_{\chio}^{\lvert\cdot\rvert, I}$ is just the uniform 
distribution on $\{0,1\}^I$]\label{cor:signforgettingchiomeasure}
For every $(s,t)\in\Z_{\geq 2}^2$ and every 
$\emptyset\subseteq I \subseteq [s-1]\times [t-1]$ let $\Prob_{0,1}^I$ denote 
the uniform distribution on $\{0,1\}^I$. 
Then $\Prob_{\chio}^{\lvert\cdot\rvert,I} = \Prob_{0,1}^I$.  
\end{corollary}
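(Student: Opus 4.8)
The plan is to reduce the statement to one fibre-counting fact: for every $C\in\{0,1\}^I$ the fibre $(\lvert\tfrac12\upC_{(s,t)}^{\breve{I}}\rvert)^{-1}(C)$ has exactly $2^{\lvert\breve{I}\rvert-\lvert I\rvert}$ elements. Once this is established, the corollary is immediate from the definition of $\Prob_{\chio}^{\lvert\cdot\rvert, I}$: for every $\mathcal{B}\subseteq\{0,1\}^I$ one gets
\[
\Prob_{\chio}^{\lvert\cdot\rvert, I}[\mathcal{B}]=\frac{1}{2^{\lvert\breve{I}\rvert}}\sum_{B\in\mathcal{B}}2^{\lvert\breve{I}\rvert-\lvert I\rvert}=\frac{\lvert\mathcal{B}\rvert}{2^{\lvert I\rvert}}=\Prob_{0,1}^I[\mathcal{B}].
\]

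To prove the fibre-counting fact I would fix $C\in\{0,1\}^I$ and observe that for $A=(a_{i,j})\in\{\pm\}^{\breve{I}}$ and $(i,j)\in I$ one has $\lvert\tfrac12\upC_{(s,t)}^{\breve{I}}(A)\rvert[i,j]=\tfrac12\lvert a_{i,j}a_{s,t}-a_{i,t}a_{s,j}\rvert$, where $a_{i,t}$, $a_{s,j}$ and $a_{s,t}$ are genuine entries of $A$ because $(i,j)\in[s-1]\times[t-1]$ forces $(i,t)$, $(s,j)$ and $(s,t)$ to be three pairwise distinct elements of $\breve{I}\setminus I$. Since all of $a_{i,j},a_{s,t},a_{i,t},a_{s,j}$ lie in $\{\pm\}$, this quantity equals $1$ exactly when $a_{i,j}a_{s,t}=-a_{i,t}a_{s,j}$ and $0$ exactly when $a_{i,j}a_{s,t}=a_{i,t}a_{s,j}$; using $a_{s,t}^{-1}=a_{s,t}$, in either case $a_{i,j}$ is uniquely determined by $a_{i,t}$, $a_{s,j}$, $a_{s,t}$ and $C[i,j]$, namely $a_{i,j}=(-1)^{C[i,j]}\,a_{i,t}a_{s,j}a_{s,t}$. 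Thus $\lvert\tfrac12\upC_{(s,t)}^{\breve{I}}\rvert(A)=C$ holds if and only if $A\mid_I$ is the function prescribed by this formula in terms of $A\mid_{\breve{I}\setminus I}$. Consequently the rule ``choose $A$ arbitrarily on the $\lvert\breve{I}\rvert-\lvert I\rvert$ positions of $\breve{I}\setminus I=\{(s,t)\}\sqcup\{(i,t):i\in\upp_1(I)\}\sqcup\{(s,j):j\in\upp_2(I)\}$, then fill in the forced values on $I$'' defines a bijection $\{\pm\}^{\breve{I}\setminus I}\to(\lvert\tfrac12\upC_{(s,t)}^{\breve{I}}\rvert)^{-1}(C)$, so the fibre has $2^{\lvert\breve{I}\rvert-\lvert I\rvert}$ elements, independently of $C$.

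I do not expect a real obstacle here; the only point calling for care is the bookkeeping of index positions --- that $(i,t)$, $(s,j)$ and $(s,t)$ are pairwise distinct and none of them lies in $I$ --- which is precisely the content of the remark following Definition~\ref{def:chioextensionofaset}. As an alternative route one could partition the fibre of $C$ according to the sign pattern $B\in\{0,\pm\}^I$ with $\Supp(B)=\{(i,j)\in I:C[i,j]=1\}$ and combine Lemma~\ref{lem:graphtheoreticalcharacterizationofchiorealizability}\,\ref{lem:prop:cardinalityofinverseimage} (with $I=J$) with the count of balanced signings in Lemma~\ref{lem:equivalenceofexistenceofbconstantrpropervertex2coloringandcyclicallyrevenness}\,\ref{item:numberofbalancedsignfunctions}; the exponents $\lvert\breve{I}\rvert-\lvert I\rvert-f_0(\upX_B)+\beta_0(\upX_B)$ and $f_0(\upX_B)-\beta_0(\upX_B)$ then cancel to the same value $2^{\lvert\breve{I}\rvert-\lvert I\rvert}$, but the direct argument above avoids the signed-graph machinery entirely.
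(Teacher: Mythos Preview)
Your proof is correct and takes a genuinely different, more elementary route than the paper. The paper's proof is precisely the ``alternative route'' you sketch in your final paragraph: it partitions the fibre $(\lvert\tfrac12\upC_{(s,t)}^{\breve{I}}\rvert)^{-1}(B)$ according to the signed matrices $\tilde{B}\in\{0,\pm\}^I$ with $\Supp(\tilde{B})=\Supp(B)$, discards the non-balanced ones via Lemma~\ref{lem:graphtheoreticalcharacterizationofchiorealizability}, counts each balanced fibre by \ref{lem:prop:cardinalityofinverseimage}, and then counts the balanced signings themselves by \ref{item:numberofbalancedsignfunctions}, whereupon the two exponents $f_0(\upX_{\tilde{B}})-\beta_0(\upX_{\tilde{B}})$ cancel. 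Your direct argument---observing that $a_{i,j}=(-1)^{C[i,j]}a_{i,t}a_{s,j}a_{s,t}$ uniquely determines each entry in $I$ once the entries in $\breve{I}\setminus I$ are chosen---bypasses the signed-graph machinery entirely and makes the uniformity of the fibre sizes transparent. The paper's approach has the virtue of reusing the characterization theorem it has just developed, so it fits the surrounding exposition; your approach is self-contained and arguably the ``right'' proof of this particular corollary in isolation.
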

\begin{proof}
This is true since for 
$(s,t)\in \Z_{\geq 2}^2$, $\emptyset\subseteq I \subseteq [s-1]\times [t-1]$ 
and $B\in\{0,1\}^I$ we have 
\begin{align}
 2^{\lvert\breve{I}\rvert} \cdot \Prob_{\chio}^{\lvert\cdot\rvert, I} [B] & = 
\bigl\lvert \{ A\in\{\pm\}^{\breve{I}}\colon 
\lvert \tfrac12\cdot\upC_{(s,t)}(A)\rvert = B \} \bigr\rvert \notag \\
& = \bigl\lvert \{ A\in\{\pm\}^{\breve{I}}\colon 
 \Supp(\tfrac12 \upC_{(s,t)}(A)) = \Supp(B) \} \bigr\rvert \notag \\
\parbox{0.2\linewidth}{\tiny 
(using \ref{lem:prop:associatededge2coloringofBiscyclicallyreven} 
$\Leftrightarrow$ \ref{lem:prop:BisChiorealizable} in \\
Lemma \ref{lem:graphtheoreticalcharacterizationofchiorealizability})} 
& = \sum_{\tilde{B}\in\{0,\pm\}^{I}\colon\Supp(\tilde{B}) = \Supp(B),\ 
\text{$(\upX_{\tilde{B}},\sigma_{\tilde{B}})$ balanced}} 
\lvert (\tfrac12\upC_{(s,t)}^{\breve{I}})^{-1}
(\mathcal{E}_{\tilde{B}}^{I}) \rvert \notag \\
\parbox{0.2\linewidth}{\tiny (by \ref{lem:prop:cardinalityofinverseimage} in 
Lemma \ref{lem:graphtheoreticalcharacterizationofchiorealizability})}
& = \lvert \{ \tilde{B}\in\{0,\pm\}^{I}\colon\text{\scriptsize
$\Supp(\tilde{B}) = \Supp(B)$,\ $\tilde{B}$ balanced} \} \rvert \cdot 
2^{\lvert\breve{I}\rvert - \dom(B) - f_0(\upX_{\tilde{B}}) + \beta_0(\upX_{\tilde{B}})} \notag \\
\parbox{0.2\linewidth}{\tiny (by \ref{item:numberofbalancedsignfunctions} 
in Lemma \ref{lem:equivalenceofexistenceofbconstantrpropervertex2coloringandcyclicallyrevenness} )} 
& = 2^{f_0(\upX_{\tilde{B}}) - \beta_0(\upX_{\tilde{B}})} \cdot 
2^{\lvert\breve{I}\rvert - \dom(B)    - f_0(\upX_{\tilde{B}}) + \beta_0(\upX_{\tilde{B}})}  
= 2^{\lvert\breve{I}\rvert - \dom(B)} \notag \qedhere
\end{align}
\end{proof}
Let us state the special case $s:=t:=n$ and $I:=[n-1]^2$ in 
graph-theoretical language:

\begin{corollary}\label{cor:whatrandombipartitegraphisX12CnnAforrandomA}
For random $A\in\{\pm\}^{[n]^2}$, the graph $\upX_{\frac12\upC_{(n,n)}(A)}$ is a 
random bipartite graph with $n-1$ vertices in each class and each edge 
chosen i.i.d. with probability $\tfrac12$. \hfill $\Box$
\end{corollary}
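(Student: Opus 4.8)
The plan is to deduce this corollary directly from Corollary~\ref{cor:signforgettingchiomeasure} by translating the statement ``$\Prob_{\chio}^{\lvert\cdot\rvert,I}=\Prob_{0,1}^I$'' into the language of bipartite graphs via the bijection $\BG_{s,t}\longleftrightarrow\{0,1\}^{[s-1]\times[t-1]}$ mentioned after the definition of $\BG_{s,t}$. Specializing to $s:=t:=n$ and $I:=[n-1]^2$, observe that the map $A\longmapsto\upX_{\frac12\upC_{(n,n)}(A)}$ is by Definition~\ref{def:XBandecXB} the composite $\upX^{(n-1)^2,n,n}\circ\bigl(\tfrac12\upC_{(n,n)}^{\breve{I}}\bigr)$ restricted to the full index set, and the edge set $\upE(\upX_B)$ depends only on $\Supp(B)=\Supp(\lvert B\rvert)$. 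Hence the pushforward of the uniform measure on $\{\pm\}^{[n]^2}$ under $A\mapsto\upX_{\frac12\upC_{(n,n)}(A)}$ equals the pushforward of $\Prob_{\chio}^{\lvert\cdot\rvert,I}$ under the bijection $\{0,1\}^I\to\BG_{n,n}$.

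Concretely, first I would note that for a graph $X\in\BG_{n,n}$, the preimage $\{A\in\{\pm\}^{[n]^2}\colon \upX_{\frac12\upC_{(n,n)}(A)}=X\}$ is exactly $(\lvert\tfrac12\upC_{(n,n)}^{\breve{I}}\rvert)^{-1}(B_X)$ where $B_X\in\{0,1\}^I$ is the $0/1$-matrix corresponding to $X$ under the bijection, since ``$\upX_B=X$'' is equivalent to ``$\Supp(B)$ equals the edge set of $X$'' which is equivalent to ``$\lvert B\rvert=B_X$''. Dividing by $2^{\lvert\breve{I}\rvert}=2^{n^2}$ and invoking Corollary~\ref{cor:signforgettingchiomeasure} gives $\Prob\bigl[\upX_{\frac12\upC_{(n,n)}(A)}=X\bigr]=\Prob_{\chio}^{\lvert\cdot\rvert,I}[B_X]=\Prob_{0,1}^I[B_X]=2^{-(n-1)^2}$ for every $X\in\BG_{n,n}$. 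So the random graph is uniform on $\BG_{n,n}$, which is precisely the statement that each of the $(n-1)^2$ possible edges between the two classes $V_1=\{(i,n)\colon 1\le i\le n-1\}$ and $V_2=\{(n,j)\colon 1\le j\le n-1\}$ is present independently with probability $\tfrac12$ (the uniform distribution on subsets of a fixed $(n-1)^2$-element edge set coincides with independent fair coin flips on the edges).

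The only point requiring a little care—and the main (mild) obstacle—is bookkeeping the index $k$: for the full index set $I=[n-1]^2$ we have $k=(n-1)^2$ and every $B\in\{0,\pm\}^{[n-1]^2}$ has $\Dom(B)=[n-1]^2$, so $\upV(\upX_B)=\breve{I}\setminus\Dom(B)\setminus\{(n,n)\}=V_1\sqcup V_2$ is the fixed vertex set $\{(i,n)\colon i\le n-1\}\sqcup\{(n,j)\colon j\le n-1\}$ independently of $B$; thus no ambiguity of the sort flagged in the discussion preceding Definition~\ref{def:upXul} arises here, and the map $B\mapsto\upX_B$ genuinely descends to the advertised bijection on the level of edge sets. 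Once this is observed the argument is a one-line transport of Corollary~\ref{cor:signforgettingchiomeasure} and there is nothing further to prove.

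\begin{proof}
Apply Corollary~\ref{cor:signforgettingchiomeasure} with $s:=t:=n$ and $I:=[n-1]^2$, so that $\breve{I}=[n]^2$, $\lvert\breve{I}\rvert=n^2$, and $\Prob_{0,1}^{[n-1]^2}$ is the uniform distribution on $\{0,1\}^{[n-1]^2}$. Since $I=[n-1]^2$, every $B\in\{0,\pm\}^{[n-1]^2}$ satisfies $\Dom(B)=[n-1]^2$, hence by Definition~\ref{def:XBandecXB} the vertex set $\upV(\upX_B)=\breve{I}\setminus\Dom(B)\setminus\{(n,n)\}$ equals the fixed set $V_1\sqcup V_2$ with $V_1=\{(i,n)\colon 1\le i\le n-1\}$ and $V_2=\{(n,j)\colon 1\le j\le n-1\}$, independently of $B$, while $\upE(\upX_B)=\bigsqcup_{(i,j)\in\Supp(B)}\{\{(i,n),(n,j)\}\}$ depends only on $\Supp(B)=\Supp(\lvert B\rvert)$. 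Under the bijection $\BG_{n,n}\longleftrightarrow\{0,1\}^{[n-1]^2}$ (which identifies $X\in\BG_{n,n}$ with the $0/1$-incidence matrix $B_X$ of its edge set), the condition $\upX_{\frac12\upC_{(n,n)}(A)}=X$ is therefore equivalent to $\lvert\tfrac12\upC_{(n,n)}(A)\rvert=B_X$, i.e.\ to $A\in(\lvert\tfrac12\upC_{(n,n)}^{\breve{I}}\rvert)^{-1}(B_X)$. Consequently, for random $A\in\{\pm\}^{[n]^2}$ (uniform distribution $\Prob$),
\begin{equation*}
\Prob\bigl[\upX_{\frac12\upC_{(n,n)}(A)}=X\bigr]
= \frac{\lvert(\lvert\tfrac12\upC_{(n,n)}^{\breve{I}}\rvert)^{-1}(B_X)\rvert}{2^{n^2}}
= \Prob_{\chio}^{\lvert\cdot\rvert,I}[B_X]
= \Prob_{0,1}^{[n-1]^2}[B_X]
= 2^{-(n-1)^2}
\end{equation*}
for every $X\in\BG_{n,n}$, where the third equality is Corollary~\ref{cor:signforgettingchiomeasure}. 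Thus $\upX_{\frac12\upC_{(n,n)}(A)}$ is uniformly distributed on $\BG_{n,n}$; equivalently, as the $2^{(n-1)^2}$ graphs in $\BG_{n,n}$ are exactly the $2^{(n-1)^2}$ subsets of the fixed edge set $\{\{(i,n),(n,j)\}\colon 1\le i,j\le n-1\}$, each of these $(n-1)^2$ edges is present independently with probability $\tfrac12$. This is the assertion.
\end{proof}
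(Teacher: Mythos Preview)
Your proof is correct and follows exactly the route the paper intends: the corollary is explicitly introduced as ``the special case $s:=t:=n$ and $I:=[n-1]^2$ in graph-theoretical language'' of Corollary~\ref{cor:signforgettingchiomeasure}, and you have carried out precisely that specialization and translation via the bijection $\BG_{n,n}\longleftrightarrow\{0,1\}^{[n-1]^2}$. The paper itself gives no further argument (the statement carries only a $\Box$), so your write-up in fact supplies more detail than the original.
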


Theorem \ref{thm:graphtheoreticalcharacterizationofthechiomeasure} also 
teaches us how fast $\Prob_{\chio}$ can be computed. In both 
Corollary \ref{cor:upperboundontimecomplexityofcomputingPchio} and 
\ref{cor:complexityofdecidingwhetherchiomeasureequalslazycoinflipmeasure} the 
asymptotic statements are referring to $n\rightarrow\infty$ and to sequences 
$I = I(n)$ of index sets with the property that 
$\lvert I(n) \rvert \rightarrow \infty$ (and therefore also 
$\lvert \upp_1(I(n)) \rvert \cdot \lvert \upp_2(I(n)) \rvert \rightarrow\infty$) 
as $n\rightarrow\infty$.

\begin{corollary}[complexity of computing $\Prob_{\chio}$]
\label{cor:upperboundontimecomplexityofcomputingPchio}
For every $\emptyset \subseteq I \subseteq J \subseteq [n-1]^2$ and 
every $B\in \{0,\pm\}^I$, the value of $\Prob_{\chio}[\mathcal{E}_B^J] \in \Q$ can  be 
computed exactly in time 
$O(\lvert \upp_1(I) \rvert + \lvert \upp_2(I) \rvert + \lvert I\rvert) 
\subseteq O(\lvert \upp_1(I) \rvert \cdot \lvert \upp_2(I) \rvert) 
\subseteq O(n^2)$. However, there does not exist a fixed algorithm 
computing $\Prob_{\chio}[\mathcal{E}_B^J] \in \Q$ exactly on arbitrary instances 
$B\in \{0,\pm\}^{[n-1]^2}$ and $\emptyset \subseteq I \subseteq [n-1]^2$ and taking 
time $o(\lvert \upp_1(I) \rvert \cdot \lvert \upp_2(I) \rvert)$.
\end{corollary}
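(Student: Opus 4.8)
The plan is to establish the two halves of Corollary~\ref{cor:upperboundontimecomplexityofcomputingPchio} separately: the positive (upper bound) part by exhibiting an explicit algorithm, and the negative (lower bound) part by reducing to the complexity of deciding connectivity-type graph properties, for which quadratic evasiveness-type lower bounds are available. For the upper bound, the key observation is that Theorem~\ref{thm:graphtheoreticalcharacterizationofthechiomeasure}, specifically \ref{characterizationofwhenchiomeasureispositive} together with the explicit formula in \ref{explicitformulaforchiomeasureinthecaseofrealizability}, reduces computing $\Prob_{\chio}[\mathcal{E}_B^J]$ to computing two numbers: $\dom(B) = \lvert I \rvert$, which is trivially read off in time $O(\lvert I\rvert)$, and the integers $f_0(\upX_B)$ and $\beta_0(\upX_B)$ of the associated bipartite signed graph $\upX_B$, plus a check of whether $(\upX_B,\sigma_B)$ is balanced. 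First I would note that $\upX_B$ has $\lvert \upp_1(I)\rvert + \lvert\upp_2(I)\rvert$ vertices (by \eqref{def:XBandecXB:definitionofvertexset}) and $\supp(B)\leq\lvert I\rvert$ edges (by \eqref{def:XBandecXB:definitionofedgeset}), so it can be built from $B$ in time $O(\lvert\upp_1(I)\rvert + \lvert\upp_2(I)\rvert + \lvert I\rvert)$. Then a single depth-first (or breadth-first) search on $\upX_B$ simultaneously computes $\beta_0(\upX_B)$ (number of components) and, by the greedy $2$-colouring argument already spelled out in the proof of Corollary~\ref{cor:decidingwhetherecXiscyclicallyreven}, decides whether $(\upX_B,\sigma_B)$ is balanced; this search also takes time $O(f_0(\upX_B) + f_1(\upX_B)) = O(\lvert\upp_1(I)\rvert + \lvert\upp_2(I)\rvert + \lvert I\rvert)$. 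Finally, if the graph is balanced, one evaluates the rational number $(\tfrac12)^{\dom(B) + f_0(\upX_B) - \beta_0(\upX_B)}$, which is a single exponentiation producing a number whose bit-length is $O(n^2)$, hence well within the claimed bound; if it is not balanced the answer is $0$. The chain of inclusions $O(\lvert\upp_1(I)\rvert+\lvert\upp_2(I)\rvert+\lvert I\rvert)\subseteq O(\lvert\upp_1(I)\rvert\cdot\lvert\upp_2(I)\rvert)\subseteq O(n^2)$ is then immediate from $\lvert\upp_1(I)\rvert,\lvert\upp_2(I)\rvert\leq n-1$ and $\lvert I\rvert\leq\lvert\upp_1(I)\rvert\cdot\lvert\upp_2(I)\rvert$ (since $I\subseteq\upp_1(I)\times\upp_2(I)$).

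For the lower bound, the strategy is to argue that any algorithm computing $\Prob_{\chio}[\mathcal{E}_B^J]$ on arbitrary instances must, in particular, be able to distinguish between inputs on which the value is positive and inputs on which it is $0$, i.e.\ it must decide whether $(\upX_B,\sigma_B)$ is balanced, and more sharply it must extract enough information about $\upX_B$ to pin down $f_0(\upX_B)-\beta_0(\upX_B)$. I would fix a convenient one-parameter family of instances: take $\sigma\equiv +$ on all of $I=\upp_1(I)\times\upp_2(I)$, i.e.\ feed in matrices $B$ whose support is an arbitrary subset $S\subseteq\upp_1(I)\times\upp_2(I)$ and whose nonzero entries are all $+1$. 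Then $(\upX_B,\sigma_B)$ is a bipartite graph with an all-$(+)$ signing, which is balanced iff the bipartite graph $\upX_B$ has no circuit of length $\equiv 2 \pmod 4$ — but restricting further to the sub-family where $\upp_1(I)$ and $\upp_2(I)$ each have a single fixed element one gets a single-edge-or-no-edge graph, which is too coarse, so instead I would keep both sides of size roughly $n/2$ but vary $S$ so that the resulting bipartite graph ranges over \emph{all} bipartite graphs on that fixed vertex bipartition. On such a family the quantity $\Prob_{\chio}[\mathcal{E}_B^J]$, via \ref{explicitformulaforchiomeasureinthecaseofrealizability}, is a fixed monotone function of $\beta_0(\upX_B)$ (when balanced) and otherwise is $0$; in particular the map $B\mapsto\Prob_{\chio}[\mathcal{E}_B^J]$ is constant on the fibres of $B\mapsto\upX_B$ only up to the $\beta_0$-value, and notably $\Prob_{\chio}[\mathcal{E}_B^J]=(\tfrac12)^{\lvert I\rvert}$ precisely when $\upX_B$ is a forest with a prescribed number of components. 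The key point is that deciding ``$\upX_B$ has an even number of components'' or ``$\upX_B$ is connected'' is an evasive (elusive) monotone graph property on bipartite graphs with a fixed bipartition, hence requires querying $\Omega(\lvert\upp_1(I)\rvert\cdot\lvert\upp_2(I)\rvert)$ of the potential edges — equivalently, any algorithm must inspect $\Omega(\lvert\upp_1(I)\rvert\cdot\lvert\upp_2(I)\rvert)$ entries of $B$ in the worst case.

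I expect the main obstacle to be making the lower-bound argument airtight: ``evasiveness'' classically refers to the decision-tree (query) complexity of a monotone graph property on a fixed vertex set with \emph{arbitrary} isomorphisms allowed, whereas here the natural object is a \emph{bipartite} adjacency matrix with a fixed bipartition, so I cannot directly invoke the Rivest–Vuillemin / Kahn–Saks–Sturtevant evasiveness theorems as black boxes. The cleanest route is probably the following: observe that connectivity of a bipartite graph with parts of sizes $p$ and $q$ is evasive in the bipartite model — this is a known and elementary adversary argument (maintain a spanning structure, answer ``non-edge'' as long as doing so keeps the graph possibly connected, forcing the algorithm to query nearly all $pq$ pairs before it can be sure) — and then use that $\Prob_{\chio}[\mathcal{E}_B^J]$, as a function of $B$ over the all-$(+)$ family, determines whether $\upX_B$ is connected (e.g.\ the maximal attainable value $(\tfrac12)^{\dom(B)+\lvert\upp_1(I)\rvert+\lvert\upp_2(I)\rvert-1}$ of $\Prob_{\chio}$ over spanning-tree graphs with $1$ component is strictly larger than any value attainable with $\beta_0\geq 2$). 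Hence an algorithm computing $\Prob_{\chio}$ exactly, restricted to this family, decides bipartite connectivity, so it must read $\Omega(\lvert\upp_1(I)\rvert\cdot\lvert\upp_2(I)\rvert)=\Omega(pq)$ entries, and taking $p,q=\Theta(n)$ gives the claimed $\Omega(n^2)$ (equivalently, there is no algorithm running in time $o(\lvert\upp_1(I)\rvert\cdot\lvert\upp_2(I)\rvert)$ on all instances). The remaining care is purely bookkeeping: confirming that the adversary family genuinely lives inside $\{0,\pm\}^{[n-1]^2}$ for all large $n$ and that the reduction is uniform in $n$ so that the ``no fixed algorithm'' phrasing is justified.
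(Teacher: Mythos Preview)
Your upper-bound argument is essentially identical to the paper's: build $\upX_B$, run a DFS to decide balancedness and compute $f_0,\beta_0$, then output $(\tfrac12)^{\dom(B)+f_0-\beta_0}$ or $0$.

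For the lower bound your route is genuinely different from the paper's, and it is worth flagging the contrast. The paper gives a one-line output-size argument: restrict to \emph{rectangular} $I$, so that $\lvert I\rvert=\lvert\upp_1(I)\rvert\cdot\lvert\upp_2(I)\rvert$; then the exponent in the answer satisfies $\lvert I\rvert+f_0(\upX_B)-\beta_0(\upX_B)\ge\lvert I\rvert$, so the dyadic fraction to be written has bitlength at least $\lvert\upp_1(I)\rvert\cdot\lvert\upp_2(I)\rvert$, and merely writing the output already costs $\Omega(\lvert\upp_1(I)\rvert\cdot\lvert\upp_2(I)\rvert)$. No graph-property evasiveness is invoked at all. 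Your evasiveness-via-connectivity argument also works (and is in fact the mechanism the paper uses for the \emph{next} corollary, on deciding $\Prob_{\chio}=\Prob_{\lcf}$, where the output is a single bit and the size argument is unavailable). Your approach has the advantage of being robust against clever output encodings; the paper's has the advantage of being completely elementary.

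Two small slips in your write-up, neither fatal. First, an all-$(+)$ signing is \emph{always} balanced (every circuit has zero $(-)$-edges), not ``balanced iff no circuit of length $\equiv 2\pmod 4$''; you seem to realise this and move on, but the sentence should be deleted. Second, your inequality is reversed: for fixed $\dom(B)$ and $f_0$, a \emph{larger} $\beta_0$ gives a \emph{smaller} exponent and hence a \emph{larger} $\Prob_{\chio}$, so the connected case ($\beta_0=1$) gives the \emph{minimum} positive value, not the maximum. This does not damage the argument, since all you need is that $\Prob_{\chio}$ determines $\beta_0$ and hence decides connectivity; but the sentence as written is wrong. With those fixes, citing Yao's theorem on monotone bipartite graph properties (as the paper does in Corollary~\ref{cor:complexityofdecidingwhetherchiomeasureequalslazycoinflipmeasure}) makes your lower bound rigorous.
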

\begin{proof}
By items \ref{characterizationofwhenchiomeasureispositive} 
and \ref{explicitformulaforchiomeasureinthecaseofrealizability} in 
Theorem \ref{thm:graphtheoreticalcharacterizationofthechiomeasure}, to compute 
$\Prob_{\chio}[\mathcal{E}_{B}^J]$ it suffices to first decide whether $\sigma_{B}$ 
(which in view of Definition \ref{def:XBandecXB} evidently can be read in time 
$O(\lvert \upp_1(I) \rvert \cdot \lvert \upp_2(I) \rvert) \subseteq O(n^2)$) is 
balanced, and, if so, to compute $f_0(\upX_{B})$ and $\beta_0(\upX_{B})$.  
By Corollary \ref{cor:decidingwhetherecXiscyclicallyreven}, and since the 
depth-first seach mentioned there also computes the numbers $f_0( \upX_{B} )$ 
and $\beta_0(\upX_{B})$, both tasks can be accomplished by one depth-first 
search in time $O(f_0( \upX_{B} ) + f_1( \upX_{B} )) \subseteq
O(\lvert \upp_1(I) \rvert + \lvert \upp_2(I) \rvert + \lvert I\rvert) \subseteq 
O(n^2)$. If $(\upX_B,\sigma_B)$ is found to be not balanced, then 
$\Prob_{\chio}[\mathcal{E}_B^J]=0$. Otherwise, the answer is 
$(\frac12)^{\lvert I\rvert + f_0( \upX_{B} ) - \beta_0(\upX_{B})}$. 
Since the bitlength  of this dyadic fraction is 
$\lvert I\rvert + f_0( \upX_{B} ) - \beta_0(\upX_{B}) = 
\lvert I\rvert + f_1( \upX_{B} ) - \beta_1(\upX_{B}) \leq 
\lvert I\rvert + f_1( \upX_{B} ) \leq 2\lvert I \rvert \in 
O(\lvert \upp_1(I)\rvert + \lvert\upp_2(I) \rvert + \lvert I \rvert)$ it is 
possible to write the output in the time claimed. This proves the first 
statement in Corollary \ref{cor:upperboundontimecomplexityofcomputingPchio}.

As to the second statement, notice that any such fixed algorithm could 
in particular compute $\Prob_{\chio}[\mathcal{E}_{B}^J] \in \Q$ exactly on those 
instances $B\in \{0,\pm\}^{[n-1]^2}$ and $\emptyset \subseteq I \subseteq [n-1]^2$ 
for which $I$ is rectangular. But if $I$ is rectangular, then 
$\lvert I\rvert + f_0( \upX_{B} ) - \beta_0(\upX_{B}) \geq \lvert I\rvert = 
\lvert \upp_1(I) \rvert \cdot \lvert \upp_2(I) \rvert$. Therefore, for these inputs, 
the bitlength of the dyadic fraction 
$(\frac12)^{\lvert I\rvert + f_0( \upX_{B} ) - \beta_0(\upX_{B})}$ is at least 
$\lvert \upp_1(I) \rvert \cdot \lvert \upp_2(I) \rvert$. Hence for such inputs the 
very task of writing the output takes time 
$\Omega(\lvert \upp_1(I) \rvert \cdot \lvert \upp_2(I) \rvert)$, 
which precludes a running time of 
$o(\lvert \upp_1(I) \rvert \cdot \lvert \upp_2(I) \rvert) \subseteq o(n^2)$.
The proof of Corollary \ref{cor:upperboundontimecomplexityofcomputingPchio} is 
now complete.
\end{proof}

A priori one might suspect that the task of merely \emph{deciding} whether 
$\Prob_{\chio}[B] = \Prob_{\lcf}[B]$ could be accomplished much faster than the 
task of computing the value of $\Prob_{\chio}[B]$. Theorem \ref{thm:graphtheoreticalcharacterizationofthechiomeasure} can also tell us that this is not the case.

\begin{corollary}[{complexity of deciding whether $\Prob_{\chio}$ and 
$\Prob_{\lcf}$ agree}]
\label{cor:complexityofdecidingwhetherchiomeasureequalslazycoinflipmeasure} 
For every $\emptyset \subseteq I \subseteq J \subseteq [n-1]^2$ 
and every $B\in \{0,\pm\}^I$, the answer to the decision problem of whether 
$\Prob_{\chio}[\mathcal{E}_{B}^J] = \Prob_{\lcf}[\mathcal{E}_{B}^J]$ can be computed in 
time $O(\lvert \upp_1(I) \rvert \cdot \lvert \upp_2(I) \rvert) \subseteq O(n^2)$. 
However, there does not exist a fixed algorithm (having only entry-wise 
access to $B$ and no further a priori information) which decides that question on 
arbitrary instances $B\in \{0,\pm\}^I$ with 
$\emptyset \subseteq I \subseteq [n-1]^2$ in time 
$o(\lvert \upp_1(I)\rvert \cdot \lvert \upp_2(I)\rvert)$. 
\end{corollary}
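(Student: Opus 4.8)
The plan is first to show that the decision problem is, up to a trivial reduction, nothing but ``is $\upX_B$ a forest?'', and then to handle the upper and lower bounds separately. For the reduction, combine items \ref{characterizationofwhenchiomeasureispositive} and \ref{relationbeweenchiomeasureandlazycoinflipmeasuregovernedbyfirstbettinumber} of Theorem \ref{thm:graphtheoreticalcharacterizationofthechiomeasure} with the fact that $\Prob_{\lcf}[\mathcal{E}_B^J]>0$ always (Lemma \ref{lem:valueoflazycoinflipdistribution}) and with the observation that a forest carries no circuit and is therefore balanced: if $\upX_B$ is not a forest, then $(\upX_B,\sigma_B)$ is either not balanced (so $\Prob_{\chio}[\mathcal{E}_B^J]=0<\Prob_{\lcf}[\mathcal{E}_B^J]$) or balanced (so $\Prob_{\chio}[\mathcal{E}_B^J]=2^{\beta_1(\upX_B)}\Prob_{\lcf}[\mathcal{E}_B^J]>\Prob_{\lcf}[\mathcal{E}_B^J]$ since $\beta_1(\upX_B)\geq 1$); and if $\upX_B$ is a forest, then it is balanced and $\beta_1(\upX_B)=0$, so $\Prob_{\chio}[\mathcal{E}_B^J]=\Prob_{\lcf}[\mathcal{E}_B^J]$. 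Hence $\Prob_{\chio}[\mathcal{E}_B^J]=\Prob_{\lcf}[\mathcal{E}_B^J]$ if and only if $\upX_B$ is a forest; in particular the answer does not depend on $J$ and depends on $B$ only through $\Supp(B)$, not even on $\sigma_B$.

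For the positive part I would simply read $B$ (in time $O(\lvert\upp_1(I)\rvert\cdot\lvert\upp_2(I)\rvert)$, using $I\subseteq\upp_1(I)\times\upp_2(I)$), build $\upX_B$, and run a depth-first search as in Corollary \ref{cor:decidingwhetherecXiscyclicallyreven}; such a search encounters a back-edge, and thereby certifies non-forestness, if and only if $\upX_B$ fails to be a forest. This costs $O(f_0(\upX_B)+f_1(\upX_B))=O(\lvert\upp_1(I)\rvert+\lvert\upp_2(I)\rvert+\lvert I\rvert)\subseteq O(\lvert\upp_1(I)\rvert\cdot\lvert\upp_2(I)\rvert)\subseteq O(n^2)$.

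For the negative part, the hard part, I would run an adversary argument on the rectangular hard instances $I:=[n-1]^2$, $n\to\infty$ (so that $\lvert\upp_1(I)\rvert\cdot\lvert\upp_2(I)\rvert=(n-1)^2=\lvert I\rvert$ and $\upX_B$ always has the $2(n-1)$ vertices of the complete bipartite pattern). Let $\mathcal{A}$ be any correct algorithm with only entry-wise access. Since, by the reduction, the answer depends only on which entries of $B$ vanish, the adversary may answer each query to an entry $B[i,j]$ by declaring it non-zero (say $+1$) whenever adjoining the edge $\{(i,n),(n,j)\}$ to the graph of the entries already declared non-zero keeps that graph acyclic, and declaring it $0$ otherwise; this maintains the invariant that the declared-non-zero entries induce a forest $F$. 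When $\mathcal{A}$ halts after $q$ queries, let $B_0$ (resp.\ $B_1$) be the completion that sets every queried entry to its declared value and every other entry to $0$ (resp.\ to $+1$). Both completions produce the same query/answer transcript, hence the same output of $\mathcal{A}$; as $\upX_{B_0}=F$ is a forest, correctness forces the output ``forest'', whence $\upX_{B_1}$ must be a forest too. But $\upX_{B_1}$ has $f_1(F)+(\lvert I\rvert-q)$ edges on $2(n-1)$ vertices, so $f_1(F)+\lvert I\rvert-q\leq 2(n-1)-1$, i.e.\ $q\geq\lvert I\rvert-2(n-1)+1=(n-2)^2$. Thus $\mathcal{A}$ takes at least $(n-2)^2=(1-o(1))\lvert\upp_1(I)\rvert\cdot\lvert\upp_2(I)\rvert$ steps on the legitimate input $B_0$, which rules out a running time in $o(\lvert\upp_1(I)\rvert\cdot\lvert\upp_2(I)\rvert)$.

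I expect the only subtle point to lie in making this adversary argument airtight: one must use that ``entry-wise access'' means $\mathcal{A}$'s behaviour is a function of the transcript only, that the reduction lets the adversary commit merely to ``zero versus non-zero'' so that a single greedy forest-maintaining rule suffices, and that $B_0$ and $B_1$ are genuinely indistinguishable to $\mathcal{A}$; the remainder is the elementary count that a forest on $v$ vertices has at most $v-1$ edges.
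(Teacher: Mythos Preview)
Your proof is correct. The reduction to ``is $\upX_B$ a forest?'' and the upper-bound argument match the paper's proof exactly.

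For the lower bound, however, you take a genuinely different route. The paper observes that on rectangular $I$ every bipartite graph with the given bipartition sizes arises as some $\upX_B$, notes that forestness is a monotone decreasing bipartite graph property, and then invokes Yao's theorem \cite{MR941942} that such properties are evasive; this yields the full $(n-1)^2$ query lower bound in one stroke but at the cost of a black-box citation. Your direct adversary argument (greedily keep a forest, then compare the all-zero and all-one completions on the unqueried positions) is entirely self-contained and proves the slightly weaker bound $q\geq(n-2)^2$, which is still $(1-o(1))\lvert\upp_1(I)\rvert\cdot\lvert\upp_2(I)\rvert$ and hence more than enough to rule out $o(\lvert\upp_1(I)\rvert\cdot\lvert\upp_2(I)\rvert)$ time. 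In effect you are reproving the special case of Yao's theorem needed here; this buys elementariness at the price of a marginally weaker constant, while the paper's approach buys the sharp evasiveness constant at the price of an external reference.
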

\begin{proof}
Given $\emptyset \subseteq I \subseteq [n-1]^2$ and $B \in\{0,\pm\}^I$, it follows 
from item 
\ref{relationbeweenchiomeasureandlazycoinflipmeasuregovernedbyfirstbettinumber} in 
Theorem \ref{thm:graphtheoreticalcharacterizationofthechiomeasure} that the question 
of whether $\Prob_{\chio}[\mathcal{E}_{B}^J] = \Prob_{\lcf}[\mathcal{E}_{B}^J]$ is 
equivalent to asking whether $\upX_{B}$ is a forest. The graph $\upX_{B}$ can 
obviously be computed from $B$ in time 
$O(\lvert \upp_1(I) \rvert \cdot \lvert \upp_2(I) \rvert) \subseteq O(n^2)$, and 
deciding whether $\upX_{B}$ is a forest, i.e. whether $\upX_{B}$ contains a circuit, 
can be done by a depth-first search in time $O(f_0(\upX_{B}) + f_1( \upX_{B} )) 
\subseteq O(\lvert \upp_1(I) \rvert + \lvert \upp_2(I) \rvert + 
\lvert I\rvert) \subseteq O(\lvert \upp_1(I) \rvert \cdot \lvert \upp_2(I) \rvert )$, so the first claim in Corollary \ref{cor:complexityofdecidingwhetherchiomeasureequalslazycoinflipmeasure} is proved.

As to the additional claim, suppose there were a fixed algorithm $A$ with the stated 
properties. Let $\mathcal{I}$ be the set of all rectangular 
$\emptyset \subseteq I \subseteq [n-1]^2$. By 
assumption, the algorithm $A$ is in particular capable of deciding whether 
$\Prob_{\chio}[\mathcal{E}_{B}^J] = \Prob_{\lcf}[\mathcal{E}_{B}^J]$ for each input 
$I\in \mathcal{I}$ and for each of them taking time 
$o(\lvert \upp_1(I)\rvert\cdot\lvert\upp_2(I)\rvert)$. However, every bipartite 
graph with bipartition sizes of $\lvert \upp_1(I) \rvert$ and 
$\lvert \upp_2(I) \rvert$ can be realized as a $\upX_{B}$ with $I\in\mathcal{I}$. 
By item 
\ref{relationbeweenchiomeasureandlazycoinflipmeasuregovernedbyfirstbettinumber} in 
Theorem \ref{thm:graphtheoreticalcharacterizationofthechiomeasure} the property 
$\Prob_{\chio}[\mathcal{E}_{B}^J] = \Prob_{\lcf}[\mathcal{E}_{B}^J]$ is equivalent to 
$\upX_{B}$ being a forest. Therefore $A$ decides set membership for the set of all 
bipartite graphs which have the fixed (that is, fixed for every fixed value of $n$) 
bipartition classes $ \upp_1(I) $ and $ \upp_2(I) $ and do not contain a circuit. 
This set is a decreasing (i.e. closed w.r.t. deleting edges) graph property 
consisting of bipartite graphs only. Since all graphs in the property have the 
same bipartition classes $\upp_1(I)$ and $\upp_2(I)$ we may appeal to a 
theorem of A. C.-C. Yao \cite[p. 518, Theorem 1]{MR941942} which says that every 
such property is evasive.\footnote{Due to the fact that the bipartition classes 
are the same for all the graphs in the property, it is not necessary to appeal 
to the more general theorem of E. Triesch \cite[p. 266, Theorem 4]{MR1401898} in 
which the assumption of \emph{fixed} bipartition classes is no longer made. An 
earlier version of the present paper stated that one would need this more general 
theorem. I was wrong regarding this particular point. Moreover I confused the 
adjectives `balanced' and `fixed'. The theorem of Yao suffices.} Hence there exists 
at least one $I\in \mathcal{I}$ with the property that $A$ examines every entry of 
$B$. This takes time $\Omega(\lvert I\rvert) = \Omega(\lvert \upp_1(I)\rvert \cdot 
\lvert \upp_2(I)\rvert)$, the equality being true  because of 
$\lvert I\rvert = \lvert \upp_1(I) \rvert \cdot \lvert \upp_2(I) \rvert$. 
This is a contradiction to the assumption about the running time of $A$. 
The proof of Corollary \ref{cor:complexityofdecidingwhetherchiomeasureequalslazycoinflipmeasure} is now complete.
\end{proof}

We now take a more quantitative look at the relationship between $\Prob_{\chio}$ and 
$\Prob_{\lcf}$. We start with an enumeration of bipartite nonforests. The fact that 
we stop the enumeration at the $f$-vector $(f_0,f_1) = (8,6)$, even though there 
are bipartite nonforests with $(f_0,f_1) = (8,7)$,  is explained by the application 
we have in mind; we will only be concerned with bipartite nonforests having up to 
six edges. 

\begin{lemma}[bipartite nonforests ordered by their $f$-vectors]\label{lem:bipartitenonforestsorderedbytheirfvectors}
The isomorphism types of bipartite nonforests, ordered lexicographically by 
their $f$-vectors up to $(f_0,f_1) = (8,6)$, are:

{\scriptsize
\begin{minipage}[b]{0.45\linewidth}
\begin{enumerate}[label={\rm(t\arabic{*})}]
\item\label{item:comparisonproofcaseLhassize4:4circuit} 
$=$ $C^4$
\item\label{item:comparisonproofcaseLhassize6:oneisolatedvertex} 
$=$ disjoint union of $C^4$ \\ and one isolated vertex
\item\label{item:comparisonproofcaseLhassize6:oneadditionaledgeintersectingC4} 
$=$ $C^4$ intersecting one edge
\item\label{item:comparisonproofcaseLhassize6:XBLisomorphictoK23} 
$=$ $K^{2,3}$ 
\item\label{item:comparisonproofcaseLhassize6:twoisolatedvertices} 
$=$ disjoint union of $C^4$ \\ and two isolated vertices
\item\label{item:comparisonproofcaseLhassize6:oneadditionaledgeintersectingC4andoneisolatedvertex} $=$ 
$C^4$ intersecting one edge, \\ and one extra isolated vertex
\item\label{item:comparisonproofcaseLhassize6:C4withoneadditionaldisjointedge}
$=$ disjoint union of $C^4$ \\ and an isolated edge
\item\label{item:comparisonproofcaseLhassize6:C4intersectingtwoedgesinseparatenonadjacentvertices} 
$=$ $C^4$ intersecting two disjoint edges, \\ the intersection set no edge of $C^4$
\item\label{item:comparisonproofcaseLhassize6:C4intersectingtwoedgesinseparateadjacentvertices} 
$=$ $C^4$ intersecting two disjoint edges, \\ the intersection set an edge of $C^4$
\item\label{item:comparisonproofcaseLhassize6:C4intersectingatwopathinanendvertex} 
$=$ $C^4$ intersecting a $2$-path in an endvertex
\end{enumerate}
\end{minipage}
\begin{minipage}[b]{0.48\linewidth}
\begin{enumerate}[label={\rm(t\arabic{*})},start=11]
\item\label{item:comparisonproofcaseLhassize6:C4intersectingatwopathinitsinnervertex} 
$=$ $C^4$ intersecting a $2$-path in its inner vertex
\item\label{item:comparisonproofcaseLhassize6:C6} 
$=$ $C^6$
\item\label{item:comparisonproofcaseLhassize6:threeisolatedvertices} 
$=$ disjoint union of $C^4$ \\ and three isolated vertices
\item\label{item:comparisonproofcaseLhassize6:oneadditionaledgeintersectingC4andtwoisolatedvertices} $=$ $C^4$ intersecting one edge, \\ and two extra isolated vertices 
\item\label{item:comparisonproofcaseLhassize6:oneadditionaldisjointedgeandoneisolatedvertex} $=$ disjoint union of $C^4$ and an edge, \\ and one extra isolated vertex 
\item\label{item:comparisonproofcaseLhassize6:twoadditionaledgesonlyoneofthemdisjoint} $=$ $C^4$ intersecting one edge, \\ and one extra isolated edge
\item\label{item:comparisonproofcaseLhassize6:twoadditionalnondisjointedgesdisjointfromC4} $=$ disjoint union of $C^4$ and a $2$-path
\item\label{item:comparisonproofcaseLhassize6:fourisolatedvertices} 
$=$ disjoint union of $C^4$ \\ and four isolated vertices 
\item\label{item:comparisonproofcaseLhassize6:oneadditionaldisjointedgeandtwoisolatedvertices} $=$ disjoint union of $C^4$ \\ and an edge and two extra isolated vertices 
\item\label{item:comparisonproofcaseLhassize6:twoadditionaldisjointedgesdisjointfromC4} $=$ disjoint union of $C^4$ and two disjoint edges 
\end{enumerate}
\end{minipage}
}
\end{lemma}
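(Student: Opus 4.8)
The plan is to carry out a systematic, exhaustive enumeration of all bipartite nonforests whose $f$-vector $(f_0, f_1)$ is lexicographically at most $(8,6)$, organized primarily by the number of edges $f_1$. The key structural fact to exploit is that any bipartite nonforest contains a circuit, and since the graph is bipartite every circuit has even length, so the shortest circuit has length at least $4$; moreover a circuit on $2k$ edges uses exactly $2k$ vertices. This immediately forces $f_1 \geq 4$, rules out $f_1 = 5$ (since removing a shortest circuit leaves either nothing or a forest, and one cannot build a bipartite nonforest with an odd number of edges unless $f_1 \geq 7$, e.g.\ $K^{2,3}$ with $f_1=6$, $C^4$ plus a path, etc.\ — so in fact the first nonforest appears at $f_1=4$ and the enumeration list shows entries only for $f_1 \in \{4,6\}$ up to the cutoff), and organizes the remaining cases into $f_1 = 4$ and $f_1 = 6$.

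For $f_1 = 4$: the only bipartite nonforest with four edges is $C^4$ itself (entry \ref{item:comparisonproofcaseLhassize4:4circuit}), since a nonforest needs a circuit, the only bipartite circuit with $\leq 4$ edges is $C^4$, and $C^4$ already uses all $4$ edges, leaving no room for isolated vertices if we also demand $f_0 \leq 8$ — actually $C^4$ has $f_0 = 4$, well under the bound, but adding isolated vertices would push into the $f_1=4$, $f_0 > 4$ range which is still below the cutoff; however the convention here (dictated by the intended application, counting matrix circuits of length up to $6$) is to list configurations as they arise, and $C^4$ with extra isolated vertices is instead catalogued under the $f_1 = 6$ block only when it competes lexicographically — I would follow the ordering in the statement literally. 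For $f_1 = 6$: every bipartite nonforest with six edges either contains a $C^4$ or contains a $C^6$ but no $C^4$. If it contains no $C^4$, then (having exactly $6$ edges) it must be exactly $C^6$ (entry \ref{item:comparisonproofcaseLhassize6:C6}), possibly plus isolated vertices — but $C^6$ already has $f_0 = 6$ and adding vertices stays under $8$; with isolated vertices this would be listed too, but the statement's cutoff at $(8,6)$ combined with $C^6$ having no room for two extra isolated vertices within $f_0 \leq 8$... actually $C^6$ with up to two isolated vertices has $(f_0,f_1) \in \{(6,6),(7,6),(8,6)\}$, all admissible; I would check the statement's list carefully and note which of these appear. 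The substantive case is nonforests containing a $C^4$: delete a fixed $C^4$ (using $4$ of the $6$ edges and $4$ vertices), leaving $2$ edges and at most $4$ further vertices to attach, and enumerate how $2$ edges plus isolated vertices can sit relative to the fixed $C^4$ while keeping the whole graph bipartite and with $f_0 \leq 8$: the $2$ edges can be disjoint from $C^4$ (disjoint union cases, entries like \ref{item:comparisonproofcaseLhassize6:C4withoneadditionaldisjointedge}, \ref{item:comparisonproofcaseLhassize6:twoadditionaldisjointedgesdisjointfromC4}, etc.), one or both can meet $C^4$ at a vertex (pendant-edge cases, entries \ref{item:comparisonproofcaseLhassize6:oneadditionaledgeintersectingC4}, \ref{item:comparisonproofcaseLhassize6:C4intersectingtwoedgesinseparatenonadjacentvertices}, \ref{item:comparisonproofcaseLhassize6:C4intersectingtwoedgesinseparateadjacentvertices}, \ref{item:comparisonproofcaseLhassize6:C4intersectingatwopathinanendvertex}, \ref{item:comparisonproofcaseLhassize6:C4intersectingatwopathinitsinnervertex}), or the two extra edges can together with two edges of $C^4$ form a larger structure like $K^{2,3}$ (entry \ref{item:comparisonproofcaseLhassize6:XBLisomorphictoK23}). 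Crucially one must verify bipartiteness is not violated (e.g.\ attaching an edge cannot create an odd circuit, which is automatic since we only ever add pendant edges or disjoint pieces or $K^{2,3}$, all bipartite) and that no two entries in the list are isomorphic (a case check on degree sequences and component structure suffices). Finally, sort all obtained isomorphism types lexicographically by $(f_0, f_1)$, breaking further ties by the secondary ordering implicit in the list, and confirm this reproduces entries \ref{item:comparisonproofcaseLhassize4:4circuit}--\ref{item:comparisonproofcaseLhassize6:twoadditionaldisjointedgesdisjointfromC4} exactly.

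The main obstacle is simply the combinatorial bookkeeping: ensuring the enumeration of ways to attach two extra edges (plus isolated vertices) to a fixed $C^4$ is genuinely exhaustive and that the resulting graphs, together with the $C^6$-family, are pairwise non-isomorphic — in particular distinguishing entry \ref{item:comparisonproofcaseLhassize6:C4intersectingtwoedgesinseparatenonadjacentvertices} from \ref{item:comparisonproofcaseLhassize6:C4intersectingtwoedgesinseparateadjacentvertices} (whether the two attachment vertices span an edge of $C^4$ or not) and \ref{item:comparisonproofcaseLhassize6:C4intersectingatwopathinanendvertex} from \ref{item:comparisonproofcaseLhassize6:C4intersectingatwopathinitsinnervertex} (whether the attached $2$-path meets $C^4$ at the path's endvertex or its midvertex) requires care, as does confirming that $C^4 \cup K_2$-type graphs with the two extra edges forming a $2$-path disjoint from $C^4$ versus two disjoint edges disjoint from $C^4$ are correctly separated (entries \ref{item:comparisonproofcaseLhassize6:twoadditionalnondisjointedgesdisjointfromC4} vs.\ \ref{item:comparisonproofcaseLhassize6:twoadditionaldisjointedgesdisjointfromC4}). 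I expect no conceptual difficulty — this is a finite verification — but it is the kind of case analysis where an omitted or duplicated configuration is easy to miss, so I would double-check via the constraint that the total count of isomorphism types in each $(f_0,f_1)$-class matches a direct enumeration.
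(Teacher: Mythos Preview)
Your proposal contains a genuine error that would cause the enumeration to fail. You claim that $f_1 = 5$ is ``ruled out'' and that ``the enumeration list shows entries only for $f_1 \in \{4,6\}$ up to the cutoff''. This is false. A bipartite nonforest can certainly have five edges: take $C^4$ together with one pendant edge (a single edge sharing exactly one vertex with the $4$-circuit). That graph has $(f_0,f_1)=(5,5)$ and is exactly type \ref{item:comparisonproofcaseLhassize6:oneadditionaledgeintersectingC4}. In fact, six of the twenty types in the list have $f_1 = 5$, namely \ref{item:comparisonproofcaseLhassize6:oneadditionaledgeintersectingC4}, \ref{item:comparisonproofcaseLhassize6:oneadditionaledgeintersectingC4andoneisolatedvertex}, \ref{item:comparisonproofcaseLhassize6:C4withoneadditionaldisjointedge}, \ref{item:comparisonproofcaseLhassize6:oneadditionaledgeintersectingC4andtwoisolatedvertices}, \ref{item:comparisonproofcaseLhassize6:oneadditionaldisjointedgeandoneisolatedvertex}, \ref{item:comparisonproofcaseLhassize6:oneadditionaldisjointedgeandtwoisolatedvertices}. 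Your parenthetical justification (``one cannot build a bipartite nonforest with an odd number of edges unless $f_1 \geq 7$'') is simply wrong; nothing about bipartiteness or the nonforest condition forces $f_1$ to be even.

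There is a second, related confusion: the lexicographic ordering in the statement is on $(f_0, f_1)$ with $f_0$ first, not on $f_1$. Thus $C^4$ together with $m$ isolated vertices has $(f_0,f_1)=(4+m,4)$ and appears in the list as \ref{item:comparisonproofcaseLhassize4:4circuit}, \ref{item:comparisonproofcaseLhassize6:oneisolatedvertex}, \ref{item:comparisonproofcaseLhassize6:twoisolatedvertices}, \ref{item:comparisonproofcaseLhassize6:threeisolatedvertices}, \ref{item:comparisonproofcaseLhassize6:fourisolatedvertices} --- all with $f_1 = 4$, not ``catalogued under the $f_1=6$ block'' as you suggest. Your organizing principle (enumerate by $f_1$, then attach extras to a fixed $C^4$) is in fact a perfectly good strategy, and the paper's own proof offers nothing more detailed than ``easy to check''; but you must run it over $f_1 \in \{4,5,6\}$, and for each value of $f_1$ separately track how many new vertices the extra $f_1 - 4$ edges (and any isolated vertices) introduce, so as to recover the correct $(f_0,f_1)$ pair for each type.
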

\begin{proof}
Easy to check since the graphs are required to be bipartite and have $f_1\leq 6$.
\end{proof}

\begin{corollary}[isomorphism types for which equality of measures of entry 
specification events fails]\label{cor:isomorphismtypesforwhichequalityofmeasuresofentryspecificationeventsfails}
\begin{minipage}[b]{0.4\linewidth}
\begin{enumerate}[label={\rm(Fa\arabic{*})},start=3]
\item\label{item:failuresetofisomorphismtypesk3} 
 $\mathcal{F}^{\mathrm{G}}(k,n) = \emptyset$ for $0\leq k \leq 3$,
\item\label{item:failuresetofisomorphismtypesk4} 
$\mathcal{F}^{\mathrm{G}}(4,n)=\{\ref{item:comparisonproofcaseLhassize4:4circuit}\}$, 
\end{enumerate}
\end{minipage}
\begin{minipage}[b]{0.65\linewidth}
\begin{enumerate}[label={\rm(Fa\arabic{*})},start=5]
\item\label{item:failuresetofisomorphismtypesk5} 
$\mathcal{F}^{\mathrm{G}}(5,n) = \{ 
\ref{item:comparisonproofcaseLhassize6:oneisolatedvertex}, 
\ref{item:comparisonproofcaseLhassize6:oneadditionaledgeintersectingC4}, 
\ref{item:comparisonproofcaseLhassize6:twoisolatedvertices}, 
\ref{item:comparisonproofcaseLhassize6:C4withoneadditionaldisjointedge}
\}$, 
\item\label{item:failuresetofisomorphismtypesk6} 
$\mathcal{F}^{\mathrm{G}}(6,n) = \mathcal{F}^{\mathrm{G}}(5,n)
\sqcup\{ \ref{item:comparisonproofcaseLhassize6:XBLisomorphictoK23}\}
\sqcup \{
\ref{item:comparisonproofcaseLhassize6:oneadditionaledgeintersectingC4andoneisolatedvertex},\dotsc, \ref{item:comparisonproofcaseLhassize6:twoadditionaldisjointedgesdisjointfromC4}\}$. 
\end{enumerate}
\end{minipage}
\end{corollary}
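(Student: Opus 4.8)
The plan is to reduce the statement to the graph-theoretic reformulation already at hand and then run a finite enumeration. By item~\ref{cor:graphfailuresetaspreimage} of Corollary~\ref{cor:quickconsequencesofthecharacterizations}, a $B$ with $\dom(B)=k$ lies in $\mathcal{F}^{\mathrm{M}}(k,n)$ exactly when $\upX_B$ is not a forest, so $\mathcal{F}^{\mathrm{G}}(k,n)$ is precisely the set of isomorphism types $\ul(\upX_B)$ of bipartite nonforests that arise as $\upX_B$ for some $B$ with $\dom(B)=k$ (for $n$ not too small; we do not track the exact threshold below which the lists must be truncated). Since distinct positions of $\Supp(B)$ yield distinct edges, $f_1(\upX_B)=\supp(B)\le\dom(B)=k$, and a simple bipartite nonforest has at least four edges (its shortest circuit is a $C^4$); hence $\mathcal{F}^{\mathrm{G}}(k,n)=\emptyset$ for $k\le 3$, which is \ref{item:failuresetofisomorphismtypesk3}.

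The heart of the matter is a realizability criterion. Recall from Definition~\ref{def:XBandecXB} that $f_0(\upX_B)=\lvert\upp_1(\Dom(B))\rvert+\lvert\upp_2(\Dom(B))\rvert$, that the edges of $\upX_B$ are indexed by $\Supp(B)$, and that $\Dom(B)\subseteq\upp_1(\Dom(B))\times\upp_2(\Dom(B))$ with both projections surjective. Reading this backwards gives: a fixed isomorphism type $\mathfrak{X}$ of a bipartite nonforest occurs as some $\ul(\upX_B)$ with $\dom(B)=k$ (and $n-1\ge f_0(\mathfrak{X})$) if and only if $\mathfrak{X}$ admits a vertex-$2$-colouring with colour classes of sizes $r,c$ carrying $i_r$ respectively $i_c$ isolated vertices of $\mathfrak{X}$, such that
\[
f_1(\mathfrak{X}) + \max(i_r,i_c) \;\le\; k \;\le\; r\cdot c \quad .
\]
For the forward direction one colours $\upX_B$ by declaring $\{(i,t)\colon i\in\upp_1(\Dom(B))\}$ one class and $\{(s,j)\colon j\in\upp_2(\Dom(B))\}$ the other; then $r\cdot c\ge\lvert\Dom(B)\rvert=k$, and each of the $i_r$ isolated vertices in the first class comes from a row index in $\upp_1(\Dom(B))\setminus\upp_1(\Supp(B))$, which must be the first coordinate of some position in $\Dom(B)\setminus\Supp(B)$; distinct such row indices force distinct such positions, so $k-\supp(B)\ge i_r$, and symmetrically $\ge i_c$. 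Conversely, given a colouring as stated, place the $f_1(\mathfrak{X})$ edges in an $r\times c$ grid, spend $\min(i_r,i_c)$ further cells covering one fresh row and one fresh column each, $\lvert i_r-i_c\rvert$ more cells on the leftover isolated vertices of the larger class, and finally $k-f_1(\mathfrak{X})-\max(i_r,i_c)$ arbitrary free cells (which exist because $k\le r\cdot c$) to reach $\lvert\Dom(B)\rvert=k$.

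It then remains to feed the isomorphism types into this criterion for $k=4,5,6$, using Lemma~\ref{lem:bipartitenonforestsorderedbytheirfvectors} as the catalogue of the underlying nonforests on at most six edges. For $k=4$ one needs $f_1(\mathfrak{X})=4$ and (since $\max(i_r,i_c)=0$) no isolated vertices, forcing $\mathfrak{X}=C^4$, whose colouring has $r=c=2$ with $4\le 2\cdot 2$: this is \ref{item:failuresetofisomorphismtypesk4}. For $k=5$ the admissible types are $C^4$ with at most one isolated vertex in each colour class (split $1+1$ or $1+0$, i.e.\ \ref{item:comparisonproofcaseLhassize6:oneisolatedvertex} and \ref{item:comparisonproofcaseLhassize6:twoisolatedvertices}; plain $C^4$ is now \emph{excluded} because $r\cdot c=4<5$) together with the two five-edge bipartite nonforests having no isolated vertex, \ref{item:comparisonproofcaseLhassize6:oneadditionaledgeintersectingC4} and \ref{item:comparisonproofcaseLhassize6:C4withoneadditionaldisjointedge}; this is \ref{item:failuresetofisomorphismtypesk5}. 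For $k=6$ one runs through the four-edge types with up to two isolated vertices per class, the five-edge types with up to one isolated vertex per class, and the six-edge types with no isolated vertex (for a six-edge nonforest $\supp(B)=6=\dom(B)$, so every row and column index occurring in $B$ supports an edge and $\upX_B$ has no isolated vertex), verifying $k\le r\cdot c$ each time; the outcome is that every isomorphism type of Lemma~\ref{lem:bipartitenonforestsorderedbytheirfvectors} \emph{except} $C^4$ occurs, which is \ref{item:failuresetofisomorphismtypesk6}.

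The genuinely non-obvious ingredient, and the one easiest to miss at a first attempt, is the capacity bound $k\le r\cdot c$ in the criterion: it is exactly what ejects $C^4$ (and, more generally, every type with too few vertices relative to $k$) from $\mathcal{F}^{\mathrm{G}}(k,n)$ once $k\ge 5$, whereas a naive edge count would leave $C^4$ in every $\mathcal{F}^{\mathrm{G}}(k,n)$. The other point to watch is that for disconnected $\mathfrak{X}$ the bipartition is not unique, so the criterion must be read existentially over $2$-colourings and the split of the isolated vertices has to be chosen so as to minimise $\max(i_r,i_c)$; beyond that the verification is routine, if slightly tedious, bookkeeping.
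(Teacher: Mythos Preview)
Your proof is correct and takes a genuinely different route from the paper. The paper argues case by case: for each $k\in\{4,5,6\}$ it lists the types from Lemma~\ref{lem:bipartitenonforestsorderedbytheirfvectors} with $f_1\le k$, rules out the non-realizable ones by ad hoc vertex-counting arguments (e.g.\ for $k=6$, ``$f_0(\upX_B)\ge 5$ for every $I\in\binom{[n-1]^2}{6}$'' to exclude \ref{item:comparisonproofcaseLhassize4:4circuit}; for $k=5$, ``the sole non-matrix-circuit entry must create at least one additional vertex'' to exclude \ref{item:comparisonproofcaseLhassize4:4circuit} and a separate argument to exclude \ref{item:comparisonproofcaseLhassize6:oneadditionaledgeintersectingC4andoneisolatedvertex}), and then exhibits explicit witnesses $B$ for each realizable type. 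You instead extract a uniform realizability criterion $f_1(\mathfrak{X})+\max(i_r,i_c)\le k\le r\cdot c$ (existentially over bipartitions), prove it once, and then run it against the catalogue. Your criterion packages both obstruction mechanisms the paper uses---the lower bound $f_1+\max(i_r,i_c)\le k$ captures the ``not enough zero-positions to manufacture the required isolated vertices'' obstruction, and the upper bound $k\le r\cdot c$ captures the ``too few cells in the grid'' obstruction that ejects $C^4$ for $k\ge 5$---and so is cleaner and would scale to larger $k$ without new ideas. What the paper's approach buys is concreteness: its explicit witnesses are reused verbatim later (e.g.\ in the proof of Theorem~\ref{thm:countingmatrixrealizations}), whereas your existence argument, while sufficient here, would have to be unpacked if one later needed specific realizations.
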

\begin{proof}
By \ref{relationbeweenchiomeasureandlazycoinflipmeasuregovernedbyfirstbettinumber} 
in Theorem \ref{thm:graphtheoreticalcharacterizationofthechiomeasure} we have 
$\Prob_{\chio}[\mathcal{E}_B^{[n-1]^2}] \neq \Prob_{\lcf}[\mathcal{E}_B^{[n-1]^2}]$  if and 
only if $\beta_1(\upX_{B}) > 0$. Moreover, directly from 
Definition \ref{def:XBandecXB} we have the bound 
$f_1(\upX_{B}) \leq \lvert I \rvert$. Therefore, for every $k$, we can get a set of 
candidates  for membership in $\mathcal{F}^{\mathrm{G}}(k,n)$ by collecting all 
isomorphism types in Corollary \ref{cor:isomorphismtypesforwhichequalityofmeasuresofentryspecificationeventsfails} having $f_1\leq k$. We then have to decide for each of 
these types whether it is possible to realize it as a $\upX_{B}$  with 
$B\in\{0,\pm\}^I$ and $I\in\binom{[n-1]^2}{k}$. 

As to \ref{item:failuresetofisomorphismtypesk3}, this is true since there do not 
exist bipartite nonforests with three edges or less.

As to \ref{item:failuresetofisomorphismtypesk4}, i.e. $k=4$, note that the only 
isomorphism types in Corollary \ref{cor:isomorphismtypesforwhichequalityofmeasuresofentryspecificationeventsfails} with $f_1 \leq 4$ are \ref{item:comparisonproofcaseLhassize4:4circuit} and \ref{item:comparisonproofcaseLhassize6:oneisolatedvertex}. Because 
of $\beta_1(\upX_B)\geq 1$ for every $B\in \mathcal{F}^{\mathrm{M}}(4,n)$ the set $I$ 
must be a matrix-$4$-circuit. This implies $f_0(\upX_{B}) = 4$. 
Since $f_0\ref{item:comparisonproofcaseLhassize6:oneisolatedvertex} = 5$, it 
follows that $\ref{item:comparisonproofcaseLhassize6:oneisolatedvertex}\notin\mathcal{F}^{\mathrm{G}}(4,n)$. Since type \ref{item:comparisonproofcaseLhassize4:4circuit}  
obviously can be realized, \ref{item:failuresetofisomorphismtypesk4} is true.

As to \ref{item:failuresetofisomorphismtypesk5}, i.e. $k=5$, note that the only 
isomorphism types with $f_1\leq 5$ in Corollary \ref{cor:isomorphismtypesforwhichequalityofmeasuresofentryspecificationeventsfails} are \ref{item:comparisonproofcaseLhassize4:4circuit}, \ref{item:comparisonproofcaseLhassize6:oneisolatedvertex}, \ref{item:comparisonproofcaseLhassize6:oneadditionaledgeintersectingC4}, \ref{item:comparisonproofcaseLhassize6:twoisolatedvertices}, \ref{item:comparisonproofcaseLhassize6:oneadditionaledgeintersectingC4andoneisolatedvertex} and \ref{item:comparisonproofcaseLhassize6:C4withoneadditionaldisjointedge}. 
Since $C^4$ is a subgraph of each of these types, it is necessary that there be a 
matrix-$4$-circuit $S\subseteq I$. Since the sole non-matrix-circuit entry 
must create at least one addition vertex of $\upX_B$, 
type \ref{item:comparisonproofcaseLhassize4:4circuit} cannot occur. The type 
\ref{item:comparisonproofcaseLhassize6:oneadditionaledgeintersectingC4andoneisolatedvertex} cannot occur either since there is only one position 
$u\in I\setminus S\in \binom{[n-1]^2}{1}$ left for us to choose freely 
and by the choice of $u$ and $B[u]$ we can either create an isolated vertex 
in $\upX_{B}$ or an edge intersecting the $C^4\hookrightarrow \upX_{B}$ but not both. 
The remaining types \ref{item:comparisonproofcaseLhassize6:oneisolatedvertex}, \ref{item:comparisonproofcaseLhassize6:oneadditionaledgeintersectingC4}, \ref{item:comparisonproofcaseLhassize6:twoisolatedvertices} and 
\ref{item:comparisonproofcaseLhassize6:C4withoneadditionaldisjointedge} can be 
indeed be realized, as is proved by the following examples. For all the examples let 
$S:=\{(1,1),(1,2),(2,1),(2,2)\}$, $B\mid_{S}:=\{-\}^S$ and $\{u\}:=I\setminus S$. 
For \ref{item:comparisonproofcaseLhassize6:oneisolatedvertex} take e.g. 
$n:=4$, $u:=(2,3)$ and $B[u] := 0$. 
For \ref{item:comparisonproofcaseLhassize6:oneadditionaledgeintersectingC4} take 
e.g. $n:=4$, $u:=(2,3)$ and $B[u]:=-$.  
For \ref{item:comparisonproofcaseLhassize6:twoisolatedvertices} take 
e.g. $n:=4$, $u:=(3,3)$ and $B[u] := 0$. 
For \ref{item:comparisonproofcaseLhassize6:C4withoneadditionaldisjointedge} take 
e.g. $n:=4$, $u:=(3,3)$ and $B[u]:=-$. 
This proves \ref{item:failuresetofisomorphismtypesk5}. 

As to \ref{item:failuresetofisomorphismtypesk6}, i.e. $k=6$, as far as only the 
necessary condition $f_1(\upX_{B}) \leq \lvert I \rvert = k$ is concerned, all 
types in Lemma \ref{lem:bipartitenonforestsorderedbytheirfvectors} are candidates. 
Type \ref{item:comparisonproofcaseLhassize4:4circuit} cannot be realized 
since $f_0\ref{item:comparisonproofcaseLhassize4:4circuit}=4$ but 
$f_0(\upX_{B}) \geq 5$ for every $I\in \binom{[n-1]^2}{6}$. All others can, as will 
now be proved by giving one example for each of them. In all examples again 
let $S:=\{(1,1),(1,2),(2,1),(2,2)\}$ and $B\mid_{S}:=\{-\}^S$. 
Here, $\{u,v\}:=I\setminus S$.
For \ref{item:comparisonproofcaseLhassize6:oneisolatedvertex} take e.g. $n:=4$, 
$u:=(1,3)$, $v:=(2,3)$  and $B[u]:=B[v]:=0$. 
For \ref{item:comparisonproofcaseLhassize6:oneadditionaledgeintersectingC4} 
take e.g. $n:=4$, $u:=(1,3)$, $v:=(2,3)$, $B[u]:=-$ and $B[v]:=0$. 
For \ref{item:comparisonproofcaseLhassize6:XBLisomorphictoK23}
take e.g. $n:=4$, $u:=(1,3)$, $v:=(2,3)$ and $B[u]:=B[v]:=-$.
For \ref{item:comparisonproofcaseLhassize6:twoisolatedvertices} 
take e.g. $n:=4$, $u:=(1,3)$, $v:=(3,3)$ and $B[u]:=B[v]:=0$. 
For \ref{item:comparisonproofcaseLhassize6:oneadditionaledgeintersectingC4andoneisolatedvertex} take e.g. $n := 4$, $u:=(1,3)$, $v:=(3,3)$ and $B[u]:=-$ and $B[v]:=0$. 
For \ref{item:comparisonproofcaseLhassize6:C4withoneadditionaldisjointedge} 
take e.g. $n := 4$, $u:=(1,3)$, $v:=(3,3)$, $B[u]:=0$ and $B[v]:=-$. 
For \ref{item:comparisonproofcaseLhassize6:C4intersectingtwoedgesinseparatenonadjacentvertices} take e.g. $n := 5$, $u:=(1,3)$, $v:=(2,4)$ and $B[u]:=B[v]:=-$. 
For \ref{item:comparisonproofcaseLhassize6:C4intersectingtwoedgesinseparateadjacentvertices} take e.g. $n:=4$, $u:=(1,3)$, $v:=(3,2)$ and $B[u]:=B[v]:=-$. 
For \ref{item:comparisonproofcaseLhassize6:C4intersectingatwopathinanendvertex} 
take e.g. $n := 4$, $u:=(1,3)$, $v:=(3,3)$ and $B[u]:=B[v]:=-$.  
For \ref{item:comparisonproofcaseLhassize6:C4intersectingatwopathinitsinnervertex}
take e.g. $n:=5$, $u:=(1,3)$, $v:=(1,4)$ and $B[u]:=B[v]:=-$.    
For \ref{item:comparisonproofcaseLhassize6:C6} we have to make an exception to our 
convention that $\{u,v\} = I\setminus S$ with $S$ defined as above, and have to 
define the set $I$ in its entirety. We can take e.g. 
$n:=4$, $I:=\{(1,1),(1,2),(2,2),(2,3),(3,1),(3,3)\}\in\binom{[n-1]^2}{k}$ 
and $B=\{-\}^I$.
For \ref{item:comparisonproofcaseLhassize6:threeisolatedvertices}  take e.g. $n:=5$, 
$u:=(3,3)$, $v:=(3,4)$ and $B[u]:=B[v]:=0$. 
For \ref{item:comparisonproofcaseLhassize6:oneadditionaledgeintersectingC4andtwoisolatedvertices}, take e.g. $n:=5$, $u:=(1,3)$, $v:=(3,4)$, $B[u]:=-$ and $B[v]:=0$. 
For \ref{item:comparisonproofcaseLhassize6:oneadditionaldisjointedgeandoneisolatedvertex}, take e.g. $n:=5$, $u:=(3,3)$, $v:=(3,4)$, $B[u]:=-$ and $B[v]:=0$. 
For \ref{item:comparisonproofcaseLhassize6:twoadditionaledgesonlyoneofthemdisjoint}, 
take e.g. $n:=5$, $u:=(1,3)$, $v:=(3,4)$ and $B[u]:=B[v]:=-$. 
For \ref{item:comparisonproofcaseLhassize6:twoadditionalnondisjointedgesdisjointfromC4} 
take e.g. $n:=5$, $u:=(3,3)$, $v:=(3,4)$ and $B[u]:=B[v]:=-$. 
For \ref{item:comparisonproofcaseLhassize6:fourisolatedvertices} take e.g. $n:=5$, 
$u:=(3,3)$, $v:=(4,4)$ and $B[u]:=B[v]:=0$. 
For \ref{item:comparisonproofcaseLhassize6:oneadditionaldisjointedgeandtwoisolatedvertices} take e.g. $n:=5$, $u:=(3,3)$, $v:=(4,4)$,  $B[u]:= 0$ and $B[v]:=1$. 
For \ref{item:comparisonproofcaseLhassize6:twoadditionaldisjointedgesdisjointfromC4} 
take e.g. $n:=5$, $u:=(3,3)$, $v:=(4,4)$ and $B[u]:=B[v]:=-$. 
This proves \ref{item:failuresetofisomorphismtypesk6}. 
The proof of Corollary \ref{cor:isomorphismtypesforwhichequalityofmeasuresofentryspecificationeventsfails} is now complete.
\end{proof}

\begin{corollary}[ratios and absolute values of $\Prob_{\Chio}$ for up to six entry 
specifications]
\label{cor:ratiosandvaluesofpchioandplcffortheisomorphismtypesforwhichequalityfails} 
{\quad }
\begin{enumerate}[label={\rm(R\arabic{*})}] 
\item\label{it:decompositionsofgraphfailuresetsingeneral} 
$\mathcal{F}^{\mathrm{G}}(k,n)  = \mathcal{F}_{\cdot 0}^{\mathrm{G}}(k,n) = 
\bigsqcup_{\beta\in\Z_{\geq 1}}\mathcal{F}_{\cdot 2^\beta}^{\mathrm{G}}(k,n)$\quad ,
\item\label{it:decompositionsofmatrixfailuresetsingeneral} 
$\mathcal{F}^{\mathrm{M}}(k,n) = \mathcal{F}_{\cdot 0}^{\mathrm{M}}(k,n) \sqcup 
\bigsqcup_{\beta\in\Z_{\geq 1}}\mathcal{F}_{\cdot 2^\beta}^{\mathrm{M}}(k,n)$\quad ,
\item\label{it:decompositionsofgraphfailuresets} 
$\mathcal{F}^{\mathrm{G}}(4,n) = \mathcal{F}_{\cdot 2}^{\mathrm{G}}(4,n)$, 
$\mathcal{F}^{\mathrm{G}}(5,n) = \mathcal{F}_{\cdot 2}^{\mathrm{G}}(5,n)$ and 
$\mathcal{F}^{\mathrm{G}}(6,n) = 
\mathcal{F}_{\cdot 2}^{\mathrm{G}}(6,n) \sqcup \mathcal{F}_{\cdot 4}^{\mathrm{G}}(6,n)$\quad ,
\item\label{it:decompositionsofmatrixfailuresets} 
$\mathcal{F}^{\mathrm{M}}(4,n) = \mathcal{F}_{\cdot 0}^{\mathrm{M}}(4,n) \sqcup 
\mathcal{F}_{\cdot 2}^{\mathrm{M}}(4,n)$,  $\mathcal{F}^{\mathrm{M}}(5,n) = 
\mathcal{F}_{\cdot 0}^{\mathrm{M}}(5,n) \sqcup \mathcal{F}_{\cdot 2}^{\mathrm{M}}(5,n)$ \\ 
and $\mathcal{F}^{\mathrm{M}}(6,n) = \mathcal{F}_{\cdot 0}^{\mathrm{M}}(6,n) \sqcup
\mathcal{F}_{\cdot 2}^{\mathrm{M}}(6,n) \sqcup \mathcal{F}_{\cdot 4}^{\mathrm{M}}(6,n)$\quad .
\end{enumerate}
Moreover, 
\begin{enumerate}[label={\rm(A\arabic{*})}, start=4] 
\item\label{item:absolutevaluesofchiomeasureonfailureentryspecificationevent:4entriesspecified} 
$\mathcal{F}^{\mathrm{G}}(4,n) = \mathcal{F}_{=(\frac12)^{7}}^{\mathrm{G}}(4,n)$, \; 
with $\mathcal{F}_{=(\frac12)^{7}}^{\mathrm{G}}(4,n) = 
\{\ref{item:comparisonproofcaseLhassize4:4circuit}\}$\quad , 
\item\label{item:absolutevaluesofchiomeasureonfailureentryspecificationevent:5entriesspecified} 
$\mathcal{F}^{\mathrm{G}}(5,n) = \mathcal{F}_{=(\frac12)^8}^{\mathrm{G}}(5,n) \sqcup 
\mathcal{F}_{=(\frac12)^9}^{\mathrm{G}}(5,n)$\quad , \\
with $\mathcal{F}_{=(\frac12)^{8}}^{\mathrm{G}}(5,n) = \{\ref{item:comparisonproofcaseLhassize6:oneisolatedvertex}, \ref{item:comparisonproofcaseLhassize6:twoisolatedvertices}\}$\quad and\quad  $\mathcal{F}_{=(\frac12)^9}^{\mathrm{G}}(5,n) = \{\ref{item:comparisonproofcaseLhassize6:oneadditionaledgeintersectingC4}, \ref{item:comparisonproofcaseLhassize6:C4withoneadditionaldisjointedge}\}$\quad , 
\item\label{item:absolutevaluesofchiomeasureonfailureentryspecificationevent:6entriesspecified} 
$\mathcal{F}^{\mathrm{G}}(6,n) = 
\mathcal{F}_{=(\frac12)^9}^{\mathrm{G}}(6,n)\sqcup
\mathcal{F}_{=(\frac12)^{10}}^{\mathrm{G}}(6,n)\sqcup
\mathcal{F}_{=(\frac12)^{11}}^{\mathrm{G}}(6,n)$, where
\begin{enumerate}[label={\rm(\roman{*})}] 
\item\label{item:absolutevaluesofchiomeasureonfailureentryspecificationevent:6entriesspecified:typeswithvalueoneovertwotothenine} 
$\mathcal{F}_{=(\frac12)^9}^{\mathrm{G}}(6,n)$
$ = \{ 
\ref{item:comparisonproofcaseLhassize6:oneisolatedvertex},
\ref{item:comparisonproofcaseLhassize6:twoisolatedvertices}, 
\ref{item:comparisonproofcaseLhassize6:threeisolatedvertices}, 
\ref{item:comparisonproofcaseLhassize6:fourisolatedvertices} \}$ \quad ,
\item\label{item:absolutevaluesofchiomeasureonfailureentryspecificationevent:6entriesspecified:typeswithvalueoneovertwototheten}  
$\mathcal{F}_{=(\frac12)^{10}}^{\mathrm{G}}(6,n)$
$=\{ 
\ref{item:comparisonproofcaseLhassize6:oneadditionaledgeintersectingC4}, 
\ref{item:comparisonproofcaseLhassize6:XBLisomorphictoK23}, 
\ref{item:comparisonproofcaseLhassize6:oneadditionaledgeintersectingC4andoneisolatedvertex}, 
\ref{item:comparisonproofcaseLhassize6:C4withoneadditionaldisjointedge}, 
\ref{item:comparisonproofcaseLhassize6:oneadditionaledgeintersectingC4andtwoisolatedvertices}, 
\ref{item:comparisonproofcaseLhassize6:oneadditionaldisjointedgeandoneisolatedvertex}, 
\ref{item:comparisonproofcaseLhassize6:oneadditionaldisjointedgeandtwoisolatedvertices} \}$ \quad ,
\item\label{item:absolutevaluesofchiomeasureonfailureentryspecificationevent:6entriesspecified:typeswithvalueoneovertwototheeleven}  
$\mathcal{F}_{=(\frac12)^{11}}^{\mathrm{G}}(6,n)$
$ = \{ \ref{item:comparisonproofcaseLhassize6:C4intersectingtwoedgesinseparatenonadjacentvertices}, 
\ref{item:comparisonproofcaseLhassize6:C4intersectingtwoedgesinseparateadjacentvertices}, 
\ref{item:comparisonproofcaseLhassize6:C4intersectingatwopathinanendvertex}, 
\ref{item:comparisonproofcaseLhassize6:C4intersectingatwopathinitsinnervertex},
\ref{item:comparisonproofcaseLhassize6:C6},  
\ref{item:comparisonproofcaseLhassize6:twoadditionaledgesonlyoneofthemdisjoint}, 
\ref{item:comparisonproofcaseLhassize6:twoadditionalnondisjointedgesdisjointfromC4}, 
\ref{item:comparisonproofcaseLhassize6:twoadditionaldisjointedgesdisjointfromC4} \}$ 
\quad .
\end{enumerate}
\end{enumerate}
\end{corollary}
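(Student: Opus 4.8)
The plan is to reduce all of \ref{it:decompositionsofgraphfailuresetsingeneral}--\ref{item:absolutevaluesofchiomeasureonfailureentryspecificationevent:6entriesspecified} to the two closed formulas furnished by Theorem~\ref{thm:graphtheoreticalcharacterizationofthechiomeasure}: item~\ref{relationbeweenchiomeasureandlazycoinflipmeasuregovernedbyfirstbettinumber}, which gives $\Prob_{\chio}[\mathcal{E}_B^{[n-1]^2}] = 2^{\beta_1(\upX_B)}\cdot\Prob_{\lcf}[\mathcal{E}_B^{[n-1]^2}]$ when the left-hand side is positive and $\Prob_{\chio}[\mathcal{E}_B^{[n-1]^2}] = 0$ otherwise, and item~\ref{explicitformulaforchiomeasureinthecaseofrealizability}, which in the positive case gives $\Prob_{\chio}[\mathcal{E}_B^{[n-1]^2}] = (\tfrac12)^{\dom(B) + f_0(\upX_B) - \beta_0(\upX_B)}$. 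Everything else is already available: the precise membership of $\mathcal{F}^{\mathrm{G}}(k,n)$ for $k\leq 6$ from Corollary~\ref{cor:isomorphismtypesforwhichequalityofmeasuresofentryspecificationeventsfails}, the $f$-vectors of the relevant isomorphism types from Lemma~\ref{lem:bipartitenonforestsorderedbytheirfvectors}, and the Euler relation $\beta_1 = f_1 - f_0 + \beta_0$. The one structural remark to be recorded is that every bipartite \emph{nonforest} $\mathfrak{X}$ carries both a balanced signing (put every edge to $(+)$; then every circuit has zero, hence an even number of, $(-)$-edges) and a non-balanced signing (as $\beta_1(\mathfrak{X})\geq 1$ there is a circuit, and flipping the sign of one of its edges makes it carry an odd number of $(-)$-edges), and that re-signing a matrix $B$ alters neither $\upX_B$ nor $\dom(B)$.

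For \ref{it:decompositionsofgraphfailuresetsingeneral} and \ref{it:decompositionsofmatrixfailuresetsingeneral} I would argue as follows. By Corollary~\ref{cor:graphfailuresetaspreimage}, $\mathcal{F}^{\mathrm{M}}(k,n) = (\beta_1\circ\upX^{k,n,n})^{-1}(\Z_{\geq 1})$, and by items~\ref{characterizationofwhenchiomeasureispositive} and \ref{relationbeweenchiomeasureandlazycoinflipmeasuregovernedbyfirstbettinumber} every such $B$ lies in $\mathcal{F}_{\cdot 0}^{\mathrm{M}}(k,n)$ if $(\upX_B,\sigma_B)$ is unbalanced and in $\mathcal{F}_{\cdot 2^{\beta_1(\upX_B)}}^{\mathrm{M}}(k,n)$ otherwise; these two alternatives are mutually exclusive and exhaustive, and since $\beta_1$ is an isomorphism invariant the sets $\mathcal{F}_{\cdot 2^\beta}^{\mathrm{M}}(k,n)$ are pairwise disjoint, which is \ref{it:decompositionsofmatrixfailuresetsingeneral}. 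Applying $\upXul^{k,n,n}$ and using the structural remark above --- which shows that each isomorphism type occurring in $\mathcal{F}^{\mathrm{G}}(k,n)$ is realised both by an unbalanced and by a balanced matrix of the same support --- yields $\mathcal{F}^{\mathrm{G}}(k,n) = \mathcal{F}_{\cdot 0}^{\mathrm{G}}(k,n) = \bigsqcup_{\beta\geq 1}\mathcal{F}_{\cdot 2^\beta}^{\mathrm{G}}(k,n)$, the displayed union being disjoint because the value of $\beta_1$ separates the types. This is \ref{it:decompositionsofgraphfailuresetsingeneral}.

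Statements \ref{it:decompositionsofgraphfailuresets}, \ref{it:decompositionsofmatrixfailuresets} and \ref{item:absolutevaluesofchiomeasureonfailureentryspecificationevent:4entriesspecified}--\ref{item:absolutevaluesofchiomeasureonfailureentryspecificationevent:6entriesspecified} then amount to two numerical checks for each isomorphism type $\mathfrak{X}$ appearing in Corollary~\ref{cor:isomorphismtypesforwhichequalityofmeasuresofentryspecificationeventsfails} for $k\in\{4,5,6\}$: the value of $\beta_1(\mathfrak{X}) = f_1(\mathfrak{X}) - f_0(\mathfrak{X}) + \beta_0(\mathfrak{X})$, which fixes the ratio $\ell = 2^{\beta_1(\mathfrak{X})}$, and the value of $\dom(B) + f_0(\mathfrak{X}) - \beta_0(\mathfrak{X}) = k + f_0(\mathfrak{X}) - \beta_0(\mathfrak{X})$ (note $\dom(B) = \lvert I\rvert = k$), which by item~\ref{explicitformulaforchiomeasureinthecaseofrealizability} fixes the absolute value $(\tfrac12)^{k + f_0(\mathfrak{X}) - \beta_0(\mathfrak{X})}$ attained in the balanced case. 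Reading $f_0$ and $f_1$ off Lemma~\ref{lem:bipartitenonforestsorderedbytheirfvectors} and $\beta_0$ off the verbal descriptions, one finds that among all types that occur for $k\leq 6$ exactly one, namely $K^{2,3}$ (type~\ref{item:comparisonproofcaseLhassize6:XBLisomorphictoK23}), has $\beta_1 = 2$, while every other occurring type has $\beta_1 = 1$; this gives \ref{it:decompositionsofgraphfailuresets}, and together with the $\ell = 0$ pieces supplied once more by the structural remark also \ref{it:decompositionsofmatrixfailuresets}. Tabulating $k + f_0 - \beta_0$ in the same way produces the single exponent $7$ for $k=4$, the two exponents $8$ and $9$ for $k=5$, and the three exponents $9$, $10$, $11$ for $k=6$, distributed over the types exactly as asserted; since $f_0 - \beta_0$ is an isomorphism invariant each type falls into a unique level set, so the unions in \ref{item:absolutevaluesofchiomeasureonfailureentryspecificationevent:4entriesspecified}--\ref{item:absolutevaluesofchiomeasureonfailureentryspecificationevent:6entriesspecified} are automatically disjoint.

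The conceptual input is entirely Theorem~\ref{thm:graphtheoreticalcharacterizationofthechiomeasure} together with the two corollaries just cited; the only genuine labour is the last step, which is pure bookkeeping over the twenty isomorphism types of Lemma~\ref{lem:bipartitenonforestsorderedbytheirfvectors}. I expect no difficulty of principle there, and the main obstacle is simply the risk of a clerical slip --- miscounting the number $\beta_0$ of components of a disjoint union, or the number $f_0$ of vertices of a $C^4$ with pendant edges or paths --- so in the write-up this step ought to be presented as an explicit table of the quintuple $(f_0,f_1,\beta_0,\beta_1,k+f_0-\beta_0)$ for each type rather than as prose.
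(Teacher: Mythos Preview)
Your proposal is correct and rests on the same conceptual input as the paper's proof: items~\ref{characterizationofwhenchiomeasureispositive}--\ref{relationbeweenchiomeasureandlazycoinflipmeasuregovernedbyfirstbettinumber} of Theorem~\ref{thm:graphtheoreticalcharacterizationofthechiomeasure}, Corollary~\ref{cor:quickconsequencesofthecharacterizations}.\ref{cor:graphfailuresetaspreimage}, and the enumeration in Corollary~\ref{cor:isomorphismtypesforwhichequalityofmeasuresofentryspecificationeventsfails}. Your structural remark that every nonforest admits both a balanced and an unbalanced signing is exactly what the paper establishes for \ref{it:decompositionsofgraphfailuresetsingeneral} (the paper does it by the counting argument $2^{f_1} - 2^{f_0-\beta_0} > 0$ when $\beta_1\geq 1$, which is equivalent to yours).

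The one genuine methodological difference lies in \ref{item:absolutevaluesofchiomeasureonfailureentryspecificationevent:6entriesspecified}. You propose a straight tabulation of $(f_0,\beta_0)$ over all nineteen types, reading off $k+f_0-\beta_0$ for each. The paper instead exploits Corollary~\ref{cor:quickconsequencesofthecharacterizations}.\ref{cor:equalityofChiomeasuresofeventswithcoalescedassociatedgraphs}: since a one-point wedge keeps $f_0-\beta_0$ invariant, the paper writes down chains of wedge reductions (e.g.\ \ref{item:comparisonproofcaseLhassize6:fourisolatedvertices} $\twoheadrightarrow$ \ref{item:comparisonproofcaseLhassize6:threeisolatedvertices} $\twoheadrightarrow$ \ref{item:comparisonproofcaseLhassize6:twoisolatedvertices} $\twoheadrightarrow$ \ref{item:comparisonproofcaseLhassize6:oneisolatedvertex}) and thereby reduces the nineteen computations to five representatives (\ref{item:comparisonproofcaseLhassize6:oneisolatedvertex}, \ref{item:comparisonproofcaseLhassize6:oneadditionaledgeintersectingC4}, \ref{item:comparisonproofcaseLhassize6:XBLisomorphictoK23}, \ref{item:comparisonproofcaseLhassize6:C4intersectingtwoedgesinseparatenonadjacentvertices}, \ref{item:comparisonproofcaseLhassize6:C6}). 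Your direct tabulation is arguably the cleaner write-up: the wedge-chain argument requires first identifying which types are wedge-related, which is itself a case inspection of comparable length, and the invariant $f_0-\beta_0$ is so cheap to evaluate that the reduction buys little. The paper's approach, on the other hand, makes the \emph{reason} for the equalities structurally visible rather than leaving them as numerical coincidences.
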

\begin{proof}
As to \ref{it:decompositionsofgraphfailuresetsingeneral}, let us start with 
$\mathcal{F}^{\mathrm{G}}(k,n) = \mathcal{F}_{\cdot 0}^{\mathrm{G}}(k,n)$. The inclusion 
$\supseteq$ is true directly by \ref{def:graphversionsofthefailuresets} in 
Definition \ref{def:parametrizedfailuresets}. Conversely, let 
$\mathfrak{X}\in\mathcal{F}^{\mathrm{G}}(k,n)$. Then $\beta_1(\mathfrak{X})\geq 1$ by 
Corollary \ref{cor:quickconsequencesofthecharacterizations}.\ref{cor:graphfailuresetaspreimage}, hence 
$\lvert \{\pm\}^{\upE(\mathfrak{X})}\setminus S_{\bal}(\mathfrak{X})\rvert = 
2^{f_1(\mathfrak{X})} - 2^{f_0(\mathfrak{X}) - \beta_0(\mathfrak{X})} = 
2^{f_1(\mathfrak{X})} - 2^{f_1(\mathfrak{X}) - \beta_1(\mathfrak{X})} > 0$, hence there exists 
$B\in\mathcal{F}_{\cdot 0}^{\mathrm{M}}(k,n)$ with $\mathfrak{X} = \upXul^{k,n,n}(B)$, 
hence $\mathfrak{X}\in\upXul^{k,n,n}(\mathcal{F}_{\cdot 0}^{\mathrm{M}}(k,n)) 
\By{Definition 
\ref{def:parametrizedfailuresets}.\ref{def:graphversionsofthefailuresets}}{=} 
\mathcal{F}_{\cdot 0}^{\mathrm{G}}(k,n)$, proving $\subseteq$. 

As to the partition claimed in \ref{it:decompositionsofgraphfailuresetsingeneral}, 
both claims follow immediately from 
\ref{relationbeweenchiomeasureandlazycoinflipmeasuregovernedbyfirstbettinumber} 
in Theorem \ref{thm:graphtheoreticalcharacterizationofthechiomeasure} (and the 
equality $\mathcal{F}^{\mathrm{G}}(k,n) = \mathcal{F}^{\mathrm{G}}_{\cdot 0}(k,n)$ is the 
reason why $\mathcal{F}_{\cdot 0}^{\mathrm{G}}(k,n)$ is missing in the disjoint union 
in \ref{it:decompositionsofgraphfailuresetsingeneral}).
As to \ref{it:decompositionsofgraphfailuresets} (respectively 
\ref{it:decompositionsofmatrixfailuresets}), this follows by combining 
\ref{it:decompositionsofgraphfailuresetsingeneral} 
(respectively \ref{it:decompositionsofmatrixfailuresetsingeneral}) with \ref{item:failuresetofisomorphismtypesk4}--\ref{item:failuresetofisomorphismtypesk6} in 
Corollary \ref{cor:isomorphismtypesforwhichequalityofmeasuresofentryspecificationeventsfails}. The claims \ref{item:absolutevaluesofchiomeasureonfailureentryspecificationevent:4entriesspecified}--\ref{item:absolutevaluesofchiomeasureonfailureentryspecificationevent:6entriesspecified} will be proved in reverse order. 

As to \ref{item:absolutevaluesofchiomeasureonfailureentryspecificationevent:6entriesspecified}, this seems to require some calculations. However, 
Corollary \ref{cor:quickconsequencesofthecharacterizations}.\ref{cor:equalityofChiomeasuresofeventswithcoalescedassociatedgraphs} can be used to reduce the amount of 
work to be done: if $(a)$ and $(b)$ are isomorphism types of graphs, let us write 
$(a) \twoheadrightarrow  (b)$ if and only if $(b)$ can be obtained from $(a)$ by a 
single one-point wedge of two connected components of $(a)$. Moreover, if $(a)$ is 
any of the isomorphism types in $\mathcal{F}^{\mathrm{M}}(6,n)$, let us employ the 
abbreviation $\mathcal{E}_B := \mathcal{E}_B^{[n-1]^2}$ and let us write 
$\Prob_{\chio}[(a)]$ for the number $\Prob_{\chio}[\mathcal{E}_B^{[n-1]^2}]$ 
with $B$ an arbitrary $B\in\{0,\pm\}^I$,  $I\in\binom{[n-1]^2}{6}$, $\upX_B\in (a)$ 
and $(\upX_B,\sigma_B)$ balanced. 
By \ref{explicitformulaforchiomeasureinthecaseofrealizability} in  Theorem 
\ref{thm:graphtheoreticalcharacterizationofthechiomeasure} we know that 
$\Prob_{\chio}[(a)]$ then does indeed  only depend on $(a)$, not on the choice 
of such a $B$.

Since evidently 
\ref{item:comparisonproofcaseLhassize6:fourisolatedvertices}
$\twoheadrightarrow$ 
\ref{item:comparisonproofcaseLhassize6:threeisolatedvertices}
$\twoheadrightarrow$ 
\ref{item:comparisonproofcaseLhassize6:twoisolatedvertices}
$\twoheadrightarrow$ 
\ref{item:comparisonproofcaseLhassize6:oneisolatedvertex},
Corollary \ref{cor:quickconsequencesofthecharacterizations}.\ref{cor:equalityofChiomeasuresofeventswithcoalescedassociatedgraphs} 
implies 
$\Prob_{\chio}[\ref{item:comparisonproofcaseLhassize6:oneisolatedvertex}]$
$=$
$\Prob_{\chio}[\ref{item:comparisonproofcaseLhassize6:twoisolatedvertices}]$
$=$ 
$\Prob_{\chio}[\ref{item:comparisonproofcaseLhassize6:threeisolatedvertices}]$
$=$
$\Prob_{\chio}[\ref{item:comparisonproofcaseLhassize6:fourisolatedvertices}]$.
Since evidently 
\ref{item:comparisonproofcaseLhassize6:oneadditionaldisjointedgeandtwoisolatedvertices} $\twoheadrightarrow$ \ref{item:comparisonproofcaseLhassize6:oneadditionaldisjointedgeandoneisolatedvertex} $\twoheadrightarrow$ \ref{item:comparisonproofcaseLhassize6:oneadditionaledgeintersectingC4andoneisolatedvertex} $\twoheadrightarrow$
\ref{item:comparisonproofcaseLhassize6:oneadditionaledgeintersectingC4}, Corollary 
\ref{cor:quickconsequencesofthecharacterizations}.\ref{cor:equalityofChiomeasuresofeventswithcoalescedassociatedgraphs} implies 
$\Prob_{\chio}[\ref{item:comparisonproofcaseLhassize6:oneadditionaldisjointedgeandtwoisolatedvertices}]$ 
$=$ 
$\Prob_{\chio}[\ref{item:comparisonproofcaseLhassize6:oneadditionaldisjointedgeandoneisolatedvertex}]$ 
$=$
$\Prob_{\chio}[\ref{item:comparisonproofcaseLhassize6:oneadditionaledgeintersectingC4andoneisolatedvertex}]$
$=$
$\Prob_{\chio}[\ref{item:comparisonproofcaseLhassize6:oneadditionaledgeintersectingC4}]$. Since also 
\ref{item:comparisonproofcaseLhassize6:oneadditionaledgeintersectingC4andtwoisolatedvertices} $\twoheadrightarrow$ \ref{item:comparisonproofcaseLhassize6:oneadditionaledgeintersectingC4andoneisolatedvertex}, Corollary 
\ref{cor:quickconsequencesofthecharacterizations}.\ref{cor:equalityofChiomeasuresofeventswithcoalescedassociatedgraphs} implies 
$\Prob_{\chio}[\ref{item:comparisonproofcaseLhassize6:oneadditionaledgeintersectingC4andtwoisolatedvertices}]$ 
$=$ 
$\Prob_{\chio}[\ref{item:comparisonproofcaseLhassize6:oneadditionaledgeintersectingC4andoneisolatedvertex}]$. Since moreover \ref{item:comparisonproofcaseLhassize6:C4withoneadditionaldisjointedge} $\twoheadrightarrow$ \ref{item:comparisonproofcaseLhassize6:oneadditionaledgeintersectingC4}, Corollary 
\ref{cor:quickconsequencesofthecharacterizations}.\ref{cor:equalityofChiomeasuresofeventswithcoalescedassociatedgraphs} implies 
$\Prob_{\chio}[\ref{item:comparisonproofcaseLhassize6:C4withoneadditionaldisjointedge}]$ $=$ $\Prob_{\chio}[\ref{item:comparisonproofcaseLhassize6:oneadditionaledgeintersectingC4}]$.
These equations together imply 
$\Prob_{\chio}[\ref{item:comparisonproofcaseLhassize6:oneadditionaledgeintersectingC4}]$
$=$
$\Prob_{\chio}[\ref{item:comparisonproofcaseLhassize6:oneadditionaledgeintersectingC4andoneisolatedvertex}]$
$=$
$\Prob_{\chio}[\ref{item:comparisonproofcaseLhassize6:C4withoneadditionaldisjointedge}]$ 
$=$ 
$\Prob_{\chio}[\ref{item:comparisonproofcaseLhassize6:oneadditionaledgeintersectingC4andtwoisolatedvertices}]$ 
$=$
$\Prob_{\chio}[\ref{item:comparisonproofcaseLhassize6:oneadditionaldisjointedgeandoneisolatedvertex}]$
$=$
$\Prob_{\chio}[\ref{item:comparisonproofcaseLhassize6:oneadditionaldisjointedgeandtwoisolatedvertices}]$.
Since evidently 
\ref{item:comparisonproofcaseLhassize6:twoadditionaldisjointedgesdisjointfromC4} 
$\twoheadrightarrow$ \ref{item:comparisonproofcaseLhassize6:twoadditionalnondisjointedgesdisjointfromC4} $\twoheadrightarrow$ \ref{item:comparisonproofcaseLhassize6:C4intersectingatwopathinanendvertex}, Corollary 
\ref{cor:quickconsequencesofthecharacterizations}.\ref{cor:equalityofChiomeasuresofeventswithcoalescedassociatedgraphs} implies 
$\Prob_{\chio}[\ref{item:comparisonproofcaseLhassize6:twoadditionaldisjointedgesdisjointfromC4}]$ 
$=$ 
$\Prob_{\chio}[\ref{item:comparisonproofcaseLhassize6:twoadditionalnondisjointedgesdisjointfromC4}]$ 
$=$
$\Prob_{\chio}[\ref{item:comparisonproofcaseLhassize6:C4intersectingatwopathinanendvertex}]$.
Since evidently 
\ref{item:comparisonproofcaseLhassize6:twoadditionaldisjointedgesdisjointfromC4} 
$\twoheadrightarrow$ \ref{item:comparisonproofcaseLhassize6:twoadditionaledgesonlyoneofthemdisjoint} $\twoheadrightarrow$ \ref{item:comparisonproofcaseLhassize6:C4intersectingatwopathinanendvertex}, Corollary 
\ref{cor:quickconsequencesofthecharacterizations}.\ref{cor:equalityofChiomeasuresofeventswithcoalescedassociatedgraphs} implies 
$\Prob_{\chio}[\ref{item:comparisonproofcaseLhassize6:twoadditionaldisjointedgesdisjointfromC4}]$ 
$=$ 
$\Prob_{\chio}[\ref{item:comparisonproofcaseLhassize6:twoadditionaledgesonlyoneofthemdisjoint}]$ 
$=$
$\Prob_{\chio}[\ref{item:comparisonproofcaseLhassize6:C4intersectingatwopathinanendvertex}]$.
Since evidently 
\ref{item:comparisonproofcaseLhassize6:twoadditionalnondisjointedgesdisjointfromC4} $\twoheadrightarrow$ \ref{item:comparisonproofcaseLhassize6:C4intersectingatwopathinitsinnervertex}, Corollary 
\ref{cor:quickconsequencesofthecharacterizations}.\ref{cor:equalityofChiomeasuresofeventswithcoalescedassociatedgraphs} implies 
$\Prob_{\chio}[\ref{item:comparisonproofcaseLhassize6:twoadditionalnondisjointedgesdisjointfromC4}]$ 
$=$
$\Prob_{\chio}[\ref{item:comparisonproofcaseLhassize6:C4intersectingatwopathinitsinnervertex}]$.
Since evidently 
\ref{item:comparisonproofcaseLhassize6:twoadditionaledgesonlyoneofthemdisjoint} $\twoheadrightarrow$ \ref{item:comparisonproofcaseLhassize6:C4intersectingtwoedgesinseparateadjacentvertices}, Corollary 
\ref{cor:quickconsequencesofthecharacterizations}.\ref{cor:equalityofChiomeasuresofeventswithcoalescedassociatedgraphs} implies 
$\Prob_{\chio}[\ref{item:comparisonproofcaseLhassize6:twoadditionaledgesonlyoneofthemdisjoint}]$ 
$=$
$\Prob_{\chio}[\ref{item:comparisonproofcaseLhassize6:C4intersectingtwoedgesinseparateadjacentvertices}]$.
Since evidently 
\ref{item:comparisonproofcaseLhassize6:twoadditionaledgesonlyoneofthemdisjoint} $\twoheadrightarrow$ \ref{item:comparisonproofcaseLhassize6:C4intersectingtwoedgesinseparatenonadjacentvertices}, Corollary 
\ref{cor:quickconsequencesofthecharacterizations}.\ref{cor:equalityofChiomeasuresofeventswithcoalescedassociatedgraphs} implies 
$\Prob_{\chio}[\ref{item:comparisonproofcaseLhassize6:twoadditionaledgesonlyoneofthemdisjoint}]$ 
$=$
$\Prob_{\chio}[\ref{item:comparisonproofcaseLhassize6:C4intersectingtwoedgesinseparatenonadjacentvertices}]$. These equations together imply that 
$\Prob_{\chio}[\ref{item:comparisonproofcaseLhassize6:C4intersectingtwoedgesinseparatenonadjacentvertices}]$
$=$
$\Prob_{\chio}[\ref{item:comparisonproofcaseLhassize6:C4intersectingtwoedgesinseparateadjacentvertices}]$
$=$
$\Prob_{\chio}[\ref{item:comparisonproofcaseLhassize6:C4intersectingatwopathinanendvertex}]$
$=$
$\Prob_{\chio}[\ref{item:comparisonproofcaseLhassize6:C4intersectingatwopathinitsinnervertex}]$
$=$
$\Prob_{\chio}[\ref{item:comparisonproofcaseLhassize6:twoadditionaledgesonlyoneofthemdisjoint}]$ 
$=$ 
$\Prob_{\chio}[\ref{item:comparisonproofcaseLhassize6:twoadditionalnondisjointedgesdisjointfromC4}]$ 
$=$
$\Prob_{\chio}[\ref{item:comparisonproofcaseLhassize6:twoadditionaldisjointedgesdisjointfromC4}]$.

This proves that it suffices (note that of the nineteen elements of 
$\mathcal{F}^{\mathrm{M}}(6,n)$  exactly 
$\ref{item:comparisonproofcaseLhassize6:XBLisomorphictoK23}$ 
and $\ref{item:comparisonproofcaseLhassize6:C6}$ 
have not been part of one of the equality chains) to calculate only 
$\Prob_{\chio}[\ref{item:comparisonproofcaseLhassize6:oneisolatedvertex}]$, 
$\Prob_{\chio}[\ref{item:comparisonproofcaseLhassize6:oneadditionaledgeintersectingC4}]$, $\Prob_{\chio}[\ref{item:comparisonproofcaseLhassize6:XBLisomorphictoK23}]$, 
$\Prob_{\chio}[\ref{item:comparisonproofcaseLhassize6:C4intersectingtwoedgesinseparatenonadjacentvertices}]$ and $\Prob_{\chio}[\ref{item:comparisonproofcaseLhassize6:C6}]$. With the formulas in \ref{explicitformulaforchiomeasureinthecaseofrealizability} 
of Theorem \ref{thm:graphtheoreticalcharacterizationofthechiomeasure} and 
in Lemma \ref{lem:valueoflazycoinflipdistribution}, this can be done as 
follows (keep in mind that, being within item \ref{item:absolutevaluesofchiomeasureonfailureentryspecificationevent:6entriesspecified}, $\dom(B) = \lvert I \rvert = 6$ 
in each calculation):
If $\mathfrak{X}=\ref{item:comparisonproofcaseLhassize6:oneisolatedvertex}$, then 
$f_0( \mathfrak{X} ) = 5$, $\beta_0(\mathfrak{X}) = 2$, hence 
$\Prob_{\chio}[\mathcal{E}_B]$ $=$ $(\frac12)^{\lvert I \rvert+5-2}$ $=$ $(\frac12)^9$. \\
If $\mathfrak{X}\in\{\ref{item:comparisonproofcaseLhassize6:oneadditionaledgeintersectingC4}, \ref{item:comparisonproofcaseLhassize6:XBLisomorphictoK23}\}$, then 
$f_0( \mathfrak{X} ) = 5$, $\beta_0(\mathfrak{X}) = 1$, hence 
$\Prob_{\chio}[\mathcal{E}_B]$ $=$ $(\frac12)^{\lvert I \rvert+5-1}$ $=$ $(\frac12)^{10}$.\\ 
If $\mathfrak{X}\in \{\ref{item:comparisonproofcaseLhassize6:C4intersectingtwoedgesinseparatenonadjacentvertices}, \ref{item:comparisonproofcaseLhassize6:C6}\}$, then 
$f_0( \mathfrak{X} ) = 6$, $\beta_0(\mathfrak{X}) = 1$, hence 
$\Prob_{\chio}[\mathcal{E}_B]$ $=$ $(\frac12)^{\lvert I \rvert+6-1}$ $=$ 
$(\frac12)^{11}$. 

As to \ref{item:absolutevaluesofchiomeasureonfailureentryspecificationevent:5entriesspecified}, it follows by an entirely analogous (but much shorter) argument as the 
one given for \ref{item:absolutevaluesofchiomeasureonfailureentryspecificationevent:5entriesspecified} that it suffices to calculate only $\Prob_{\chio}[\ref{item:comparisonproofcaseLhassize6:oneisolatedvertex}]$ and $\Prob_{\chio}[\ref{item:comparisonproofcaseLhassize6:oneadditionaledgeintersectingC4}]$, and these calculations are 
identical to the ones made for $\Prob_{\chio}[\ref{item:comparisonproofcaseLhassize6:oneisolatedvertex}]$ and $\Prob_{\chio}[\ref{item:comparisonproofcaseLhassize6:oneadditionaledgeintersectingC4}]$ in the preceding paragraph, 
except that now $\lvert I \rvert = 5$.

As to \ref{item:absolutevaluesofchiomeasureonfailureentryspecificationevent:4entriesspecified}, in view of \ref{item:failuresetofisomorphismtypesk4} in 
Corollary \ref{cor:isomorphismtypesforwhichequalityofmeasuresofentryspecificationeventsfails}, we only have to deal with the single type \ref{item:comparisonproofcaseLhassize4:4circuit} where $f_0\ref{item:comparisonproofcaseLhassize4:4circuit} = 4$, 
$\beta_0\ref{item:comparisonproofcaseLhassize4:4circuit} = 1$ and therefore 
$\Prob_{\chio}[\ref{item:comparisonproofcaseLhassize4:4circuit}] = 
(\tfrac12)^{\lvert I \rvert + 4 - 1} = (\tfrac12)^{4+4-1} = (\tfrac12)^7$. 
\end{proof}

The results obtained so far can be turned into a set of instructions of how to 
tell the measure of $\Prob_{\chio}[\mathcal{E}_B^{[n-1]^2}]$ from a 
given $B\in \{0,\pm\}^I$ provided that  $\lvert I \rvert \leq 6$. We formulate the 
instructions exclusively in terms of those data, avoiding any mention 
of the associated signed graph ($\upX_B$,$\sigma_B$). 

\begin{corollary}[how to find the measure under $\Prob_{\chio}$ of large 
entry-specification events]
\label{cor:recipe} 
For every $\emptyset \subseteq I \subseteq [n-1]^2$ with $\lvert I \rvert \leq 6$ 
and every $B\in \{0,\pm\}^I $, the following instructions lead to the 
correct Chio measure of $\mathcal{E}_B := \mathcal{E}_B^{[n-1]^2}$:
\begin{enumerate}[label={\rm(H\arabic{*})}, start=3]
\item\label{comparisonthm:item:uptto3entries} If 
$0\leq \lvert I\rvert \leq 3$, then 
$\Prob_{\chio}[\mathcal{E}_B] = \Prob_{\lcf} [\mathcal{E}_B] 
= (\frac12)^{\dom(B)+\supp(B)}$ .
\item\label{thm:it:fourentriesspecified} If $\lvert I\rvert = 4$, 
then check whether $I$ is a matrix-$4$-circuit \emph{such that} 
$B\in \{\pm\}^I$. If not, then 
$\Prob_{\chio}[\mathcal{E}_B] = \Prob_{\lcf} [\mathcal{E}_B] 
= (\frac12)^{\dom(B)+\supp(B)}$. If $B$ has this property, then check whether 
an odd number of the four nonzero values $B[i,j]$ with $(i,j)\in I$ are $+$. 
If so, $\Prob_{\chio}[\mathcal{E}_B]=0$. If not, then 
$\Prob_{\chio}[\mathcal{E}_B] = (\frac12)^7 = 2\cdot \Prob_{\lcf}[\mathcal{E}_B]$.
\item\label{thm:it:fiveentriesspecified} 
If $\lvert I\rvert = 5$, then check whether there exists within $I$ a 
matrix-$4$-circuit $S\subseteq I$ \emph{such that} $B\mid_{S}\in \{\pm\}^S$. 
If not, then 
$\Prob_{\chio}[\mathcal{E}_B] = \Prob_{\lcf} [\mathcal{E}_B] 
= (\frac12)^{\dom(B)+\supp(B)}$. If so, then check whether an odd number of 
the four nonzero values $B[i,j]$ with $(i,j)\in S$ are $+$. 
If so, $\Prob_{\chio}[\mathcal{E}_B]=0$. If not, then check whether the 
entry $B[i,j]$ with $(i,j)\in I\setminus S$ is zero. If it is, then 
$\Prob_{\chio}[\mathcal{E}_B] = (\frac12)^8 = 
2 \cdot \Prob_{\lcf}[\mathcal{E}_B]$. If it is nonzero, then 
$\Prob_{\chio}[\mathcal{E}_B] = (\frac12)^9 = 
2\cdot \Prob_{\lcf}[\mathcal{E}_B]$.
\item\label{thm:it:sixentriesspecified} 
If $\lvert I\rvert = 6$, then check whether at least four entries in $B$ 
are nonzero. If not, then $\Prob_{\chio}[\mathcal{E}_B] = \Prob_{\lcf} [\mathcal{E}_B] 
= (\frac12)^{\dom(B)+\supp(B)}$. If so, then check whether $I$ contains a 
matrix-$4$-circuit $S\subseteq I$ with $B\mid_{S} \in \{\pm\}^S$. 
\begin{enumerate}[label={\rm(\roman{*})}]
\item\label{item:thmonfindingPchio:sizeLis6no4circuit} If $I$ does not 
contain such a matrix-$4$-circuit $S$, then check whether $I$ is a 
matrix-$6$-circuit such that $B \in \{\pm\}^I$. If not, then  
$\Prob_{\chio}[\mathcal{E}_B] = \Prob_{\lcf} [\mathcal{E}_B] 
= (\frac12)^{\dom(B)+\supp(B)}$. If so, then check 
whether an odd number of these six entries are $+$. If so, then 
$\Prob_{\chio}[\mathcal{E}_B] = 0$. If not, then 
$\Prob_{\chio}[\mathcal{E}_B] = (\frac12)^{11} = 
2\cdot \Prob_{\lcf}[\mathcal{E}_B]$.
\item\label{item:thmonfindingPchio:sizeLis6thereis4circuit} If $I$ does indeed 
contain such a matrix-$4$-circuit, then check whether an odd number of the four 
nonzero values $B[i,j]$ with $(i,j)\in S$ are $+$. 
If so, then $\Prob_{\chio}[\mathcal{E}_B] = 0$. Else, there are three 
further cases: 
\begin{enumerate}[label={\rm(\alph{*})}]
\item
\label{item:thmonfindingPchio:sizeLis6thereis4circuit:bothnoncircuitentrieszero} 
If both entries indexed by $I\setminus S$ are zero, then  
$\Prob_{\chio}[\mathcal{E}_B] = (\frac12)^9 = 2\cdot 
\Prob_{\lcf} [\mathcal{E}_B]$.
\item
\label{item:thmonfindingPchio:sizeLis6thereis4circuit:onenoncircuitentrieszero} 
If exactly one of the two entries indexed by $I\setminus S$ is zero, then  
$\Prob_{\chio}[\mathcal{E}_B] = (\frac12)^{10} = 2\cdot 
\Prob_{\lcf} [\mathcal{E}_B]$.
\item
\label{item:thmonfindingPchio:sizeLis6thereis4circuit:bothnoncircuitentriesnonzero} 
If both entries indexed by $I\setminus S$ are nonzero, then the positions of 
these two nonzero entries must be taken into account: if there do \emph{neither} 
exist $1\leq i<i'<i'' \leq n-1$ and $1 \leq j < j' \leq n-1$ such 
that $I$ $=$ $\{$ $(i,j),$ $(i',j),$ $(i'',j),$ $(i,j'),$ $(i',j'),$ $(i'',j')$ 
$\}$ \emph{nor} $1\leq i < i' \leq n-1$ and $1\leq j < j' < j'' \leq n-1$ such 
that $I$ $=$ $\{$ $(i,j),$ $(i,j'),$ $(i,j''),$ $(i',j),$ $(i',j'),$ 
$(i',j'')$ $\}$, then---whatever the $B$-values indexed by the two elements in 
$I\setminus S$ may be---you know that 
$\Prob_{\chio}[\mathcal{E}_B] = (\frac12)^{11} = 2\cdot 
\Prob_{\lcf} [\mathcal{E}_B]$. Else, check whether in any one of the then 
existing two additional matrix-$4$-circuits in $I$ the number of $+$ entries 
is odd. If so, then $\Prob_{\chio}[\mathcal{E}_B]=0$. If not, then 
$\Prob_{\chio}[\mathcal{E}_B] = (\frac12)^{10} 
= 4\cdot \Prob_{\lcf} [\mathcal{E}_B]$. \hfill $\Box$
\end{enumerate} 
\end{enumerate}
\end{enumerate}
\end{corollary}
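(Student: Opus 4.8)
The plan is a case analysis on $\lvert I\rvert\in\{0,1,2,3\}$, $\lvert I\rvert=4$, $\lvert I\rvert=5$ and $\lvert I\rvert=6$, in which for each case the graph-theoretic description of $\Prob_{\chio}[\mathcal{E}_B]$ established above is translated verbatim into the advertised arithmetic conditions on $B$. Three facts are combined throughout. First, items \ref{characterizationofwhenchiomeasureispositive}--\ref{relationbeweenchiomeasureandlazycoinflipmeasuregovernedbyfirstbettinumber} of Theorem \ref{thm:graphtheoreticalcharacterizationofthechiomeasure}: $\Prob_{\chio}[\mathcal{E}_B]=0$ unless $(\upX_B,\sigma_B)$ is balanced, and in the balanced case $\Prob_{\chio}[\mathcal{E}_B]=(\tfrac12)^{\dom(B)+f_0(\upX_B)-\beta_0(\upX_B)}=2^{\beta_1(\upX_B)}\cdot\Prob_{\lcf}[\mathcal{E}_B]$; in particular the value depends on $B$ only through $\lvert I\rvert=\dom(B)$, the support size $\supp(B)=f_1(\upX_B)$, the Betti number $\beta_1(\upX_B)$, and the single bit ``balanced or not''. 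Second, Lemma \ref{lem:valueoflazycoinflipdistribution}: $\Prob_{\lcf}[\mathcal{E}_B]=(\tfrac12)^{\dom(B)+\supp(B)}$, which converts every displayed value into the claimed multiple of $\Prob_{\lcf}[\mathcal{E}_B]$. Third, the elementary observation that for a circuit $C$ of even length the number of $(\sigma_B,-)$-edges of $C$ is even if and only if the number of $(\sigma_B,+)$-edges of $C$ is even; by Definition \ref{def:XBandecXB} the latter are precisely the entries of $B$ equal to $+$ that index the positions of a matrix-circuit, so balancedness of $(\upX_B,\sigma_B)$ can be tested by counting $+$-entries of $B$ along a spanning family of matrix-circuits of $I$.

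For \ref{comparisonthm:item:uptto3entries}, item \ref{item:failuresetofisomorphismtypesk3} of Corollary \ref{cor:isomorphismtypesforwhichequalityofmeasuresofentryspecificationeventsfails} shows that $\upX_B$ is a forest whenever $\lvert I\rvert\leq 3$, so $\beta_1(\upX_B)=0$ and item \ref{relationbeweenchiomeasureandlazycoinflipmeasuregovernedbyfirstbettinumber} of Theorem \ref{thm:graphtheoreticalcharacterizationofthechiomeasure} gives $\Prob_{\chio}[\mathcal{E}_B]=\Prob_{\lcf}[\mathcal{E}_B]=(\tfrac12)^{\dom(B)+\supp(B)}$. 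For \ref{thm:it:fourentriesspecified} and \ref{thm:it:fiveentriesspecified} one argues in the same spirit: items \ref{item:failuresetofisomorphismtypesk4} and \ref{item:failuresetofisomorphismtypesk5} of Corollary \ref{cor:isomorphismtypesforwhichequalityofmeasuresofentryspecificationeventsfails} pin down exactly which isomorphism types of $\upX_B$ can make $\Prob_{\chio}[\mathcal{E}_B]\neq\Prob_{\lcf}[\mathcal{E}_B]$, and inspecting these types yields the stated position-and-support conditions --- for $\lvert I\rvert=4$ that $I$ be a matrix-$4$-circuit with $B\in\{\pm\}^I$ (type \ref{item:comparisonproofcaseLhassize4:4circuit}), and for $\lvert I\rvert=5$ that $I$ contain a matrix-$4$-circuit $S$ with $B\mid_S\in\{\pm\}^S$, the remaining entry's being zero or nonzero distinguishing the isolated-vertex types \ref{item:comparisonproofcaseLhassize6:oneisolatedvertex},\ref{item:comparisonproofcaseLhassize6:twoisolatedvertices} from the one-extra-edge types \ref{item:comparisonproofcaseLhassize6:oneadditionaledgeintersectingC4},\ref{item:comparisonproofcaseLhassize6:C4withoneadditionaldisjointedge}. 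In the relevant cases $\Prob_{\chio}[\mathcal{E}_B]=0$ exactly when the $C^4$ supported on $S$ is unbalanced, i.e.\ when an odd number of its four $B$-entries are $+$; otherwise the absolute values come from items \ref{item:absolutevaluesofchiomeasureonfailureentryspecificationevent:4entriesspecified} and \ref{item:absolutevaluesofchiomeasureonfailureentryspecificationevent:5entriesspecified} of Corollary \ref{cor:ratiosandvaluesofpchioandplcffortheisomorphismtypesforwhichequalityfails}, and the factors $2\cdot\Prob_{\lcf}[\mathcal{E}_B]$ follow by substituting $\supp(B)\in\{4,5\}$ into Lemma \ref{lem:valueoflazycoinflipdistribution}.

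For \ref{thm:it:sixentriesspecified} I would reproduce the case tree exactly as in the statement. Since every bipartite nonforest listed in Lemma \ref{lem:bipartitenonforestsorderedbytheirfvectors} has $f_1\geq 4$ and $f_1(\upX_B)=\supp(B)$, at least four nonzero entries of $B$ are necessary for $\Prob_{\chio}[\mathcal{E}_B]\neq\Prob_{\lcf}[\mathcal{E}_B]$, which settles the first branch via item \ref{relationbeweenchiomeasureandlazycoinflipmeasuregovernedbyfirstbettinumber} of Theorem \ref{thm:graphtheoreticalcharacterizationofthechiomeasure}. When $B$ has at least four nonzero entries, every type allowed by item \ref{item:failuresetofisomorphismtypesk6} of Corollary \ref{cor:isomorphismtypesforwhichequalityofmeasuresofentryspecificationeventsfails} except \ref{item:comparisonproofcaseLhassize6:C6} contains a $C^4$, so one branches on whether $I$ contains a matrix-$4$-circuit $S$ with $B\mid_S\in\{\pm\}^S$ (branch \ref{item:thmonfindingPchio:sizeLis6no4circuit} versus \ref{item:thmonfindingPchio:sizeLis6thereis4circuit}): if not, the sole remaining nonforest possibility is $\upX_B\cong C^6$, i.e.\ type \ref{item:comparisonproofcaseLhassize6:C6}, handled exactly as the earlier circuit cases; if $S$ exists, then the $4$-circuit of $\upX_B$ supported on $S$ is already a witness of non-balancedness whenever it carries an odd number of $+$-entries, forcing $\Prob_{\chio}[\mathcal{E}_B]=0$, while if that circuit is balanced the three sub-cases \ref{item:thmonfindingPchio:sizeLis6thereis4circuit:bothnoncircuitentrieszero}, \ref{item:thmonfindingPchio:sizeLis6thereis4circuit:onenoncircuitentrieszero} and \ref{item:thmonfindingPchio:sizeLis6thereis4circuit:bothnoncircuitentriesnonzero} are governed by how many of the two entries indexed by $I\setminus S$ vanish. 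Identifying the resulting isomorphism type of $\upX_B$ in Lemma \ref{lem:bipartitenonforestsorderedbytheirfvectors}, then reading off the value from item \ref{item:absolutevaluesofchiomeasureonfailureentryspecificationevent:6entriesspecified} of Corollary \ref{cor:ratiosandvaluesofpchioandplcffortheisomorphismtypesforwhichequalityfails} (and the multiple of $\Prob_{\lcf}[\mathcal{E}_B]$ from $\beta_1(\upX_B)\in\{1,2\}$ via Theorem \ref{thm:graphtheoreticalcharacterizationofthechiomeasure}), produces the values $(\tfrac12)^{9}$, $(\tfrac12)^{10}$, $(\tfrac12)^{11}$ in the appropriate sub-cases.

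The one step that is more than bookkeeping is sub-case \ref{item:thmonfindingPchio:sizeLis6thereis4circuit:bothnoncircuitentriesnonzero}, in which all six entries of $B$ are nonzero, so $\upX_B$ has no isolated vertices and $f_1(\upX_B)=6$. Here I must establish the purely combinatorial fact that, among the size-$6$ index sets $I$ that contain a matrix-$4$-circuit and all six of whose $B$-entries are nonzero, those with $\beta_1(\upX_B)=2$ --- equivalently $\upX_B\cong K^{2,3}$, type \ref{item:comparisonproofcaseLhassize6:XBLisomorphictoK23} --- are exactly the two ``combinatorial rectangles'' written out in the statement, while all others have $\beta_1(\upX_B)=1$; this is a short finite check, using that a simple bipartite graph with $6$ edges and no isolated vertices has $\beta_1\leq 2$, with equality only for a connected graph on $5$ vertices --- necessarily $K^{2,3}$ --- whose edge set is then forced to be a full $2\times 3$ (or $3\times 2$) block of matrix positions by Definition \ref{def:circuitofmatrixentrypositions}. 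Finally, when $\upX_B\cong K^{2,3}$ the set $I$ carries three matrix-$4$-circuits, and since the cycle space of $K^{2,3}$ over $\Z/2$ is $2$-dimensional and spanned by any two of them, once the one supported on $S$ is known to be balanced the signed graph $(\upX_B,\sigma_B)$ is balanced if and only if one of the other two matrix-$4$-circuits carries an even number of $+$-entries; in that balanced case $\beta_1(\upX_B)=2$ and Theorem \ref{thm:graphtheoreticalcharacterizationofthechiomeasure} together with Lemma \ref{lem:valueoflazycoinflipdistribution} give $\Prob_{\chio}[\mathcal{E}_B]=4\cdot\Prob_{\lcf}[\mathcal{E}_B]=(\tfrac12)^{10}$, which completes the verification.
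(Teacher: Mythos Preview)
Your derivation is correct and is exactly the route the paper intends: the corollary is stated without proof (just a $\Box$), being a direct repackaging of Theorem \ref{thm:graphtheoreticalcharacterizationofthechiomeasure}, Lemma \ref{lem:valueoflazycoinflipdistribution}, and Corollaries \ref{cor:isomorphismtypesforwhichequalityofmeasuresofentryspecificationeventsfails} and \ref{cor:ratiosandvaluesofpchioandplcffortheisomorphismtypesforwhichequalityfails} into matrix-level instructions. Your handling of sub-case \ref{item:thmonfindingPchio:sizeLis6thereis4circuit:bothnoncircuitentriesnonzero}, in particular the identification of the $K^{2,3}$ configurations and the reduction of balancedness to parity on one additional $4$-circuit via the $2$-dimensional cycle space, is the only genuinely nontrivial step and you carry it out correctly.
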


If $U\subseteq \Dom(\upX^{k,n,n}) = \bigsqcup_{I\in\binom{[s-1]\times [t-1]}{k}}\{0,\pm\}^I$ 
is an arbitrary subset, then on the abstract set-theoretical level all we know is 
$(\upX^{k,n,n})^{-1}(\upX^{k,n,n}(U))\supseteq U$. For the specific subsets 
$U = \mathcal{F}^{\mathrm{M}}(k,n)$, however, the inclusion is an equality: 

\begin{corollary}\label{cor:graphmapcanbeinvertedsetwiseonthefailuresets}
$(\upX^{k,n,n})^{-1}(\upX^{k,n,n}(\mathcal{F}^{\mathrm{M}}(k,n))) = 
\mathcal{F}^{\mathrm{M}}(k,n)$ 
\end{corollary}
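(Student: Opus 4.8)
The plan is to notice that $\mathcal{F}^{\mathrm{M}}(k,n)$ is, by construction, nothing but a full preimage under the map $\upX^{k,n,n}$, so that the assertion reduces to the purely set-theoretic Lemma \ref{lem:elementarypreimagestatements:finvffinv}. First I would dispose of the inclusion $\supseteq$, which needs no argument at all: for \emph{any} map $f\colon A\to B$ and \emph{any} subset $S\subseteq A$ one has $f^{-1}(f(S))\supseteq S$, so in particular $(\upX^{k,n,n})^{-1}(\upX^{k,n,n}(\mathcal{F}^{\mathrm{M}}(k,n)))\supseteq \mathcal{F}^{\mathrm{M}}(k,n)$.

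For the reverse inclusion the key point is that membership in $\mathcal{F}^{\mathrm{M}}(k,n)$ depends on $B$ only through the labelled graph $\upX_B^{k,n,n}$. Making this precise, set $W:=\{X\in\BG_{n,n}\colon \beta_1(X)\geq 1\}$, the set of bipartite non-forests on the standard vertex set. Then item \ref{cor:graphfailuresetaspreimage} of Corollary \ref{cor:quickconsequencesofthecharacterizations} gives
\begin{equation*}
\mathcal{F}^{\mathrm{M}}(k,n) = (\beta_1\circ \upX^{k,n,n})^{-1}(\Z_{\geq 1}) = (\upX^{k,n,n})^{-1}\bigl(\{X\in\BG_{n,n}\colon \beta_1(X)\geq 1\}\bigr) = (\upX^{k,n,n})^{-1}(W)\quad ,
\end{equation*}
where the middle equality is just the fact that $(\beta_1\circ \upX^{k,n,n})^{-1}(\Z_{\geq 1}) = (\upX^{k,n,n})^{-1}(\beta_1^{-1}(\Z_{\geq 1}))$. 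So $\mathcal{F}^{\mathrm{M}}(k,n)$ is literally a preimage under $\upX^{k,n,n}$.

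The finish is then to apply Lemma \ref{lem:elementarypreimagestatements:finvffinv} with $f:=\upX^{k,n,n}$ and $U:=W$, which yields
\begin{equation*}
(\upX^{k,n,n})^{-1}\bigl(\upX^{k,n,n}\bigl((\upX^{k,n,n})^{-1}(W)\bigr)\bigr) = (\upX^{k,n,n})^{-1}(W)\quad ,
\end{equation*}
and substituting $(\upX^{k,n,n})^{-1}(W) = \mathcal{F}^{\mathrm{M}}(k,n)$ into both sides turns this into exactly the claimed identity. I expect no genuine obstacle here: the entire content is the observation that $\mathcal{F}^{\mathrm{M}}(k,n)$ is a union of fibres of $\upX^{k,n,n}$, which is already packaged in Corollary \ref{cor:quickconsequencesofthecharacterizations}.\ref{cor:graphfailuresetaspreimage}; the only thing requiring a moment's care is the bookkeeping that $\beta_1^{-1}(\Z_{\geq 1})$ should be read as a subset of the codomain $\BG_{n,n}$ of $\upX^{k,n,n}$. (By \ref{explicitformulaforchiomeasureinthecaseofrealizability} and \ref{relationbeweenchiomeasureandlazycoinflipmeasuregovernedbyfirstbettinumber} of Theorem \ref{thm:graphtheoreticalcharacterizationofthechiomeasure} the same reasoning would also handle the refined sets $\mathcal{F}_{\cdot\ell}^{\mathrm{M}}(k,n)$ and $\mathcal{F}_{=p}^{\mathrm{M}}(k,n)$, but that is not needed for the present statement.)
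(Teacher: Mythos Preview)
Your proof is correct and follows essentially the same approach as the paper: both use Corollary~\ref{cor:quickconsequencesofthecharacterizations}.\ref{cor:graphfailuresetaspreimage} to recognize $\mathcal{F}^{\mathrm{M}}(k,n)$ as a preimage $(\upX^{k,n,n})^{-1}(\beta_1^{-1}(\Z_{\geq 1}))$ and then apply Lemma~\ref{lem:elementarypreimagestatements:finvffinv}. Your separate treatment of the inclusion $\supseteq$ is harmless but superfluous, since the lemma already delivers equality directly.
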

\begin{proof}
Since $(\upX^{k,n,n})^{-1}(\upX^{k,n,n}(\mathcal{F}^{\mathrm{M}}(k,n)))$ 
$\By{Corollary \ref{cor:quickconsequencesofthecharacterizations}.\ref{cor:graphfailuresetaspreimage}}{=}$ 
$(\upX^{k,n,n})^{-1}(\upX^{k,n,n}( (\upX^{k,n,n})^{-1}(\beta_1^{-1}(\Z_{\geq 1}))))$ \\
$\By{Lemma \ref{lem:elementarypreimagestatements:finvffinv}}{=}$ 
$(\upX^{k,n,n})^{-1} (\beta_1^{-1}(\Z_{\geq 1}))$ 
$\By{again Corollary \ref{cor:quickconsequencesofthecharacterizations}.\ref{cor:graphfailuresetaspreimage}}{=}$ 
$\mathcal{F}^{\mathrm{M}}(k,n)$.
\end{proof}

Corollaries \ref{cor:isomorphismtypesforwhichequalityofmeasuresofentryspecificationeventsfails} and \ref{cor:graphmapcanbeinvertedsetwiseonthefailuresets} allow us to 
express the failure sets $\mathcal{F}^{\mathrm{M}}(k,n)$ as partitions indexed by 
isomorphism types of bipartite graphs: 

\begin{corollary}\label{cor:partitionsoffailuresets}
For every $n$, 
\begin{enumerate}[label={\rm(M\arabic{*})}, start=4] 
\item\label{it:partitionoffailuresets:kequals4} $\mathcal{F}^{\mathrm{M}}(4,n) = 
(\upXul^{4,n,n})^{-1}\ref{item:comparisonproofcaseLhassize4:4circuit}$\quad ,
\item\label{it:partitionoffailuresets:kequals5} $\mathcal{F}^{\mathrm{M}}(5,n) = 
(\upXul^{5,n,n})^{-1}\ref{item:comparisonproofcaseLhassize6:oneisolatedvertex} \sqcup 
(\upXul^{5,n,n})^{-1}\ref{item:comparisonproofcaseLhassize6:oneadditionaledgeintersectingC4} \sqcup (\upXul^{5,n,n})^{-1}\ref{item:comparisonproofcaseLhassize6:twoisolatedvertices}\sqcup (\upXul^{5,n,n})^{-1}\ref{item:comparisonproofcaseLhassize6:C4withoneadditionaldisjointedge}$\quad ,
\item\label{it:partitionoffailuresets:kequals6} $\mathcal{F}^{\mathrm{M}}(6,n) = (\upXul^{6,n,n})^{-1}\ref{item:comparisonproofcaseLhassize6:oneisolatedvertex} \sqcup (\upXul^{6,n,n})^{-1}\ref{item:comparisonproofcaseLhassize6:oneadditionaledgeintersectingC4} \sqcup (\upXul^{6,n,n})^{-1}\ref{item:comparisonproofcaseLhassize6:twoisolatedvertices} \sqcup (\upXul^{6,n,n})^{-1}\ref{item:comparisonproofcaseLhassize6:C4withoneadditionaldisjointedge} \\ 
 \sqcup (\upXul^{6,n,n})^{-1} 
\ref{item:comparisonproofcaseLhassize6:XBLisomorphictoK23} 
\sqcup \bigsqcup_{6\leq k \leq 20} (\upXul^{6,n,n})^{-1}(\mathrm{t}k)$\quad .
\end{enumerate}
\end{corollary}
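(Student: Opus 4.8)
The plan is to deduce all three equalities from one structural fact --- that $\mathcal{F}^{\mathrm{M}}(k,n)$ is the \emph{full} $\upXul^{k,n,n}$-preimage of $\mathcal{F}^{\mathrm{G}}(k,n)$ --- and then simply to read off the explicit isomorphism types from Corollary~\ref{cor:isomorphismtypesforwhichequalityofmeasuresofentryspecificationeventsfails}. First I would record that $\mathcal{F}^{\mathrm{M}}(k,n) = (\upXul^{k,n,n})^{-1}(S)$ for $S := (\beta_1^{\ul})^{-1}(\Z_{\geq 1})$; this is exactly the ``unlabelled'' half of Corollary~\ref{cor:quickconsequencesofthecharacterizations}.\ref{cor:graphfailuresetaspreimage}. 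Applying $\upXul^{k,n,n}$ to this equality and using Definition~\ref{def:parametrizedfailuresets}.\ref{def:graphversionsofthefailuresets} on the left gives $\mathcal{F}^{\mathrm{G}}(k,n) = \upXul^{k,n,n}\bigl((\upXul^{k,n,n})^{-1}(S)\bigr)$. Taking the $\upXul^{k,n,n}$-preimage once more and invoking Lemma~\ref{lem:elementarypreimagestatements:finvffinv} with $f = \upXul^{k,n,n}$, $U = S$ collapses the triple composition, whence $(\upXul^{k,n,n})^{-1}(\mathcal{F}^{\mathrm{G}}(k,n)) = (\upXul^{k,n,n})^{-1}(S) = \mathcal{F}^{\mathrm{M}}(k,n)$. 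This is the argument of Corollary~\ref{cor:graphmapcanbeinvertedsetwiseonthefailuresets} run verbatim with $\upXul^{k,n,n},\beta_1^{\ul}$ in place of $\upX^{k,n,n},\beta_1$; the only point to verify is that Corollary~\ref{cor:quickconsequencesofthecharacterizations}.\ref{cor:graphfailuresetaspreimage} does provide the unlabelled preimage description, which it does.

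Next I would substitute the explicit values of $\mathcal{F}^{\mathrm{G}}(k,n)$ for $k = 4,5,6$ given by items \ref{item:failuresetofisomorphismtypesk4}--\ref{item:failuresetofisomorphismtypesk6} of Corollary~\ref{cor:isomorphismtypesforwhichequalityofmeasuresofentryspecificationeventsfails}, namely $\mathcal{F}^{\mathrm{G}}(4,n) = \{\ref{item:comparisonproofcaseLhassize4:4circuit}\}$, $\mathcal{F}^{\mathrm{G}}(5,n) = \{\ref{item:comparisonproofcaseLhassize6:oneisolatedvertex},\ref{item:comparisonproofcaseLhassize6:oneadditionaledgeintersectingC4},\ref{item:comparisonproofcaseLhassize6:twoisolatedvertices},\ref{item:comparisonproofcaseLhassize6:C4withoneadditionaldisjointedge}\}$, and $\mathcal{F}^{\mathrm{G}}(6,n) = \mathcal{F}^{\mathrm{G}}(5,n)\cup\{\ref{item:comparisonproofcaseLhassize6:XBLisomorphictoK23}\}\cup\{\ref{item:comparisonproofcaseLhassize6:oneadditionaledgeintersectingC4andoneisolatedvertex},\dotsc,\ref{item:comparisonproofcaseLhassize6:twoadditionaldisjointedgesdisjointfromC4}\}$. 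Since preimages distribute over unions, $(\upXul^{k,n,n})^{-1}$ of each of these finite sets of isomorphism types is the union of the preimages of its members, which is precisely \ref{it:partitionoffailuresets:kequals4}--\ref{it:partitionoffailuresets:kequals6}. To see that the displayed unions are disjoint, note that $\upXul^{k,n,n}$ is a function into $\ul(\BG_{n,n})$, so the preimages of two distinct points are disjoint, and the isomorphism types enumerated in Lemma~\ref{lem:bipartitenonforestsorderedbytheirfvectors} are pairwise distinct; hence the listed preimages are pairwise disjoint (the union in the $k=6$ line is to be read as the disjoint union over the distinct bipartite nonforest isomorphism types occurring, each counted once).

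I do not expect a genuine obstacle: the corollary is bookkeeping on top of Corollaries~\ref{cor:quickconsequencesofthecharacterizations}, \ref{cor:isomorphismtypesforwhichequalityofmeasuresofentryspecificationeventsfails} and~\ref{cor:graphmapcanbeinvertedsetwiseonthefailuresets}. The one spot deserving a sentence of care is the very first step --- upgrading ``$\mathcal{F}^{\mathrm{M}}(k,n)$ is a Betti-number level set'' to ``$\mathcal{F}^{\mathrm{M}}(k,n)$ is the $\upXul^{k,n,n}$-preimage of $\mathcal{F}^{\mathrm{G}}(k,n)$'' --- since this is where Lemma~\ref{lem:elementarypreimagestatements:finvffinv} genuinely does work, absorbing the fact that $\mathcal{F}^{\mathrm{G}}(k,n)$ is only that part of the level set $S$ which lies in the image of $\upXul^{k,n,n}$; everything afterwards is substitution and distributivity of preimages over unions.
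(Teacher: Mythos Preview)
Your proof is correct and follows essentially the same route as the paper: establish $\mathcal{F}^{\mathrm{M}}(k,n) = (\upXul^{k,n,n})^{-1}(\mathcal{F}^{\mathrm{G}}(k,n))$ via Lemma~\ref{lem:elementarypreimagestatements:finvffinv} and Corollary~\ref{cor:quickconsequencesofthecharacterizations}.\ref{cor:graphfailuresetaspreimage}, then substitute the explicit type lists from Corollary~\ref{cor:isomorphismtypesforwhichequalityofmeasuresofentryspecificationeventsfails} and use that preimages of distinct points are disjoint. Your explicit remark that the argument of Corollary~\ref{cor:graphmapcanbeinvertedsetwiseonthefailuresets} must be rerun with $\upXul^{k,n,n}$ in place of $\upX^{k,n,n}$ is in fact slightly more careful than the paper, which cites that corollary directly while silently switching to the unlabelled map.
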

\begin{proof}
In general we have
$
\mathcal{F}^{\mathrm{M}}(k,n) 
\By{\text{\tiny Corollary \ref{cor:graphmapcanbeinvertedsetwiseonthefailuresets}}}{=} 
(\upXul^{k,n,n})^{-1}(\upXul^{k,n,n}(\mathcal{F}^{\mathrm{M}}(k,n))) 
\By{\text{\tiny (by Definition \ref{def:parametrizedfailuresets}.\ref{def:graphversionsofthefailuresets})}}{=} (\upXul^{k,n,n})^{-1}( \mathcal{F}^{\mathrm{G}}(k,n) ) 
\By{\text{\tiny (for every map)}}{=} 
\bigsqcup_{\mathfrak{X}\in\mathcal{F}^{\mathrm{G}}(k,n)} (\upXul^{k,n,n})^{-1}(\mathfrak{X})$, 
and for the specific values $4\leq k \leq 6$ we can use Corollary 
\ref{cor:isomorphismtypesforwhichequalityofmeasuresofentryspecificationeventsfails} 
to obtain the claimed partitions. 
\end{proof}

While having the aim of explicitly determining the 
numbers $\lvert(\upXul^{k,n,n})^{-1}(\mathfrak{X})\rvert$ for 
certain $k$ and $\mathfrak{X}$ which interest us, we will start slowly by first 
formulating some linear relations among $\lvert(\upXul^{5,n,n})^{-1}\ref{item:comparisonproofcaseLhassize6:oneisolatedvertex}\rvert$, $\dotsc$, $\lvert(\upXul^{6,n,n})^{-1}\ref{item:comparisonproofcaseLhassize6:twoadditionaldisjointedgesdisjointfromC4}\rvert$ which will later serve as a check for the formulas given in 
Theorem \ref{thm:countingmatrixrealizations}. 

\begin{lemma}[linear relations among 
$\lvert (\upXul^{k,n,n})^{-1}(\mathfrak{X}) \rvert$ for $5\leq k \leq 6$]
\label{lem:relationsamongthenumbersofmatrixrealizations}
$\quad$
\begin{enumerate}[label={\rm(l\arabic{*})}] 
\item\label{linearrelationbetweent2andt3:kequals5}
$(3^1-1)\cdot \lvert (\upXul^{5,n,n})^{-1}\ref{item:comparisonproofcaseLhassize6:oneisolatedvertex} \rvert
= \lvert (\upXul^{5,n,n})^{-1}\ref{item:comparisonproofcaseLhassize6:oneadditionaledgeintersectingC4}\rvert$\quad ,
\item\label{linearrelationbetweent5andt7:kequals5}
$(3^1-1)\cdot \lvert (\upXul^{5,n,n})^{-1}\ref{item:comparisonproofcaseLhassize6:twoisolatedvertices} \rvert
=  \lvert (\upXul^{5,n,n})^{-1}\ref{item:comparisonproofcaseLhassize6:C4withoneadditionaldisjointedge}\rvert$\quad ,
\item\label{linearrelationbetweent4tot10:kequals6}
$
(3^2 -1) \cdot \lvert(\upXul^{6,n,n})^{-1}\ref{item:comparisonproofcaseLhassize6:twoisolatedvertices}\rvert
=
\lvert(\upXul^{6,n,n})^{-1}\ref{item:comparisonproofcaseLhassize6:oneadditionaledgeintersectingC4andoneisolatedvertex}\rvert
+ \dotsm +
\lvert(\upXul^{6,n,n})^{-1}\ref{item:comparisonproofcaseLhassize6:C4intersectingatwopathinitsinnervertex}\rvert$\quad , 
\item\label{linearrelationbetweent11tot15:kequals6} 
$
(3^2 - 1) \cdot \lvert(\upXul^{6,n,n})^{-1}\ref{item:comparisonproofcaseLhassize6:threeisolatedvertices}\rvert
=
\lvert(\upXul^{6,n,n})^{-1}\ref{item:comparisonproofcaseLhassize6:oneadditionaledgeintersectingC4andtwoisolatedvertices}\rvert
+
\dotsm 
+
\lvert(\upXul^{6,n,n})^{-1}\ref{item:comparisonproofcaseLhassize6:twoadditionalnondisjointedgesdisjointfromC4}\rvert$\quad ,
\item\label{linearrelationbetweent16tot18:kequals6} 
$
(3^2 - 1) \cdot \lvert(\upXul^{6,n,n})^{-1}\ref{item:comparisonproofcaseLhassize6:fourisolatedvertices}\rvert
=
\lvert(\upXul^{6,n,n})^{-1}\ref{item:comparisonproofcaseLhassize6:oneadditionaldisjointedgeandtwoisolatedvertices}\rvert
+
\lvert(\upXul^{6,n,n})^{-1}\ref{item:comparisonproofcaseLhassize6:twoadditionaldisjointedgesdisjointfromC4}\rvert$\quad .
\end{enumerate}
\end{lemma}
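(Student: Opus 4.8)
I would prove all five identities \ref{linearrelationbetweent2andt3:kequals5}--\ref{linearrelationbetweent16tot18:kequals6} at once, by a single construction. Fix $k\in\{5,6\}$ and put $p:=k-4\in\{1,2\}$. Two elementary facts are used throughout. First, the isomorphism type of $\upX_B$ depends on $B$ only through the pair $(\Dom(B),\Supp(B))$; this is immediate from Definition~\ref{def:XBandecXB}, which reads the vertex set of $\upX_B$ off $\Dom(B)$ and its edge set off $\Supp(B)$, ignoring all signs. Second, two distinct matrix-$4$-circuits share at most two matrix positions (sharing three would force equal row-sets and equal column-sets, hence equality), a matrix-$4$-circuit $S$ satisfies $S=\upp_1(S)\times\upp_2(S)$ (its graph is $K_{2,2}$), and every isomorphism type occurring on a right-hand side of Lemma~\ref{lem:relationsamongthenumbersofmatrixrealizations} carries exactly one $4$-circuit, as one reads off Lemma~\ref{lem:bipartitenonforestsorderedbytheirfvectors}.

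The heart of the argument is a \emph{decomposition map} $\partial$. Given a matrix $B$ with $\upX_B$ of a right-hand side type of the relation under consideration, let $S\subseteq\Supp(B)$ be the matrix-$4$-circuit carrying the unique $4$-circuit of $\upX_B$; then $B\mid_S\in\{\pm\}^S$. Put $P:=\Dom(B)\setminus S$, which has $\lvert P\rvert=p$, and let $\partial B$ be the matrix with $\Dom(\partial B)=\Dom(B)$, $\partial B\mid_S=B\mid_S$, and $\partial B\mid_P\equiv 0$. Then $\Supp(\partial B)=S$, so $\upX_{\partial B}$ is a $C^4$ together with $m:=f_0(\upX_B)-4=\lvert\upp_1(\Dom(B))\rvert+\lvert\upp_2(\Dom(B))\rvert-4$ isolated vertices; since $P\cap S=\emptyset$ and $S=\upp_1(S)\times\upp_2(S)$, every position of $P$ introduces a new row index or a new column index, whence $1\le m\le 2p$. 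Reading off the $f_0$-values in Lemma~\ref{lem:bipartitenonforestsorderedbytheirfvectors}, one checks that the right-hand side types of \ref{linearrelationbetweent2andt3:kequals5}, \ref{linearrelationbetweent5andt7:kequals5}, \ref{linearrelationbetweent4tot10:kequals6}, \ref{linearrelationbetweent11tot15:kequals6}, \ref{linearrelationbetweent16tot18:kequals6} all have $m$ equal to $1,2,2,3,4$ respectively, and that this is precisely $f_0-4$ of the corresponding left-hand side type (which is $C^4$ plus that many isolated vertices); hence $B\mapsto\partial B$ maps the right-hand side realizations into $(\upXul^{k,n,n})^{-1}$ of the matching left-hand side type.

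For the reverse direction, start from a matrix $B_0$ whose $\upX_{B_0}$ is the left-hand side type; then $\Supp(B_0)$ is a matrix-$4$-circuit $S$ and $\Dom(B_0)=S\sqcup P$ with $\lvert P\rvert=p$ and $B_0\mid_P\equiv 0$. A second matrix-$4$-circuit $S'\subseteq\Dom(B_0)$ would force $\lvert S'\cap P\rvert=4-\lvert S'\cap S\rvert\ge 2$, hence $p=2$ and $P\subseteq S'$; a short inspection of how two cells of the rectangle $S'$ can constitute $P$ while the other two lie in $S=\upp_1(S)\times\upp_2(S)$ then shows $m=1$. But $m=1$ is never the left-hand side type of \ref{linearrelationbetweent5andt7:kequals5}--\ref{linearrelationbetweent16tot18:kequals6}, and whenever $p=1$ the inequality $\lvert S'\cap P\rvert\ge 2$ cannot hold, so in all cases $S$ is the unique matrix-$4$-circuit of $\Dom(B_0)$. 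Consequently the fibre $\partial^{-1}(B_0)$ is exactly the set of matrices $B$ with $\Dom(B)=\Dom(B_0)$, $B\mid_S=B_0\mid_S$, $B\mid_P\ne 0$, and $\upX_B$ of a right-hand side type; there are $3^p-1$ nonzero choices for $B\mid_P$, and it remains to check that each of them has $\upX_B$ of a right-hand side type and is returned to $B_0$ by $\partial$. Indeed $\upX_B$ is the $C^4$ on $S$ with the at most $p$ edges of $\Supp(B\mid_P)$ attached to the $m$ extra vertices, each such edge incident to at least one of those vertices (again because $P$ lies off the rectangle $S$); and for $m\ge 2$, as well as for $m=1$ with $p=1$, adding at most $p$ such edges produces no new circuit (a new circuit would once more give a second matrix-$4$-circuit, forcing $m=1$ with $p=2$). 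Hence $K^{2,3}$ and $C^6$ cannot arise, the $4$-circuit of $\upX_B$ is still $S$, and $\partial B=B_0$. Thus $\partial$ is a surjection onto the left-hand side realizations all of whose fibres have cardinality $3^p-1$, which is exactly the content of \ref{linearrelationbetweent2andt3:kequals5}--\ref{linearrelationbetweent16tot18:kequals6}.

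The main obstacle, and the only step requiring genuine work, is the finite verification invoked at the end: for each admissible way the $p\le 2$ extra matrix positions can sit relative to the fixed matrix-$4$-circuit (classified by the value $m\in\{1,2,3,4\}$ and by which row and column indices among those positions coincide) and each nonzero assignment of $\{0,\pm\}$-values to them, one must name the isomorphism type of $\upX_B$ and confirm that, as the pattern and the assignment range over all possibilities, these types run through exactly the right-hand side list of the relation, with nothing missing and nothing extraneous. Since every added edge must meet the complement of the $C^4$, there are only a handful of shapes to inspect for each value of $m$, so I would record the outcome in a small case table; beyond the bookkeeping I anticipate no difficulty.
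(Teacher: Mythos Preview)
Your argument is correct and is essentially the paper's proof repackaged: where the paper partitions the realizations with a given $m$ by the triple (matrix-$4$-circuit $S$, signs on $S$, positions $P=I\setminus S$) and observes that the block count equals the left-hand side while each block has size $3^p$, you encode the same partition as the surjection $\partial$ with fibres of size $3^p-1$. The paper handles \ref{linearrelationbetweent2andt3:kequals5}--\ref{linearrelationbetweent5andt7:kequals5} by the one-line observation that the realizations of \ref{item:comparisonproofcaseLhassize6:oneadditionaledgeintersectingC4} (resp.\ \ref{item:comparisonproofcaseLhassize6:C4withoneadditionaldisjointedge}) are exactly the realizations of \ref{item:comparisonproofcaseLhassize6:oneisolatedvertex} (resp.\ \ref{item:comparisonproofcaseLhassize6:twoisolatedvertices}) with $B[u]$ switched from $0$ to an element of $\{\pm\}$; your $\partial$ says the same thing.

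Two small remarks. First, your parenthetical ``a new circuit would once more give a second matrix-$4$-circuit'' only literally covers $4$-circuits; to exclude a $C^6$ you need one extra sentence (a $C^6$ would consume all $4+p\le 6$ edges, yet four of them already form a $C^4$, and $C^6$ has no $C^4$ subgraph). Second, the ``finite verification'' you flag as the main obstacle is almost entirely automatic: once you know $\upX_B$ has $f_0=4+m$, $f_1\ge 5$, and a unique $C^4$, the list in Lemma~\ref{lem:bipartitenonforestsorderedbytheirfvectors} forces it into the right-hand side of \ref{linearrelationbetweent2andt3:kequals5} and \ref{linearrelationbetweent4tot10:kequals6}--\ref{linearrelationbetweent16tot18:kequals6} without casework. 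Only \ref{linearrelationbetweent5andt7:kequals5} needs a word: there $m=2$ with $p=1$ forces both coordinates of the single extra position outside $\upp_1(S)$ and $\upp_2(S)$, so the added edge is disjoint from the $C^4$ and the type is \ref{item:comparisonproofcaseLhassize6:C4withoneadditionaldisjointedge} rather than \ref{item:comparisonproofcaseLhassize6:oneadditionaledgeintersectingC4andoneisolatedvertex}.
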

\begin{proof}
It follows from Definition \ref{def:XBandecXB} that 
$\upX_{B}\in\ref{item:comparisonproofcaseLhassize6:oneadditionaledgeintersectingC4}$ if and only if equation \eqref{eq:conditioninproofofnumberofrealizationsoftype:oneisolatedvertex:kequals5} is true and $B[u]\in\{\pm\}$. This implies 
$\lvert(\upXul^{5,n,n})^{-1}\ref{item:comparisonproofcaseLhassize6:oneadditionaledgeintersectingC4}\rvert = 
2\cdot \lvert(\upXul^{5,n,n})^{-1}\ref{item:comparisonproofcaseLhassize6:oneisolatedvertex}\rvert$, proving \ref{linearrelationbetweent2andt3:kequals5}. 
It also follows from Definition \ref{def:XBandecXB} that $\upX_{B}\in\ref{item:comparisonproofcaseLhassize6:C4withoneadditionaldisjointedge}$ if and only if equation \eqref{eq:conditioninproofofnumberofrealizationsoftype:twoisolatedvertices:kequals5} is true and $B[u]\in\{\pm\}$. This implies $\lvert(\upXul^{5,n,n})^{-1}\ref{item:comparisonproofcaseLhassize6:C4withoneadditionaldisjointedge}\rvert= 2\cdot \lvert(\upXul^{5,n,n})^{-1}\ref{item:comparisonproofcaseLhassize6:twoisolatedvertices}\rvert$, proving \ref{linearrelationbetweent5andt7:kequals5}.

The isomorphism types \ref{item:comparisonproofcaseLhassize6:twoisolatedvertices}--\ref{item:comparisonproofcaseLhassize6:C4intersectingatwopathinitsinnervertex} are all the isomorphism types of bipartite nonforests with six vertices and exactly one 
copy of $C^4$. Therefore 
$\lvert(\upXul^{6,n,n})^{-1}\ref{item:comparisonproofcaseLhassize6:twoisolatedvertices}\rvert + \dotsm + \lvert(\upXul^{6,n,n})^{-1}\ref{item:comparisonproofcaseLhassize6:C4intersectingatwopathinitsinnervertex}\rvert
$ is the number of all $B\in\{0,\pm\}^I$ with $I\in\binom{[n-1]^2}{6}$ such that 
$\upX_{B}$ contains exactly one $C^4$ and $f_0(\upX_{B}) = 6$. Imagine counting these 
$B$ by partitioning the set of all such $B$ according to the copy of $C^4$, and 
for each such copy, further partitioning the $B$ according to the mandatory 
$\pm$-values on the edges of the $C^4$, and then further partitioning according to 
the \emph{positions} of the two elements of $I$ which are not responsible for the 
copy of $C^4$. When partitioning in that way, the number of blocks of the partition 
obtained so far equals $\lvert(\upXul^{6,n,n})^{-1}\ref{item:comparisonproofcaseLhassize6:twoisolatedvertices}\rvert$. The reason for this is that to realize the 
type $\ref{item:comparisonproofcaseLhassize6:twoisolatedvertices}$ there is no 
choice for the values indexed by the positions which are not responsible for 
the $C^4$, both must be zero. In the enumeration we are currently carrying out, 
however, there \emph{is} still complete freedom left on how to choose any one of the 
$\lvert \{0,\pm\}^{[2]} \rvert = 3^2$ values which can be indexed by these two 
positions, in other words, each of the blocks has size $3^2$. Therefore 
$
\lvert(\upXul^{6,n,n})^{-1}\ref{item:comparisonproofcaseLhassize6:twoisolatedvertices}\rvert
+ \dotsm +
\lvert(\upXul^{6,n,n})^{-1}\ref{item:comparisonproofcaseLhassize6:C4intersectingatwopathinitsinnervertex}\rvert
=
3^2\cdot \lvert(\upXul^{6,n,n})^{-1}\ref{item:comparisonproofcaseLhassize6:twoisolatedvertices}\rvert$, which proves \ref{linearrelationbetweent4tot10:kequals6}. Equations  \ref{linearrelationbetweent11tot15:kequals6} and \ref{linearrelationbetweent16tot18:kequals6} are true for an entirely analogous reason. 
\end{proof}

We will now quantify the claims in Corollary \ref{cor:isomorphismtypesforwhichequalityofmeasuresofentryspecificationeventsfails} by determining 
$\lvert(\upXul^{k,n,n})^{-1}(\mathfrak{X})\rvert$ for each $k$ and each isomorphism 
type $\mathfrak{X}$ mentioned there. A few preparatory comments seem in order. The 
behaviour of $\lvert(\upXul^{k,n,n})^{-1}(\mathfrak{X})\rvert$ as a function of $k$ 
for a given isomorphism type $\mathfrak{X}$ is a little subtle. For example, note that 
Theorem \ref{thm:countingmatrixrealizations} tells us that 
\begin{equation}\label{eq:exampleforcounterintuitivepointinenumerationofrealizations}
\lvert(\upXul^{5,n,n})^{-1}\ref{item:comparisonproofcaseLhassize6:oneisolatedvertex}\rvert > \lvert(\upXul^{6,n,n})^{-1}\ref{item:comparisonproofcaseLhassize6:oneisolatedvertex}\rvert
\end{equation}
in spite of the fact that in the case of 
$\lvert(\upXul^{6,n,n})^{-1}\ref{item:comparisonproofcaseLhassize6:oneisolatedvertex}\rvert$ we have one matrix entry more at our disposal to realize \ref{item:comparisonproofcaseLhassize6:oneisolatedvertex}. The reason for this could be summarized thus: 
when wanting to keep the number of isolated vertices in $\upXul_{B}$ at one, the additional matrix entry curtails our freedom more than it 
adds to it---after having chosen a position for one of the 
non-matrix-circuit-entries which `hides' one of its two `shadows' in one 
of the four shadows of the matrix-circuit-entries, we then have to position  the 
second non-matrix-circuit-entry so as to hide \emph{both} of its two 
shadows in already existing shadows, and this determines it position completely. 
Moreover, since \ref{item:comparisonproofcaseLhassize6:oneisolatedvertex} is an 
isomorphism type in which there do not exist edges outside the $4$-circuit, the 
non-matrix-circuit positions must index the value $0$. The net result of these 
rigid requirements are (since in effect for $\lvert(\upXul^{6,n,n})^{-1}\ref{item:comparisonproofcaseLhassize6:oneisolatedvertex}\rvert$ we are counting the 
possible \emph{$2$-sets} of non-circuit positions while for $\lvert(\upXul^{5,n,n})^{-1}\ref{item:comparisonproofcaseLhassize6:oneisolatedvertex}\rvert$ we counted the 
possible $1$-sets of such positions) \emph{less} possibilities.  For other types it 
can happen that the mechanism just described is counterbalanced by the additional 
possibilities of indexing different values. This is the essential reason why 
$\lvert(\upXul^{5,n,n})^{-1}\ref{item:comparisonproofcaseLhassize6:oneadditionaledgeintersectingC4}\rvert
=
\lvert(\upXul^{6,n,n})^{-1}\ref{item:comparisonproofcaseLhassize6:oneadditionaledgeintersectingC4}\rvert$, despite \eqref{eq:exampleforcounterintuitivepointinenumerationofrealizations} and despite the fact that the set of all \emph{domains} in the 
preimages in question are the same as in 
\eqref{eq:exampleforcounterintuitivepointinenumerationofrealizations}, i.e. 

{
\scriptsize
\begin{equation}
\Dom((\upXul^{5,n,n})^{-1}\ref{item:comparisonproofcaseLhassize6:oneisolatedvertex}) 
 = \Dom((\upXul^{5,n,n})^{-1}\ref{item:comparisonproofcaseLhassize6:oneadditionaledgeintersectingC4}) \quad ,  \qquad 
\Dom((\upXul^{6,n,n})^{-1}\ref{item:comparisonproofcaseLhassize6:oneisolatedvertex}) 
 = \Dom((\upXul^{6,n,n})^{-1}\ref{item:comparisonproofcaseLhassize6:oneadditionaledgeintersectingC4}) \quad . 
\end{equation}
}

Since biadjacency matrices are quite a fundamental topic, it would be of interest to 
treat these phenomena in more generality. It seems advisable to do this with a 
view towards the theory of $\{0,1\}$-matrices with given row and column sums (for a 
start, cf. e.g. \cite{MR878703}, \cite{arXiv:1010.5706v1} and \cite{MR2600999}). 
However, so far the author could not harness the literature on this topic in any way 
which would lessen the burden of proving the following theorem: 

\begin{theorem}[cardinality of preimages of $\upXul^{k,n,n}$ on bipartite 
nonforests for $4\leq k \leq 6$]\label{thm:countingmatrixrealizations}
The claims \ref{item:failuresetofisomorphismtypesk4}---\ref{item:failuresetofisomorphismtypesk6} can be quantified as follows 
(with $\xi_n:=2^4\cdot \lvert \Cir(4,n)\rvert = 2^4\cdot\binom{n-1}{2}^2$), 
\begin{enumerate}[label={\rm(QFa\arabic{*})}, start=4]
\item\label{item:numberofrealizationsofnonforestswhenkequals4} For every $n\geq 3$, 
$\lvert(\upXul^{4,n,n})^{-1}\ref{item:comparisonproofcaseLhassize4:4circuit}\rvert = \xi_n$\quad .
\item\label{item:numberofrealizationsofnonforestswhenkequals5} For every $n\geq 3$, 

{\scriptsize
\begin{minipage}[b]{0.45\linewidth}
\begin{enumerate}[label={\rm(m5.t\arabic{*})}, start=2]
\item\label{numberofmatrixrealizationsoftype:oneisolatedvertex:kequals5} 
$\lvert(\upXul^{5,n,n})^{-1}\ref{item:comparisonproofcaseLhassize6:oneisolatedvertex}
\rvert = 4\cdot(n-3) \cdot \xi_n$
\item\label{numberofmatrixrealizationsoftype:oneadditionaledgeintersectingC4:kequals5} 
$\lvert(\upXul^{5,n,n})^{-1}\ref{item:comparisonproofcaseLhassize6:oneadditionaledgeintersectingC4}\rvert = 8\cdot(n-3) \cdot \xi_n$
\end{enumerate}
\end{minipage}
\begin{minipage}[b]{0.45\linewidth}
\begin{enumerate}[label={\rm(m5.t\arabic{*})}, start=5]
\item\label{numberofmatrixrealizationsoftype:twoisolatedvertices:kequals5}
$\lvert(\upXul^{5,n,n})^{-1}\ref{item:comparisonproofcaseLhassize6:twoisolatedvertices}\rvert  = 1\cdot (n-3)^2 \cdot \xi_n$
\end{enumerate}
\begin{enumerate}[label={\rm(m5.t\arabic{*})}, start=7]
\item\label{numberofmatrixrealizationsoftype:C4withoneadditionaldisjointedge:kequals5} 
$\lvert(\upXul^{5,n,n})^{-1}\ref{item:comparisonproofcaseLhassize6:C4withoneadditionaldisjointedge}\rvert = 2\cdot (n-3)^2 \cdot \xi_n$\quad .
\end{enumerate}
\end{minipage}
}
\item\label{item:numberofrealizationsofnonforestswhenkequals6} 
For every $n\geq 3$, 

{\scriptsize
\begin{minipage}[b]{0.5\linewidth}
\begin{enumerate}[label={\rm(m6.t\arabic{*})}, start=2]
\item\label{numberofmatrixrealizationsoftype:oneisolatedvertex} 
$\lvert(\upXul^{6,n,n})^{-1}\ref{item:comparisonproofcaseLhassize6:oneisolatedvertex}\rvert = 2 (n-3) \xi_n$
\item\label{numberofmatrixrealizationsoftype:oneadditionaledgeintersectingC4} 
$\lvert(\upXul^{6,n,n})^{-1}\ref{item:comparisonproofcaseLhassize6:oneadditionaledgeintersectingC4}\rvert =  8 (n-3)  \xi_n$
\item\label{numberofmatrixrealizationsoftype:XBLisomorphictoK23} 
$\lvert(\upXul^{6,n,n})^{-1}\ref{item:comparisonproofcaseLhassize6:XBLisomorphictoK23}\rvert =  2^7 \binom{n-1}{2}  \binom{n-1}{3}$
\item\label{numberofmatrixrealizationsoftype:twoisolatedvertices} 
$\lvert(\upXul^{6,n,n})^{-1}\ref{item:comparisonproofcaseLhassize6:twoisolatedvertices}\rvert =  (8 (n-3)^2 + 8 \binom{n-3}{2})  \xi_n$
\item\label{numberofmatrixrealizationsoftype:oneadditionaledgeintersectingC4andoneisolatedvertex} 
$\lvert(\upXul^{6,n,n})^{-1}\ref{item:comparisonproofcaseLhassize6:oneadditionaledgeintersectingC4andoneisolatedvertex}\rvert = (24 (n-3)^2 + 32 \binom{n-3}{2})  \xi_n$
\item\label{numberofmatrixrealizationsoftype:C4withoneadditionaldisjointedge} 
$\lvert(\upXul^{6,n,n})^{-1}\ref{item:comparisonproofcaseLhassize6:C4withoneadditionaldisjointedge}\rvert = 8 (n-3)^2  \xi_n$
\item\label{numberofmatrixrealizationsoftype:C4intersectingtwoedgesinseparatenonadjacentvertices} 
$\lvert(\upXul^{6,n,n})^{-1}\ref{item:comparisonproofcaseLhassize6:C4intersectingtwoedgesinseparatenonadjacentvertices}\rvert = 16 \binom{n-3}{2} \xi_n$
\item\label{numberofmatrixrealizationsoftype:C4intersectingtwoedgesinseparateadjacentvertices} 
$\lvert(\upXul^{6,n,n})^{-1}\ref{item:comparisonproofcaseLhassize6:C4intersectingtwoedgesinseparateadjacentvertices}\rvert = 16 (n-3)^2  \xi_n$
\item\label{numberofmatrixrealizationsoftype:C4intersectingatwopathinanendvertex} 
$\lvert(\upXul^{6,n,n})^{-1}\ref{item:comparisonproofcaseLhassize6:C4intersectingatwopathinanendvertex}\rvert = 16  (n-3)^2  \xi_n$
\item\label{numberofmatrixrealizationsoftype:C4intersectingatwopathinitsinnervertex} 
$\lvert(\upXul^{6,n,n})^{-1}\ref{item:comparisonproofcaseLhassize6:C4intersectingatwopathinitsinnervertex}\rvert = 16 \binom{n-3}{2}   \xi_n$
\end{enumerate}
\end{minipage}
\begin{minipage}[b]{0.5\linewidth}
\begin{enumerate}[label={\rm(m6.t\arabic{*})}, start=12]
\item\label{numberofmatrixrealizationsoftype:C6} 
$\lvert(\upXul^{6,n,n})^{-1}\ref{item:comparisonproofcaseLhassize6:C6}\rvert = 
2^6 \lvert \Cir(6,n) \rvert$ 
\item\label{numberofmatrixrealizationsoftype:threeisolatedvertices} 
$\lvert(\upXul^{6,n,n})^{-1}\ref{item:comparisonproofcaseLhassize6:threeisolatedvertices}\rvert = 10 (n-3)  \binom{n-3}{2}  \xi_n$
\item\label{numberofmatrixrealizationsoftype:oneadditionaledgeintersectingC4andtwoisolatedvertices} 
$\lvert(\upXul^{6,n,n})^{-1}\ref{item:comparisonproofcaseLhassize6:oneadditionaledgeintersectingC4andtwoisolatedvertices}\rvert = 16 (n-3) \binom{n-3}{2}  \xi_n$
\item\label{numberofmatrixrealizationsoftype:oneadditionaldisjointedgeandoneisolatedvertex} 
$\lvert(\upXul^{6,n,n})^{-1}\ref{item:comparisonproofcaseLhassize6:oneadditionaldisjointedgeandoneisolatedvertex}\rvert = 24 (n-3) \binom{n-3}{2}  \xi_n$
\item\label{numberofmatrixrealizationsoftype:twoadditionaledgesonlyoneofthemdisjoint} 
$\lvert(\upXul^{6,n,n})^{-1}\ref{item:comparisonproofcaseLhassize6:twoadditionaledgesonlyoneofthemdisjoint}\rvert = 32 (n-3)\binom{n-3}{2}  \xi_n$
\item\label{numberofmatrixrealizationsoftype:twoadditionalnondisjointedgesdisjointfromC4} 
$\lvert(\upXul^{6,n,n})^{-1}\ref{item:comparisonproofcaseLhassize6:twoadditionalnondisjointedgesdisjointfromC4}\rvert = 8 (n-3)\binom{n-3}{2}  \xi_n$
\item\label{numberofmatrixrealizationsoftype:fourisolatedvertices} 
$\lvert(\upXul^{6,n,n})^{-1}\ref{item:comparisonproofcaseLhassize6:fourisolatedvertices}\rvert = 2 \binom{n-3}{2} \binom{n-3}{2}  \xi_n$
\item\label{numberofmatrixrealizationsoftype:oneadditionaldisjointedgeandtwoisolatedvertices} 
 $\lvert(\upXul^{6,n,n})^{-1}\ref{item:comparisonproofcaseLhassize6:oneadditionaldisjointedgeandtwoisolatedvertices}\rvert = 8  \binom{n-3}{2} \binom{n-3}{2}  \xi_n$
\item\label{numberofmatrixrealizationsoftype:twoadditionaldisjointedgesdisjointfromC4} 
$\lvert(\upXul^{6,n,n})^{-1}\ref{item:comparisonproofcaseLhassize6:twoadditionaldisjointedgesdisjointfromC4}\rvert = 8  \binom{n-3}{2} \binom{n-3}{2}  \xi_n$\quad .
\end{enumerate}
\end{minipage}
}
\end{enumerate}
\end{theorem}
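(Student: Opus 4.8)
The plan is to strip the signs off, reducing each cardinality to a count of matrix support patterns, and then to run through the isomorphism types of Corollary \ref{cor:isomorphismtypesforwhichequalityofmeasuresofentryspecificationeventsfails} one at a time. By Definition \ref{def:XBandecXB} the labelled graph $\upX_B$ attached to $B\in\{0,\pm\}^I$ has vertex set indexed by $\upp_1(I)\sqcup\upp_2(I)$ and edge set $\{\,\{(i,t),(s,j)\}\colon(i,j)\in\Supp(B)\,\}$; in particular it, hence its isomorphism type $\upXul_B$, depends on $B$ only through $\upp_1(I)$, $\upp_2(I)$ and $\Supp(B)$, and \emph{not} on the signs of the nonzero entries. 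Hence, writing $m:=f_1(\mathfrak{X})$,
\begin{equation}
\lvert(\upXul^{k,n,n})^{-1}(\mathfrak{X})\rvert=2^{m}\cdot N_k(\mathfrak{X}),
\end{equation}
where $N_k(\mathfrak{X})$ is the number of pairs $(I,S)$ with $I\in\binom{[n-1]^2}{k}$, $S\subseteq I$, such that the bipartite graph with parts $\upp_1(I)$, $\upp_2(I)$ and edge set $S$ is isomorphic to $\mathfrak{X}$ (the factor $2^m$ records the independent choice of a sign for each of the $m=\supp(B)$ entries indexed by $S=\Supp(B)$). A zero entry never produces an edge, so $S$ alone determines the edges of $\upX_B$, while the positions of $I\setminus S$ serve only to enlarge $\upp_1(I)$, $\upp_2(I)$, i.e.\ to create isolated vertices; consequently $\upX_B\cong\mathfrak{X}$ iff $S$ realizes the edge pattern of $\mathfrak{X}$ and the positions of $I\setminus S$ create exactly as many isolated vertices as $\mathfrak{X}$ has.

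To evaluate $N_k(\mathfrak{X})$ I would split $\mathfrak{X}$ into its \emph{core} $\mathfrak{X}_0$ (the subgraph on the non-isolated vertices, carrying all $m$ edges) and its $a:=f_0(\mathfrak{X})-f_0(\mathfrak{X}_0)$ isolated vertices, and count in two stages. First, count the size-$m$ support patterns $S\subseteq[n-1]^2$ realizing $\mathfrak{X}_0$, i.e.\ the injective placements of one side of a bipartition of $\mathfrak{X}_0$ into the rows and the other side into the columns compatibly with the edges, summed over the bipartitions of $\mathfrak{X}_0$ up to graph automorphism. For the cores $C^4$ and $C^6$ this number is exactly $\lvert\Cir(4,n)\rvert$ and $\lvert\Cir(6,n)\rvert$ of Lemma \ref{lem:numberofmatrixcircuitsofgivenlengthwithingivencartesianproduct}; for $K^{2,3}$ it is $2\binom{n-1}{2}\binom{n-1}{3}$ (the $2$ being the choice of which side plays the role of rows); and for the cores obtained from $C^4$ by attaching one or two extra edges it is $\lvert\Cir(4,n)\rvert$ times a small polynomial in $n$ recording where the extra edges, and the new rows/columns they require, attach. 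Second, given such an $S$, count the $(k-m)$-subsets $T$ of $[n-1]^2\setminus S$ with $\lvert\upp_1(T)\setminus\upp_1(S)\rvert+\lvert\upp_2(T)\setminus\upp_2(S)\rvert=a$; equivalently one sums over the splittings $a=a_1+a_2$ the number of placements of $T$ creating $a_1$ new rows and $a_2$ new columns (the isolated vertices of $\mathfrak{X}$ carrying no row/column label). Multiplying the two stage-counts and the factor $2^m$ yields $\lvert(\upXul^{k,n,n})^{-1}(\mathfrak{X})\rvert$.

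Executing this: for $k=4$ the only type is $C^4$, with $m=4$ and $a=0$, so $\lvert(\upXul^{4,n,n})^{-1}\ref{item:comparisonproofcaseLhassize4:4circuit}\rvert=2^4\lvert\Cir(4,n)\rvert=\xi_n$, which is \ref{item:numberofrealizationsofnonforestswhenkequals4}. For $k=5$ and $k=6$ one treats each type of Corollary \ref{cor:isomorphismtypesforwhichequalityofmeasuresofentryspecificationeventsfails} by a bounded computation of the following kind. For $\mathfrak{X}=\ref{item:comparisonproofcaseLhassize6:oneisolatedvertex}$ (a $C^4$ plus one isolated vertex) and $k=5$: the core $C^4$ contributes $\lvert\Cir(4,n)\rvert$, here $m=4$ and there is one zero position, which must create exactly one new vertex, hence lies either in one of the two rows of the $C^4$ and a column outside it ($2(n-3)$ choices) or symmetrically ($2(n-3)$ choices); so $N_5(\mathfrak{X})=4(n-3)\lvert\Cir(4,n)\rvert$ and $\lvert(\upXul^{5,n,n})^{-1}(\mathfrak{X})\rvert=2^4\cdot4(n-3)\lvert\Cir(4,n)\rvert=4(n-3)\xi_n$, which is \ref{numberofmatrixrealizationsoftype:oneisolatedvertex:kequals5}. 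The remaining entries of \ref{item:numberofrealizationsofnonforestswhenkequals5} and all of \ref{item:numberofrealizationsofnonforestswhenkequals6} follow likewise, and the formulas can be cross-checked against the linear relations of Lemma \ref{lem:relationsamongthenumbersofmatrixrealizations} and against the partition $\mathcal{F}^{\mathrm{M}}(k,n)=\bigsqcup_{\mathfrak{X}\in\mathcal{F}^{\mathrm{G}}(k,n)}(\upXul^{k,n,n})^{-1}(\mathfrak{X})$.

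The hard part will be purely the volume and fragility of the $k=6$ bookkeeping: for each of the nineteen types one must enumerate the admissible placements of the (up to two) zero positions relative to $\upp_1(S)$ and $\upp_2(S)$ without double counting and without overlooking the degenerate configurations — two zero positions landing in a common new row or column, a zero position forced to be ``invisible'' because it sits in an already-used row and an already-used column, two extra edges attaching at the same vertex of the $C^4$, and so on — and one must keep straight the non-unique bipartitions of the disconnected cores, where each connected component ($C^4$, an isolated edge, a $2$-path) may be oriented between rows and columns independently. Each individual case is routine; it is getting all nineteen of them simultaneously right that is the obstacle.
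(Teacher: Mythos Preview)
Your plan is sound and matches the paper's approach: the paper too fixes the mandatory matrix-$4$-circuit $S$ (contributing the factor $\xi_n$) and then enumerates, type by type, the ways the remaining one or two positions of $I$ can sit relative to $\upp_1(S),\upp_2(S)$ and which values in $\{0,\pm\}$ they may carry. Your upfront sign-stripping $\lvert(\upXul^{k,n,n})^{-1}(\mathfrak{X})\rvert=2^{m}N_k(\mathfrak{X})$ is a slightly cleaner way to organise the same casework; the paper instead keeps the sign choices inside each case and exploits a ``reexamination'' device---analysing the purely-zero configurations for one type (e.g.\ \ref{item:comparisonproofcaseLhassize6:twoisolatedvertices}) and then revisiting exactly the same positional subcases with nonzero values to read off the counts for the related types (e.g.\ \ref{item:comparisonproofcaseLhassize6:oneadditionaledgeintersectingC4andoneisolatedvertex}--\ref{item:comparisonproofcaseLhassize6:C4intersectingatwopathinitsinnervertex})---but the underlying combinatorics are identical.
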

\begin{proof}[Proof of \ref{item:numberofrealizationsofnonforestswhenkequals4}]
We have $\upX_B \cong C^4$ if and only if $I$ is a matrix-$4$-circuit and 
$\Supp(B) = I$. By Lemma \ref{lem:numberofmatrixcircuitsofgivenlengthwithingivencartesianproduct} there exist $\binom{n-1}{2}^2$ possible matrix-$4$-circuits $I$ and 
for each of them there are $2^4$ possibilities for a $B\in \{0,\pm\}^I$ 
with $\Supp(B)=I$. 
\end{proof}

Let us now prepare for the rest of the proof of 
Theorem \ref{thm:countingmatrixrealizations} with some observations and definitions. 
Inspecting the isomorphism types in $\mathcal{F}^{\mathrm{G}}(6,n) 
\setminus \{ \ref{item:comparisonproofcaseLhassize6:XBLisomorphictoK23}, 
\ref{item:comparisonproofcaseLhassize6:C6} \}$ (the types \ref{item:comparisonproofcaseLhassize6:XBLisomorphictoK23} and \ref{item:comparisonproofcaseLhassize6:C6} are 
exceptions whose preimages are also exceptionally easy to count) we see that  
in each of them the graph contains exactly one $C^4$. We therefore know that for 
every $\mathfrak{X}\in \mathcal{F}^{\mathrm{G}}(6,n)$ (hence in particular for every 
$\mathfrak{X}\in \mathcal{F}^{\mathrm{G}}(5,n)$ since 
$\mathcal{F}^{\mathrm{G}}(5,n)\subseteq \mathcal{F}^{\mathrm{G}}(6,n)$ 
by \ref{item:failuresetofisomorphismtypesk6} in Corollary \ref{cor:isomorphismtypesforwhichequalityofmeasuresofentryspecificationeventsfails}), and for every 
$I\in \binom{[n-1]^2}{6}$ it is necessary that there exist a matrix-$4$-circuit 
$S\subseteq I$ with $B\mid_{S}\in \{\pm\}^S$. For this there are 
$2^4\cdot \lvert \Cir(4,n) \rvert$ possibilities. A priori it could be that the 
number of possibilities to realize an isomorphism type depends on the choice of 
this necessary $S\subseteq I$. However, since we will take this $S$ to be arbitrary 
in the proofs to follow, and since we 
will get results which do not depend on $S$, it follows as a byproduct that they 
are not, more precisely that for each $\mathfrak{X}\in  \mathcal{F}^{\mathrm{G}}(6,n) 
\setminus \{ \ref{item:comparisonproofcaseLhassize6:C6}, 
\ref{item:comparisonproofcaseLhassize6:XBLisomorphictoK23} \}$ 
the values of $\lvert(\upXul^{k,n,n})^{-1}(\mathfrak{X})\rvert$ are equal to the 
product of $2^4\cdot \lvert \Cir(4,n)\rvert$ and the number of possibilities to choose 
$B\mid_{I\setminus S}\in \{0,\pm\}^{I\setminus S}$ in such a way that 
$\upX_{B} \in \mathfrak{X}$. By determining the latter number for each of the 
isomorphism types, we will prove all of the formulas 
\ref{numberofmatrixrealizationsoftype:oneisolatedvertex:kequals5}---\ref{numberofmatrixrealizationsoftype:twoadditionaldisjointedgesdisjointfromC4}, except, as 
already mentioned, \ref{numberofmatrixrealizationsoftype:XBLisomorphictoK23} 
and \ref{numberofmatrixrealizationsoftype:C6}, which do not fit into the overall 
plan of the proof (in the case of 
\ref{numberofmatrixrealizationsoftype:XBLisomorphictoK23} we would be overcounting 
the number of realizations since $K^{2,3}$ contains three copies of $C^4$) but which 
are easy to count directly.

Let $\prec$ denote the lexicographic ordering on $[n-1]^2$. Throughout the proof, 
we use the following conventions: we consider $I\supseteq S\in \binom{[n-1]^2}{4}$ 
and $B\mid_{S}\in\{\pm\}^S$ to be arbitrary. 
We set  $\{a,b,c,d\} := S$, $a_1 := \upp_1(a)$, $a_2 := \upp_2(a)$ and analogously 
for $b_1$, $b_2$, $c_1$, $c_2$, $d_1$ and $d_2$. Since $\prec$ is a total 
order, we may assume $a\prec b \prec c \prec d$ which combined with the fact 
that $S$ is a matrix-$4$-circuit implies $a_1 = b_1$, $c_1=d_1$, $a_2 = c_2$ 
and $b_2 = d_2$. The cardinality of $I\setminus S$ depends on whether we are proving 
formulas from \ref{item:numberofrealizationsofnonforestswhenkequals5} or \ref{item:numberofrealizationsofnonforestswhenkequals6}.  In the former case we set 
$\{u\}:=I\setminus S$, in the latter $\{u,v\}:=I\setminus S$ with the 
assumption that $u\prec v$. Moreover, $u_1:=\upp_1(u)$, $u_2:=\upp_2(u)$, 
$v_1:=\upp_1(v)$ and $v_2:=\upp_2(v)$. Finally, let us use the abbreviation 
$\upp(S) := \upp_1(S)\sqcup\upp_2(S)$.

\begin{proof}[Proof of \ref{item:numberofrealizationsofnonforestswhenkequals5}]
As to \ref{numberofmatrixrealizationsoftype:oneisolatedvertex:kequals5}, we start 
by noting that it follows directly from Definition \ref{def:XBandecXB} that 
$\upX_{B} \in \ref{item:comparisonproofcaseLhassize6:oneisolatedvertex}$ 
if and only if $B[u]=0$ and 
\begin{equation}\label{eq:conditioninproofofnumberofrealizationsoftype:oneisolatedvertex:kequals5}
\lvert \{u_1\}\setminus \upp_1(S)\rvert + \lvert \{u_2\}\setminus \upp_2(S)\rvert = 1
\quad .
\end{equation}
We distinguish cases according to how \eqref{eq:conditioninproofofnumberofrealizationsoftype:oneisolatedvertex:kequals5} is satisfied. 
\begin{enumerate}[label={\rm(C.\ref{numberofmatrixrealizationsoftype:oneisolatedvertex:kequals5}.\arabic{*})}]
\item\label{numberofmatrixrealizationsoftype:oneisolatedvertex:kequals5:case:lvertu1rvertbackslashp1Sequals0} $\lvert \{u_1\}\setminus \upp_1(S)\rvert = 0$, 
i.e. $u_1\in\upp_1(S)$. Then \eqref{eq:conditioninproofofnumberofrealizationsoftype:oneisolatedvertex:kequals5} implies that $u_2\notin \upp_2(S)$. 
Since there are $2$ different $u_1$ with $u_1\in\upp_1(S)$ and for each of them there 
are  $((n-1)-2)=(n-3)$ different $u_2$ with $u_2\notin\upp_2(S)$ it follows that 
if \ref{numberofmatrixrealizationsoftype:oneisolatedvertex:kequals5:case:lvertu1rvertbackslashp1Sequals0}, then there are  $2(n-3)$ realizations of type \ref{item:comparisonproofcaseLhassize6:oneisolatedvertex} by $B[u]$. 
\item\label{numberofmatrixrealizationsoftype:oneisolatedvertex:kequals5:case:lvertu1rvertbackslashp1Sequals1} $\lvert \{u_1\}\setminus \upp_1(S)\rvert = 1$. This case is 
easily seen to be symmetric to \ref{numberofmatrixrealizationsoftype:oneisolatedvertex:kequals5:case:lvertu1rvertbackslashp1Sequals0} w.r.t. swapping the subscripts $1$ and 
$2$. Therefore, if \ref{numberofmatrixrealizationsoftype:oneisolatedvertex:kequals5:case:lvertu1rvertbackslashp1Sequals1}, then there are also  $2(n-3)$ realizations of 
type \ref{item:comparisonproofcaseLhassize6:oneisolatedvertex} by $B[u]$. 
\end{enumerate}
It follows that there are  $2(n-3)+2(n-3)=4(n-3)$ realizations of type 
\ref{item:comparisonproofcaseLhassize6:oneisolatedvertex} by $B[u]$, proving 
\ref{numberofmatrixrealizationsoftype:oneisolatedvertex:kequals5}. 
As to \ref{numberofmatrixrealizationsoftype:oneadditionaledgeintersectingC4:kequals5}, 
this follows from \ref{numberofmatrixrealizationsoftype:oneisolatedvertex:kequals5} 
and Lemma \ref{lem:relationsamongthenumbersofmatrixrealizations}.\ref{linearrelationbetweent2andt3:kequals5}. 

As to \ref{numberofmatrixrealizationsoftype:twoisolatedvertices:kequals5}, it follows 
from Definition \ref{def:XBandecXB} that $\upX_{B} \in \ref{item:comparisonproofcaseLhassize6:twoisolatedvertices}$ if and only if $B[u]=0$ and 
\begin{equation}\label{eq:conditioninproofofnumberofrealizationsoftype:twoisolatedvertices:kequals5}
\lvert \{u_1\}\setminus \upp_1(S)\rvert + \lvert\{u_2\}\setminus \upp_2(S)\rvert = 2
\quad .
\end{equation}
Property \eqref{eq:conditioninproofofnumberofrealizationsoftype:twoisolatedvertices:kequals5} is equivalent to $u_1\notin\upp_1(S)$ and $u_2\notin\upp_2(S)$, and there are 
obviously $((n-1)-2)^2=(n-3)^2$ different $u\in [n-1]^2$ satisfying this. Therefore, 
\ref{numberofmatrixrealizationsoftype:twoisolatedvertices:kequals5} is correct. 
As to \ref{numberofmatrixrealizationsoftype:C4withoneadditionaldisjointedge:kequals5}, this follows from \ref{numberofmatrixrealizationsoftype:twoisolatedvertices:kequals5}
and Lemma \ref{lem:relationsamongthenumbersofmatrixrealizations}.\ref{linearrelationbetweent5andt7:kequals5}. This completes the proof of \ref{item:numberofrealizationsofnonforestswhenkequals5}. 
\end{proof}

We now take on the task of proving \ref{item:numberofrealizationsofnonforestswhenkequals6}, which will take some effort. We prepare by proving four lemmas characterizing 
the realizations of the types \ref{item:comparisonproofcaseLhassize6:C4intersectingtwoedgesinseparatenonadjacentvertices}--\ref{item:comparisonproofcaseLhassize6:C4intersectingatwopathinitsinnervertex}. 

\begin{lemma}\label{lem:characterizationofisomorphismtype:C4intersectingtwoedgesinseparatenonadjacentvertices}
For every $B\in \{0,\pm\}^I$ with $I\in \binom{[n-1]^2}{6}$, 
$I = S \sqcup \{u,v\}$ and $\upX_{B\mid_{S}}\cong C^4$ we have $\upX_{B} \cong \ref{item:comparisonproofcaseLhassize6:C4intersectingtwoedgesinseparatenonadjacentvertices}$ if and only if 

{\scriptsize
\begin{minipage}[b]{0.35\linewidth}
\begin{enumerate}[label={\rm(P.\ref{item:comparisonproofcaseLhassize6:C4intersectingtwoedgesinseparatenonadjacentvertices}.\arabic{*})}]
\item\label{characterizationoftype:C4intersectingtwoedgesinseparatenonadjacentvertices:property1} 
$\upX_{\{0\}^{\{u,v\}} \sqcup B\mid_{S}} \cong 
\ref{item:comparisonproofcaseLhassize6:twoisolatedvertices}$, 
\item\label{characterizationoftype:C4intersectingtwoedgesinseparatenonadjacentvertices:property2} 
$B[u]\in\{\pm\}$ and $B[v]\in \{\pm\}$,
\end{enumerate}
\end{minipage}
\begin{minipage}[b]{0.65\linewidth}
\begin{enumerate}[label={\rm(P.\ref{item:comparisonproofcaseLhassize6:C4intersectingtwoedgesinseparatenonadjacentvertices}.\arabic{*})},start=3]
\item\label{characterizationoftype:C4intersectingtwoedgesinseparatenonadjacentvertices:property3} 
$\{u_1,u_2\}\cap\{v_1,v_2\} = \emptyset$, 
\item\label{characterizationoftype:C4intersectingtwoedgesinseparatenonadjacentvertices:property4} 
($u_1\in\upp_1(S)$ and $v_1\in\upp_1(S)$) 
or 
($u_2\in\upp_2(S)$ and $v_2\in\upp_2(S)$). 
\end{enumerate}
\end{minipage}
}
\end{lemma}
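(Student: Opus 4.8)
The plan is to translate the graph-theoretic assertion ``$\upX_B\cong\ref{item:comparisonproofcaseLhassize6:C4intersectingtwoedgesinseparatenonadjacentvertices}$'' into elementary conditions on the coordinates $u_1,u_2,v_1,v_2$ relative to $\upp_1(S),\upp_2(S)$, and then to read those off against the four listed conditions. Note first that the hypothesis $\upX_{B\mid_S}\cong C^4$ is equivalent to saying that $S$ is a matrix-$4$-circuit and $B\mid_S\in\{\pm\}^S$, so (with the standing conventions $a\prec b\prec c\prec d$, $a_1=b_1$, $c_1=d_1$, $a_2=c_2$, $b_2=d_2$) one has $S=\upp_1(S)\times\upp_2(S)$, $\lvert\upp_1(S)\rvert=\lvert\upp_2(S)\rvert=2$, and the edges coming from $S$ form a $C^4$, in fact a $K_{2,2}$, on $\{(a_1,t),(c_1,t)\}\sqcup\{(s,a_2),(s,b_2)\}$. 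By Definition~\ref{def:XBandecXB}, $\upX_B$ is obtained from this $C^4$ by adjoining the vertex $(s,u_2)$ when $u_2\notin\upp_2(S)$ and the vertex $(u_1,t)$ when $u_1\notin\upp_1(S)$ (and likewise for $v$), together with the edge $e_u:=\{(u_1,t),(s,u_2)\}$ if $B[u]\neq 0$ and $e_v:=\{(v_1,t),(s,v_2)\}$ if $B[v]\neq 0$. The one structural remark that does the real work is that in $K_{2,2}$ two vertices are non-adjacent exactly when they lie on the same side of the bipartition; this is what converts ``the intersection set is no edge of $C^4$'' in type \ref{item:comparisonproofcaseLhassize6:C4intersectingtwoedgesinseparatenonadjacentvertices} into a row-versus-column statement.

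I would then dispose of the cardinality bookkeeping. Since $\upX_B$ always contains the $C^4$ of $S$ and differs from it only in $e_u,e_v$ and their new endpoints, $f_1(\upX_B)=4+[B[u]\neq 0]+[B[v]\neq 0]$ and $f_0(\upX_B)=4+\lvert\{u_1,v_1\}\setminus\upp_1(S)\rvert+\lvert\{u_2,v_2\}\setminus\upp_2(S)\rvert$. Type \ref{item:comparisonproofcaseLhassize6:C4intersectingtwoedgesinseparatenonadjacentvertices} has $f$-vector $(6,6)$, so $\upX_B\cong\ref{item:comparisonproofcaseLhassize6:C4intersectingtwoedgesinseparatenonadjacentvertices}$ forces $f_1=6$, i.e.\ (P.t8.2), and $f_0=6$. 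Conversely (P.t8.1) is equivalent to $f_0(\upX_B)=6$: the graph $\upX_{\{0\}^{\{u,v\}}\sqcup B\mid_S}$ has the same vertex set as $\upX_B$ (vertex sets depend only on the domain) but support exactly $S$, hence is $C^4$ together with $f_0(\upX_B)-4$ isolated vertices, which is $\cong\ref{item:comparisonproofcaseLhassize6:twoisolatedvertices}$ iff $f_0(\upX_B)=6$. This reduces the lemma, in both directions, to: \emph{under} (P.t8.1) \emph{and} (P.t8.2), $\upX_B\cong\ref{item:comparisonproofcaseLhassize6:C4intersectingtwoedgesinseparatenonadjacentvertices}$ iff (P.t8.3) and (P.t8.4) hold.

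For the reduced statement I would run a short case analysis on how the two ``new'' coordinates distribute between the row side and the column side. Since $S=\upp_1(S)\times\upp_2(S)$ and $u,v\notin S$, for each of $u,v$ at least one coordinate is new; together with $f_0(\upX_B)=6$ this gives exactly two new coordinates, split as $(0,2)$, $(1,1)$ or $(2,0)$. The $(0,2)$ and $(2,0)$ splits are exchanged by transposing $B$, which induces an isomorphism of $\upX_B$ swapping the two bipartition classes and fixes each of the types \ref{item:comparisonproofcaseLhassize6:C4intersectingtwoedgesinseparatenonadjacentvertices}, \ref{item:comparisonproofcaseLhassize6:C4intersectingtwoedgesinseparateadjacentvertices}, \ref{item:comparisonproofcaseLhassize6:C4intersectingatwopathinanendvertex}, \ref{item:comparisonproofcaseLhassize6:C4intersectingatwopathinitsinnervertex}; so it suffices to treat the $(1,1)$ split and one of the extreme splits. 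In the $(1,1)$ split the two new vertices sit on opposite sides, whereas the two pendant tips of type \ref{item:comparisonproofcaseLhassize6:C4intersectingtwoedgesinseparatenonadjacentvertices} lie on the \emph{same} side; so the $(1,1)$ split yields one of \ref{item:comparisonproofcaseLhassize6:C4intersectingtwoedgesinseparateadjacentvertices}, \ref{item:comparisonproofcaseLhassize6:C4intersectingatwopathinanendvertex}, \ref{item:comparisonproofcaseLhassize6:C4intersectingatwopathinitsinnervertex}, never \ref{item:comparisonproofcaseLhassize6:C4intersectingtwoedgesinseparatenonadjacentvertices}, and this is precisely the split ruled out by (P.t8.4). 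In the $(0,2)$ split (so $u_1,v_1\in\upp_1(S)$, whence $u_2,v_2\notin\upp_2(S)$ and, by $f_0=6$, $u_2\neq v_2$) both $e_u,e_v$ are pendant edges hanging from row-vertices of the $C^4$ with the distinct new column-vertices as tips; the graph is of type \ref{item:comparisonproofcaseLhassize6:C4intersectingtwoedgesinseparatenonadjacentvertices} exactly when the two row-attachment vertices are distinct, i.e.\ $u_1\neq v_1$, equivalently $\{u_1,v_1\}=\upp_1(S)$ (if $u_1=v_1$ one gets type \ref{item:comparisonproofcaseLhassize6:C4intersectingatwopathinitsinnervertex}). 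Tracing this: ``$u_1\neq v_1$ and $u_2\neq v_2$'' is exactly the assertion that $e_u$ and $e_v$ are vertex-disjoint, which is the content of (P.t8.3), and ``$u_1,v_1\in\upp_1(S)$'' is the first alternative of (P.t8.4); the column side is symmetric. This closes both implications.

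The part that I expect to need the most care is matching (P.t8.3) to the graph-theoretic requirement that $e_u$ and $e_v$ attach at \emph{distinct} (hence, being on the same side, \emph{non-adjacent}) vertices of the $C^4$. Here one must keep the row side and the column side scrupulously separated: a coordinate coincidence such as $u_1=v_2$ does \emph{not} create any coincidence of vertices in $\upX_B$, since a ``row'' vertex $(u_1,t)$ and a ``column'' vertex $(s,v_2)$ are always distinct, so the only coincidences relevant to disjointness of $e_u,e_v$ are $u_1=v_1$ and $u_2=v_2$. One should also record at the outset that a graph of type \ref{item:comparisonproofcaseLhassize6:C4intersectingtwoedgesinseparatenonadjacentvertices} is unicyclic, so its unique $C^4$ is forcibly the one spanned by the edges of $S$; this is what makes ``the $C^4$'' unambiguous throughout and lets the degree pattern ($2,2,3,3,1,1$) identify the two new vertices as exactly the pendant tips.
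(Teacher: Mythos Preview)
Your argument is correct and reaches the same conclusion as the paper's, but with a different organization. The paper treats the two implications separately: for the forward direction it names the two pendant edges $e=\{(u_1,n),(n,u_2)\}$ and $f=\{(v_1,n),(n,v_2)\}$, reads off (P.t8.3) from $e\cap f=\emptyset$, and then cases on whether $u_1\in\upp_1(S)$ or $u_2\in\upp_2(S)$ to obtain (P.t8.4); for the converse it reverses this, casing on which endpoint of $e$ meets $\upV(\upX_{B\mid_S})$ and using (P.t8.4) to force the corresponding endpoint of $f$ onto the same bipartition side. You instead front-load the $f$-vector bookkeeping to dispatch (P.t8.1) and (P.t8.2) as the conditions $f_0=6$ and $f_1=6$, then classify once and for all by how the two new vertices split between the row side and the column side, handling both directions simultaneously and invoking the transpose symmetry of $\upX_B$ to halve the number of cases. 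Your route is more economical and makes the role of the bipartition explicit; the paper's is more hands-on at the level of named edges and vertices, which keeps it uniform with the proofs of the companion lemmas for types (t9)--(t11).

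One remark on your closing paragraph: you are right that only the coincidences $u_1=v_1$ and $u_2=v_2$ are relevant to vertex-disjointness of $e_u,e_v$, and that a cross-coincidence such as $u_1=v_2$ is graph-theoretically harmless. The paper's own proof uses (P.t8.3) in exactly this sense---it derives (P.t8.3) directly from $e\cap f=\emptyset$, which literally yields only $u_1\neq v_1$ and $u_2\neq v_2$---so your reading of ``the content of (P.t8.3)'' matches the paper's intended one, and neither argument attempts to exclude the irrelevant cross-coincidences.
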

\begin{proof}
First suppose that $\upX_{B} \cong \ref{item:comparisonproofcaseLhassize6:C4intersectingtwoedgesinseparatenonadjacentvertices}$. Then Definition \ref{def:XBandecXB} 
implies that both \ref{characterizationoftype:C4intersectingtwoedgesinseparatenonadjacentvertices:property1} and \ref{characterizationoftype:C4intersectingtwoedgesinseparatenonadjacentvertices:property2} are true. To prove \ref{characterizationoftype:C4intersectingtwoedgesinseparatenonadjacentvertices:property3} and 
\ref{characterizationoftype:C4intersectingtwoedgesinseparatenonadjacentvertices:property4}, let $e\neq f\in \upE(\upX_{B})$ denote the two edges in $\upX_{B\mid_{\{u,v\}}}$, 
where $\{e\}$ $:=$ $\upE(\upX_{B[u]})$ $=$ $\bigl\{\{(u_1,n), (n,u_2)\}\bigr\}$ and 
$\{f\}$ $:=$ $\upE(\upX_{B[v]})$ $=$ $\bigl\{ \{(v_1,n), (n,v_2)\}\bigr\}$. By 
hypothesis, $e\cap f = \emptyset$ and this implies that \ref{characterizationoftype:C4intersectingtwoedgesinseparatenonadjacentvertices:property3} is true. Moreover, again 
by hypothesis, both $e$ and $f$ intersect $\upX_{B\mid_S}\cong C^4$ and the 
intersection set is \emph{not} an edge of it. 

If $u_1\in\upp_1(S)$, then there are still two possibilities for the intersection set 
$f\cap \upV(\upX_{B\mid_S})$, namely $f\cap \upV(\upX_{B\mid_S})=\{(v_1,n)\}$ 
(equivalently, $v_1\in\upp_1(S)$) or 
$f\cap \upV(\upX_{B\mid_S}) = \{(n,v_2)\}$ (equivalently, $v_2\in\upp_2(S)$). It follows from 
Definition \ref{def:XBandecXB} that the vertex in the intersection set 
$e\cap \upV(\upX_{B\mid_S}) = \{ (u_1,n)\}$ is \emph{not} adjacent to the vertex in $f\cap \upV(\upX_{B\mid_S})$ 
if and only if the first possibility is true, i.e. $f\cap \upV(\upX_{B\mid_S}) = \{(v_1,n)\}$, i.e. 
$v_1\in\upp_1(S)$. This proves that the first 
clause of \ref{characterizationoftype:C4intersectingtwoedgesinseparatenonadjacentvertices:property4}, 
and hence \ref{characterizationoftype:C4intersectingtwoedgesinseparatenonadjacentvertices:property4} itself, is true. 

If $u_2\in\upp_2(S)$, then an entirely analogous argument as the one in the preceding 
paragraph shows that the second clause of \ref{characterizationoftype:C4intersectingtwoedgesinseparatenonadjacentvertices:property4}, 
hence again \ref{characterizationoftype:C4intersectingtwoedgesinseparatenonadjacentvertices:property4} itself, is true. This proves 
that $\upX_{B} \cong \ref{item:comparisonproofcaseLhassize6:C4intersectingtwoedgesinseparatenonadjacentvertices}$ implies that 
\ref{characterizationoftype:C4intersectingtwoedgesinseparatenonadjacentvertices:property1}--\ref{characterizationoftype:C4intersectingtwoedgesinseparatenonadjacentvertices:property4} 
are true.  

Conversely, assume \ref{characterizationoftype:C4intersectingtwoedgesinseparatenonadjacentvertices:property1}--\ref{characterizationoftype:C4intersectingtwoedgesinseparatenonadjacentvertices:property4}. Then \ref{characterizationoftype:C4intersectingtwoedgesinseparatenonadjacentvertices:property2} implies 
that $f_1( \upX_{B}) = 6$ and \ref{characterizationoftype:C4intersectingtwoedgesinseparatenonadjacentvertices:property3} implies that the two edges in $\upE(\upX_{B\mid_{\{u,v\}}})$ do 
not intersect. Let $e$ and $f$ be defined as in the preceding proof of the other 
implication. It remains to show that 
$(e\cap \upV(\upX_{B\mid_{S}}))\cup (f\cap \upV(\upX_{B\mid_{S}})) \notin \upE(\upX_{B\mid_{S}})$. By 
definition of $e$, either 
$e\cap \upV(\upX_{B\mid_{S}}) = \{ (u_1,n)\}$ or $e\cap \upV(\upX_{B\mid_{S}}) = \{(n,u_2)\}$. 

In the former case we have $u_1\in\upp_1(S)$, hence the first clause 
of \ref{characterizationoftype:C4intersectingtwoedgesinseparatenonadjacentvertices:property4} implies $v_1\in\upp_1(S)$, hence $(v_1,n)\in f\cap \upV(\upX_{B})$ by 
definition of $f$, hence $f\cap \upV( \upX_{B}) = \{(v_1,n)\}$ since 
$f\cap \upV( \upX_{B})$ is a singleton by construction. In view of 
Definition \ref{def:XBandecXB} this implies that indeed $(e\cap \upV(\upX_{B\mid_{S}}))\cup (f\cap \upV(\upX_{B\mid_{S}})) = \{ (u_1,n) , (v_1,n)\}\notin \upE(\upX_{B\mid_{S}})$.

In the latter case we have $u_2\in\upp_2(S)$, hence the second clause 
of \ref{characterizationoftype:C4intersectingtwoedgesinseparatenonadjacentvertices:property4} implies $v_2\in\upp_2(S)$, hence $(n,v_2)\in f\cap \upV(\upX_{B})$ by 
definition of $f$, hence $f\cap \upV( \upX_{B}) = \{(n,v_2)\}$ since 
$f\cap \upV( \upX_{B})$ is a singleton by construction. In view of 
Definition \ref{def:XBandecXB} this implies that indeed $(e\cap \upV(\upX_{B\mid_{S}}))\cup (f\cap \upV(\upX_{B\mid_{S}})) = \{ (n,u_2) , (n,v_2)\}\notin \upE(\upX_{B\mid_{S}})$.
This completes the proof that \ref{characterizationoftype:C4intersectingtwoedgesinseparatenonadjacentvertices:property1}--\ref{characterizationoftype:C4intersectingtwoedgesinseparatenonadjacentvertices:property4} imply $\upX_{B} \cong \ref{item:comparisonproofcaseLhassize6:C4intersectingtwoedgesinseparatenonadjacentvertices}$.
\end{proof}

\begin{lemma}\label{lem:characterizationofisomorphismtype:C4intersectingtwoedgesinseparateadjacentvertices}
For every $B\in \{0,\pm\}^I$ with $I\in\binom{[n-1]^2}{6}$, 
$I = S \sqcup \{u,v\}$ and $\upX_{B\mid_{S}}\cong C^4$ we have $\upX_{B} \cong \ref{item:comparisonproofcaseLhassize6:C4intersectingtwoedgesinseparateadjacentvertices}$ 
if and only if 

{\scriptsize
\begin{minipage}[b]{0.35\linewidth}
\begin{enumerate}[label={\rm(P.\ref{item:comparisonproofcaseLhassize6:C4intersectingtwoedgesinseparateadjacentvertices}.\arabic{*})}]
\item\label{characterizationoftype:C4intersectingtwoedgesinseparateadjacentvertices:property1} 
$\upX_{\{0\}^{\{u,v\}} \sqcup B\mid_{S}} \cong 
\ref{item:comparisonproofcaseLhassize6:twoisolatedvertices}$, 
\item\label{characterizationoftype:C4intersectingtwoedgesinseparateadjacentvertices:property2} 
$B[u]\in\{\pm\}$ and $B[v]\in \{\pm\}$,
\end{enumerate}
\end{minipage}
\begin{minipage}[b]{0.65\linewidth}
\begin{enumerate}[label={\rm(P.\ref{item:comparisonproofcaseLhassize6:C4intersectingtwoedgesinseparateadjacentvertices}.\arabic{*})},start=3]
\item\label{characterizationoftype:C4intersectingtwoedgesinseparateadjacentvertices:property3} 
$\{u_1,u_2\}\cap\{v_1,v_2\} = \emptyset$, 
\item\label{characterizationoftype:C4intersectingtwoedgesinseparateadjacentvertices:property4} 
($u_1\in\upp_1(S)$ and $v_2\in\upp_2(S)$) 
or 
($u_2\in\upp_2(S)$ and $v_1\in\upp_1(S)$). 
\end{enumerate}
\end{minipage}
}
\end{lemma}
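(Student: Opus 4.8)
The plan is to mirror, almost verbatim, the structure of the proof of Lemma~\ref{lem:characterizationofisomorphismtype:C4intersectingtwoedgesinseparatenonadjacentvertices}, since types~\ref{item:comparisonproofcaseLhassize6:C4intersectingtwoedgesinseparatenonadjacentvertices} and~\ref{item:comparisonproofcaseLhassize6:C4intersectingtwoedgesinseparateadjacentvertices} differ only in whether the two pendant edges attach to \emph{non-adjacent} or to \emph{adjacent} vertices of the $C^4$; accordingly, properties \ref{characterizationoftype:C4intersectingtwoedgesinseparateadjacentvertices:property1}--\ref{characterizationoftype:C4intersectingtwoedgesinseparateadjacentvertices:property3} are literally the same as before, and only the case-distinction of \ref{characterizationoftype:C4intersectingtwoedgesinseparateadjacentvertices:property4} has to be re-examined. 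First I would set up the same notation: write $I = S\sqcup\{u,v\}$ with $\upX_{B\mid_S}\cong C^4$, let $\{e\}:=\upE(\upX_{B[u]}) = \bigl\{\{(u_1,n),(n,u_2)\}\bigr\}$ and $\{f\}:=\upE(\upX_{B[v]}) = \bigl\{\{(v_1,n),(n,v_2)\}\bigr\}$, and recall from Definition~\ref{def:XBandecXB} that in $\upX_{B\mid_S}$ the vertices $(i,n)$ and $(n,j)$ are adjacent precisely when $(i,j)\in S$, so that two vertices of the same colour class (two ``$V_1$-vertices'' $(i,n),(i',n)$ or two ``$V_2$-vertices'' $(n,j),(n,j')$) are never adjacent, while a $V_1$-vertex and a $V_2$-vertex are adjacent iff the corresponding matrix position lies in $S$.

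For the forward direction, assume $\upX_B\cong\ref{item:comparisonproofcaseLhassize6:C4intersectingtwoedgesinseparateadjacentvertices}$. As in the previous lemma, \ref{characterizationoftype:C4intersectingtwoedgesinseparateadjacentvertices:property1} and \ref{characterizationoftype:C4intersectingtwoedgesinseparateadjacentvertices:property2} are immediate from Definition~\ref{def:XBandecXB} (the $C^4$ must come from $S$, and both extra edges must actually be present, so $B[u],B[v]\in\{\pm\}$), and $e\cap f=\emptyset$ forces \ref{characterizationoftype:C4intersectingtwoedgesinseparateadjacentvertices:property3}. The new point is \ref{characterizationoftype:C4intersectingtwoedgesinseparateadjacentvertices:property4}: now the single vertex in $e\cap\upV(\upX_{B\mid_S})$ and the single vertex in $f\cap\upV(\upX_{B\mid_S})$ \emph{are} required to be adjacent in $C^4$; since adjacency in a bipartite graph forces the two endpoints into different colour classes, exactly one of $e\cap\upV(\upX_{B\mid_S})$, $f\cap\upV(\upX_{B\mid_S})$ is a $V_1$-vertex and the other a $V_2$-vertex. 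If $u_1\in\upp_1(S)$ (so $e$ attaches via its $V_1$-vertex $(u_1,n)$), then for $f$ to attach via a $V_2$-vertex we need $v_2\in\upp_2(S)$; symmetrically if $u_2\in\upp_2(S)$ we need $v_1\in\upp_1(S)$. This is exactly \ref{characterizationoftype:C4intersectingtwoedgesinseparateadjacentvertices:property4}.

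For the converse, assume \ref{characterizationoftype:C4intersectingtwoedgesinseparateadjacentvertices:property1}--\ref{characterizationoftype:C4intersectingtwoedgesinseparateadjacentvertices:property4}. Property \ref{characterizationoftype:C4intersectingtwoedgesinseparateadjacentvertices:property2} gives $f_1(\upX_B)=6$, \ref{characterizationoftype:C4intersectingtwoedgesinseparateadjacentvertices:property1} identifies the unique $C^4$, and \ref{characterizationoftype:C4intersectingtwoedgesinseparateadjacentvertices:property3} makes $e$ and $f$ vertex-disjoint; it remains to check that the two intersection points of $e$ and $f$ with the $C^4$ span an \emph{edge} of the $C^4$. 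In the case $e\cap\upV(\upX_{B\mid_S})=\{(u_1,n)\}$ we have $u_1\in\upp_1(S)$, so the first clause of \ref{characterizationoftype:C4intersectingtwoedgesinseparateadjacentvertices:property4} yields $v_2\in\upp_2(S)$, forcing $f\cap\upV(\upX_{B\mid_S})=\{(n,v_2)\}$ (a singleton by construction); then $(u_1,n)$ and $(n,v_2)$ lie in opposite colour classes and — here is the one genuinely new verification — one must confirm $(u_1,v_2)\in S$, i.e. that these two vertices really are adjacent in $C^4$ rather than merely being non-adjacent vertices of opposite colour. This follows because $S$ is a matrix-$4$-circuit whose projections $\upp_1(S)$ and $\upp_2(S)$ each have exactly two elements, so that $S=\upp_1(S)\times\upp_2(S)$; hence $u_1\in\upp_1(S)$ and $v_2\in\upp_2(S)$ immediately give $(u_1,v_2)\in S$, which by Definition~\ref{def:XBandecXB} means exactly that $(u_1,n)$ and $(n,v_2)$ are adjacent in $\upX_{B\mid_S}\cong C^4$. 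The symmetric case $e\cap\upV(\upX_{B\mid_S})=\{(n,u_2)\}$ (using the second clause of \ref{characterizationoftype:C4intersectingtwoedgesinseparateadjacentvertices:property4} and $(v_1,u_2)\in S$) is handled identically. This establishes $\upX_B\cong\ref{item:comparisonproofcaseLhassize6:C4intersectingtwoedgesinseparateadjacentvertices}$ and completes the proof. The main (and only) obstacle beyond bookkeeping is recognising that a matrix-$4$-circuit is automatically a full rectangular product $\upp_1(S)\times\upp_2(S)$, which is what converts the colour-class condition into genuine edge-adjacency in the $C^4$.
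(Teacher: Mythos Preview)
Your proof is correct and follows essentially the same route as the paper's own argument: both directions proceed exactly as in the proof of Lemma~\ref{lem:characterizationofisomorphismtype:C4intersectingtwoedgesinseparatenonadjacentvertices}, with the adjacency/non-adjacency swap localized in property~\ref{characterizationoftype:C4intersectingtwoedgesinseparateadjacentvertices:property4}. Your explicit observation that a matrix-$4$-circuit satisfies $S=\upp_1(S)\times\upp_2(S)$ is a welcome clarification of a step the paper leaves implicit (it simply invokes Definition~\ref{def:XBandecXB} to conclude $\{(u_1,n),(n,v_2)\}\in\upE(\upX_{B\mid_S})$).
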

\begin{proof}
First suppose that $\upX_{B}\cong \ref{item:comparisonproofcaseLhassize6:C4intersectingtwoedgesinseparateadjacentvertices}$. Then Definition \ref{def:XBandecXB} implies 
that both \ref{characterizationoftype:C4intersectingtwoedgesinseparateadjacentvertices:property1} and \ref{characterizationoftype:C4intersectingtwoedgesinseparateadjacentvertices:property2} are true. 
To prove \ref{characterizationoftype:C4intersectingtwoedgesinseparateadjacentvertices:property3} and 
\ref{characterizationoftype:C4intersectingtwoedgesinseparateadjacentvertices:property4}, let $e\neq f\in \upE(\upX_{B})$ denote the two edges in $\upX_{B\mid_{\{u,v\}}}$, where 
$\{e\}$ $:=$ $\upE(\upX_{B[u]})$ $=$ $\bigl\{\{(u_1,n), (n,u_2)\}\bigr\}$ and 
$\{f\}$ $:=$ $\upE(\upX_{B[v]})$ $=$ $\bigl\{ \{(v_1,n), (n,v_2)\}\bigr\}$. 
By hypothesis $e\cap f = \emptyset$ and this implies that \ref{characterizationoftype:C4intersectingtwoedgesinseparateadjacentvertices:property3} is true. Moreover, again 
by hypothesis, both $e$ and $f$ intersect $\upX_{B\mid_S}$ $\cong$ $C^4$ and the 
intersection set is an edge of it. 

If $u_1\in \upp_1(S)$, then there are still two possibilities for the intersection 
set $f\cap \upV(\upX_{B\mid_S})$, namely $f\cap \upV(\upX_{B\mid_S}) = \{(v_1,n)\}$ 
(equivalently, $v_1\in \upp_1(S)$) or $f\cap \upV(\upX_{B\mid_S}) = \{(n,v_2)\}$ 
(equivalently, $v_2\in \upp_2(S)$). It is evident from Definition \ref{def:XBandecXB} 
that the vertex in the intersection set $e\cap \upV(\upX_{B\mid_S}) = \{(u_1,n)\}$ is 
adjacent to the vertex in $f\cap \upV(\upX_{B\mid_S})$ if and only if the second 
possibility is true, i.e. $f\cap \upV(\upX_{B\mid_S}) = \{(n,v_2)\}$, 
i.e. $v_2\in\upp_2(S)$. This proves that the first clause of \ref{characterizationoftype:C4intersectingtwoedgesinseparateadjacentvertices:property4}, and hence 
\ref{characterizationoftype:C4intersectingtwoedgesinseparateadjacentvertices:property4} itself, is true. 

If $u_2\in \upp_2(S)$, an entirely analogous argument as the one in the 
preceding paragraph shows that then the second clause 
of \ref{characterizationoftype:C4intersectingtwoedgesinseparateadjacentvertices:property4}, hence again \ref{characterizationoftype:C4intersectingtwoedgesinseparateadjacentvertices:property4} itself is true. This completes the proof that $\upX_{B} \cong \ref{item:comparisonproofcaseLhassize6:C4intersectingtwoedgesinseparateadjacentvertices}$ 
implies properties \ref{characterizationoftype:C4intersectingtwoedgesinseparateadjacentvertices:property1}--\ref{characterizationoftype:C4intersectingtwoedgesinseparateadjacentvertices:property4}. 

Conversely, suppose that \ref{characterizationoftype:C4intersectingtwoedgesinseparateadjacentvertices:property1}--\ref{characterizationoftype:C4intersectingtwoedgesinseparateadjacentvertices:property4} are true. Then \ref{characterizationoftype:C4intersectingtwoedgesinseparateadjacentvertices:property2} implies 
that $f_1( \upX_{B} ) = 6$ and \ref{characterizationoftype:C4intersectingtwoedgesinseparateadjacentvertices:property3} implies that the two edges in 
$\upE(\upX_{B\mid_{\{u,v\}}})$ do not intersect. Let $e$ and $f$ be defined as in the 
preceding proof of the other implication. It remains to show that 
$(e\cap \upV(\upX_{B\mid_{S}})) \cup (f\cap \upV(\upX_{B\mid_{S}})) \in \upE(\upX_{B\mid_{S}})$. By definition of $e$, either $e\cap \upV(\upX_{B\mid_{S}}) = \{(u_1,n)\}$ or 
$e\cap \upV(\upX_{B\mid_{S}}) = \{(n,u_2)\}$. 

In the former case we have $u_1\in \upp_1(S)$, hence 
the first clause of \ref{characterizationoftype:C4intersectingtwoedgesinseparateadjacentvertices:property4} implies that $v_2\in \upp_2(S)$, hence  
$(n,v_2) \in  f\cap \upV(\upX_{B\mid_{S}})$ by definition of $f$, hence 
$f\cap \upV(\upX_{B\mid_{S}}) = \{ (n,v_2) \}$ since $f\cap \upV(\upX_{B\mid_{S}})$ is a 
singleton by construction. In view of Definition \ref{def:XBandecXB} this implies 
that indeed $(e\cap \upV(\upX_{B\mid_{S}})) \cup (f\cap \upV(\upX_{B\mid_{S}})) = 
\{ (u_1,n), (n,v_2)\} \in \upE(\upX_{B\mid_{S}})$. 

In the latter case we have $u_2\in \upp_2(S)$, hence the second clause of 
\ref{characterizationoftype:C4intersectingtwoedgesinseparateadjacentvertices:property4} 
implies that $v_1\in \upp_1(S)$, hence  $(v_1,n) \in  f\cap \upV(\upX_{B\mid_{S}})$ by 
definition of $f$, hence $f\cap \upV(\upX_{B\mid_{S}}) = \{ (v_1,n) \}$ since 
$f\cap \upV(\upX_{B\mid_{S}})$ is a singleton by construction. In view of 
Definition \ref{def:XBandecXB} this implies that indeed 
$(e\cap \upV(\upX_{B\mid_{S}})) \cup (f\cap \upV(\upX_{B\mid_{S}})) = \{ (n,u_2), (v_1,n)\} \in \upE(\upX_{B\mid_{S}})$. This completes the proof that \ref{characterizationoftype:C4intersectingtwoedgesinseparateadjacentvertices:property1}--\ref{characterizationoftype:C4intersectingtwoedgesinseparateadjacentvertices:property4} imply $\upX_{B} \cong \ref{item:comparisonproofcaseLhassize6:C4intersectingtwoedgesinseparateadjacentvertices}$.
\end{proof}

\begin{lemma}\label{lem:characterizationofisomorphismtype:C4intersectingatwopathinanendvertex}
For every $B\in \{0,\pm\}^I$ with $I\in\binom{[n-1]^2}{6}$, 
$I = S \sqcup \{u,v\}$ and $\upX_{B\mid_{S}}\cong C^4$ we have $\upX_{B} \cong \ref{item:comparisonproofcaseLhassize6:C4intersectingatwopathinanendvertex}$ if and only if

{\scriptsize
\begin{minipage}[b]{0.35\linewidth}
\begin{enumerate}[label={\rm(P.\ref{item:comparisonproofcaseLhassize6:C4intersectingatwopathinanendvertex}.\arabic{*})}]
\item\label{characterizationoftype:C4intersectingatwopathinanendvertex:property1}  
$\upX_{\{0\}^{\{u,v\}} \sqcup B\mid_{S}} \cong 
\ref{item:comparisonproofcaseLhassize6:twoisolatedvertices}$, 
\item\label{characterizationoftype:C4intersectingatwopathinanendvertex:property2}  
$B[u]\in\{\pm\}$ and $B[v]\in \{\pm\}$,
\end{enumerate}
\end{minipage}
\begin{minipage}[b]{0.65\linewidth}
\begin{enumerate}[label={\rm(P.\ref{item:comparisonproofcaseLhassize6:C4intersectingatwopathinanendvertex}.\arabic{*})},start=3]
\item\label{characterizationoftype:C4intersectingatwopathinanendvertex:property3}  
$\{u_1,u_2\}\cap\{v_1,v_2\} \neq \emptyset$, 
\item\label{characterizationoftype:C4intersectingatwopathinanendvertex:property4}  
$\{u_1,u_2\}\cap \upp(S) = \emptyset$ 
or $\{v_1,v_2\}\cap \upp(S) = \emptyset$. 
\end{enumerate}
\end{minipage}
}
\end{lemma}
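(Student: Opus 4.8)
The plan is to follow the same two-part template as Lemma~\ref{lem:characterizationofisomorphismtype:C4intersectingtwoedgesinseparatenonadjacentvertices} and Lemma~\ref{lem:characterizationofisomorphismtype:C4intersectingtwoedgesinseparateadjacentvertices}: I would make the two non-circuit edges explicit and run a short case distinction. Setting $e:=\{(u_1,n),(n,u_2)\}$ when $B[u]\neq0$ and $f:=\{(v_1,n),(n,v_2)\}$ when $B[v]\neq0$, Definition~\ref{def:XBandecXB} identifies these as exactly the edges of $\upX_B$ outside $\upX_{B\mid_S}$ whenever they exist; read inside $\upV(\upX_B)\subseteq V_1\sqcup V_2$, the set $\upp(S)$ is the vertex set of the $C^4$ $\upX_{B\mid_S}$, so \ref{characterizationoftype:C4intersectingatwopathinanendvertex:property3} says $e$ and $f$ share a vertex (equivalently $u_1=v_1$ or $u_2=v_2$) and \ref{characterizationoftype:C4intersectingatwopathinanendvertex:property4} says one of $e,f$ has both of its endpoints off that $C^4$. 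One could equally run the explicit row/column split of the two companion lemmas, but I would phrase everything vertex-theoretically to cut down on bookkeeping.

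For the forward implication I would argue as follows. Assuming $\upX_B\cong\ref{item:comparisonproofcaseLhassize6:C4intersectingatwopathinanendvertex}$, uniqueness of the $C^4$ in that isomorphism type makes $\upX_{B\mid_S}$ that $C^4$ and forces the remaining two edges of $\upX_B$ to be $e$ and $f$; this yields \ref{characterizationoftype:C4intersectingatwopathinanendvertex:property2} (both $B[u],B[v]\in\{\pm\}$) and, after zeroing $u$ and $v$, type~\ref{item:comparisonproofcaseLhassize6:twoisolatedvertices} --- which is \ref{characterizationoftype:C4intersectingatwopathinanendvertex:property1} --- since the two path-vertices lying off the $C^4$ become isolated. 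The two edges of the attached $2$-path share exactly one vertex, so by Definition~\ref{def:XBandecXB} we get $u_1=v_1$ or $u_2=v_2$, and not both (else $u=v$): that is \ref{characterizationoftype:C4intersectingatwopathinanendvertex:property3}. Finally, in type~\ref{item:comparisonproofcaseLhassize6:C4intersectingatwopathinanendvertex} exactly one endpoint of the $2$-path lies on the $C^4$, so the edge of the path not incident with that endpoint has both endpoints off the $C^4$; according as this edge is $e$ or $f$ we obtain $\{u_1,u_2\}\cap\upp(S)=\emptyset$ or $\{v_1,v_2\}\cap\upp(S)=\emptyset$, i.e.\ \ref{characterizationoftype:C4intersectingatwopathinanendvertex:property4}.

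For the converse I would assume \ref{characterizationoftype:C4intersectingatwopathinanendvertex:property1}--\ref{characterizationoftype:C4intersectingatwopathinanendvertex:property4}. Property~\ref{characterizationoftype:C4intersectingatwopathinanendvertex:property2} gives $f_1(\upX_B)=6$ and $\upE(\upX_B)=\upE(\upX_{B\mid_S})\sqcup\{e,f\}$; \ref{characterizationoftype:C4intersectingatwopathinanendvertex:property3} makes $e,f$ meet in a single vertex $w$ (a second shared vertex would force $u=v$), so with $x,y$ the other endpoints of $e,f$ respectively --- and $x\neq y$, else $e=f$ and again $u=v$ --- the union $e\cup f$ is the $2$-path with edges $\{x,w\}=e$ and $\{w,y\}=f$. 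Taking \ref{characterizationoftype:C4intersectingatwopathinanendvertex:property4} to hold for $v$ (the other case being symmetric), both endpoints of $f$, in particular $w$ (since $w\in f$) and $y$, lie off the $C^4$, so $x$ is the only path-vertex that could lie on it. The clincher is \ref{characterizationoftype:C4intersectingatwopathinanendvertex:property1}: deleting $e$ and $f$ must leave the $C^4$ with exactly two isolated vertices, and those are precisely the members of the three-element set $\{x,w,y\}$ lying off the $C^4$; since $w,y$ already account for two of them, $x$ must lie on the $C^4$. Hence $e\cup f$ is a $2$-path meeting the $C^4$ in the single vertex $x$, which is an endpoint of the path, so $\upX_B\cong\ref{item:comparisonproofcaseLhassize6:C4intersectingatwopathinanendvertex}$.

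The step I expect to carry the real content --- and the one distinguishing this lemma from the companion statement for type~\ref{item:comparisonproofcaseLhassize6:C4intersectingatwopathinitsinnervertex} (the $2$-path attached at its inner vertex) --- is the assertion that the middle vertex $w$ of the $2$-path lies off the $C^4$: property~\ref{characterizationoftype:C4intersectingatwopathinanendvertex:property4} is tailored to supply exactly that, since a $w$ on the $C^4$ would land us in type~\ref{item:comparisonproofcaseLhassize6:C4intersectingatwopathinitsinnervertex} instead. The remaining alternatives fall to cheap counts: \ref{characterizationoftype:C4intersectingatwopathinanendvertex:property3} excludes the disjoint-edge types~\ref{item:comparisonproofcaseLhassize6:C4intersectingtwoedgesinseparatenonadjacentvertices} and~\ref{item:comparisonproofcaseLhassize6:C4intersectingtwoedgesinseparateadjacentvertices}, $f_0(\upX_B)=6$ excludes $K^{2,3}$ (type~\ref{item:comparisonproofcaseLhassize6:XBLisomorphictoK23}), and the presence of a $C^4$ subgraph excludes $C^6$ (type~\ref{item:comparisonproofcaseLhassize6:C6}).
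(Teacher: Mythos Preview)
Your proof is correct and follows the same overall template as the paper: make the two non-circuit edges $e,f$ explicit, and read the four conditions as statements about how $e$ and $f$ sit relative to the $C^4$. Your forward implication is essentially identical to the paper's (the paper labels $e,f$ by which one misses the $C^4$ rather than by which of $u,v$ it comes from, but this is cosmetic).

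Your converse, however, is organised differently and is arguably cleaner. The paper splits into the coordinate cases $u_1=v_1$ versus $u_2=v_2$, invokes the numerical characterisation \eqref{eq:proofofnumberofeventsinducing:item:comparisonproofcaseLhassize6:twoisolatedvertices} of type~\ref{item:comparisonproofcaseLhassize6:twoisolatedvertices} (that is, $\lvert\{u_1,v_1\}\setminus\upp_1(S)\rvert+\lvert\{u_2,v_2\}\setminus\upp_2(S)\rvert=2$), and then argues index by index which of $u_2,v_2$ (resp.\ $u_1,v_1$) lies in $\upp_2(S)$ (resp.\ $\upp_1(S)$). You instead use \ref{characterizationoftype:C4intersectingatwopathinanendvertex:property4} directly to place the middle vertex $w$ and one endvertex off the $C^4$, and then finish by the pigeonhole observation that \ref{characterizationoftype:C4intersectingatwopathinanendvertex:property1} allows only two non-$C^4$ vertices, forcing the remaining endvertex $x$ onto the $C^4$. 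Both arguments are short; yours avoids the row/column case split at the cost of appealing to the vertex count implicit in \ref{characterizationoftype:C4intersectingatwopathinanendvertex:property1} rather than to the explicit equation \eqref{eq:proofofnumberofeventsinducing:item:comparisonproofcaseLhassize6:twoisolatedvertices}. Your explicit remark that the intersections in \ref{characterizationoftype:C4intersectingatwopathinanendvertex:property3} and \ref{characterizationoftype:C4intersectingatwopathinanendvertex:property4} are to be read inside $V_1\sqcup V_2$ is also a helpful clarification of the paper's notation.
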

\begin{proof} 
First suppose that $\upX_{B}\cong \ref{item:comparisonproofcaseLhassize6:C4intersectingatwopathinanendvertex}$. Then Definition \ref{def:XBandecXB} implies that both \ref{characterizationoftype:C4intersectingatwopathinanendvertex:property1} and \ref{characterizationoftype:C4intersectingatwopathinanendvertex:property2} are true. To prove 
\ref{characterizationoftype:C4intersectingatwopathinanendvertex:property3} and \ref{characterizationoftype:C4intersectingatwopathinanendvertex:property4}, 
let $e\in \upE(\upX_{B})$ denote the unique edge which does not intersect 
$\upX_{B\mid_S}\cong C^4$, and let $f\in \upE(\upX_{B})$ denote the unique edge 
which intersects $\upX_{B\mid_S}\cong C^4$. 
In view of Definition \ref{def:XBandecXB}, 
{\scriptsize
\begin{equation}\label{eq:propertyinthepreparationofanalysisoftypet14}
\text{($e = \{ (u_1,n), (n,u_2) \} $ and $f=\{(v_1,n),(n,v_2)\}$) 
or 
($e = \{(v_1,n), (n,v_2)\}$ and $f=\{(u_1,n),(n,u_2)\}$).
}
\end{equation}
}
By definition of $e$ and $f$ we have $e\cap f\neq \emptyset$, hence whatever of the 
two clauses of \eqref{eq:propertyinthepreparationofanalysisoftypet14} is true, either 
$u_1=v_1$ or $u_2=v_2$. Therefore property \ref{characterizationoftype:C4intersectingatwopathinanendvertex:property3} 
is true. 
By definition of $e$, if $e = \{ (u_1,n), (n,u_2)\}$, then $u_1\notin\upp_1(S)$ 
and $u_2\notin\upp_2(S)$, hence the first clause 
of \ref{characterizationoftype:C4intersectingatwopathinanendvertex:property4} is true, and if $e=\{(v_1,n),(n,v_2)\}$, 
then $v_1\notin \upp_1(S)$ and $v_2\notin\upp_2(S)$, hence then the second clause of 
\ref{characterizationoftype:C4intersectingatwopathinanendvertex:property4} is true. 

Conversely, suppose that properties \ref{characterizationoftype:C4intersectingatwopathinanendvertex:property1}--\ref{characterizationoftype:C4intersectingatwopathinanendvertex:property4} are true. 
Then \ref{characterizationoftype:C4intersectingatwopathinanendvertex:property2}
implies $f_1( \upX_{B}) = 6$ and \ref{characterizationoftype:C4intersectingatwopathinanendvertex:property3} 
implies that the two edges corresponding to $u\neq v$ intersect. 
It remains to prove that the $2$-path consisting of these edges intersects 
$\upX_{B\mid_S} \cong C^4$ with one of its endvertices. To see this, note first 
that \ref{characterizationoftype:C4intersectingatwopathinanendvertex:property1} implies \eqref{eq:proofofnumberofeventsinducing:item:comparisonproofcaseLhassize6:twoisolatedvertices} which combined with $u\neq v$ implies that  
\begin{equation}\label{eq:preparationofanalysisoftypet14:the2pathintersectstheC4}
\{u_1,u_2,v_1,v_2\}\cap \upp(S) \neq \emptyset\quad . 
\end{equation}
Moreover, we know from \eqref{eq:propertyinthepreparationofanalysisoftypet14} together 
with $e\cap f\neq \emptyset$ that $u_1=v_1$ or $u_2=v_2$. 

If $u_1=v_1$, then \eqref{eq:preparationofanalysisoftypet14:the2pathintersectstheC4} 
cannot be true by virtue of $u_1=v_1\in \upp_1(S)$ since then \ref{characterizationoftype:C4intersectingatwopathinanendvertex:property4} would become false. Therefore, if $u_1=v_1$, then 
$u_1=v_1\notin\upp_1(S)$, and \eqref{eq:preparationofanalysisoftypet14:the2pathintersectstheC4} implies  $\{u_2,v_2\}\cap\upp_2(S) \neq \emptyset$. It is impossible 
that $\{u_2,v_2\}\subseteq \upp_2(S)$ for this combined with $u_1=v_1$ would imply 
$\lvert \{u_1,v_1\}\setminus\upp_1(S)\rvert 
+ \lvert \{u_2,v_2\}\setminus\upp_2(S)\rvert  = 1$, hence 
contradict \eqref{eq:proofofnumberofeventsinducing:item:comparisonproofcaseLhassize6:twoisolatedvertices}. Therefore, $\lvert \{u_2,v_2\}\cap\upp_2(S)\rvert = 1$, and 
since $u_1=v_1$ implies $u_2\neq v_2$, this is what we wanted to prove: exactly one 
of the two endvertices $(n,u_2), (n,v_2)\in \upV(\upX_{B})$ intersects 
$\upX_{B\mid_S}\cong C^4$. 

If $u_2=v_2$, then an entirely analogous argument as in the preceding paragraph 
shows that exactly one of the two endvertices $(u_1,n), (v_1,n)\in \upV(\upX_{B})$ 
intersects the $\upX_{B\mid_S}\cong C^4$.

The proof that properties \ref{characterizationoftype:C4intersectingatwopathinanendvertex:property1}--\ref{characterizationoftype:C4intersectingatwopathinanendvertex:property4} imply $\upX_{B} \cong \ref{item:comparisonproofcaseLhassize6:C4intersectingatwopathinanendvertex}$ is now complete. 
\end{proof}

\begin{lemma}\label{lem:characterizationofisomorphismtype:C4intersectingatwopathinitsinnervertex}
For every $B\in \{0,\pm\}^I$ with $I\in \binom{[n-1]^2}{6}$, 
$I = S \sqcup \{u,v\}$ and $\upX_{B\mid_{S}}\cong C^4$ we have 
$\upX_{B} \cong \ref{item:comparisonproofcaseLhassize6:C4intersectingatwopathinitsinnervertex}$ if and only if 

{\scriptsize
\begin{minipage}[b]{0.35\linewidth}
\begin{enumerate}[label={\rm(P.\ref{item:comparisonproofcaseLhassize6:C4intersectingatwopathinitsinnervertex}.\arabic{*})}]
\item\label{characterizationoftype:C4intersectingatwopathinitsinnervertex:property1}  
$\upX_{\{0\}^{\{u,v\}} \sqcup B\mid_{S}} \cong 
\ref{item:comparisonproofcaseLhassize6:twoisolatedvertices}$,  
\item\label{characterizationoftype:C4intersectingatwopathinitsinnervertex:property2} 
$B[u]\in\{\pm\}$ and $B[v]\in \{\pm\}$,
\end{enumerate}
\end{minipage}
\begin{minipage}[b]{0.65\linewidth}
\begin{enumerate}[label={\rm(P.\ref{item:comparisonproofcaseLhassize6:C4intersectingatwopathinitsinnervertex}.\arabic{*})}, start=3]
\item\label{characterizationoftype:C4intersectingatwopathinitsinnervertex:property3} 
$\{u_1,u_2\}\cap\{v_1,v_2\} \neq \emptyset$, 
\item\label{characterizationoftype:C4intersectingatwopathinitsinnervertex:property4} 
$\{u_1,u_2\}\cap \upp(S)\neq\emptyset$ and $\{v_1,v_2\}\cap \upp(S) \neq \emptyset$.
\end{enumerate}
\end{minipage}
}
\end{lemma}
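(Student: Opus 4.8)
The plan is to mimic, essentially verbatim, the two-implication arguments already carried out for Lemmas \ref{lem:characterizationofisomorphismtype:C4intersectingtwoedgesinseparatenonadjacentvertices}--\ref{lem:characterizationofisomorphismtype:C4intersectingatwopathinanendvertex}, reading every statement directly off Definition \ref{def:XBandecXB}; the sole novelty is to pin down which vertex of the attached $2$-path is the one lying on the copy of $C^4$. As in those proofs, write $e := \{(u_1,n),(n,u_2)\}$ for the edge that $B[u]$ would contribute to $\upX_B$ and $f := \{(v_1,n),(n,v_2)\}$ for the edge that $B[v]$ would contribute; by Definition \ref{def:XBandecXB} both edges are present in $\upX_B$ exactly when $B[u],B[v]\in\{\pm\}$, i.e. exactly when \ref{characterizationoftype:C4intersectingatwopathinitsinnervertex:property2} holds, and deleting $e$ and $f$ from $\upX_B$ yields precisely the graph $\upX_{\{0\}^{\{u,v\}}\sqcup B\mid_{S}}$ appearing in \ref{characterizationoftype:C4intersectingatwopathinitsinnervertex:property1}. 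I will repeatedly use that, under the standing hypothesis $\upX_{B\mid_S}\cong C^4$, property \ref{characterizationoftype:C4intersectingatwopathinitsinnervertex:property1} is equivalent to $f_0(\upX_B)=6$; since the four vertices of $\upV(\upX_{B\mid_S})$ are always present, $f_0(\upX_B)=6$ amounts to saying that among the vertices incident with $e$ or $f$ exactly two lie outside $\upV(\upX_{B\mid_S})$.

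For the forward implication I would assume $\upX_B\cong\ref{item:comparisonproofcaseLhassize6:C4intersectingatwopathinitsinnervertex}$. Then Definition \ref{def:XBandecXB} at once gives \ref{characterizationoftype:C4intersectingatwopathinitsinnervertex:property2} (the unique $C^4$ subgraph of $\upX_B$ is $\upX_{B\mid_S}$, so the remaining two edges of $\upX_B$, being present, are $e$ and $f$, whence $B[u],B[v]\neq 0$) and \ref{characterizationoftype:C4intersectingatwopathinitsinnervertex:property1} (deleting $e$ and $f$ isolates the two endvertices of the $2$-path and leaves the $C^4$, hence leaves type \ref{item:comparisonproofcaseLhassize6:twoisolatedvertices}). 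Since the two non-$C^4$ edges of type \ref{item:comparisonproofcaseLhassize6:C4intersectingatwopathinitsinnervertex} form a $2$-path, $e\cap f\neq\emptyset$, i.e. $u_1=v_1$ or $u_2=v_2$, which is \ref{characterizationoftype:C4intersectingatwopathinitsinnervertex:property3}; and since that $2$-path meets the $C^4$ precisely at its \emph{inner} vertex, the common vertex of $e$ and $f$ lies in $\upV(\upX_{B\mid_S})$, so its coordinate lies in $\upp(S)$, which forces both $\{u_1,u_2\}\cap\upp(S)\neq\emptyset$ and $\{v_1,v_2\}\cap\upp(S)\neq\emptyset$, i.e. \ref{characterizationoftype:C4intersectingatwopathinitsinnervertex:property4}.

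For the converse I would assume \ref{characterizationoftype:C4intersectingatwopathinitsinnervertex:property1}--\ref{characterizationoftype:C4intersectingatwopathinitsinnervertex:property4}. By \ref{characterizationoftype:C4intersectingatwopathinitsinnervertex:property2} we have $f_1(\upX_B)=6$, and by \ref{characterizationoftype:C4intersectingatwopathinitsinnervertex:property3} the edges $e$ and $f$ meet, so they form a $2$-path; exchanging the subscripts $1$ and $2$ if necessary, I may assume the shared vertex is $(u_1,n)=(v_1,n)$ (so $u_2\neq v_2$, because $u\neq v$). The three vertices incident with $e$ or $f$ are then $(u_1,n)$, $(n,u_2)$ and $(n,v_2)$, and the $f_0=6$ reformulation of \ref{characterizationoftype:C4intersectingatwopathinitsinnervertex:property1} says exactly two of them lie outside $\upV(\upX_{B\mid_S})$. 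Suppose $(u_1,n)=(v_1,n)\notin\upV(\upX_{B\mid_S})$, i.e. $u_1=v_1\notin\upp_1(S)$; then this is one of the two ``new'' vertices, so exactly one of $(n,u_2),(n,v_2)$ is new, say $(n,v_2)\notin\upV(\upX_{B\mid_S})$, i.e. $v_2\notin\upp_2(S)$ (the case where $(n,u_2)$ is the new one is symmetric). But then $\{v_1,v_2\}\cap\upp(S)=\emptyset$ (since also $v_1=u_1\notin\upp_1(S)$), contradicting \ref{characterizationoftype:C4intersectingatwopathinitsinnervertex:property4}. Hence $(u_1,n)=(v_1,n)\in\upV(\upX_{B\mid_S})$, and then the vertex count forces $(n,u_2)\notin\upV(\upX_{B\mid_S})$ and $(n,v_2)\notin\upV(\upX_{B\mid_S})$. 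This is exactly the assertion that $\upX_B$ is a $C^4$ with a $2$-path attached at an inner vertex, i.e. $\upX_B\cong\ref{item:comparisonproofcaseLhassize6:C4intersectingatwopathinitsinnervertex}$.

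The one step I expect to need a little care is the exclusion, in the converse, of the competing ``attachment at an endvertex'' picture (type \ref{item:comparisonproofcaseLhassize6:C4intersectingatwopathinanendvertex}): it is precisely there that \ref{characterizationoftype:C4intersectingatwopathinitsinnervertex:property4} must be combined with the $f_0=6$ consequence of \ref{characterizationoftype:C4intersectingatwopathinitsinnervertex:property1} to show that the coordinate shared by $e$ and $f$ cannot be a new one. Everything else is a routine unwinding of Definition \ref{def:XBandecXB}, completely parallel to the three preceding lemmas; indeed, this lemma differs from Lemma \ref{lem:characterizationofisomorphismtype:C4intersectingatwopathinanendvertex} only in that its fourth property is a conjunction rather than a disjunction, which is exactly the combinatorial difference between \emph{inner vertex} and \emph{endvertex}.
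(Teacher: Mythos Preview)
Your proof is correct and follows essentially the same approach as the paper's: both directions unwind Definition~\ref{def:XBandecXB} in the manner of the preceding three lemmas, and in the converse both arguments combine \ref{characterizationoftype:C4intersectingatwopathinitsinnervertex:property4} with the $f_0=6$ consequence of \ref{characterizationoftype:C4intersectingatwopathinitsinnervertex:property1} (which the paper phrases as equation~\eqref{eq:proofofnumberofeventsinducing:item:comparisonproofcaseLhassize6:twoisolatedvertices}) to force the shared vertex of $e$ and $f$ to lie on the $C^4$. Your vertex-count phrasing is slightly more direct than the paper's chain of case distinctions, but the logic is the same.
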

\begin{proof}
First suppose that $\upX_{B}\cong \ref{item:comparisonproofcaseLhassize6:C4intersectingatwopathinitsinnervertex}$. Then Definition \ref{def:XBandecXB} implies that 
both \ref{characterizationoftype:C4intersectingatwopathinitsinnervertex:property1} 
and \ref{characterizationoftype:C4intersectingatwopathinitsinnervertex:property2} 
are true. To prove 
\ref{characterizationoftype:C4intersectingatwopathinitsinnervertex:property3} and 
\ref{characterizationoftype:C4intersectingatwopathinitsinnervertex:property4}, let 
$e\neq f\in \upE(\upX_{B})$ denote the two edges in $\upE(\upX_{B})$ forming 
the $2$-path which intersects $\upX_{B\mid_S}\cong C^4$ with its inner vertex. 
As in the proof of Lemma \ref{lem:characterizationofisomorphismtype:C4intersectingatwopathinanendvertex}, we know that \eqref{eq:propertyinthepreparationofanalysisoftypet14} is true. By definition of $e$ and $f$ we have $e\cap f\neq \emptyset$, hence 
whatever of the two clauses of \eqref{eq:propertyinthepreparationofanalysisoftypet14} is true, either $u_1=v_1$ or $u_2=v_2$. Therefore property 
\ref{characterizationoftype:C4intersectingatwopathinitsinnervertex:property3} is 
true. By definition of $e$ and $f$, both $e$ and $f$ intersect 
$\upX_{B\mid_S}$ $\cong$ $C^4$. If the first clause 
in \eqref{eq:propertyinthepreparationofanalysisoftypet14} is true then 
$e$ intersecting $\upX_{B\mid_S}\cong C^4$ is equivalent 
to ($u_1\in \upp_1(S)$ or $u_2\in \upp_2(S)$) 
and $f$ intersecting $\upX_{B\mid_S}\cong C^4$ is equivalent 
to ($v_1\in \upp_1(S)$ or $v_2\in \upp_2(S)$). Then \ref{characterizationoftype:C4intersectingatwopathinitsinnervertex:property4} is indeed true. If the second clause 
in \eqref{eq:propertyinthepreparationofanalysisoftypet14} is true, 
interchanging `$u$' and `$v$' in the preceding sentence shows that then 
\ref{characterizationoftype:C4intersectingatwopathinitsinnervertex:property4} is true as well. This completes the proof that $\upX_{B}\cong \ref{item:comparisonproofcaseLhassize6:C4intersectingatwopathinitsinnervertex}$ implies 
\ref{characterizationoftype:C4intersectingatwopathinitsinnervertex:property1}--\ref{characterizationoftype:C4intersectingatwopathinitsinnervertex:property4}.

Conversely, suppose that properties \ref{characterizationoftype:C4intersectingatwopathinitsinnervertex:property1}--\ref{characterizationoftype:C4intersectingatwopathinitsinnervertex:property4} are true. 
Then \ref{characterizationoftype:C4intersectingatwopathinitsinnervertex:property2}
implies $f_1( \upX_{B}) = 6$ and \ref{characterizationoftype:C4intersectingatwopathinitsinnervertex:property3} implies that the two edges corresponding to $u\neq v$ 
intersect. It remains to prove that the $2$-path consisting of these edges 
intersects $\upX_{B\mid_S}\cong C^4$ with its inner vertex. Similar to the proof of 
Lemma \ref{lem:characterizationofisomorphismtype:C4intersectingatwopathinanendvertex} 
we know that \eqref{eq:preparationofanalysisoftypet14:the2pathintersectstheC4} and 
that $u_1=v_1$ or $u_2=v_2$. 

If $u_1=v_1$, then $u_2\in \upp_2(S)$ is impossible since this together 
with $u\neq v$ would imply $u_1=v_1\notin \upp_1(S)$ which due to the second 
clause of \ref{characterizationoftype:C4intersectingatwopathinitsinnervertex:property4} would imply $v_2\in\upp_2(S)$; but 
$u_2\in\upp_2(S)$, $u_1=v_1$ and $v_2\in \upp_2(S)$ combined imply 
$\lvert \{u_1,v_1\}\setminus\upp_1(S)\rvert + \lvert \{u_2,v_2\}\setminus \upp_2(S)\rvert =1$, a contradiction to \eqref{eq:proofofnumberofeventsinducing:item:comparisonproofcaseLhassize6:twoisolatedvertices}. For an entirely analogous reason 
$v_2\in \upp_2(S)$ is impossible, too. Since both $u_2\notin\upp_2(S)$ 
and $v_2\notin\upp_2(S)$, it follows from 
\eqref{eq:preparationofanalysisoftypet14:the2pathintersectstheC4} that 
$u_1=v_1\in \upp_1(S)$. This is what we wanted to prove: the common 
vertex $(u_1,n)=(v_1,n)$ of $e$ and $f$ (i.e. the inner vertex 
of the $2$-path formed by $e$ and $f$) is also the unique vertex of intersection 
with $\upX_{B\mid_S}\cong C^4$.

If $u_2=v_2$, then  an entirely analogous argumentation as in the preceding 
paragraph shows that the common vertex $(n,u_2)=(n,v_2)$ of $e$ and $f$ (i.e. the 
inner vertex of the $2$-path which is formed by $e$ and $f$) is also the unique 
vertex of intersection with $\upX_{B\mid_S}\cong C^4$.

The proof that properties \ref{characterizationoftype:C4intersectingatwopathinitsinnervertex:property1}--\ref{characterizationoftype:C4intersectingatwopathinitsinnervertex:property4} imply $\upX_{B} \cong \ref{item:comparisonproofcaseLhassize6:C4intersectingatwopathinitsinnervertex}$ is now complete. 
\end{proof}

\begin{proof}[Proof of \ref{item:numberofrealizationsofnonforestswhenkequals6}]
As to \ref{numberofmatrixrealizationsoftype:oneisolatedvertex}, it follows 
from Definition \ref{def:XBandecXB} that 
$\upX_{B} \cong \ref{item:comparisonproofcaseLhassize6:oneisolatedvertex}$ 
if and only if 
\begin{equation}\label{eq:proofofnumberofeventsinducing:item:comparisonproofcaseLhassize6:oneisolatedvertex}
\lvert \{u_1,v_1\}\setminus\upp_1(S) \rvert + 
\lvert\{ u_2,v_2 \} \setminus \upp_2(S) \rvert = 1 \quad .
\end{equation}

\begin{enumerate}[label={\rm(C.\ref{numberofmatrixrealizationsoftype:oneisolatedvertex}.\arabic{*})}]
\item\label{numberofmatrixrealizationsoftype:oneisolatedvertex:kequals6:case:lvertu1v1rvertbackslashp1Sequals0}  $\lvert\{u_1,v_1\}\setminus\upp_1(S)\rvert = 0$. 
Then \eqref{eq:proofofnumberofeventsinducing:item:comparisonproofcaseLhassize6:oneisolatedvertex} implies that $\lvert\{u_2,v_2\}\setminus\upp_2(S)\rvert = 1$ which is 
equivalent to \eqref{eq:proofofcomparativecountingtheoremcase3secondeq}. The 
property defining Case 1 is equivalent to $\{u_1,v_1\}\subseteq\upp_1(S)$. Property 
\eqref{eq:proofofcomparativecountingtheoremcase3secondeq} implies two cases: 
\begin{enumerate}[label={\rm(\arabic{*})}]
\item\label{numberofmatrixrealizationsoftype:twoisolatedvertices:kequals6:case:lvertu1v1rvertbackslashp1Sequals0:case:u2equalsv2} $u_2=v_2$ and 
$\{u_2,v_2\}\cap\upp_2(S)=\emptyset$. Then $u_2=v_2$ and $u\prec v$ imply that 
$u_1 < v_1$. This together with $\{u_1,v_1\} \subseteq\upp_1(S)$ implies 
$u_1=a_1=c_1$ and $v_1=b_1=d_1$. Therefore, it is $u_2=v_2$ alone which determines 
the two pairs $u$ and $v$. The property $u_2=v_2$ and 
$\{u_2,v_2\}\cap\upp_2(S)=\emptyset$ is equivalent to $u_2=v_2\notin \upp_2(S)$. It 
follows that if \ref{numberofmatrixrealizationsoftype:oneisolatedvertex:kequals6:case:lvertu1v1rvertbackslashp1Sequals0}.\ref{numberofmatrixrealizationsoftype:twoisolatedvertices:kequals6:case:lvertu1v1rvertbackslashp1Sequals0:case:u2equalsv2}, then there 
are $(n-1)-2$ realizations of type \ref{item:comparisonproofcaseLhassize6:oneisolatedvertex} by $u$ and $v$. 
\item\label{numberofmatrixrealizationsoftype:twoisolatedvertices:kequals6:case:lvertu1v1rvertbackslashp1Sequals0:case:u2notequaltov2} $u_2\neq v_2$ and 
$\lvert \{u_2,v_2\}\cap\upp_2(S) \rvert = 1$. Then either $u_2\in \upp_2(S)$ or 
$v_2\in \upp_2(S)$. If $u_2\in \upp_2(S)$, then because of 
$\{u_1,v_1\}\subseteq\upp_1(S)$ it follows that $u\in \{a,b,c,d\}$, a contradiction 
to $I\setminus S = \{ u, v\}$. Similarly, if $v_2\in \upp_2(S)$, then the same 
contradiction arises with regard to $v$. Therefore, the case \ref{numberofmatrixrealizationsoftype:oneisolatedvertex:kequals6:case:lvertu1v1rvertbackslashp1Sequals0}.\ref{numberofmatrixrealizationsoftype:twoisolatedvertices:kequals6:case:lvertu1v1rvertbackslashp1Sequals0:case:u2notequaltov2} cannot occur. 
\end{enumerate}
It follows that if \ref{numberofmatrixrealizationsoftype:oneisolatedvertex:kequals6:case:lvertu1v1rvertbackslashp1Sequals0}, then there are exactly $(n-1)-2$ realizations 
of type \ref{item:comparisonproofcaseLhassize6:oneisolatedvertex} by $B\mid_{\{u,v\}}$.

\item\label{numberofmatrixrealizationsoftype:oneisolatedvertex:kequals6:case:lvertu1v1rvertbackslashp1Sequals1} $\lvert\{u_1,v_1\}\setminus\upp_1(S)\rvert = 1$. Then 
\eqref{eq:proofofnumberofeventsinducing:item:comparisonproofcaseLhassize6:oneisolatedvertex} implies $\lvert\{u_2,v_2\}\setminus\upp_2(S)\rvert = 0$ which is 
equivalent to $\{u_2,v_2\} \subseteq\upp_2(S)$. The property defining 
\ref{numberofmatrixrealizationsoftype:oneisolatedvertex:kequals6:case:lvertu1v1rvertbackslashp1Sequals1} is equivalent 
to \eqref{eq:proofofcomparativecountingtheoremcase2secondeq}. Now an argument 
entirely analogous to the one given for \ref{numberofmatrixrealizationsoftype:oneisolatedvertex:kequals6:case:lvertu1v1rvertbackslashp1Sequals0} shows that if 
\ref{numberofmatrixrealizationsoftype:oneisolatedvertex:kequals6:case:lvertu1v1rvertbackslashp1Sequals1}, then there are exactly $(n-1)-2$ realizations of type 
\ref{item:comparisonproofcaseLhassize6:oneisolatedvertex} by $B\mid_{\{u,v\}}$.
\end{enumerate}
It follows that there are exactly $2\cdot ((n-1)-2)$ different 
$I\setminus S = \{u,v\}$ with $\upX_{B} \cong \ref{item:comparisonproofcaseLhassize6:oneisolatedvertex}$. This completes the proof of \ref{numberofmatrixrealizationsoftype:oneisolatedvertex}. 

As to  \ref{numberofmatrixrealizationsoftype:oneadditionaledgeintersectingC4}, 
notice that a necessary condition for type \ref{item:comparisonproofcaseLhassize6:oneadditionaledgeintersectingC4} is that 
$\lvert \upV(\upX_{B})\setminus \upV(\upX_{B\mid_{S}}) \rvert = 1$. Therefore 
the set of all suitable $I\in\binom{[n-1]^2}{6}$ is a subset (possibly nonproper) of 
those which are suitable for 
type \ref{item:comparisonproofcaseLhassize6:oneisolatedvertex}. We may therefore 
reexamine the analysis carried out for 
\ref{numberofmatrixrealizationsoftype:oneisolatedvertex} and in each of the 
cases count the number of $B\in\{0,\pm\}^I$ with $\upX_{B}\cong \ref{item:comparisonproofcaseLhassize6:oneadditionaledgeintersectingC4}$. 

If \ref{numberofmatrixrealizationsoftype:oneisolatedvertex:kequals6:case:lvertu1v1rvertbackslashp1Sequals0}.\ref{numberofmatrixrealizationsoftype:twoisolatedvertices:kequals6:case:lvertu1v1rvertbackslashp1Sequals0:case:u2equalsv2}, then properties 
$u_1=a_1=c_1$ and $v_1=b_1=d_1$ show that both $u$ and $v$ have the property that if 
one of them indexes a nonzero value of $B$, then there is an edge intersecting 
$\upX_{B\mid_S}\cong C^4$. Since otherwise we would have $K^{2,3}$, \emph{exactly} one 
of them must be nonzero. This implies exactly $4$ possibilities to realize type 
\ref{item:comparisonproofcaseLhassize6:oneadditionaledgeintersectingC4} for 
each of the $(n-1)-2$ realizations of type 
\ref{item:comparisonproofcaseLhassize6:oneisolatedvertex} which were offered 
in \ref{numberofmatrixrealizationsoftype:oneisolatedvertex:kequals6:case:lvertu1v1rvertbackslashp1Sequals0}.\ref{numberofmatrixrealizationsoftype:twoisolatedvertices:kequals6:case:lvertu1v1rvertbackslashp1Sequals0:case:u2equalsv2}. Therefore, if 
\ref{numberofmatrixrealizationsoftype:oneisolatedvertex:kequals6:case:lvertu1v1rvertbackslashp1Sequals0}.\ref{numberofmatrixrealizationsoftype:twoisolatedvertices:kequals6:case:lvertu1v1rvertbackslashp1Sequals0:case:u2equalsv2}, then there are exactly 
$4\cdot ((n-1)-2)$ realizations of type \ref{item:comparisonproofcaseLhassize6:oneadditionaledgeintersectingC4} by $B\mid_{I\setminus S}$. Since the case \ref{numberofmatrixrealizationsoftype:oneisolatedvertex:kequals6:case:lvertu1v1rvertbackslashp1Sequals0}.\ref{numberofmatrixrealizationsoftype:twoisolatedvertices:kequals6:case:lvertu1v1rvertbackslashp1Sequals0:case:u2notequaltov2} is as impossible now as it was back then, it 
follows that this is also the number of realizations for the entire \ref{numberofmatrixrealizationsoftype:oneisolatedvertex:kequals6:case:lvertu1v1rvertbackslashp1Sequals0}. 

If \ref{numberofmatrixrealizationsoftype:oneisolatedvertex:kequals6:case:lvertu1v1rvertbackslashp1Sequals1}, then by interchanging the subscripts $1$ 
and $2$ we may use the same analysis as for the case \ref{numberofmatrixrealizationsoftype:oneisolatedvertex:kequals6:case:lvertu1v1rvertbackslashp1Sequals0} to reach the 
conclusion that there are exactly $4\cdot ((n-1)-2)$ realizations of type 
\ref{item:comparisonproofcaseLhassize6:oneadditionaledgeintersectingC4} by 
$B\mid_{\{u,v\}} = B\mid_{I\setminus S}$.  

It follows that there are are exactly $4\cdot ((n-1)-2) + 4\cdot ((n-1)-2) = 
8\cdot (n-3)$ realizations of type \ref{item:comparisonproofcaseLhassize6:oneadditionaledgeintersectingC4} by $B\mid_{I\setminus S}$. This proves \ref{numberofmatrixrealizationsoftype:oneadditionaledgeintersectingC4}. 

As to \ref{numberofmatrixrealizationsoftype:XBLisomorphictoK23}, it is 
evident that the number of possibilities to realize a $K^{2,3}$ is 
$2\cdot 2^6 \cdot \binom{n-1}{2}\cdot \binom{n-1}{3}$, the first factor accouting 
for the two possibilities of either choosing two of the first indices and three of 
the last, or vice versa. 

As to \ref{numberofmatrixrealizationsoftype:twoisolatedvertices}, note that 
$f_1\ref{item:comparisonproofcaseLhassize6:twoisolatedvertices} = 4$, hence it is 
necessary that $B[u]=B[v]=0$. Therefore, the number of 
$B\mid_{\{u,v\}}\in\{0,\pm\}^{I\setminus S}$ with 
$\upX_{B} \cong \ref{item:comparisonproofcaseLhassize6:twoisolatedvertices}$ equals 
the number of $\{u,v\}\in \binom{[n-1]^2\setminus S}{2}$ such that 
\begin{equation}\label{eq:proofofnumberofeventsinducing:item:comparisonproofcaseLhassize6:twoisolatedvertices}
\lvert \{ u_1, v_1\} \setminus \upp_1(S) \rvert +
\lvert \{ u_2, v_2\} \setminus \upp_2(S) \rvert = 2\quad .
\end{equation} 
We now distinguish cases according to how \eqref{eq:proofofnumberofeventsinducing:item:comparisonproofcaseLhassize6:twoisolatedvertices} is satisfied.

\begin{enumerate}[label={\rm(C.\ref{numberofmatrixrealizationsoftype:twoisolatedvertices}.\arabic{*})}]
\item\label{numberofmatrixrealizationsoftype:twoisolatedvertices:case:lvertu1v1rvertbackslashp1Sequals0} $\lvert \{ u_1, v_1\} \setminus \upp_1(S) \rvert = 0$. Then \eqref{eq:proofofnumberofeventsinducing:item:comparisonproofcaseLhassize6:twoisolatedvertices} implies $\lvert \{u_2, v_2  \} \setminus \upp_2(S) \rvert = 2$, which is equivalent to \eqref{eq:proofofcomparativecountingtheoremcase2firsteq}. The property 
defining Case \ref{numberofmatrixrealizationsoftype:twoisolatedvertices:case:lvertu1v1rvertbackslashp1Sequals0} is equivalent to $\{u_1,v_1\}\subseteq \upp_1(S)$. 
There are now two further cases: 
\begin{enumerate}[label={\rm(\arabic{*})}]
\item\label{numberofmatrixrealizationsoftype:twoisolatedvertices:case:lvertu1v1rvertbackslashp1Sequals0:case:u1equalsv1} $u_1=v_1$. Then there are 
exactly $2$ possible set inclusions $\{u_1,v_1\} = \{u_1\} \subseteq \upp_1(S) 
= \{a_1,c_1\}$. For each of them, there are exactly $\binom{(n-1)-2}{2}$ different 
sets $\{u_2,v_2\}$ with property 
\eqref{eq:proofofcomparativecountingtheoremcase2firsteq}. Since 
$u_1=v_1$ and $u\prec v$ imply $u_2<v_2$, each of these sets determines the two 
pairs $u$ and $v$. 
Therefore, if \ref{numberofmatrixrealizationsoftype:twoisolatedvertices:case:lvertu1v1rvertbackslashp1Sequals0}.\ref{numberofmatrixrealizationsoftype:twoisolatedvertices:case:lvertu1v1rvertbackslashp1Sequals0:case:u1equalsv1}, 
then there are exactly $2\cdot \binom{(n-1)-2}{2}$ 
realizations of type \ref{item:comparisonproofcaseLhassize6:twoisolatedvertices} 
by $B\mid_{\{u,v\}}$. 
\item\label{numberofmatrixrealizationsoftype:twoisolatedvertices:case:lvertu1v1rvertbackslashp1Sequals0:case:u1doesnotequalv1} $u_1 \neq v_1$. Then $u\prec u$ implies 
$u_1 < u_1$. Now there is exactly one possible set inclusion 
$\{u_1,v_1\}\subseteq \upp_1(S)=\{a_1,c_1\}$. When this inclusion holds, there are 
exactly $\binom{(n-1)-2}{2}$ different sets $\{u_2,v_2\}$ with 
property \eqref{eq:proofofcomparativecountingtheoremcase2firsteq}. Each of them can 
be realized in exactly $2$ ways by $u$ and $v$, either by $u_2<v_2$ or by $v_2<u_2$. 

Therefore, if \ref{numberofmatrixrealizationsoftype:twoisolatedvertices:case:lvertu1v1rvertbackslashp1Sequals0}.\ref{numberofmatrixrealizationsoftype:twoisolatedvertices:case:lvertu1v1rvertbackslashp1Sequals0:case:u1doesnotequalv1}, then there are again 
(with a qualitatively different reason for the factor $2$) 
exactly $2\cdot \binom{(n-1)-2}{2}$ realizations of 
type \ref{item:comparisonproofcaseLhassize6:twoisolatedvertices} by $B\mid_{\{u,v\}}$. 
\end{enumerate}
It follows that if \ref{numberofmatrixrealizationsoftype:twoisolatedvertices:case:lvertu1v1rvertbackslashp1Sequals0}, then there are exactly $4\cdot \binom{(n-1)-2}{2}$ 
different realizations of type \ref{item:comparisonproofcaseLhassize6:twoisolatedvertices} by $B\mid_{\{u,v\}}$. 
\item\label{numberofmatrixrealizationsoftype:twoisolatedvertices:case:lvertu1v1rvertbackslashp1Sequals1} $\lvert \{ u_1, v_1\} \setminus \upp_1(S) \rvert = 1$. Then \eqref{eq:proofofnumberofeventsinducing:item:comparisonproofcaseLhassize6:twoisolatedvertices} 
implies $\lvert \{u_2, v_2  \} \setminus \upp_2(S) \rvert = 1$. Hence, in the present 
situation, the equations  \eqref{eq:proofofcomparativecountingtheoremcase2secondeq} 
and \eqref{eq:proofofcomparativecountingtheoremcase3secondeq} are simultaneously 
true. There are now two further cases depending on the manner in 
which \eqref{eq:proofofcomparativecountingtheoremcase2secondeq} is true: 
\begin{enumerate}[label={\rm(\arabic{*})}]
\item\label{numberofmatrixrealizationsoftype:twoisolatedvertices:case:lvertu1v1rvertbackslashp1Sequals1:u1equalsv1andbracesu1v1bracesintersectp1Sisempty} 
$u_1=v_1$ and $\{u_1,v_1\}\cap\upp_1(S)=\emptyset$. There are $((n-1)-2)$ 
possibilities for this. For each of them the clause $u_2=v_2$ and 
$\{u_2,v_2\}\cap\upp_1(S)=\emptyset$ in 
\eqref{eq:proofofcomparativecountingtheoremcase3secondeq} cannot be true because it 
would imply $u=v$ and therefore for each of them $u_2\neq v_2$ and 
$\lvert \{u_2,v_2\}\cap\upp_2(S)\rvert = 1$ must be true. In the following, keep in 
mind that $u_1=v_1$ and $u\prec v$ implies $u_2<v_2$. If $u_2 = a_2 = c_2$, then 
$v_2\neq b_2=d_2$, hence there are exactly $n-1 - a_2 - 1$ different $v_2$, and 
therefore as many different realizations of type \ref{item:comparisonproofcaseLhassize6:twoisolatedvertices} by $u$ and $v$. 
If $u_2 = b_2 = d_2$, then there are exactly $n-1 - b_2$ different $v_2$, and 
therefore as many different realizations of type \ref{item:comparisonproofcaseLhassize6:twoisolatedvertices} by $u$ and $v$. 
If $v_2 = a_2 = c_2$, then there are exactly $a_2 - 1$ different 
$u_2$, and therefore as many different realizations of type \ref{item:comparisonproofcaseLhassize6:twoisolatedvertices} by $u$ and $v$. 
If $v_2 = b_2 = d_2$, then $u_2\neq a_2=c_2$, hence there  are 
exactly $b_2 - 1 - 1$ different $u_2$ and therefore as many different 
realizations of type \ref{item:comparisonproofcaseLhassize6:twoisolatedvertices} by $u$ and $v$. 

Therefore, if \ref{numberofmatrixrealizationsoftype:twoisolatedvertices:case:lvertu1v1rvertbackslashp1Sequals1}.\ref{numberofmatrixrealizationsoftype:twoisolatedvertices:case:lvertu1v1rvertbackslashp1Sequals1:u1equalsv1andbracesu1v1bracesintersectp1Sisempty}, then there are 
$
\bigl( (n-1-a_2-1)+(n-1-b_2)+(a_2-1)+(b_2-1-1)\bigr) \cdot ((n-1)-2) 
= 2\cdot ((n-1)-2)^2 
$
realizations of type \ref{item:comparisonproofcaseLhassize6:twoisolatedvertices} 
by $B\mid_{I\setminus S} = B\mid_{\{u,v\}}$.
\item\label{numberofmatrixrealizationsoftype:twoisolatedvertices:case:lvertu1v1rvertbackslashp1Sequals1:u1doesnotequalv1andbracesu1v1bracesintersectp1Sisasingleton} 
$u_1 \neq v_1$ and $\lvert \{u_1,v_1\} \cap \upp_1(S) \rvert = 1$. 
Then $u\prec v$ implies $u_1 < v_1$ and there are two further cases: 
\begin{enumerate}[label={\rm(\arabic{*})}]
\item\label{numberofmatrixrealizationsoftype:twoisolatedvertices:case:lvertu1v1rvertbackslashp1Sequals1:u1doesnotequalv1andbracesu1v1bracesintersectp1Sisasingletoncontainingu1} $\lvert \{u_1,v_1\} \cap \upp_1(S) \rvert = 1$ is true due 
to $u_1 \in \upp_1(S) = \{a_1,c_1\}$. Then $v_1\notin \{a_1,c_1\}=\upp_1(S)$ and there 
are two further cases:
\begin{enumerate}[label={\rm(\arabic{*})}]
\item\label{numberofmatrixrealizationsoftype:twoisolatedvertices:case:lvertu1v1rvertbackslashp1Sequals1:u1doesnotequalv1andbracesu1v1bracesintersectp1Sisasingletoncontainingu1:u1equaltoa1}  $u_1 = a_1 = b_1$. Since $u_1 < v_1$ and $v_1\notin \{a_1,c_1\}$, 
there are then exactly $n-1 - a_1 - 1$ different $v_1$. For each of them, there are 
two cases. If the first clause of 
\eqref{eq:proofofcomparativecountingtheoremcase3secondeq} is true, then there are 
exactly $(n-1)-2$ different such $u_2=v_2$, and each such 
$u_2=v_2$ \emph{determines} the two pairs $u$ and $v$. Therefore in this case there 
are exactly $(n-1-a_1-1)\cdot ((n-1)-2)$ realizations of type 
\ref{item:comparisonproofcaseLhassize6:twoisolatedvertices} by $B\mid_{\{u,v\}}$. 
If the second clause of \eqref{eq:proofofcomparativecountingtheoremcase3secondeq} is 
true, then since $u_1=a_1=b_1$ combined with $u\neq a$ and $u\neq b$ implies 
$u_2\notin \{a_2=c_2,b_2=d_2\} = \upp_2(S)$, it follows that 
$\lvert \{u_2, v_2\} \cap \upp_2(S)\rvert =1$ is true as $v_2\in \upp_2(S)$. 
Therefore in  this case there are exactly $(n-1)-2$ different $u_2$ and 
exactly $2$ different $v_2$ for each of them and hence exactly 
$(n-1-a_1-1)\cdot 2\cdot ((n-1)-2)$ realizations of type 
\ref{item:comparisonproofcaseLhassize6:twoisolatedvertices} by $B\mid_{\{u,v\}}$. 

Therefore, if \ref{numberofmatrixrealizationsoftype:twoisolatedvertices:case:lvertu1v1rvertbackslashp1Sequals1}.\ref{numberofmatrixrealizationsoftype:twoisolatedvertices:case:lvertu1v1rvertbackslashp1Sequals1:u1doesnotequalv1andbracesu1v1bracesintersectp1Sisasingleton}.\ref{numberofmatrixrealizationsoftype:twoisolatedvertices:case:lvertu1v1rvertbackslashp1Sequals1:u1doesnotequalv1andbracesu1v1bracesintersectp1Sisasingletoncontainingu1}.\ref{numberofmatrixrealizationsoftype:twoisolatedvertices:case:lvertu1v1rvertbackslashp1Sequals1:u1doesnotequalv1andbracesu1v1bracesintersectp1Sisasingletoncontainingu1:u1equaltoa1}, then there are exactly $3\cdot (n-1-a_1-1)\cdot ((n-1)-2)$ 
different realizations of type \ref{item:comparisonproofcaseLhassize6:twoisolatedvertices} by $B\mid_{\{u,v\}}$.
\item\label{numberofmatrixrealizationsoftype:twoisolatedvertices:case:lvertu1v1rvertbackslashp1Sequals1:u1doesnotequalv1andbracesu1v1bracesintersectp1Sisasingletoncontainingu1:u1equaltoc1} $u_1 = c_1 = d_1$. Since $u_1< v_1$, there are then exactly 
$n-1 - c_1$ different $v_1$. For each of them, there are two cases. If the first 
clause of \eqref{eq:proofofcomparativecountingtheoremcase3secondeq} is true, then 
there are exactly $(n-1 - c_1)\cdot ((n-1)-2)$ realizations of type 
\ref{item:comparisonproofcaseLhassize6:twoisolatedvertices} by $B\mid_{\{u,v\}}$. If 
the second clause of \eqref{eq:proofofcomparativecountingtheoremcase3secondeq} is 
true, then since $u_1=c_1=d_1$  combined with $u\neq c$ and $u\neq d$ implies 
$u_2\notin \{a_2=c_2,b_2=d_2\}= \upp_2(S)$, it follows that $\lvert \{u_2, v_2\} \cap \upp_2(S)\rvert =1$ is true as $v_2\in \upp_2(S)$. Therefore in this case there are 
exactly $(n-1)-2$ different $u_2$ and for each of them exactly $2$ different $v_2$, 
thus exactly $(n-1-c_1) \cdot 2\cdot ((n-1)-2)$ realizations of type \ref{item:comparisonproofcaseLhassize6:twoisolatedvertices} by $B\mid_{\{u,v\}}$.
  
Therefore, if \ref{numberofmatrixrealizationsoftype:twoisolatedvertices:case:lvertu1v1rvertbackslashp1Sequals1}.\ref{numberofmatrixrealizationsoftype:twoisolatedvertices:case:lvertu1v1rvertbackslashp1Sequals1:u1doesnotequalv1andbracesu1v1bracesintersectp1Sisasingleton}.\ref{numberofmatrixrealizationsoftype:twoisolatedvertices:case:lvertu1v1rvertbackslashp1Sequals1:u1doesnotequalv1andbracesu1v1bracesintersectp1Sisasingletoncontainingu1}.\ref{numberofmatrixrealizationsoftype:twoisolatedvertices:case:lvertu1v1rvertbackslashp1Sequals1:u1doesnotequalv1andbracesu1v1bracesintersectp1Sisasingletoncontainingu1:u1equaltoc1}, then there are exactly $3\cdot (n-1-c_1)\cdot ((n-1)-2)$ realizations of 
type \ref{item:comparisonproofcaseLhassize6:twoisolatedvertices} by $B\mid_{\{u,v\}}$.
\end{enumerate}
It follows that if \ref{numberofmatrixrealizationsoftype:twoisolatedvertices:case:lvertu1v1rvertbackslashp1Sequals1}.\ref{numberofmatrixrealizationsoftype:twoisolatedvertices:case:lvertu1v1rvertbackslashp1Sequals1:u1doesnotequalv1andbracesu1v1bracesintersectp1Sisasingleton}.\ref{numberofmatrixrealizationsoftype:twoisolatedvertices:case:lvertu1v1rvertbackslashp1Sequals1:u1doesnotequalv1andbracesu1v1bracesintersectp1Sisasingletoncontainingu1}, then there are exactly 
$3\cdot (n-1-a_1-1)\cdot ((n-1)-2) + 3\cdot (n-1-c_1)\cdot ((n-1)-2) 
= 3\cdot (2n-a_1-c_1-3)\cdot ((n-1)-2)$ different realizations of 
type \ref{item:comparisonproofcaseLhassize6:twoisolatedvertices} by $B\mid_{\{u,v\}}$. 
\item\label{numberofmatrixrealizationsoftype:twoisolatedvertices:case:lvertu1v1rvertbackslashp1Sequals1:u1doesnotequalv1andbracesu1v1bracesintersectp1Sisasingletoncontainingv1} $\lvert \{u_1,v_1\} \cap \upp_1(S) \rvert = 1$ is true due to 
$v_1 \in \upp_1(S) = \{a_1, c_1\}$. Then $u_1\notin \{a_1,c_1\}$ and there are two 
further cases:
\begin{enumerate}[label={\rm(\arabic{*})}]
\item\label{numberofmatrixrealizationsoftype:twoisolatedvertices:case:lvertu1v1rvertbackslashp1Sequals1:u1doesnotequalv1andbracesu1v1bracesintersectp1Sisasingletoncontainingu1:v1equaltoa1}  $v_1 = a_1 = b_1$. Since $u_1<v_1$, there are then exactly $a_1-1$ 
different $u_1$. For each of them, there are two cases. If the first clause of 
\eqref{eq:proofofcomparativecountingtheoremcase3secondeq} is true, then there are 
exactly $(n-1)-2$ different $u_2=v_2$ and each such $u_2=v_2$ determines the 
pair $(u,v)$. Hence in this case there are exactly $(a_1-1)\cdot ((n-1)-2)$ 
realizations of type \ref{item:comparisonproofcaseLhassize6:twoisolatedvertices} by 
$B\mid_{\{u,v\}}$. If the second clause of 
\eqref{eq:proofofcomparativecountingtheoremcase3secondeq} is true, then since 
$v_1=a_1=b_1$ combined with $v\neq a$ and $v\neq b$ implies 
$v_2\notin\{a_2=c_2,b_2=d_2\} = \upp_2(S)$, we know that 
$\lvert \{u_2, v_2\} \cap \upp_2(S)\rvert =1$ must be true as $u_2\in \upp_2(S)$, 
hence there are $2$ different $u_2$ and for each of them exactly $(n-1)-2$ different 
$v_2$, hence in this case there are exactly $(a_1-1)\cdot 2\cdot ((n-1)-2)$ 
realizations of type \ref{item:comparisonproofcaseLhassize6:twoisolatedvertices} 
by $B\mid_{\{u,v\}}$.

Therefore, if \ref{numberofmatrixrealizationsoftype:twoisolatedvertices:case:lvertu1v1rvertbackslashp1Sequals1}.\ref{numberofmatrixrealizationsoftype:twoisolatedvertices:case:lvertu1v1rvertbackslashp1Sequals1:u1doesnotequalv1andbracesu1v1bracesintersectp1Sisasingleton}.\ref{numberofmatrixrealizationsoftype:twoisolatedvertices:case:lvertu1v1rvertbackslashp1Sequals1:u1doesnotequalv1andbracesu1v1bracesintersectp1Sisasingletoncontainingv1}.\ref{numberofmatrixrealizationsoftype:twoisolatedvertices:case:lvertu1v1rvertbackslashp1Sequals1:u1doesnotequalv1andbracesu1v1bracesintersectp1Sisasingletoncontainingu1:v1equaltoa1}, then there are exactly $3\cdot (a_1-1)\cdot ((n-1)-2)$ different 
realizations of type \ref{item:comparisonproofcaseLhassize6:twoisolatedvertices} 
by $B\mid_{\{u,v\}}$. 
\item\label{numberofmatrixrealizationsoftype:twoisolatedvertices:case:lvertu1v1rvertbackslashp1Sequals1:u1doesnotequalv1andbracesu1v1bracesintersectp1Sisasingletoncontainingu1:v1equaltoc1}  $v_1 = c_1 = d_1$. Since $u_1 < v_1$ and $u_1\notin \{a_1,c_1\}$, 
there are then exactly $c_1-1-1$ different $u_1$. For each of them, there are two 
cases. If the first clause of 
\eqref{eq:proofofcomparativecountingtheoremcase3secondeq} is true, then there are 
exactly $(c_1-1-1)\cdot ((n-1)-2)$ realizations of type 
\ref{item:comparisonproofcaseLhassize6:twoisolatedvertices} by $B\mid_{\{u,v\}}$. If 
the second clause of \eqref{eq:proofofcomparativecountingtheoremcase3secondeq} is 
true, then since $v_1 = c_1 = d_1$ combined with $v\neq c$ and $v\neq d$ implies 
$v_2\notin\{a_2=c_2,b_2=d_2\}=\upp_2(S)$, it follows that 
$\lvert\{u_2,v_2 \}\cap\upp_2(S)\rvert=1$ is true as $u_2\in \upp_2(S)$. Therefore 
in this case there are exactly $(n-1)-2$ different $v_2$ and for each of them 
exactly $2$ different $u_2$, hence exactly $(c_1-1-1)\cdot 2\cdot ((n-1) - 2)$ 
realizations of type \ref{item:comparisonproofcaseLhassize6:twoisolatedvertices} by 
$B\mid_{\{u,v\}}$.  

Therefore, if \ref{numberofmatrixrealizationsoftype:twoisolatedvertices:case:lvertu1v1rvertbackslashp1Sequals1}.\ref{numberofmatrixrealizationsoftype:twoisolatedvertices:case:lvertu1v1rvertbackslashp1Sequals1:u1doesnotequalv1andbracesu1v1bracesintersectp1Sisasingleton}.\ref{numberofmatrixrealizationsoftype:twoisolatedvertices:case:lvertu1v1rvertbackslashp1Sequals1:u1doesnotequalv1andbracesu1v1bracesintersectp1Sisasingletoncontainingv1}.\ref{numberofmatrixrealizationsoftype:twoisolatedvertices:case:lvertu1v1rvertbackslashp1Sequals1:u1doesnotequalv1andbracesu1v1bracesintersectp1Sisasingletoncontainingu1:v1equaltoc1}, then there are exactly $3\cdot (c_1-1-1)\cdot ((n-1)-2)$ realizations 
of type \ref{item:comparisonproofcaseLhassize6:twoisolatedvertices}  by $B\mid_{\{u,v\}}$.
\end{enumerate}
It follows that if \ref{numberofmatrixrealizationsoftype:twoisolatedvertices:case:lvertu1v1rvertbackslashp1Sequals1}.\ref{numberofmatrixrealizationsoftype:twoisolatedvertices:case:lvertu1v1rvertbackslashp1Sequals1:u1doesnotequalv1andbracesu1v1bracesintersectp1Sisasingleton}.\ref{numberofmatrixrealizationsoftype:twoisolatedvertices:case:lvertu1v1rvertbackslashp1Sequals1:u1doesnotequalv1andbracesu1v1bracesintersectp1Sisasingletoncontainingv1}, then there are exactly 
$3\cdot (a_1-1)\cdot ((n-1)-2) + 3\cdot (c_1-1-1)\cdot ((n-1)-2) 
= 3\cdot (a_1+c_1-3)\cdot ((n-1)-2)$ realizations of  
type \ref{item:comparisonproofcaseLhassize6:twoisolatedvertices}  by $B\mid_{\{u,v\}}$.
\end{enumerate} 
It follows that if \ref{numberofmatrixrealizationsoftype:twoisolatedvertices:case:lvertu1v1rvertbackslashp1Sequals1}.\ref{numberofmatrixrealizationsoftype:twoisolatedvertices:case:lvertu1v1rvertbackslashp1Sequals1:u1doesnotequalv1andbracesu1v1bracesintersectp1Sisasingleton}, then there are exactly 
$3\cdot (2n-a_1-c_1-3)\cdot ((n-1)-2) + 3\cdot (a_1+c_1-3)\cdot ((n-1)-2) 
= 6\cdot ((n-1)-2)^2$ realizations of type 
\ref{item:comparisonproofcaseLhassize6:twoisolatedvertices}  by $B\mid_{\{u,v\}}$.
\end{enumerate}
It follows that if \ref{numberofmatrixrealizationsoftype:twoisolatedvertices:case:lvertu1v1rvertbackslashp1Sequals1}, then there are exactly  
$2\cdot ((n-1)-2)^2 + 6\cdot ((n-1)-2)^2 = 8\cdot ((n-1)-2)^2$ 
realizations of type \ref{item:comparisonproofcaseLhassize6:twoisolatedvertices} 
by $B\mid_{\{u,v\}}$. 
\item\label{numberofmatrixrealizationsoftype:twoisolatedvertices:case:lvertu1v1rvertbackslashp1Sequals2} $\lvert \{u_1,v_1\}\setminus\upp_1(S)\rvert = 2$. This is 
equivalent to \eqref{eq:proofofcomparativecountingtheoremcase3firsteq}. Moreover 
\eqref{eq:proofofnumberofeventsinducing:item:comparisonproofcaseLhassize6:twoisolatedvertices} implies $\lvert \{u_2,v_2\}\setminus\upp_2(S)\rvert = 0$, which is 
equivalent to $\{u_2,v_2\}\subseteq\upp_2(S)$. Swapping the subscripts $1$ and 
$2$ in the analysis of \ref{numberofmatrixrealizationsoftype:twoisolatedvertices:case:lvertu1v1rvertbackslashp1Sequals0} shows that if \ref{numberofmatrixrealizationsoftype:twoisolatedvertices:case:lvertu1v1rvertbackslashp1Sequals2}, then there are exactly 
$4\cdot\binom{(n-1)-2}{2}$ realizations of type 
\ref{item:comparisonproofcaseLhassize6:twoisolatedvertices} by $B\mid_{\{u,v\}}$. 
\end{enumerate} 

It follows that for the fixed $S$, there are exactly 
$4\cdot \binom{(n-1)-2}{2} + 8\cdot((n-1)-2)^2 + 4\cdot \binom{(n-1)-2}{2} 
= 8\cdot(n-3)^2 + 8\cdot \binom{n-3}{2}$ different $I\setminus S = \{u,v\}$ with 
the property $\upX_{B} \cong \ref{item:comparisonproofcaseLhassize6:twoisolatedvertices}$. This completes the proof of 
\ref{numberofmatrixrealizationsoftype:twoisolatedvertices}. 

As to \ref{numberofmatrixrealizationsoftype:oneadditionaledgeintersectingC4andoneisolatedvertex} and \ref{numberofmatrixrealizationsoftype:C4withoneadditionaldisjointedge}, 
let us first note that both for \ref{item:comparisonproofcaseLhassize6:oneadditionaledgeintersectingC4andoneisolatedvertex} and for \ref{item:comparisonproofcaseLhassize6:C4withoneadditionaldisjointedge} a necessary condition is that $\upX_{B}$ contain 
exactly two vertices not in $\upX_{B\mid_S}$ $\cong$ $C^4$. 
Therefore, the set of all $I\setminus S = \{u,v\}$ with 
$\upX_B \cong \ref{item:comparisonproofcaseLhassize6:oneadditionaledgeintersectingC4andoneisolatedvertex}$ is a subset of the set of those $\{u,v\}$ with  
$\upX_{\{0\}^{\{u,v\}}} \cong \ref{item:comparisonproofcaseLhassize6:twoisolatedvertices}$, 
and likewise for \ref{item:comparisonproofcaseLhassize6:C4withoneadditionaldisjointedge}. We may therefore determine both \ref{numberofmatrixrealizationsoftype:oneadditionaledgeintersectingC4andoneisolatedvertex} and \ref{numberofmatrixrealizationsoftype:C4withoneadditionaldisjointedge} by a single reexamination of the analysis given for type 
\ref{item:comparisonproofcaseLhassize6:twoisolatedvertices}. In each of the cases 
which we distinguished there we now have to count the number of those 
$B\mid_{\{u,v\}}\in\{0,\pm\}^{\{u,v\}}$ with $\upX_B \cong \ref{item:comparisonproofcaseLhassize6:oneadditionaledgeintersectingC4andoneisolatedvertex}$ and also of 
those $B\mid_{\{u,v\}}\in\{0,\pm\}^{\{u,v\}}$ with $\upX_{B} \cong \ref{item:comparisonproofcaseLhassize6:C4withoneadditionaldisjointedge}$.

We can prepare for this as follows. Consider the properties
{\small
\begin{enumerate}[label={\rm(i\arabic{*})}]
\item
\label{item:casesfortype8casethatexactlyoneofuandvhasaprojectionmeetingaprojectionofS} 
($\{u_1,u_2\}\cap \upp(S) \neq \emptyset$ and 
$\{v_1,v_2\}\cap \upp(S) = \emptyset$) or 
($\{u_1,u_2\}\cap \upp(S) = \emptyset$ and 
$\{v_1,v_2\}\cap \upp(S) \neq \emptyset$)\quad , 
\item
\label{item:casesfortype8casethatbothuandvhaveaprojectionmeetingaprojectionofS} 
($\{u_1,u_2\}\cap \upp(S) \neq \emptyset$ and 
$\{v_1,v_2\}\cap \upp(S) \neq \emptyset$)\quad .
\end{enumerate}
}
In each of the cases to be reexamined, these properties alone  determine  how many 
$B\mid_{\{u,v\}}\in \{0,\pm\}^{\{u,v\}}$ realize \ref{item:comparisonproofcaseLhassize6:oneadditionaledgeintersectingC4andoneisolatedvertex} or \ref{item:comparisonproofcaseLhassize6:C4withoneadditionaldisjointedge}. 

Let us first focus on \ref{item:comparisonproofcaseLhassize6:oneadditionaledgeintersectingC4andoneisolatedvertex}. In \ref{item:casesfortype8casethatexactlyoneofuandvhasaprojectionmeetingaprojectionofS}, each of the two clauses of that disjunction has the 
property that if it is true, then there are exactly $2$ possibilities for a 
$B\mid_{\{u,v\}}$ with $\upX_B \cong \ref{item:comparisonproofcaseLhassize6:oneadditionaledgeintersectingC4andoneisolatedvertex}$.  For the first clause these 
are ($B[u]\in \{\pm\}$ and $B[v]=0$), for the second 
clause ($B[u] = 0$ and $B[v]\in\{\pm\}$). Moreover, the disjunction is evidently 
exclusive. Therefore, if property \ref{item:casesfortype8casethatexactlyoneofuandvhasaprojectionmeetingaprojectionofS} is true, then the number 
of $B\mid_{\{u,v\}}\in \{0,\pm\}^{\{u,v\}}$ with $\upX_B \cong \ref{item:comparisonproofcaseLhassize6:oneadditionaledgeintersectingC4andoneisolatedvertex}$ 
is exactly $2$-times as large as the number of 
$\{u,v\}\in\binom{[n-1]^2}{2}$ with $\upX_{\{0\}^{\{u,v\}}} \cong \ref{item:comparisonproofcaseLhassize6:twoisolatedvertices}$ which was determined in the proof 
of \ref{numberofmatrixrealizationsoftype:twoisolatedvertices}.
If property \ref{item:casesfortype8casethatbothuandvhaveaprojectionmeetingaprojectionofS} is true, then there are exactly $4$ possibilities for a $B\mid_{\{u,v\}}$ with 
$\upX_B \cong \ref{item:comparisonproofcaseLhassize6:oneadditionaledgeintersectingC4andoneisolatedvertex}$: 
($B[u]\in \{\pm\}$ and $B[v]=0$) or ($B[u] = 0$ and $B[v] \in \{\pm\}$). Therefore, 
if property \ref{item:casesfortype8casethatbothuandvhaveaprojectionmeetingaprojectionofS} is true, then there are exactly $4$-times as many realizations of isomorphism type 
\ref{item:comparisonproofcaseLhassize6:oneadditionaledgeintersectingC4andoneisolatedvertex} by $B\mid_{I\setminus S} = B\mid_{\{u,v\}}$ as there had been for type 
\ref{item:comparisonproofcaseLhassize6:twoisolatedvertices}.

Let us now turn to \ref{item:comparisonproofcaseLhassize6:C4withoneadditionaldisjointedge}. In case \ref{item:casesfortype8casethatexactlyoneofuandvhasaprojectionmeetingaprojectionofS} there are (just as for type \ref{item:comparisonproofcaseLhassize6:oneadditionaledgeintersectingC4andoneisolatedvertex}) exactly $2$ possibilities for 
a $B\mid_{\{u,v\}}$ with $\upX_{B} \cong \ref{item:comparisonproofcaseLhassize6:C4withoneadditionaldisjointedge}$. This time, these are ($B[u]=0$ and $B[v]\in\{\pm\}$) for 
the first clause of \ref{item:casesfortype8casethatexactlyoneofuandvhasaprojectionmeetingaprojectionofS}, and ($B[u]\in\{\pm\}$ and $B[v]=0$) for the second clause. 
Again, due to the mutual exclusiveness of the clauses, it follows that whenever 
case \ref{item:casesfortype8casethatexactlyoneofuandvhasaprojectionmeetingaprojectionofS} is true (no matter by way of which clause), there are exactly $2$-times as many 
$B\mid_{\{u,v\}}\in \{0,\pm\}^{\{u,v\}}$ with $\upX_B \cong \ref{item:comparisonproofcaseLhassize6:C4withoneadditionaldisjointedge}$ as there are 
$\{u,v\}\in \binom{[n-1]^2}{2}$ with $\upX_{\{0\}^{\{u,v\}}} \cong \ref{item:comparisonproofcaseLhassize6:twoisolatedvertices}$. Concerning property \ref{item:casesfortype8casethatbothuandvhaveaprojectionmeetingaprojectionofS}, however, there is a genuine 
difference: when this property is true, there is \emph{no} possibility to 
choose $B\mid_{\{u,v\}}$ so as to create exactly one edge disjoint from 
the $\upX_{B\mid_S}\cong C^4$. We can now begin inspecting the cases. 

If \ref{numberofmatrixrealizationsoftype:twoisolatedvertices:case:lvertu1v1rvertbackslashp1Sequals0}, then the inclusion $\{u_1,v_1\}\subseteq \upp_1(S)$ alone, no matter 
whether $u_1=v_1$ or not, implies that property \ref{item:casesfortype8casethatbothuandvhaveaprojectionmeetingaprojectionofS} is true and without going any deeper  we know 
that there are exactly 
$4\cdot 4 \cdot \binom{(n-1)-2}{2} = 16 \cdot \binom{(n-1)-2}{2}$ 
realizations of type \ref{item:comparisonproofcaseLhassize6:oneadditionaledgeintersectingC4andoneisolatedvertex} and $0$ realizations of type \ref{item:comparisonproofcaseLhassize6:C4withoneadditionaldisjointedge} by $B\mid_{\{u,v\}}$. 

If \ref{numberofmatrixrealizationsoftype:threeisolatedvertices:case:lvertu1v1rvertbackslashp1Sequals1}, we have to descend one level deeper. If \ref{numberofmatrixrealizationsoftype:threeisolatedvertices:case:lvertu1v1rvertbackslashp1Sequals1}.\ref{numberofmatrixrealizationsoftype:twoisolatedvertices:case:lvertu1v1rvertbackslashp1Sequals1:u1equalsv1andbracesu1v1bracesintersectp1Sisempty} then it is known that 
$u_1=v_1$, $\{u_1,v_1\}\cap\upp_1(S) = \emptyset$, 
$u_2<v_2$ and $\lvert \{ u_2,v_2\}\cap\upp_2(S) \rvert = 1$ and obviously this implies 
that property \ref{item:casesfortype8casethatexactlyoneofuandvhasaprojectionmeetingaprojectionofS} is true. Therefore without having to reexamine further subcases 
we then know that if \ref{numberofmatrixrealizationsoftype:threeisolatedvertices:case:lvertu1v1rvertbackslashp1Sequals1}.\ref{numberofmatrixrealizationsoftype:twoisolatedvertices:case:lvertu1v1rvertbackslashp1Sequals1:u1equalsv1andbracesu1v1bracesintersectp1Sisempty}, then there are exactly $2\cdot 2\cdot ((n-1)-2)^2 = 4\cdot ((n-1)-2)^2$ 
realizations of type \ref{item:comparisonproofcaseLhassize6:oneadditionaledgeintersectingC4andoneisolatedvertex} and also exactly $2 \cdot 2 \cdot ((n-1)-2)^2 
= 4\cdot((n-1)-2)^2$ realizations of type \ref{item:comparisonproofcaseLhassize6:C4withoneadditionaldisjointedge} by $B\mid_{\{u,v\}}$.

If \ref{numberofmatrixrealizationsoftype:threeisolatedvertices:case:lvertu1v1rvertbackslashp1Sequals1}.\ref{numberofmatrixrealizationsoftype:twoisolatedvertices:case:lvertu1v1rvertbackslashp1Sequals1:u1doesnotequalv1andbracesu1v1bracesintersectp1Sisasingleton}, however, then we have to go deeper still. Although we then already know 
that $u_1<v_1$ and $\lvert \{u_1,v_1\} \cap \upp_1(S) \rvert = 1$, and therefore 
know that 
\begin{equation}\label{partialknowledgeintheanalysisoftype8}
\text{$\{u_1,u_2\} \cap \upp(S)\neq\emptyset$ or 
$\{v_1,v_2\} \cap \upp(S)\neq\emptyset$ \quad , }
\end{equation}
at the present stage of our knowledge this latter property is compatible with both 
\ref{item:casesfortype8casethatexactlyoneofuandvhasaprojectionmeetingaprojectionofS} 
and \ref{item:casesfortype8casethatbothuandvhaveaprojectionmeetingaprojectionofS} 
(i.e., we do not know yet whether the `or' in 
\eqref{partialknowledgeintheanalysisoftype8} is true as an `and'). The 
reason is that we do not yet have any knowledge about $u_2$ and $v_2$. Therefore, 
neither descending down to \ref{numberofmatrixrealizationsoftype:threeisolatedvertices:case:lvertu1v1rvertbackslashp1Sequals1}.\ref{numberofmatrixrealizationsoftype:twoisolatedvertices:case:lvertu1v1rvertbackslashp1Sequals1:u1doesnotequalv1andbracesu1v1bracesintersectp1Sisasingleton}.\ref{numberofmatrixrealizationsoftype:twoisolatedvertices:case:lvertu1v1rvertbackslashp1Sequals1:u1doesnotequalv1andbracesu1v1bracesintersectp1Sisasingletoncontainingu1} nor to \ref{numberofmatrixrealizationsoftype:threeisolatedvertices:case:lvertu1v1rvertbackslashp1Sequals1}.\ref{numberofmatrixrealizationsoftype:twoisolatedvertices:case:lvertu1v1rvertbackslashp1Sequals1:u1doesnotequalv1andbracesu1v1bracesintersectp1Sisasingleton}.\ref{numberofmatrixrealizationsoftype:twoisolatedvertices:case:lvertu1v1rvertbackslashp1Sequals1:u1doesnotequalv1andbracesu1v1bracesintersectp1Sisasingletoncontainingu1}.\ref{numberofmatrixrealizationsoftype:twoisolatedvertices:case:lvertu1v1rvertbackslashp1Sequals1:u1doesnotequalv1andbracesu1v1bracesintersectp1Sisasingletoncontainingu1:u1equaltoa1} is sufficient for us to know whether \ref{item:casesfortype8casethatexactlyoneofuandvhasaprojectionmeetingaprojectionofS} or \ref{item:casesfortype8casethatbothuandvhaveaprojectionmeetingaprojectionofS} is true. We therefore have to go all the way down to the two (anonymous) subcases of maximal depth within the case \ref{numberofmatrixrealizationsoftype:threeisolatedvertices:case:lvertu1v1rvertbackslashp1Sequals1}.\ref{numberofmatrixrealizationsoftype:twoisolatedvertices:case:lvertu1v1rvertbackslashp1Sequals1:u1doesnotequalv1andbracesu1v1bracesintersectp1Sisasingleton}.\ref{numberofmatrixrealizationsoftype:twoisolatedvertices:case:lvertu1v1rvertbackslashp1Sequals1:u1doesnotequalv1andbracesu1v1bracesintersectp1Sisasingletoncontainingu1}.\ref{numberofmatrixrealizationsoftype:twoisolatedvertices:case:lvertu1v1rvertbackslashp1Sequals1:u1doesnotequalv1andbracesu1v1bracesintersectp1Sisasingletoncontainingu1:u1equaltoa1}. In the first of the two subcases we know that 
$\{u_2,v_2\}\cap \upp_2(S) = \emptyset$ and combining this with our knowledge 
of $\lvert \{u_1,v_1\} \cap \upp_1(S) \rvert = 1$ we may conclude that exactly one 
of the two clauses in \eqref{partialknowledgeintheanalysisoftype8}, and hence 
property \ref{item:casesfortype8casethatexactlyoneofuandvhasaprojectionmeetingaprojectionofS} is true. Therefore, in the present subcase there are exactly 
$2\cdot (n-1-a_1-1)\cdot ((n-1)-2)$ realizations of type \ref{item:comparisonproofcaseLhassize6:oneadditionaledgeintersectingC4andoneisolatedvertex} and also $2\cdot (n-1-a_1-1)\cdot ((n-1)-2)$ realizations of type \ref{item:comparisonproofcaseLhassize6:C4withoneadditionaldisjointedge} by $B\mid_{\{u,v\}}$. 

In the second of the two subcases we know that $u_2\neq v_2$ and 
$\lvert \{u_2,v_2\}\cap\upp_2(S) \rvert = 1$. Recall that at present we also know 
that  $u_1<v_1$ and $\lvert \{u_1,v_1\} \cap \upp_1(S) \rvert = 1$. 
Keeping in mind the fact that because of $u\notin S$ at most one projection of $u$ 
can be contained in $\upp(S)$ (and the analogous fact about $v$), 
we may argue that if 
$\lvert \{u_1,v_1\} \cap \upp_1(S) \rvert = 1$ is true as 
($u_1\in \upp_1(S)$ and $v_1\notin\upp_1(S)$), then  
$\lvert \{u_2,v_2\}\cap\upp_2(S) \rvert = 1$ must be true 
as ($u_2\notin\upp_2(S)$ and $v_2\in\upp_2(S)$), and if 
$\lvert \{u_1,v_1\} \cap \upp_1(S) \rvert = 1$ is true as 
($u_1\notin\upp_1(S)$ and $v_1\in \upp_1(S)$ ), 
then  $\lvert \{u_2,v_2\}\cap\upp_2(S) \rvert = 1$ must be true as 
($u_2\in\upp_2(S)$ and $v_2\notin\upp_2(S)$). Since in both 
cases both clauses of \eqref{partialknowledgeintheanalysisoftype8} are true, 
it follows that \ref{item:casesfortype8casethatbothuandvhaveaprojectionmeetingaprojectionofS} is true. Therefore in the present subcase there are exactly 
$4\cdot (n-1-a_1-1)\cdot 2\cdot ((n-1)-2) = 8\cdot (n-1-a_1-1)\cdot ((n-1)-2)$ 
realizations of type \ref{item:comparisonproofcaseLhassize6:oneadditionaledgeintersectingC4andoneisolatedvertex} and $0$ realizations of type \ref{item:comparisonproofcaseLhassize6:C4withoneadditionaldisjointedge} by $B\mid_{\{u,v\}}$. 
Adding up our findings, it follows that if \ref{numberofmatrixrealizationsoftype:threeisolatedvertices:case:lvertu1v1rvertbackslashp1Sequals1}.\ref{numberofmatrixrealizationsoftype:twoisolatedvertices:case:lvertu1v1rvertbackslashp1Sequals1:u1doesnotequalv1andbracesu1v1bracesintersectp1Sisasingleton}.\ref{numberofmatrixrealizationsoftype:twoisolatedvertices:case:lvertu1v1rvertbackslashp1Sequals1:u1doesnotequalv1andbracesu1v1bracesintersectp1Sisasingletoncontainingu1}.\ref{numberofmatrixrealizationsoftype:twoisolatedvertices:case:lvertu1v1rvertbackslashp1Sequals1:u1doesnotequalv1andbracesu1v1bracesintersectp1Sisasingletoncontainingu1:u1equaltoa1}, then there are exactly 
$2\cdot (n-1-a_1-1)\cdot ((n-1)-2) + 8\cdot (n-1-a_1-1)\cdot ((n-1)-2)
=
10\cdot (n-1-a_1-1)\cdot ((n-1)-2)$ realizations of type \ref{item:comparisonproofcaseLhassize6:oneadditionaledgeintersectingC4andoneisolatedvertex} but merely 
$2\cdot (n-1-a_1-1)\cdot ((n-1)-2)$ realizations of type \ref{item:comparisonproofcaseLhassize6:C4withoneadditionaldisjointedge} by $B\mid_{\{u,v\}}$. 

The case \ref{numberofmatrixrealizationsoftype:threeisolatedvertices:case:lvertu1v1rvertbackslashp1Sequals1}.\ref{numberofmatrixrealizationsoftype:twoisolatedvertices:case:lvertu1v1rvertbackslashp1Sequals1:u1doesnotequalv1andbracesu1v1bracesintersectp1Sisasingleton}.\ref{numberofmatrixrealizationsoftype:twoisolatedvertices:case:lvertu1v1rvertbackslashp1Sequals1:u1doesnotequalv1andbracesu1v1bracesintersectp1Sisasingletoncontainingu1}.\ref{numberofmatrixrealizationsoftype:twoisolatedvertices:case:lvertu1v1rvertbackslashp1Sequals1:u1doesnotequalv1andbracesu1v1bracesintersectp1Sisasingletoncontainingu1:u1equaltoc1} is again not sufficient for us to know whether \ref{item:casesfortype8casethatexactlyoneofuandvhasaprojectionmeetingaprojectionofS} or \ref{item:casesfortype8casethatbothuandvhaveaprojectionmeetingaprojectionofS} is true and we again have to consider its anonymous subcases. In the first of them, an argument entirely analogous to 
the one given for the first subcase of \ref{numberofmatrixrealizationsoftype:threeisolatedvertices:case:lvertu1v1rvertbackslashp1Sequals1}.\ref{numberofmatrixrealizationsoftype:twoisolatedvertices:case:lvertu1v1rvertbackslashp1Sequals1:u1doesnotequalv1andbracesu1v1bracesintersectp1Sisasingleton}.\ref{numberofmatrixrealizationsoftype:twoisolatedvertices:case:lvertu1v1rvertbackslashp1Sequals1:u1doesnotequalv1andbracesu1v1bracesintersectp1Sisasingletoncontainingu1}.\ref{numberofmatrixrealizationsoftype:twoisolatedvertices:case:lvertu1v1rvertbackslashp1Sequals1:u1doesnotequalv1andbracesu1v1bracesintersectp1Sisasingletoncontainingu1:u1equaltoa1} proves that then property \ref{item:casesfortype8casethatexactlyoneofuandvhasaprojectionmeetingaprojectionofS} is true and therefore we know that there are exactly $2\cdot (n-1-c_1)\cdot ((n-1)-2)$ realizations of 
type \ref{item:comparisonproofcaseLhassize6:oneadditionaledgeintersectingC4andoneisolatedvertex} and also exactly $2\cdot (n-1-c_1)\cdot ((n-1)-2)$ realizations of 
type \ref{item:comparisonproofcaseLhassize6:C4withoneadditionaldisjointedge} 
by $B\mid_{\{u,v\}}$. In the second of them, analogously to the second subcase 
of  \ref{numberofmatrixrealizationsoftype:threeisolatedvertices:case:lvertu1v1rvertbackslashp1Sequals1}.\ref{numberofmatrixrealizationsoftype:twoisolatedvertices:case:lvertu1v1rvertbackslashp1Sequals1:u1doesnotequalv1andbracesu1v1bracesintersectp1Sisasingleton}.\ref{numberofmatrixrealizationsoftype:twoisolatedvertices:case:lvertu1v1rvertbackslashp1Sequals1:u1doesnotequalv1andbracesu1v1bracesintersectp1Sisasingletoncontainingu1}.\ref{numberofmatrixrealizationsoftype:twoisolatedvertices:case:lvertu1v1rvertbackslashp1Sequals1:u1doesnotequalv1andbracesu1v1bracesintersectp1Sisasingletoncontainingu1:u1equaltoa1}  proves that then property \ref{item:casesfortype8casethatbothuandvhaveaprojectionmeetingaprojectionofS} is true and therefore there are exactly  
$4\cdot (n-1-c_1)\cdot 2\cdot ((n-1)-2) = 8\cdot (n-1-c_1)\cdot ((n-1)-2)$ 
realizations of type \ref{item:comparisonproofcaseLhassize6:oneadditionaledgeintersectingC4andoneisolatedvertex} and $0$ realizations of type \ref{item:comparisonproofcaseLhassize6:C4withoneadditionaldisjointedge} by $B\mid_{\{u,v\}}$. 
It follows that if \ref{numberofmatrixrealizationsoftype:threeisolatedvertices:case:lvertu1v1rvertbackslashp1Sequals1}.\ref{numberofmatrixrealizationsoftype:twoisolatedvertices:case:lvertu1v1rvertbackslashp1Sequals1:u1doesnotequalv1andbracesu1v1bracesintersectp1Sisasingleton}.\ref{numberofmatrixrealizationsoftype:twoisolatedvertices:case:lvertu1v1rvertbackslashp1Sequals1:u1doesnotequalv1andbracesu1v1bracesintersectp1Sisasingletoncontainingu1}.\ref{numberofmatrixrealizationsoftype:twoisolatedvertices:case:lvertu1v1rvertbackslashp1Sequals1:u1doesnotequalv1andbracesu1v1bracesintersectp1Sisasingletoncontainingu1:u1equaltoc1}, then  there are exactly 
$2\cdot (n-1-c_1)\cdot ((n-1)-2) + 8\cdot (n-1-c_1)\cdot ((n-1)-2)
=
10\cdot (n-1-c_1)\cdot ((n-1)-2)$
realizations of type \ref{item:comparisonproofcaseLhassize6:oneadditionaledgeintersectingC4andoneisolatedvertex} but only $2\cdot (n-1-c_1)\cdot ((n-1)-2)$ realizations of 
type \ref{item:comparisonproofcaseLhassize6:C4withoneadditionaldisjointedge} by $B\mid_{\{u,v\}}$. 

It now follows that if \ref{numberofmatrixrealizationsoftype:threeisolatedvertices:case:lvertu1v1rvertbackslashp1Sequals1}.\ref{numberofmatrixrealizationsoftype:twoisolatedvertices:case:lvertu1v1rvertbackslashp1Sequals1:u1doesnotequalv1andbracesu1v1bracesintersectp1Sisasingleton}.\ref{numberofmatrixrealizationsoftype:twoisolatedvertices:case:lvertu1v1rvertbackslashp1Sequals1:u1doesnotequalv1andbracesu1v1bracesintersectp1Sisasingletoncontainingu1}, then there are exactly 
$10\cdot (n-1-a_1-1)\cdot ((n-1)-2) + 10\cdot (n-1-c_1)\cdot ((n-1)-2)
= 
10\cdot (2n-a_1-c_1-3) \cdot ((n-1)-2)$
realizations of type \ref{item:comparisonproofcaseLhassize6:oneadditionaledgeintersectingC4andoneisolatedvertex} but only $2\cdot( 2n-a_1-c_1-3 )\cdot( (n-1)-2 )$ of type 
\ref{item:comparisonproofcaseLhassize6:C4withoneadditionaldisjointedge} by 
$B\mid_{\{u,v\}}$.

The case \ref{numberofmatrixrealizationsoftype:threeisolatedvertices:case:lvertu1v1rvertbackslashp1Sequals1}.\ref{numberofmatrixrealizationsoftype:twoisolatedvertices:case:lvertu1v1rvertbackslashp1Sequals1:u1doesnotequalv1andbracesu1v1bracesintersectp1Sisasingleton}.\ref{numberofmatrixrealizationsoftype:twoisolatedvertices:case:lvertu1v1rvertbackslashp1Sequals1:u1doesnotequalv1andbracesu1v1bracesintersectp1Sisasingletoncontainingv1} will now be treated analogously to \ref{numberofmatrixrealizationsoftype:threeisolatedvertices:case:lvertu1v1rvertbackslashp1Sequals1}.\ref{numberofmatrixrealizationsoftype:twoisolatedvertices:case:lvertu1v1rvertbackslashp1Sequals1:u1doesnotequalv1andbracesu1v1bracesintersectp1Sisasingleton}.\ref{numberofmatrixrealizationsoftype:twoisolatedvertices:case:lvertu1v1rvertbackslashp1Sequals1:u1doesnotequalv1andbracesu1v1bracesintersectp1Sisasingletoncontainingu1}.

In the first subcase of \ref{numberofmatrixrealizationsoftype:threeisolatedvertices:case:lvertu1v1rvertbackslashp1Sequals1}.\ref{numberofmatrixrealizationsoftype:twoisolatedvertices:case:lvertu1v1rvertbackslashp1Sequals1:u1doesnotequalv1andbracesu1v1bracesintersectp1Sisasingleton}.\ref{numberofmatrixrealizationsoftype:twoisolatedvertices:case:lvertu1v1rvertbackslashp1Sequals1:u1doesnotequalv1andbracesu1v1bracesintersectp1Sisasingletoncontainingv1}.\ref{numberofmatrixrealizationsoftype:twoisolatedvertices:case:lvertu1v1rvertbackslashp1Sequals1:u1doesnotequalv1andbracesu1v1bracesintersectp1Sisasingletoncontainingu1:v1equaltoa1} we find that  property \ref{item:casesfortype8casethatexactlyoneofuandvhasaprojectionmeetingaprojectionofS} is true and therefore there are exactly 
$2\cdot (a_1-1)\cdot((n-1)-2)$ realizations of type \ref{item:comparisonproofcaseLhassize6:oneadditionaledgeintersectingC4andoneisolatedvertex} and also 
$2\cdot (a_1-1)\cdot((n-1)-2)$ realizations of type \ref{item:comparisonproofcaseLhassize6:C4withoneadditionaldisjointedge} by $B\mid_{\{u,v\}}$. In the 
second subcase of \ref{numberofmatrixrealizationsoftype:threeisolatedvertices:case:lvertu1v1rvertbackslashp1Sequals1}.\ref{numberofmatrixrealizationsoftype:twoisolatedvertices:case:lvertu1v1rvertbackslashp1Sequals1:u1doesnotequalv1andbracesu1v1bracesintersectp1Sisasingleton}.\ref{numberofmatrixrealizationsoftype:twoisolatedvertices:case:lvertu1v1rvertbackslashp1Sequals1:u1doesnotequalv1andbracesu1v1bracesintersectp1Sisasingletoncontainingv1}.\ref{numberofmatrixrealizationsoftype:twoisolatedvertices:case:lvertu1v1rvertbackslashp1Sequals1:u1doesnotequalv1andbracesu1v1bracesintersectp1Sisasingletoncontainingu1:v1equaltoa1} we find that property \ref{item:casesfortype8casethatbothuandvhaveaprojectionmeetingaprojectionofS} is true and therefore there are  exactly 
$4\cdot (a_1-1)\cdot 2\cdot ((n-1)-2) = 8\cdot (a_1-1)\cdot ((n-1)-2)$  
realizations of type \ref{item:comparisonproofcaseLhassize6:oneadditionaledgeintersectingC4andoneisolatedvertex} but $0$ realizations of type \ref{item:comparisonproofcaseLhassize6:C4withoneadditionaldisjointedge} by $B\mid_{\{u,v\}}$. Therefore, if \ref{numberofmatrixrealizationsoftype:threeisolatedvertices:case:lvertu1v1rvertbackslashp1Sequals1}.\ref{numberofmatrixrealizationsoftype:twoisolatedvertices:case:lvertu1v1rvertbackslashp1Sequals1:u1doesnotequalv1andbracesu1v1bracesintersectp1Sisasingleton}.\ref{numberofmatrixrealizationsoftype:twoisolatedvertices:case:lvertu1v1rvertbackslashp1Sequals1:u1doesnotequalv1andbracesu1v1bracesintersectp1Sisasingletoncontainingv1}.\ref{numberofmatrixrealizationsoftype:twoisolatedvertices:case:lvertu1v1rvertbackslashp1Sequals1:u1doesnotequalv1andbracesu1v1bracesintersectp1Sisasingletoncontainingu1:v1equaltoa1}, then there 
are exactly $2\cdot (a_1-1)\cdot((n-1)-2) + 8\cdot (a_1-1)\cdot ((n-1)-2) 
= 10\cdot (a_1-1)\cdot((n-1)-2)$ realizations of type \ref{item:comparisonproofcaseLhassize6:oneadditionaledgeintersectingC4andoneisolatedvertex} but only 
$2\cdot (a_1-1)\cdot ( (n-1)-2)$ realizations of type \ref{item:comparisonproofcaseLhassize6:C4withoneadditionaldisjointedge} by $B\mid_{\{u,v\}}$. 

In the first subcase of \ref{numberofmatrixrealizationsoftype:threeisolatedvertices:case:lvertu1v1rvertbackslashp1Sequals1}.\ref{numberofmatrixrealizationsoftype:twoisolatedvertices:case:lvertu1v1rvertbackslashp1Sequals1:u1doesnotequalv1andbracesu1v1bracesintersectp1Sisasingleton}.\ref{numberofmatrixrealizationsoftype:twoisolatedvertices:case:lvertu1v1rvertbackslashp1Sequals1:u1doesnotequalv1andbracesu1v1bracesintersectp1Sisasingletoncontainingv1}.\ref{numberofmatrixrealizationsoftype:twoisolatedvertices:case:lvertu1v1rvertbackslashp1Sequals1:u1doesnotequalv1andbracesu1v1bracesintersectp1Sisasingletoncontainingu1:v1equaltoc1} we conclude that property \ref{item:casesfortype8casethatexactlyoneofuandvhasaprojectionmeetingaprojectionofS} is true and therefore there exist  
exactly $2\cdot (c_1-1-1)\cdot ((n-1)-2)$ realizations of type \ref{item:comparisonproofcaseLhassize6:oneadditionaledgeintersectingC4andoneisolatedvertex} and also 
$2\cdot (c_1-1-1)\cdot ((n-1)-2)$ realizations of type \ref{item:comparisonproofcaseLhassize6:C4withoneadditionaldisjointedge} by $B\mid_{\{u,v\}}$. 
In the  second subcase of \ref{numberofmatrixrealizationsoftype:threeisolatedvertices:case:lvertu1v1rvertbackslashp1Sequals1}.\ref{numberofmatrixrealizationsoftype:twoisolatedvertices:case:lvertu1v1rvertbackslashp1Sequals1:u1doesnotequalv1andbracesu1v1bracesintersectp1Sisasingleton}.\ref{numberofmatrixrealizationsoftype:twoisolatedvertices:case:lvertu1v1rvertbackslashp1Sequals1:u1doesnotequalv1andbracesu1v1bracesintersectp1Sisasingletoncontainingv1}.\ref{numberofmatrixrealizationsoftype:twoisolatedvertices:case:lvertu1v1rvertbackslashp1Sequals1:u1doesnotequalv1andbracesu1v1bracesintersectp1Sisasingletoncontainingu1:v1equaltoc1} we conclude that property \ref{item:casesfortype8casethatbothuandvhaveaprojectionmeetingaprojectionofS} is true and therefore there are exactly 
$4\cdot (c_1-1-1)\cdot 2 \cdot ((n-1)-2) = 8\cdot (c_1-1-1)\cdot ((n-1)-2)$  
realizations of type \ref{item:comparisonproofcaseLhassize6:oneadditionaledgeintersectingC4andoneisolatedvertex} and $0$ realizations of type \ref{item:comparisonproofcaseLhassize6:C4withoneadditionaldisjointedge} by $B\mid_{\{u,v\}}$. Therefore, if \ref{numberofmatrixrealizationsoftype:threeisolatedvertices:case:lvertu1v1rvertbackslashp1Sequals1}.\ref{numberofmatrixrealizationsoftype:twoisolatedvertices:case:lvertu1v1rvertbackslashp1Sequals1:u1doesnotequalv1andbracesu1v1bracesintersectp1Sisasingleton}.\ref{numberofmatrixrealizationsoftype:twoisolatedvertices:case:lvertu1v1rvertbackslashp1Sequals1:u1doesnotequalv1andbracesu1v1bracesintersectp1Sisasingletoncontainingv1}.\ref{numberofmatrixrealizationsoftype:twoisolatedvertices:case:lvertu1v1rvertbackslashp1Sequals1:u1doesnotequalv1andbracesu1v1bracesintersectp1Sisasingletoncontainingu1:v1equaltoc1}, then there are exactly 
$2\cdot (c_1-1-1)\cdot ((n-1)-2) + 8\cdot (c_1-1-1)\cdot ((n-1)-2)
= 10\cdot (c_1-1-1)\cdot ((n-1)-2)$ realizations of 
type \ref{item:comparisonproofcaseLhassize6:oneadditionaledgeintersectingC4andoneisolatedvertex} but only $2\cdot (c_1-1-1)\cdot ((n-1)-2)$ realizations of type 
\ref{item:comparisonproofcaseLhassize6:C4withoneadditionaldisjointedge} by 
$B\mid_{\{u,v\}}$. 

It now follows that if \ref{numberofmatrixrealizationsoftype:threeisolatedvertices:case:lvertu1v1rvertbackslashp1Sequals1}.\ref{numberofmatrixrealizationsoftype:twoisolatedvertices:case:lvertu1v1rvertbackslashp1Sequals1:u1doesnotequalv1andbracesu1v1bracesintersectp1Sisasingleton}.\ref{numberofmatrixrealizationsoftype:twoisolatedvertices:case:lvertu1v1rvertbackslashp1Sequals1:u1doesnotequalv1andbracesu1v1bracesintersectp1Sisasingletoncontainingv1}, then there are exactly 
$10\cdot (a_1-1)\cdot((n-1)-2) + 10\cdot (c_1-1-1)\cdot ((n-1)-2)
= 10\cdot (a_1+c_1-3) \cdot ((n-1)-2)$
realizations of type \ref{item:comparisonproofcaseLhassize6:oneadditionaledgeintersectingC4andoneisolatedvertex} but merely 
$2 \cdot (a_1-1)\cdot((n-1)-2) + 2 \cdot (c_1-1-1)\cdot ((n-1)-2)
= 2 \cdot (a_1+c_1-3) \cdot ((n-1)-2)$ realizations of type \ref{item:comparisonproofcaseLhassize6:C4withoneadditionaldisjointedge} by $B\mid_{\{u,v\}}$. Moreover we may now 
conclude that if \ref{numberofmatrixrealizationsoftype:threeisolatedvertices:case:lvertu1v1rvertbackslashp1Sequals1}.\ref{numberofmatrixrealizationsoftype:twoisolatedvertices:case:lvertu1v1rvertbackslashp1Sequals1:u1doesnotequalv1andbracesu1v1bracesintersectp1Sisasingleton}, then there are exactly 
$10\cdot (2n-a_1-c_1-3) \cdot ((n-1)-2) + 10\cdot (a_1+c_1-3) \cdot ((n-1)-2)
=
20\cdot ((n-1)-2)^2$ realizations of type \ref{item:comparisonproofcaseLhassize6:oneadditionaledgeintersectingC4andoneisolatedvertex} but only 
$2 \cdot (2n-a_1-c_1-3) \cdot ((n-1)-2) + 2 \cdot (a_1+c_1-3) \cdot ((n-1)-2)
=
4 \cdot ((n-1)-2)^2$ realizations of type \ref{item:comparisonproofcaseLhassize6:C4withoneadditionaldisjointedge} by $B\mid_{\{u,v\}}$. Finally we can conclude that 
if \ref{numberofmatrixrealizationsoftype:threeisolatedvertices:case:lvertu1v1rvertbackslashp1Sequals1}, then there are exactly 
$4\cdot ((n-1)-2)^2 + 20\cdot ((n-1)-2)^2 = 24\cdot ((n-1)-2)^2$ realizations 
of type \ref{item:comparisonproofcaseLhassize6:oneadditionaledgeintersectingC4andoneisolatedvertex} but only $4\cdot ((n-1)-2)^2 + 4\cdot((n-1)-2)^2 = 8\cdot ((n-1)-2)^2$
realizations of type \ref{item:comparisonproofcaseLhassize6:C4withoneadditionaldisjointedge} by $B\mid_{\{u,v\}}$.

If \ref{numberofmatrixrealizationsoftype:twoisolatedvertices:case:lvertu1v1rvertbackslashp1Sequals2}, then the inclusion $\{u_2,v_2\}\subseteq\upp_2(S)$  alone implies that 
property \ref{item:casesfortype8casethatbothuandvhaveaprojectionmeetingaprojectionofS}
is true and therefore there are exactly 
$4\cdot 4\cdot \binom{(n-1)-2}{2} = 16\cdot \binom{(n-1)-2}{2}$ 
realizations of type \ref{item:comparisonproofcaseLhassize6:oneadditionaledgeintersectingC4andoneisolatedvertex} but $0$ realizations of type \ref{item:comparisonproofcaseLhassize6:C4withoneadditionaldisjointedge} by $B\mid_{\{u,v\}}$. 

Summing up, it follows that for each fixed $S$ there are exactly 
$2\cdot 16\cdot \binom{(n-1)-2}{2} + 24\cdot ((n-1)-2)^2 = 24\cdot(n-3)^2 + 32\cdot\binom{n-3}{2}$ realizations of type \ref{item:comparisonproofcaseLhassize6:oneadditionaledgeintersectingC4andoneisolatedvertex} but only $8\cdot ((n-1)-2)^2$ realizations of type \ref{item:comparisonproofcaseLhassize6:C4withoneadditionaldisjointedge} by $B\mid_{\{u,v\}}$. This completes the proof of both \ref{numberofmatrixrealizationsoftype:oneadditionaledgeintersectingC4andoneisolatedvertex} and \ref{numberofmatrixrealizationsoftype:C4withoneadditionaldisjointedge}.

We can now turn to counting the realizations of \ref{item:comparisonproofcaseLhassize6:C4intersectingtwoedgesinseparatenonadjacentvertices}--\ref{item:comparisonproofcaseLhassize6:C4intersectingatwopathinitsinnervertex}, i.e. to proving \ref{numberofmatrixrealizationsoftype:C4intersectingtwoedgesinseparatenonadjacentvertices}--\ref{numberofmatrixrealizationsoftype:C4intersectingatwopathinitsinnervertex}

By \ref{characterizationoftype:C4intersectingtwoedgesinseparatenonadjacentvertices:property1}, \ref{characterizationoftype:C4intersectingtwoedgesinseparateadjacentvertices:property1}, \ref{characterizationoftype:C4intersectingatwopathinanendvertex:property1} and \ref{characterizationoftype:C4intersectingatwopathinitsinnervertex:property1}, for each of the four types \ref{item:comparisonproofcaseLhassize6:C4intersectingtwoedgesinseparatenonadjacentvertices}, \ref{item:comparisonproofcaseLhassize6:C4intersectingtwoedgesinseparateadjacentvertices}, \ref{item:comparisonproofcaseLhassize6:C4intersectingatwopathinanendvertex} and \ref{item:comparisonproofcaseLhassize6:C4intersectingatwopathinitsinnervertex} it is necessary that $\upX_{\{0\}^{u,v}\sqcup B\mid_{S}} \cong \ref{item:comparisonproofcaseLhassize6:twoisolatedvertices}$. 
We may therefore determine 
each of the four functions \ref{numberofmatrixrealizationsoftype:C4intersectingtwoedgesinseparatenonadjacentvertices}, \ref{numberofmatrixrealizationsoftype:C4intersectingtwoedgesinseparateadjacentvertices}, \ref{numberofmatrixrealizationsoftype:C4intersectingatwopathinanendvertex}, \ref{numberofmatrixrealizationsoftype:C4intersectingatwopathinitsinnervertex} in the course of one reexamination of the proof of 
\ref{numberofmatrixrealizationsoftype:twoisolatedvertices}. 
We consider each of the cases in turn, each time descending down just deep enough 
until we are able to decide which of the four isomorphism types \ref{item:comparisonproofcaseLhassize6:C4intersectingtwoedgesinseparatenonadjacentvertices}, \ref{item:comparisonproofcaseLhassize6:C4intersectingtwoedgesinseparateadjacentvertices}, \ref{item:comparisonproofcaseLhassize6:C4intersectingatwopathinanendvertex} can be realized in 
that case.

Since by \ref{characterizationoftype:C4intersectingtwoedgesinseparatenonadjacentvertices:property2}, \ref{characterizationoftype:C4intersectingtwoedgesinseparateadjacentvertices:property2}, \ref{characterizationoftype:C4intersectingatwopathinanendvertex:property2} and \ref{characterizationoftype:C4intersectingatwopathinitsinnervertex:property2} the property  $B[u]\in\{\pm\}$ and $B[v]\in \{\pm\}$ is necessary for each of 
the four types, the positions $u$ and $v$ alone, not $B\mid_{\{u,v\}}$ itself, decide 
about which type can be realized. Therefore, if a decision is reached about which 
of the four types can be realized in a case, then we obtain the number of 
realizations by multiplying the number of realizations of 
type \ref{item:comparisonproofcaseLhassize6:twoisolatedvertices} in that particular 
case by $4$. 

We now reexamine \ref{numberofmatrixrealizationsoftype:twoisolatedvertices:case:lvertu1v1rvertbackslashp1Sequals0}. 
If \ref{numberofmatrixrealizationsoftype:twoisolatedvertices:case:lvertu1v1rvertbackslashp1Sequals0}, then the property $\lvert \{u_1,v_1\}\setminus \upp_1(S)\rvert = 0$ 
makes \ref{characterizationoftype:C4intersectingatwopathinanendvertex:property4} 
impossible, hence in the entire case \ref{numberofmatrixrealizationsoftype:twoisolatedvertices:case:lvertu1v1rvertbackslashp1Sequals0} the type \ref{item:comparisonproofcaseLhassize6:C4intersectingatwopathinanendvertex} is impossible. 

If \ref{numberofmatrixrealizationsoftype:twoisolatedvertices:case:lvertu1v1rvertbackslashp1Sequals0}.\ref{numberofmatrixrealizationsoftype:twoisolatedvertices:case:lvertu1v1rvertbackslashp1Sequals0:case:u1equalsv1}, then we have $u_1=v_1$, which 
makes both \ref{characterizationoftype:C4intersectingtwoedgesinseparateadjacentvertices:property3} and \ref{characterizationoftype:C4intersectingtwoedgesinseparatenonadjacentvertices:property3} impossible. The only type remaining is \ref{item:comparisonproofcaseLhassize6:C4intersectingatwopathinitsinnervertex} (and all the properties \ref{characterizationoftype:C4intersectingatwopathinitsinnervertex:property1}--\ref{characterizationoftype:C4intersectingatwopathinitsinnervertex:property4} are indeed satisfied). It 
follows that if \ref{numberofmatrixrealizationsoftype:twoisolatedvertices:case:lvertu1v1rvertbackslashp1Sequals0}.\ref{numberofmatrixrealizationsoftype:twoisolatedvertices:case:lvertu1v1rvertbackslashp1Sequals0:case:u1equalsv1}, then  there are exactly 
$8\cdot\binom{(n-1)-2}{2}$ realizations of type \ref{item:comparisonproofcaseLhassize6:C4intersectingatwopathinitsinnervertex} by $B\mid_{\{u,v\}}$.

If \ref{numberofmatrixrealizationsoftype:twoisolatedvertices:case:lvertu1v1rvertbackslashp1Sequals0}.\ref{numberofmatrixrealizationsoftype:twoisolatedvertices:case:lvertu1v1rvertbackslashp1Sequals0:case:u1doesnotequalv1}, then we have $u_1\neq v_1$. 
Since we also have $u_2\neq v_2$ throughout \ref{numberofmatrixrealizationsoftype:twoisolatedvertices:case:lvertu1v1rvertbackslashp1Sequals0}, this 
makes \ref{characterizationoftype:C4intersectingatwopathinitsinnervertex:property3}
impossible. Moreover, since throughout \ref{numberofmatrixrealizationsoftype:twoisolatedvertices:case:lvertu1v1rvertbackslashp1Sequals0} we also have \eqref{eq:proofofcomparativecountingtheoremcase2firsteq}, in particular 
$\{ u_2, v_2 \} \cap \upp_2(S) = \emptyset$, it follows that \ref{characterizationoftype:C4intersectingtwoedgesinseparateadjacentvertices:property4} is impossible. The only 
type remaining is \ref{item:comparisonproofcaseLhassize6:C4intersectingtwoedgesinseparatenonadjacentvertices} (and all the properties \ref{characterizationoftype:C4intersectingtwoedgesinseparatenonadjacentvertices:property1}--\ref{characterizationoftype:C4intersectingtwoedgesinseparatenonadjacentvertices:property4} are indeed satisfied; 
notice in particular that in \ref{characterizationoftype:C4intersectingtwoedgesinseparatenonadjacentvertices:property4} the first of the two mutually exclusive clauses 
is true). If follows that if \ref{numberofmatrixrealizationsoftype:twoisolatedvertices:case:lvertu1v1rvertbackslashp1Sequals0}.\ref{numberofmatrixrealizationsoftype:twoisolatedvertices:case:lvertu1v1rvertbackslashp1Sequals0:case:u1doesnotequalv1}, then 
there are exactly $8\cdot\binom{(n-1)-2}{2}$ realizations of 
type \ref{item:comparisonproofcaseLhassize6:C4intersectingtwoedgesinseparatenonadjacentvertices} by $B\mid_{\{u,v\}}$. This completes our reexamination of \ref{numberofmatrixrealizationsoftype:twoisolatedvertices:case:lvertu1v1rvertbackslashp1Sequals0}.

We now reexamine \ref{numberofmatrixrealizationsoftype:twoisolatedvertices:case:lvertu1v1rvertbackslashp1Sequals1}. The information defining \ref{numberofmatrixrealizationsoftype:twoisolatedvertices:case:lvertu1v1rvertbackslashp1Sequals1} is by itself 
not yet sufficient to rule out any of the four types \ref{item:comparisonproofcaseLhassize6:C4intersectingtwoedgesinseparatenonadjacentvertices}--\ref{item:comparisonproofcaseLhassize6:C4intersectingatwopathinitsinnervertex}.  The information defining \ref{numberofmatrixrealizationsoftype:twoisolatedvertices:case:lvertu1v1rvertbackslashp1Sequals1}.\ref{numberofmatrixrealizationsoftype:twoisolatedvertices:case:lvertu1v1rvertbackslashp1Sequals1:u1equalsv1andbracesu1v1bracesintersectp1Sisempty}, more 
specificly $u_1=v_1$, makes both \ref{characterizationoftype:C4intersectingtwoedgesinseparatenonadjacentvertices:property3} and \ref{characterizationoftype:C4intersectingtwoedgesinseparateadjacentvertices:property3} impossible, still leaving two types. 
We argued in \ref{numberofmatrixrealizationsoftype:twoisolatedvertices:case:lvertu1v1rvertbackslashp1Sequals1}.\ref{numberofmatrixrealizationsoftype:twoisolatedvertices:case:lvertu1v1rvertbackslashp1Sequals1:u1equalsv1andbracesu1v1bracesintersectp1Sisempty} that we have $u_2\neq v_2$ and $\lvert\{u_2,v_2\}\cap\upp_2(S)\rvert = 1$ in this 
case. If the latter is true as $u_2\in\upp_2(S)$, then $v_2\notin\upp_2(S)$, and 
combining this information with  $v_1\notin\upp_1(S)$ (which we know since we are 
in \ref{numberofmatrixrealizationsoftype:twoisolatedvertices:case:lvertu1v1rvertbackslashp1Sequals1}.\ref{numberofmatrixrealizationsoftype:twoisolatedvertices:case:lvertu1v1rvertbackslashp1Sequals1:u1equalsv1andbracesu1v1bracesintersectp1Sisempty}) makes the 
second clause of the conjunction \ref{characterizationoftype:C4intersectingatwopathinitsinnervertex:property4} impossible. If on the other hand it is true 
as $v_2\in\upp_2(S)$, then $u_2\notin\upp_2(S)$, and  combining this 
with $u_1\notin\upp_1(S)$ (which again we know since we are in \ref{numberofmatrixrealizationsoftype:twoisolatedvertices:case:lvertu1v1rvertbackslashp1Sequals1}.\ref{numberofmatrixrealizationsoftype:twoisolatedvertices:case:lvertu1v1rvertbackslashp1Sequals1:u1equalsv1andbracesu1v1bracesintersectp1Sisempty}) makes \ref{characterizationoftype:C4intersectingatwopathinitsinnervertex:property4} impossible (this time, the first clause). This rules out type \ref{item:comparisonproofcaseLhassize6:C4intersectingatwopathinitsinnervertex}. 
The only type remaining is \ref{item:comparisonproofcaseLhassize6:C4intersectingatwopathinanendvertex} (and all the properties \ref{characterizationoftype:C4intersectingatwopathinanendvertex:property1}--\ref{characterizationoftype:C4intersectingatwopathinanendvertex:property4} are indeed satisfied; note that due 
to $\lvert \{u_2,v_2\}\cap\upp_2(S)\rvert = 1$ the two clauses of \ref{characterizationoftype:C4intersectingatwopathinanendvertex:property4} are mutually exclusive, the 
second being true if $u_2\in\upp_2(S)$ and the first if $v_2\in\upp_2(S)$).
It follows that in case \ref{numberofmatrixrealizationsoftype:twoisolatedvertices:case:lvertu1v1rvertbackslashp1Sequals1:u1equalsv1andbracesu1v1bracesintersectp1Sisempty} of \ref{numberofmatrixrealizationsoftype:twoisolatedvertices:case:lvertu1v1rvertbackslashp1Sequals1} there are exactly $8\cdot ((n-1)-2)^2$ realizations of type \ref{item:comparisonproofcaseLhassize6:C4intersectingatwopathinanendvertex} by $B\mid_{\{u,v\}}$. 

The information defining \ref{numberofmatrixrealizationsoftype:twoisolatedvertices:case:lvertu1v1rvertbackslashp1Sequals1}.\ref{numberofmatrixrealizationsoftype:twoisolatedvertices:case:lvertu1v1rvertbackslashp1Sequals1:u1doesnotequalv1andbracesu1v1bracesintersectp1Sisasingleton} is not enough to rule out any of the four types 
\ref{item:comparisonproofcaseLhassize6:C4intersectingtwoedgesinseparatenonadjacentvertices}--\ref{item:comparisonproofcaseLhassize6:C4intersectingatwopathinitsinnervertex}, 
and descending one level deeper to \ref{numberofmatrixrealizationsoftype:twoisolatedvertices:case:lvertu1v1rvertbackslashp1Sequals1}.\ref{numberofmatrixrealizationsoftype:twoisolatedvertices:case:lvertu1v1rvertbackslashp1Sequals1:u1doesnotequalv1andbracesu1v1bracesintersectp1Sisasingleton}.\ref{numberofmatrixrealizationsoftype:twoisolatedvertices:case:lvertu1v1rvertbackslashp1Sequals1:u1doesnotequalv1andbracesu1v1bracesintersectp1Sisasingletoncontainingu1} does not change this.  

If \ref{numberofmatrixrealizationsoftype:twoisolatedvertices:case:lvertu1v1rvertbackslashp1Sequals1}.\ref{numberofmatrixrealizationsoftype:twoisolatedvertices:case:lvertu1v1rvertbackslashp1Sequals1:u1doesnotequalv1andbracesu1v1bracesintersectp1Sisasingleton}.\ref{numberofmatrixrealizationsoftype:twoisolatedvertices:case:lvertu1v1rvertbackslashp1Sequals1:u1doesnotequalv1andbracesu1v1bracesintersectp1Sisasingletoncontainingu1}.\ref{numberofmatrixrealizationsoftype:twoisolatedvertices:case:lvertu1v1rvertbackslashp1Sequals1:u1doesnotequalv1andbracesu1v1bracesintersectp1Sisasingletoncontainingu1:u1equaltoa1}, then the decision still cannot be made and depends on the (anonymous) subcases 
which we distinguished in that case, namely whether the first or the second clause of 
\eqref{eq:proofofcomparativecountingtheoremcase3secondeq} is true: 

If \ref{numberofmatrixrealizationsoftype:twoisolatedvertices:case:lvertu1v1rvertbackslashp1Sequals1}.\ref{numberofmatrixrealizationsoftype:twoisolatedvertices:case:lvertu1v1rvertbackslashp1Sequals1:u1doesnotequalv1andbracesu1v1bracesintersectp1Sisasingleton}.\ref{numberofmatrixrealizationsoftype:twoisolatedvertices:case:lvertu1v1rvertbackslashp1Sequals1:u1doesnotequalv1andbracesu1v1bracesintersectp1Sisasingletoncontainingu1}.\ref{numberofmatrixrealizationsoftype:twoisolatedvertices:case:lvertu1v1rvertbackslashp1Sequals1:u1doesnotequalv1andbracesu1v1bracesintersectp1Sisasingletoncontainingu1:u1equaltoa1}, and the first clause of \eqref{eq:proofofcomparativecountingtheoremcase3secondeq} 
is true, then in particular we know that  
$v_1\notin\upp_1(S)$ and $u_2=v_2\notin\upp_2(S)$. The latter contradicts \ref{characterizationoftype:C4intersectingtwoedgesinseparatenonadjacentvertices:property3} and \ref{characterizationoftype:C4intersectingtwoedgesinseparateadjacentvertices:property3}. Moreover, 
$v_1\notin\upp_1(S)$ and $v_2\notin\upp_2(S)$ combined render the second clause of the 
conjunction \ref{characterizationoftype:C4intersectingatwopathinitsinnervertex:property4} false. Note that for each of three discarded types we used in whole or in part 
the information $u_2=v_2\notin\upp_2(S)$, which defines the present subcase. 
Hence deferring any decision about the types for so so long was necessary. 
We are now left with only the type 
\ref{item:comparisonproofcaseLhassize6:C4intersectingatwopathinanendvertex} (and indeed the properties \ref{characterizationoftype:C4intersectingatwopathinanendvertex:property1}--\ref{characterizationoftype:C4intersectingatwopathinanendvertex:property4} are all satisfied). Since within \ref{numberofmatrixrealizationsoftype:twoisolatedvertices:case:lvertu1v1rvertbackslashp1Sequals1}.\ref{numberofmatrixrealizationsoftype:twoisolatedvertices:case:lvertu1v1rvertbackslashp1Sequals1:u1doesnotequalv1andbracesu1v1bracesintersectp1Sisasingleton}.\ref{numberofmatrixrealizationsoftype:twoisolatedvertices:case:lvertu1v1rvertbackslashp1Sequals1:u1doesnotequalv1andbracesu1v1bracesintersectp1Sisasingletoncontainingu1}.\ref{numberofmatrixrealizationsoftype:twoisolatedvertices:case:lvertu1v1rvertbackslashp1Sequals1:u1doesnotequalv1andbracesu1v1bracesintersectp1Sisasingletoncontainingu1:u1equaltoa1} we found that in this 
situation there are exactly $(n-1-a_1-1)\cdot ((n-1)-2)$ realizations of type 
\ref{item:comparisonproofcaseLhassize6:twoisolatedvertices} by $B\mid_{\{u,v\}}$, it follows 
that here there are exactly $4\cdot (n-1-a_1-1)\cdot ((n-1)-2)$ realizations of type 
\ref{item:comparisonproofcaseLhassize6:C4intersectingatwopathinanendvertex}. 

If \ref{numberofmatrixrealizationsoftype:twoisolatedvertices:case:lvertu1v1rvertbackslashp1Sequals1}.\ref{numberofmatrixrealizationsoftype:twoisolatedvertices:case:lvertu1v1rvertbackslashp1Sequals1:u1doesnotequalv1andbracesu1v1bracesintersectp1Sisasingleton}.\ref{numberofmatrixrealizationsoftype:twoisolatedvertices:case:lvertu1v1rvertbackslashp1Sequals1:u1doesnotequalv1andbracesu1v1bracesintersectp1Sisasingletoncontainingu1}.\ref{numberofmatrixrealizationsoftype:twoisolatedvertices:case:lvertu1v1rvertbackslashp1Sequals1:u1doesnotequalv1andbracesu1v1bracesintersectp1Sisasingletoncontainingu1:u1equaltoa1} and the second clause of \eqref{eq:proofofcomparativecountingtheoremcase3secondeq} 
is true, then we know that $u_1\neq v_1$, $u_1 \in\upp_1(S)$,
$v_1\notin\upp_1(S)$, $u_2\neq v_2$, $u_2\notin\upp_2(S)$ and $v_2\in\upp_2(S)$. 
We can now rule out three types: properties $v_1\notin\upp_1(S)$ and $u_2\notin\upp_2(S)$ combined 
render both clauses of the disjunction \ref{characterizationoftype:C4intersectingtwoedgesinseparatenonadjacentvertices:property4} false. 
Properties $u_1\in\upp_1(S)$ and $v_2\in\upp_2(S)$ combined render both clauses of 
the disjunction \ref{characterizationoftype:C4intersectingatwopathinanendvertex:property4} false. 
Properties $u_1\neq v_1$ and $u_2\neq v_2$ proves  \ref{characterizationoftype:C4intersectingatwopathinitsinnervertex:property3} to be false. 
Again note that in all three decisions we used the information defining the present 
subcase. The only type remaining now is \ref{item:comparisonproofcaseLhassize6:C4intersectingtwoedgesinseparateadjacentvertices}, and indeed all properties \ref{characterizationoftype:C4intersectingtwoedgesinseparateadjacentvertices:property1}--\ref{characterizationoftype:C4intersectingtwoedgesinseparateadjacentvertices:property4} are 
satisfied (in \ref{characterizationoftype:C4intersectingtwoedgesinseparateadjacentvertices:property4} only the first clause of the disjunction). Since in \ref{numberofmatrixrealizationsoftype:twoisolatedvertices:case:lvertu1v1rvertbackslashp1Sequals1}.\ref{numberofmatrixrealizationsoftype:twoisolatedvertices:case:lvertu1v1rvertbackslashp1Sequals1:u1doesnotequalv1andbracesu1v1bracesintersectp1Sisasingleton}.\ref{numberofmatrixrealizationsoftype:twoisolatedvertices:case:lvertu1v1rvertbackslashp1Sequals1:u1doesnotequalv1andbracesu1v1bracesintersectp1Sisasingletoncontainingu1}.\ref{numberofmatrixrealizationsoftype:twoisolatedvertices:case:lvertu1v1rvertbackslashp1Sequals1:u1doesnotequalv1andbracesu1v1bracesintersectp1Sisasingletoncontainingu1:u1equaltoa1} we found that in this situation 
there are exactly $(n-1-a_1-1)\cdot 2\cdot ((n-1)-2)$ realizations of 
type \ref{item:comparisonproofcaseLhassize6:twoisolatedvertices} by $B\mid_{\{u,v\}}$, 
it follows that here there are exactly $8\cdot (n-1-a_1-1)\cdot ((n-1)-2)$ 
realizations of type \ref{item:comparisonproofcaseLhassize6:C4intersectingtwoedgesinseparateadjacentvertices}. 

If \ref{numberofmatrixrealizationsoftype:twoisolatedvertices:case:lvertu1v1rvertbackslashp1Sequals1}.\ref{numberofmatrixrealizationsoftype:twoisolatedvertices:case:lvertu1v1rvertbackslashp1Sequals1:u1doesnotequalv1andbracesu1v1bracesintersectp1Sisasingleton}.\ref{numberofmatrixrealizationsoftype:twoisolatedvertices:case:lvertu1v1rvertbackslashp1Sequals1:u1doesnotequalv1andbracesu1v1bracesintersectp1Sisasingletoncontainingu1}.\ref{numberofmatrixrealizationsoftype:twoisolatedvertices:case:lvertu1v1rvertbackslashp1Sequals1:u1doesnotequalv1andbracesu1v1bracesintersectp1Sisasingletoncontainingu1:u1equaltoc1}, then again we have to distinguish whether the first or the second clause of 
\eqref{eq:proofofcomparativecountingtheoremcase3secondeq} is true to reach a 
conclusion:

If \ref{numberofmatrixrealizationsoftype:twoisolatedvertices:case:lvertu1v1rvertbackslashp1Sequals1}.\ref{numberofmatrixrealizationsoftype:twoisolatedvertices:case:lvertu1v1rvertbackslashp1Sequals1:u1doesnotequalv1andbracesu1v1bracesintersectp1Sisasingleton}.\ref{numberofmatrixrealizationsoftype:twoisolatedvertices:case:lvertu1v1rvertbackslashp1Sequals1:u1doesnotequalv1andbracesu1v1bracesintersectp1Sisasingletoncontainingu1}.\ref{numberofmatrixrealizationsoftype:twoisolatedvertices:case:lvertu1v1rvertbackslashp1Sequals1:u1doesnotequalv1andbracesu1v1bracesintersectp1Sisasingletoncontainingu1:u1equaltoc1} and the first clause of \eqref{eq:proofofcomparativecountingtheoremcase3secondeq} 
is true, then again we in particular know that 
$v_1\notin\upp_1(S)$ and $u_2=v_2\notin\upp_2(S)$, and therefore an argument 
analogous to the one given for the first subcase of \ref{numberofmatrixrealizationsoftype:twoisolatedvertices:case:lvertu1v1rvertbackslashp1Sequals1}.\ref{numberofmatrixrealizationsoftype:twoisolatedvertices:case:lvertu1v1rvertbackslashp1Sequals1:u1doesnotequalv1andbracesu1v1bracesintersectp1Sisasingleton}.\ref{numberofmatrixrealizationsoftype:twoisolatedvertices:case:lvertu1v1rvertbackslashp1Sequals1:u1doesnotequalv1andbracesu1v1bracesintersectp1Sisasingletoncontainingu1}.\ref{numberofmatrixrealizationsoftype:twoisolatedvertices:case:lvertu1v1rvertbackslashp1Sequals1:u1doesnotequalv1andbracesu1v1bracesintersectp1Sisasingletoncontainingu1:u1equaltoa1} shows that in the present 
situation there are exactly $4\cdot (n-1-c_1)\cdot((n-1)-2)$ realizations of 
type \ref{item:comparisonproofcaseLhassize6:C4intersectingatwopathinanendvertex}
 by $B\mid_{\{u,v\}}$ and no other type possible. 

If \ref{numberofmatrixrealizationsoftype:twoisolatedvertices:case:lvertu1v1rvertbackslashp1Sequals1}.\ref{numberofmatrixrealizationsoftype:twoisolatedvertices:case:lvertu1v1rvertbackslashp1Sequals1:u1doesnotequalv1andbracesu1v1bracesintersectp1Sisasingleton}.\ref{numberofmatrixrealizationsoftype:twoisolatedvertices:case:lvertu1v1rvertbackslashp1Sequals1:u1doesnotequalv1andbracesu1v1bracesintersectp1Sisasingletoncontainingu1}.\ref{numberofmatrixrealizationsoftype:twoisolatedvertices:case:lvertu1v1rvertbackslashp1Sequals1:u1doesnotequalv1andbracesu1v1bracesintersectp1Sisasingletoncontainingu1:u1equaltoc1} and the second clause of \eqref{eq:proofofcomparativecountingtheoremcase3secondeq} 
is true, then again we know that $u_1\neq v_1$, $u_1\in\upp_1(S)$, 
$v_1\notin\upp_1(S)$ and $u_2\neq v_2$, 
but this time we have $u_2\notin \upp_2(S)$ and $v_2\in\upp_2(S)$. We can now 
rule out three types: properties 
$v_1\notin\upp_1(S)$ and $u_2\notin\upp_2(S)$ combined 
render both clauses of the disjunction \ref{characterizationoftype:C4intersectingtwoedgesinseparatenonadjacentvertices:property4} false.
Properties $u_1\in\upp_1(S)$  and $v_2\in\upp_2(S)$ combined render both clauses of 
the disjunction \ref{characterizationoftype:C4intersectingatwopathinanendvertex:property4} false. Properties $u_1\neq v_1$ and $u_2\neq v_2$ contradict \ref{characterizationoftype:C4intersectingatwopathinitsinnervertex:property3}. What 
remains is type \ref{item:comparisonproofcaseLhassize6:C4intersectingtwoedgesinseparateadjacentvertices}, and all properties \ref{characterizationoftype:C4intersectingtwoedgesinseparateadjacentvertices:property1}--\ref{characterizationoftype:C4intersectingtwoedgesinseparateadjacentvertices:property4} are satisfied (in \ref{characterizationoftype:C4intersectingtwoedgesinseparateadjacentvertices:property4} only the first clause of the disjunction). Since in \ref{numberofmatrixrealizationsoftype:twoisolatedvertices:case:lvertu1v1rvertbackslashp1Sequals1}.\ref{numberofmatrixrealizationsoftype:twoisolatedvertices:case:lvertu1v1rvertbackslashp1Sequals1:u1doesnotequalv1andbracesu1v1bracesintersectp1Sisasingleton}.\ref{numberofmatrixrealizationsoftype:twoisolatedvertices:case:lvertu1v1rvertbackslashp1Sequals1:u1doesnotequalv1andbracesu1v1bracesintersectp1Sisasingletoncontainingu1}.\ref{numberofmatrixrealizationsoftype:twoisolatedvertices:case:lvertu1v1rvertbackslashp1Sequals1:u1doesnotequalv1andbracesu1v1bracesintersectp1Sisasingletoncontainingu1:u1equaltoc1} we found that in this 
situation there are exactly $2\cdot (n-1-c_1)\cdot ((n-1)-2)$ realizations of 
type \ref{item:comparisonproofcaseLhassize6:twoisolatedvertices} by $B\mid_{\{u,v\}}$, 
it follows that here there are exactly $8\cdot (n-1-c_1)\cdot ((n-1)-2)$ 
realizations of type \ref{item:comparisonproofcaseLhassize6:C4intersectingtwoedgesinseparateadjacentvertices}.  

The next case to reexamine is \ref{numberofmatrixrealizationsoftype:twoisolatedvertices:case:lvertu1v1rvertbackslashp1Sequals1}.\ref{numberofmatrixrealizationsoftype:twoisolatedvertices:case:lvertu1v1rvertbackslashp1Sequals1:u1doesnotequalv1andbracesu1v1bracesintersectp1Sisasingleton}.\ref{numberofmatrixrealizationsoftype:twoisolatedvertices:case:lvertu1v1rvertbackslashp1Sequals1:u1doesnotequalv1andbracesu1v1bracesintersectp1Sisasingletoncontainingv1} which again does not give enough information to decide about 
the types. 

If \ref{numberofmatrixrealizationsoftype:twoisolatedvertices:case:lvertu1v1rvertbackslashp1Sequals1}.\ref{numberofmatrixrealizationsoftype:twoisolatedvertices:case:lvertu1v1rvertbackslashp1Sequals1:u1doesnotequalv1andbracesu1v1bracesintersectp1Sisasingleton}.\ref{numberofmatrixrealizationsoftype:twoisolatedvertices:case:lvertu1v1rvertbackslashp1Sequals1:u1doesnotequalv1andbracesu1v1bracesintersectp1Sisasingletoncontainingv1}.\ref{numberofmatrixrealizationsoftype:twoisolatedvertices:case:lvertu1v1rvertbackslashp1Sequals1:u1doesnotequalv1andbracesu1v1bracesintersectp1Sisasingletoncontainingu1:v1equaltoa1}, then the decision still cannot be made and once more depends on the nameless 
subcases that were distinguished in that case, namely whether the first or the 
second clause of \eqref{eq:proofofcomparativecountingtheoremcase3secondeq} is true:

If \ref{numberofmatrixrealizationsoftype:twoisolatedvertices:case:lvertu1v1rvertbackslashp1Sequals1}.\ref{numberofmatrixrealizationsoftype:twoisolatedvertices:case:lvertu1v1rvertbackslashp1Sequals1:u1doesnotequalv1andbracesu1v1bracesintersectp1Sisasingleton}.\ref{numberofmatrixrealizationsoftype:twoisolatedvertices:case:lvertu1v1rvertbackslashp1Sequals1:u1doesnotequalv1andbracesu1v1bracesintersectp1Sisasingletoncontainingv1}.\ref{numberofmatrixrealizationsoftype:twoisolatedvertices:case:lvertu1v1rvertbackslashp1Sequals1:u1doesnotequalv1andbracesu1v1bracesintersectp1Sisasingletoncontainingu1:v1equaltoa1}, and the first clause of \ref{eq:proofofcomparativecountingtheoremcase3secondeq} is 
true, then we know that $u_1\neq v_1$, $u_1\notin\upp_1(S)$, $v_1\in\upp_1(S)$ and 
$u_2 = v_2 \notin \upp_2(S)$. The latter, more specificly $u_2 = v_2$, contradicts 
both \ref{characterizationoftype:C4intersectingtwoedgesinseparatenonadjacentvertices:property3} and \ref{characterizationoftype:C4intersectingtwoedgesinseparateadjacentvertices:property3}. Combining $u_1\notin\upp_1(S)$ and  $u_2\notin \upp_2(S)$ proves the 
first clause of the conjunction \ref{characterizationoftype:C4intersectingatwopathinitsinnervertex:property4} to be false. The only type remaining is \ref{item:comparisonproofcaseLhassize6:C4intersectingatwopathinanendvertex}, and all properties 
\ref{characterizationoftype:C4intersectingatwopathinanendvertex:property1}--\ref{characterizationoftype:C4intersectingatwopathinanendvertex:property4} are indeed satisfied (with only the first clause of the disjunction \ref{characterizationoftype:C4intersectingatwopathinanendvertex:property4} being true). Since in \ref{numberofmatrixrealizationsoftype:twoisolatedvertices:case:lvertu1v1rvertbackslashp1Sequals1}.\ref{numberofmatrixrealizationsoftype:twoisolatedvertices:case:lvertu1v1rvertbackslashp1Sequals1:u1doesnotequalv1andbracesu1v1bracesintersectp1Sisasingleton}.\ref{numberofmatrixrealizationsoftype:twoisolatedvertices:case:lvertu1v1rvertbackslashp1Sequals1:u1doesnotequalv1andbracesu1v1bracesintersectp1Sisasingletoncontainingv1}.\ref{numberofmatrixrealizationsoftype:twoisolatedvertices:case:lvertu1v1rvertbackslashp1Sequals1:u1doesnotequalv1andbracesu1v1bracesintersectp1Sisasingletoncontainingu1:v1equaltoa1} we found that there are exactly 
$(a_1-1)\cdot ((n-1)-2)$ realizations of type \ref{item:comparisonproofcaseLhassize6:twoisolatedvertices} by $B\mid_{\{u,v\}}$, it follows that in the present situation there are 
exactly $4\cdot (a_1-1)\cdot ((n-1)-2)$ realizations of type \ref{item:comparisonproofcaseLhassize6:C4intersectingatwopathinanendvertex}. 

If \ref{numberofmatrixrealizationsoftype:twoisolatedvertices:case:lvertu1v1rvertbackslashp1Sequals1}.\ref{numberofmatrixrealizationsoftype:twoisolatedvertices:case:lvertu1v1rvertbackslashp1Sequals1:u1doesnotequalv1andbracesu1v1bracesintersectp1Sisasingleton}.\ref{numberofmatrixrealizationsoftype:twoisolatedvertices:case:lvertu1v1rvertbackslashp1Sequals1:u1doesnotequalv1andbracesu1v1bracesintersectp1Sisasingletoncontainingv1}.\ref{numberofmatrixrealizationsoftype:twoisolatedvertices:case:lvertu1v1rvertbackslashp1Sequals1:u1doesnotequalv1andbracesu1v1bracesintersectp1Sisasingletoncontainingu1:v1equaltoa1}, and the second clause of \eqref{eq:proofofcomparativecountingtheoremcase3secondeq} 
is true, then we know that $u_1\neq v_1$, $u_1\notin\upp_1(S)$, $v_1\in\upp_1(S)$,  
$u_2\in\upp_2(S)$  and  $v_2\notin\upp_2(S)$. Since then $u_2\neq v_2$ 
and $u_1\neq v_1$, both \ref{characterizationoftype:C4intersectingatwopathinanendvertex:property3} and \ref{characterizationoftype:C4intersectingatwopathinitsinnervertex:property3} are impossible. Moreover, combining 
$u_1\notin\upp_1(S)$ and $v_2\notin\upp_2(S)$ shows that both clauses of the 
disjunction \ref{characterizationoftype:C4intersectingtwoedgesinseparatenonadjacentvertices:property4}. The remaining type is \ref{item:comparisonproofcaseLhassize6:C4intersectingtwoedgesinseparateadjacentvertices} and all the properties 
\ref{characterizationoftype:C4intersectingtwoedgesinseparateadjacentvertices:property1}--\ref{characterizationoftype:C4intersectingtwoedgesinseparateadjacentvertices:property4} are indeed satisfied (as to the disjunction \ref{characterizationoftype:C4intersectingtwoedgesinseparateadjacentvertices:property4}, only its second clause is true).
Since in \ref{numberofmatrixrealizationsoftype:twoisolatedvertices:case:lvertu1v1rvertbackslashp1Sequals1}.\ref{numberofmatrixrealizationsoftype:twoisolatedvertices:case:lvertu1v1rvertbackslashp1Sequals1:u1doesnotequalv1andbracesu1v1bracesintersectp1Sisasingleton}.\ref{numberofmatrixrealizationsoftype:twoisolatedvertices:case:lvertu1v1rvertbackslashp1Sequals1:u1doesnotequalv1andbracesu1v1bracesintersectp1Sisasingletoncontainingv1}.\ref{numberofmatrixrealizationsoftype:twoisolatedvertices:case:lvertu1v1rvertbackslashp1Sequals1:u1doesnotequalv1andbracesu1v1bracesintersectp1Sisasingletoncontainingu1:v1equaltoa1} we found exactly $2\cdot (a_1-1)\cdot ((n-1)-2)$ realizations of type \ref{item:comparisonproofcaseLhassize6:twoisolatedvertices} by $B\mid_{\{u,v\}}$, it follows that 
right now there are exactly $8\cdot (a_1-1)\cdot ((n-1)-2)$ realizations of 
type \ref{item:comparisonproofcaseLhassize6:C4intersectingtwoedgesinseparateadjacentvertices}. 

If \ref{numberofmatrixrealizationsoftype:twoisolatedvertices:case:lvertu1v1rvertbackslashp1Sequals1}.\ref{numberofmatrixrealizationsoftype:twoisolatedvertices:case:lvertu1v1rvertbackslashp1Sequals1:u1doesnotequalv1andbracesu1v1bracesintersectp1Sisasingleton}.\ref{numberofmatrixrealizationsoftype:twoisolatedvertices:case:lvertu1v1rvertbackslashp1Sequals1:u1doesnotequalv1andbracesu1v1bracesintersectp1Sisasingletoncontainingv1}.\ref{numberofmatrixrealizationsoftype:twoisolatedvertices:case:lvertu1v1rvertbackslashp1Sequals1:u1doesnotequalv1andbracesu1v1bracesintersectp1Sisasingletoncontainingu1:v1equaltoc1}, then one more time we have to distinguish the anonymous subcases. 
If \ref{numberofmatrixrealizationsoftype:twoisolatedvertices:case:lvertu1v1rvertbackslashp1Sequals1}.\ref{numberofmatrixrealizationsoftype:twoisolatedvertices:case:lvertu1v1rvertbackslashp1Sequals1:u1doesnotequalv1andbracesu1v1bracesintersectp1Sisasingleton}.\ref{numberofmatrixrealizationsoftype:twoisolatedvertices:case:lvertu1v1rvertbackslashp1Sequals1:u1doesnotequalv1andbracesu1v1bracesintersectp1Sisasingletoncontainingv1}.\ref{numberofmatrixrealizationsoftype:twoisolatedvertices:case:lvertu1v1rvertbackslashp1Sequals1:u1doesnotequalv1andbracesu1v1bracesintersectp1Sisasingletoncontainingu1:v1equaltoc1}, and the first clause of \eqref{eq:proofofcomparativecountingtheoremcase3secondeq} is true, 
then we know $u_1\neq v_1$, $u_1\notin\upp_1(S)$, $v_1\in\upp_1(S)$ and 
$u_2 = v_2 \notin \upp_2(S)$, with $u_2 = v_2$ ruling out both \ref{item:comparisonproofcaseLhassize6:C4intersectingtwoedgesinseparatenonadjacentvertices} and \ref{item:comparisonproofcaseLhassize6:C4intersectingtwoedgesinseparateadjacentvertices}. 
Moreover, combining $u_1\notin\upp_1(S)$ with $u_2\notin\upp_2(S)$ proves the first 
clause of the conjuction \ref{characterizationoftype:C4intersectingatwopathinitsinnervertex:property4} to be false. Again, for each decision the information in the 
first clause of \eqref{eq:proofofcomparativecountingtheoremcase3secondeq} was used. 
Now only \ref{item:comparisonproofcaseLhassize6:C4intersectingatwopathinanendvertex} 
is left and indeed all properties \ref{characterizationoftype:C4intersectingatwopathinanendvertex:property1}--\ref{characterizationoftype:C4intersectingatwopathinanendvertex:property4} are true (with the disjunction \ref{characterizationoftype:C4intersectingatwopathinanendvertex:property4} satisfied only by way of its second clause). Since in \ref{numberofmatrixrealizationsoftype:twoisolatedvertices:case:lvertu1v1rvertbackslashp1Sequals1}.\ref{numberofmatrixrealizationsoftype:twoisolatedvertices:case:lvertu1v1rvertbackslashp1Sequals1:u1doesnotequalv1andbracesu1v1bracesintersectp1Sisasingleton}.\ref{numberofmatrixrealizationsoftype:twoisolatedvertices:case:lvertu1v1rvertbackslashp1Sequals1:u1doesnotequalv1andbracesu1v1bracesintersectp1Sisasingletoncontainingv1}.\ref{numberofmatrixrealizationsoftype:twoisolatedvertices:case:lvertu1v1rvertbackslashp1Sequals1:u1doesnotequalv1andbracesu1v1bracesintersectp1Sisasingletoncontainingu1:v1equaltoc1} we found that in the first subcase there are exactly $(c_1-2)\cdot ((n-1)-2)$ realizations of 
type \ref{item:comparisonproofcaseLhassize6:twoisolatedvertices} by $B\mid_{\{u,v\}}$, 
it follows for our present situation that there are 
exactly $4\cdot (c_1-2)\cdot ((n-1)-2)$  realizations of \ref{item:comparisonproofcaseLhassize6:C4intersectingatwopathinanendvertex} by $B\mid_{\{u,v\}}$. 

If \ref{numberofmatrixrealizationsoftype:twoisolatedvertices:case:lvertu1v1rvertbackslashp1Sequals1}.\ref{numberofmatrixrealizationsoftype:twoisolatedvertices:case:lvertu1v1rvertbackslashp1Sequals1:u1doesnotequalv1andbracesu1v1bracesintersectp1Sisasingleton}.\ref{numberofmatrixrealizationsoftype:twoisolatedvertices:case:lvertu1v1rvertbackslashp1Sequals1:u1doesnotequalv1andbracesu1v1bracesintersectp1Sisasingletoncontainingv1}.\ref{numberofmatrixrealizationsoftype:twoisolatedvertices:case:lvertu1v1rvertbackslashp1Sequals1:u1doesnotequalv1andbracesu1v1bracesintersectp1Sisasingletoncontainingu1:v1equaltoc1}, and the second clause of \eqref{eq:proofofcomparativecountingtheoremcase3secondeq} 
is true, then we know that $u_1\neq v_1$, $u_1\notin\upp_1(S)$, $v_1\in\upp_1(S)$ 
and $u_2\in\upp_2(S)$ and $v_2\notin\upp_2(S)$. Since then $u_2\neq v_2$ it follows 
$\{u_1,u_2\}\cap \{v_1,v_2\} = \emptyset$, contradicting both 
\ref{characterizationoftype:C4intersectingatwopathinanendvertex:property3} and 
\ref{characterizationoftype:C4intersectingatwopathinitsinnervertex:property3}. 
Combining $u_1\notin\upp_1(S)$ and $v_2\notin\upp_2(S)$ we see that both clauses 
of the disjunction \ref{characterizationoftype:C4intersectingtwoedgesinseparatenonadjacentvertices:property4} are false. We are left with type \ref{item:comparisonproofcaseLhassize6:C4intersectingtwoedgesinseparateadjacentvertices} and indeed, all properties 
\ref{characterizationoftype:C4intersectingtwoedgesinseparateadjacentvertices:property1}--\ref{characterizationoftype:C4intersectingtwoedgesinseparateadjacentvertices:property4} are satisfied (for the disjunction \ref{characterizationoftype:C4intersectingtwoedgesinseparateadjacentvertices:property4} it is only the second clause, which is).  
Since in \ref{numberofmatrixrealizationsoftype:twoisolatedvertices:case:lvertu1v1rvertbackslashp1Sequals1}.\ref{numberofmatrixrealizationsoftype:twoisolatedvertices:case:lvertu1v1rvertbackslashp1Sequals1:u1doesnotequalv1andbracesu1v1bracesintersectp1Sisasingleton}.\ref{numberofmatrixrealizationsoftype:twoisolatedvertices:case:lvertu1v1rvertbackslashp1Sequals1:u1doesnotequalv1andbracesu1v1bracesintersectp1Sisasingletoncontainingv1}.\ref{numberofmatrixrealizationsoftype:twoisolatedvertices:case:lvertu1v1rvertbackslashp1Sequals1:u1doesnotequalv1andbracesu1v1bracesintersectp1Sisasingletoncontainingu1:v1equaltoc1} we found that there exist exactly $2\cdot (c_1-2)\cdot ((n-1)-2)$ realizations 
of type \ref{item:comparisonproofcaseLhassize6:twoisolatedvertices} by $B\mid_{\{u,v\}}$, 
it follows that in the present situation there are exactly 
$8\cdot (c_1-2)\cdot ((n-1)-2)$ realizations of type \ref{item:comparisonproofcaseLhassize6:C4intersectingtwoedgesinseparateadjacentvertices} by $B\mid_{\{u,v\}}$. 

The next case to reexamine is \ref{numberofmatrixrealizationsoftype:twoisolatedvertices:case:lvertu1v1rvertbackslashp1Sequals2}. This case is symmetric to 
\ref{numberofmatrixrealizationsoftype:twoisolatedvertices:case:lvertu1v1rvertbackslashp1Sequals0} under swapping the subscripts $1$ and $2$. Therefore, we can analyse its 
subcases by reexaming the analysis of \ref{numberofmatrixrealizationsoftype:twoisolatedvertices:case:lvertu1v1rvertbackslashp1Sequals0} with this swap in mind. 
First of all, if \ref{numberofmatrixrealizationsoftype:twoisolatedvertices:case:lvertu1v1rvertbackslashp1Sequals2}, then $\{u_2,v_2\}\subseteq \upp_2(S)$, and this 
renders both clauses of the disjunction \ref{characterizationoftype:C4intersectingatwopathinanendvertex:property4} false. 

Reading  \ref{numberofmatrixrealizationsoftype:twoisolatedvertices:case:lvertu1v1rvertbackslashp1Sequals0}.\ref{numberofmatrixrealizationsoftype:twoisolatedvertices:case:lvertu1v1rvertbackslashp1Sequals0:case:u1equalsv1} this way implies that we know 
$u_2=v_2 \in \upp_2(S)$, and $u_2=v_2$ contradicts both \ref{characterizationoftype:C4intersectingtwoedgesinseparatenonadjacentvertices:property3} and \ref{characterizationoftype:C4intersectingtwoedgesinseparateadjacentvertices:property3}. The only type 
remaining is \ref{item:comparisonproofcaseLhassize6:C4intersectingatwopathinitsinnervertex} and indeed, all properties \ref{characterizationoftype:C4intersectingatwopathinitsinnervertex:property1}--\ref{characterizationoftype:C4intersectingatwopathinitsinnervertex:property4} are satisfied. Since in \ref{numberofmatrixrealizationsoftype:twoisolatedvertices:case:lvertu1v1rvertbackslashp1Sequals0}.\ref{numberofmatrixrealizationsoftype:twoisolatedvertices:case:lvertu1v1rvertbackslashp1Sequals0:case:u1equalsv1} we found 
that there are exactly $2\cdot \binom{(n-1)-2}{2}$ realizations of type \ref{item:comparisonproofcaseLhassize6:twoisolatedvertices} by $B\mid_{\{u,v\}}$, 
it follows that here there exist exactly $8\cdot \binom{(n-1)-2}{2}$ realizations 
of type \ref{item:comparisonproofcaseLhassize6:C4intersectingatwopathinitsinnervertex} by $B\mid_{\{u,v\}}$. 

Reading \ref{numberofmatrixrealizationsoftype:twoisolatedvertices:case:lvertu1v1rvertbackslashp1Sequals0}.\ref{numberofmatrixrealizationsoftype:twoisolatedvertices:case:lvertu1v1rvertbackslashp1Sequals0:case:u1doesnotequalv1} this way implies that we know 
$u_2\neq v_2$, $\{u_2,v_2\}\cap\upp_2(S) = \emptyset$, $u_1\neq v_1$ 
and $\{u_1,v_1\}\subseteq \upp_1(S)$. The properties $u_1\neq v_1$ and $u_2\neq v_2$ 
taken together contradict both \ref{characterizationoftype:C4intersectingatwopathinanendvertex:property3} and \ref{characterizationoftype:C4intersectingatwopathinitsinnervertex:property3}. The property $\{  u_2 , v_2\}\cap \upp_2(S) = \emptyset$ alone renders 
both clauses of the disjunction \ref{characterizationoftype:C4intersectingtwoedgesinseparateadjacentvertices:property4} false. What we are left with is type \ref{item:comparisonproofcaseLhassize6:C4intersectingtwoedgesinseparatenonadjacentvertices} and 
indeed all properties \ref{characterizationoftype:C4intersectingtwoedgesinseparatenonadjacentvertices:property1}--\ref{characterizationoftype:C4intersectingtwoedgesinseparatenonadjacentvertices:property4} are satisfied. Since in \ref{numberofmatrixrealizationsoftype:twoisolatedvertices:case:lvertu1v1rvertbackslashp1Sequals0}.\ref{numberofmatrixrealizationsoftype:twoisolatedvertices:case:lvertu1v1rvertbackslashp1Sequals0:case:u1doesnotequalv1} we found that there are exactly 
$2\cdot \binom{(n-1)-2}{2}$ realizations of type 
\ref{item:comparisonproofcaseLhassize6:twoisolatedvertices} by $B\mid_{\{u,v\}}$, it follows 
by symmetry that here, too, there exist exactly $8\cdot \binom{(n-1)-2}{2}$ 
realizations of type \ref{item:comparisonproofcaseLhassize6:C4intersectingtwoedgesinseparatenonadjacentvertices} by $B\mid_{\{u,v\}}$. 

We have now completed reexamining  the analysis of \ref{numberofmatrixrealizationsoftype:twoisolatedvertices} and we may now add up (separately for each of the four types 
\ref{item:comparisonproofcaseLhassize6:C4intersectingtwoedgesinseparatenonadjacentvertices}--\ref{item:comparisonproofcaseLhassize6:C4intersectingatwopathinitsinnervertex} 
the number of realizations  we found  during the reexamination. 

For \ref{item:comparisonproofcaseLhassize6:C4intersectingtwoedgesinseparatenonadjacentvertices} we found exactly $8\cdot\binom{(n-1)-2}{2} + 8\cdot\binom{(n-1)-2}{2} 
= 16 \cdot \binom{(n-1)-2}{2}$  realizations by $B\mid_{\{u,v\}}$. Now  \ref{numberofmatrixrealizationsoftype:C4intersectingtwoedgesinseparatenonadjacentvertices} is proved.

For \ref{item:comparisonproofcaseLhassize6:C4intersectingtwoedgesinseparateadjacentvertices} we found exactly 
$
  8\cdot (n-1-a_1-1)\cdot ((n-1)-2) 
+ 8 \cdot (n-1-c_1)\cdot ((n-1)-2) 
+ 8\cdot (a_1-1)\cdot ((n-1)-2) 
+ 8\cdot (c_1-2)\cdot ((n-1)-2) 
=
  16\cdot ((n-1)-2)^2
$ realizations by $B\mid_{\{u,v\}}$. Now  \ref{numberofmatrixrealizationsoftype:C4intersectingtwoedgesinseparateadjacentvertices} is proved.

For \ref{item:comparisonproofcaseLhassize6:C4intersectingatwopathinanendvertex} 
we found exactly  $
  8 \cdot ((n-1)-2)^2 
+ 4 \cdot (n-1-a_1-1)\cdot ((n-1)-2) 
+ 4 \cdot (n-1-c_1)\cdot ((n-1)-2) 
+ 4 \cdot (a_1-1)\cdot ((n-1)-2)  
+ 4 \cdot (c_1-2)\cdot ((n-1)-2)
= 
  8\cdot ((n-1)-2)^2 
+ 8\cdot ((n-1)-2)^2
= 16 \cdot ((n-1)-2)^2
$ realizations by $B\mid_{\{u,v\}}$. Now 
\ref{numberofmatrixrealizationsoftype:C4intersectingatwopathinanendvertex} is proved. 

For \ref{item:comparisonproofcaseLhassize6:C4intersectingatwopathinitsinnervertex} we 
found exactly 
 $8\cdot\binom{(n-1)-2}{2} + 8\cdot\binom{(n-1)-2}{2} = 16 \cdot \binom{(n-1)-2}{2}$ 
realizations by $B\mid_{\{u,v\}}$. Now \ref{numberofmatrixrealizationsoftype:C4intersectingatwopathinitsinnervertex} is proved.

As to \ref{numberofmatrixrealizationsoftype:threeisolatedvertices}, let us first 
note that Definition \ref{def:XBandecXB} implies:
\begin{lemma}
For every $B\in\{0,\pm\}^I$ with $I\in\binom{[n-1]^2}{6}$, $I = S\sqcup \{u,v\}$ and 
$\upX_{B\mid_{S}} \cong C^4$ we have $\upX_{B} \cong \ref{item:comparisonproofcaseLhassize6:threeisolatedvertices}$ if and only if 

{\scriptsize
\begin{minipage}[b]{0.3\linewidth}
\begin{enumerate}[label={\rm(P.\ref{item:comparisonproofcaseLhassize6:threeisolatedvertices}.\arabic{*})}]
\item\label{characterizationoftype:threeisolatedvertices:property1} 
$B[u]=B[v]=0$,
\end{enumerate}
\end{minipage}
\begin{minipage}[b]{0.6\linewidth}
\begin{enumerate}[label={\rm(P.\ref{item:comparisonproofcaseLhassize6:threeisolatedvertices}.\arabic{*})},start=2]
\item\label{characterizationoftype:threeisolatedvertices:property2} 
$\lvert \{ u_1, v_1\} \setminus \upp_1(S) \rvert +
\lvert \{ u_2, v_2\} \setminus \upp_2(S) \rvert = 3$\quad .
\end{enumerate}
\end{minipage}
}
\end{lemma}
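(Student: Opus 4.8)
The plan is to read everything off directly from Definition \ref{def:XBandecXB}, exactly as in the proof of \ref{numberofmatrixrealizationsoftype:twoisolatedvertices}, by comparing the $f$-vector and the component structure of $\upX_B$ with those of the target isomorphism type \ref{item:comparisonproofcaseLhassize6:threeisolatedvertices}, which has $(f_0,f_1) = (7,4)$ and four connected components. First I would record that the hypothesis $\upX_{B\mid_{S}} \cong C^4$ forces $S$ to be a matrix-$4$-circuit and $B\mid_{S}\in\{\pm\}^S$; in particular (by Definition \ref{def:circuitofmatrixentrypositions}) $\lvert S\rvert = 4$, hence $\lvert\upp_1(S)\rvert = \lvert\upp_2(S)\rvert = 2$, $S = \upp_1(S)\times\upp_2(S)$, and $S\subseteq\Supp(B)$. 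Then by \eqref{def:XBandecXB:definitionofedgeset} one has $f_1(\upX_B) = \supp(B) = 4 + \lvert\Supp(B)\cap\{u,v\}\rvert$, and by \eqref{def:XBandecXB:definitionofvertexset}, using $I = S\sqcup\{u,v\}$,
\[
f_0(\upX_B) = \lvert\upp_1(I)\rvert + \lvert\upp_2(I)\rvert = \lvert\upp_1(S)\cup\{u_1,v_1\}\rvert + \lvert\upp_2(S)\cup\{u_2,v_2\}\rvert = 4 + \lvert\{u_1,v_1\}\setminus\upp_1(S)\rvert + \lvert\{u_2,v_2\}\setminus\upp_2(S)\rvert .
\]

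For the implication ``$\Rightarrow$'' I would then argue: if $\upX_B\cong\ref{item:comparisonproofcaseLhassize6:threeisolatedvertices}$, then $f_1(\upX_B) = 4$ forces $\Supp(B)\cap\{u,v\} = \emptyset$, i.e. \ref{characterizationoftype:threeisolatedvertices:property1}, whereupon $f_0(\upX_B) = 7$ forces $\lvert\{u_1,v_1\}\setminus\upp_1(S)\rvert + \lvert\{u_2,v_2\}\setminus\upp_2(S)\rvert = 3$, i.e. \ref{characterizationoftype:threeisolatedvertices:property2}. For the converse, assuming \ref{characterizationoftype:threeisolatedvertices:property1} and \ref{characterizationoftype:threeisolatedvertices:property2} gives $\Supp(B) = S$, so $\upE(\upX_B) = \upE(\upX_{B\mid_{S}})$ is precisely the edge set of the $4$-circuit $\upX_{B\mid_{S}}$ on the four vertices $\{(i,n)\colon i\in\upp_1(S)\}\cup\{(n,j)\colon j\in\upp_2(S)\}$; the remaining $f_0(\upX_B) - 4 = 3$ vertices of $\upX_B$ are incident to no edge, hence isolated, so $\upX_B$ is the disjoint union of a $C^4$ and three isolated vertices, i.e. $\upX_B\cong\ref{item:comparisonproofcaseLhassize6:threeisolatedvertices}$.

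I do not expect a real obstacle: this is a bookkeeping lemma of the same kind as Lemmas \ref{lem:characterizationofisomorphismtype:C4intersectingtwoedgesinseparatenonadjacentvertices}--\ref{lem:characterizationofisomorphismtype:C4intersectingatwopathinitsinnervertex}, but easier because type \ref{item:comparisonproofcaseLhassize6:threeisolatedvertices} carries no edges outside the $C^4$, so no case distinction on the positions of $u$ and $v$ relative to each other or to $\upp(S)$ is needed. The one point deserving a word of care is the assertion that the three extra vertices are genuinely isolated, which is immediate once $\Supp(B) = S$, since then no edge of $\upX_B$ meets a vertex outside $\upX_{B\mid_{S}}$.
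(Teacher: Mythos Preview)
Your proof is correct and is precisely the argument the paper has in mind: the paper states this lemma with the preface ``Definition \ref{def:XBandecXB} implies'' and gives no further justification, so you have simply spelled out the $f$-vector computation that the author deemed immediate from the definition.
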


We now distinguish cases according to how \ref{characterizationoftype:threeisolatedvertices:property2} is satisfied. 
\begin{enumerate}[label={\rm(C.\ref{numberofmatrixrealizationsoftype:threeisolatedvertices}.\arabic{*})}]
\item\label{numberofmatrixrealizationsoftype:threeisolatedvertices:case:lvertu1v1rvertbackslashp1Sequals0} $\lvert \{ u_1, v_1\} \setminus \upp_1(S) \rvert = 0$. Then 
$\lvert \{ u_2, v_2\} \setminus \upp_2(S)\rvert = 3$ by 
\ref{characterizationoftype:threeisolatedvertices:property2}, which is impossible. 
Hence Case 1 does not occur. 
\item\label{numberofmatrixrealizationsoftype:threeisolatedvertices:case:lvertu1v1rvertbackslashp1Sequals1} $\lvert \{ u_1, v_1\} \setminus \upp_1(S) \rvert = 1$. Then 
$\lvert \{ u_2, v_2\} \setminus \upp_2(S)\rvert = 2$ by 
\ref{characterizationoftype:threeisolatedvertices:property2}, 
which is equivalent to 
\begin{equation}\label{eq:proofofcomparativecountingtheoremcase2firsteq}
\text{$u_2\neq v_2$ and $\{ u_2, v_2 \} \cap \upp_2(S) = \emptyset$} \quad . 
\end{equation}
Since the condition defining \ref{numberofmatrixrealizationsoftype:threeisolatedvertices:case:lvertu1v1rvertbackslashp1Sequals1} is equivalent to 
\begin{equation}\label{eq:proofofcomparativecountingtheoremcase2secondeq}
\text{
($u_1 = v_1$ and $\{u_1,v_1\} \cap \upp_1(S) = \emptyset$) or 
($u_1 \neq v_1$ and $\lvert \{u_1,v_1\} \cap \upp_1(S) \rvert = 1$)
}\quad ,
\end{equation}
there are two further cases. 
\begin{enumerate}[label={\rm(\arabic{*})}]
\item\label{numberofmatrixrealizationsoftype:threeisolatedvertices:case:lvertu1v1rvertbackslashp1Sequals1:u1equalsv1andbracesu1v1bracesintersectp1Sisempty} 
$u_1 = v_1$ and $\{u_1,v_1\} \cap \upp_1(S) = \emptyset$. Then there are exactly 
$(n-1)-2$ such $u_1=v_1$. Combining \eqref{eq:proofofcomparativecountingtheoremcase2firsteq} with $u\prec v$ it follows that $u_2<v_2$, therefore in the present case each 
of the $\binom{(n-1)-2}{2}$ different sets $\{u_2,v_2\}$ satisfying 
\eqref{eq:proofofcomparativecountingtheoremcase2firsteq} determines the two 
pairs $u$ and $v$. Therefore there are exactly $((n-1)-2) \cdot \binom{(n-1)-2}{2}$ 
realizations of \ref{numberofmatrixrealizationsoftype:threeisolatedvertices:case:lvertu1v1rvertbackslashp1Sequals1:u1equalsv1andbracesu1v1bracesintersectp1Sisempty}.
\item\label{numberofmatrixrealizationsoftype:threeisolatedvertices:case:lvertu1v1rvertbackslashp1Sequals1:u1doesnotequalv1andbracesu1v1bracesintersectp1Sisasingleton} 
$u_1 \neq v_1$ and $\lvert \{u_1,v_1\} \cap \upp_1(S) \rvert = 1$. From 
$u_1 \neq v_1$ and the assumption $u\prec v$ it follows that $u_1 < v_1$. By 
\eqref{eq:proofofcomparativecountingtheoremcase2firsteq}, $u_2<v_2$ or $v_2<u_2$, 
but nothing more is known about $u_2$ and $v_2$. Therefore, both possibilities must 
be taken into account. Because of $\upp_1(S) = \{a_1,b_1,c_1,d_1\} = \{a_1,c_1\}$ 
and $u_1<v_1$ there are exactly four possibilities for 
$\lvert \{u_1,v_1\} \cap \upp_1(S) \rvert = 1$ to be true: 
\begin{enumerate}[label={\rm(\arabic{*})}]
\item\label{numberofmatrixrealizationsoftype:threeisolatedvertices:case:lvertu1v1rvertbackslashp1Sequals1:u1doesnotequalv1andbracesu1v1bracesintersectp1Sisasingletonu1equalsa1}  $u_1 = a_1 = b_1$. Then because of $u_1 < v_1$ and $v_1 \neq c_1 = d_1$ it 
follows that there are exactly $n-1 - a_1 - 1$ different $v_1$ with 
$v_1\notin \upp_1(S)$ in this case. For each of them,  there exist exactly 
$\binom{(n-1)-2}{2}$ different \emph{sets} $\{u_2,v_2\}$ satisfying 
\eqref{eq:proofofcomparativecountingtheoremcase2firsteq}. Now the two pairs $u$ and 
$v$ are not determined by them: each of the sets can be realized in exactly two 
ways, both by $u_2<v_2$ and by $v_2<u_2$. Therefore, there are exactly 
$(n-1-a_1-1)\cdot 2\cdot \binom{(n-1)-2}{2}$ realizations of 
\ref{numberofmatrixrealizationsoftype:threeisolatedvertices:case:lvertu1v1rvertbackslashp1Sequals1:u1doesnotequalv1andbracesu1v1bracesintersectp1Sisasingletonu1equalsa1} 
by $u$ and $v$. 
\item\label{numberofmatrixrealizationsoftype:threeisolatedvertices:case:lvertu1v1rvertbackslashp1Sequals1:u1doesnotequalv1andbracesu1v1bracesintersectp1Sisasingletonu1equalsc1}  $u_1 = c_1 = d_1$. Then because of $u_1 < v_1$ it follows that there are 
exactly $n-1 - c_1$ different $v_1$ with $v_1\notin \upp_1(S)$. As in the preceding 
case, for each of these $v_1$ there exist  exactly $\binom{(n-1)-2}{2}$ different 
sets $\{u_2,v_2\}$ satisfying 
\eqref{eq:proofofcomparativecountingtheoremcase2firsteq}, hence exactly 
$2\cdot\binom{(n-1)-2}{2}$ different $u$ and $v$. Therefore there exist exactly 
$(n-1 - c_1) \cdot 2 \cdot \binom{(n-1)-2}{2}$ different realizations of type 
\ref{item:comparisonproofcaseLhassize6:threeisolatedvertices} by $B\mid_{\{u,v\}}$. 
\item\label{numberofmatrixrealizationsoftype:threeisolatedvertices:case:lvertu1v1rvertbackslashp1Sequals1:u1doesnotequalv1andbracesu1v1bracesintersectp1Sisasingletonv1equalsa1}  $v_1 = a_1 = b_1$. Then because of $u_1 < v_1$ it follows that there are 
exactly $a_1 - 1$ different $u_1$ with $u_1\notin \upp_1(S)$  in this case. For the 
same reasons as in the preceding two cases we know that here there exist exactly 
$(a_1 - 1) \cdot 2 \cdot \binom{(n-1)-2}{2}$ different realizations of type 
\ref{item:comparisonproofcaseLhassize6:threeisolatedvertices} by $B\mid_{\{u,v\}}$. 
\item\label{numberofmatrixrealizationsoftype:threeisolatedvertices:case:lvertu1v1rvertbackslashp1Sequals1:u1doesnotequalv1andbracesu1v1bracesintersectp1Sisasingletonv1equalsc1} $v_1 = c_1 = d_1$. Then because of $u_1 < v_1$ and $u_1 \neq a_1 = b_1$ it 
follows that there are $c_1 - 1 - 1$ different $u_1$ with $u_1\notin \upp_1(S)$  in 
this case. For the same reasons as in the preceding three cases we know that here 
there exist exactly $(c_1 - 1 - 1)\cdot 2 \cdot \binom{(n-1)-2}{2}$ different 
realizations of type \ref{item:comparisonproofcaseLhassize6:threeisolatedvertices} 
by $B\mid_{\{u,v\}}$. 
\end{enumerate}

It follows that if \ref{numberofmatrixrealizationsoftype:threeisolatedvertices:case:lvertu1v1rvertbackslashp1Sequals1}.\ref{numberofmatrixrealizationsoftype:threeisolatedvertices:case:lvertu1v1rvertbackslashp1Sequals1:u1doesnotequalv1andbracesu1v1bracesintersectp1Sisasingleton}, then there exist exactly 
$
\bigl ( (n-1-a_1-1) + (n-1-c_1) + (a_1-1) + (c_1-1-1)\bigr) \cdot 
2\cdot \binom{(n-1)-2}{2} =  4\cdot ((n-1)-2) \cdot \binom{(n-1)-2}{2}
$
different realizations of type \ref{item:comparisonproofcaseLhassize6:threeisolatedvertices} by $B\mid_{\{u,v\}}$. 
\end{enumerate}
It follows that if \ref{numberofmatrixrealizationsoftype:threeisolatedvertices:case:lvertu1v1rvertbackslashp1Sequals1}, then there are exactly 
$( (n-1)-2  + 4\cdot((n-1)-2))\cdot \binom{(n-1)-2}{2} = 
5\cdot ((n-1)-2) \cdot \binom{(n-1)-2}{2}$ realizations of type \ref{item:comparisonproofcaseLhassize6:threeisolatedvertices} by $B\mid_{\{u,v\}}$. 

\item\label{numberofmatrixrealizationsoftype:threeisolatedvertices:case:lvertu1v1rvertbackslashp1Sequals2} $\lvert \{ u_1, v_1\} \setminus \upp_1(S) \rvert = 2$. This is 
equivalent to 
\begin{equation}\label{eq:proofofcomparativecountingtheoremcase3firsteq}
\text{$u_1\neq v_1$ and $\{ u_1, v_1 \} \cap \upp_1(X) = \emptyset$}\quad . 
\end{equation}
Equation \ref{characterizationoftype:threeisolatedvertices:property2} implies $\lvert \{ u_2, v_2\} \setminus \upp_2(S)\rvert = 1$, 
which is equivalent to 
\begin{equation}\label{eq:proofofcomparativecountingtheoremcase3secondeq}
\text{
($u_2 = v_2$ and $\{u_2,v_2\} \cap \upp_2(S) = \emptyset$) or 
($u_2 \neq v_2$ and $\lvert \{u_2,v_2\} \cap \upp_2(S) \rvert = 1$)
}\quad .
\end{equation}
By swapping the subscripts $1$ and $2$ in the argument given for Case 2 it now 
follows that if \ref{numberofmatrixrealizationsoftype:threeisolatedvertices:case:lvertu1v1rvertbackslashp1Sequals2}, then there are 
exactly $5\cdot ((n-1)-2) \cdot \binom{(n-1)-2}{2}$ different realizations of 
type \ref{item:comparisonproofcaseLhassize6:threeisolatedvertices} by $B\mid_{\{u,v\}}$. 

\item\label{numberofmatrixrealizationsoftype:threeisolatedvertices:case:lvertu1v1rvertbackslashp1Sequals3} $\lvert \{ u_1, v_1\} \setminus \upp_1(S) \rvert = 3$. This is 
impossible, hence \ref{numberofmatrixrealizationsoftype:threeisolatedvertices:case:lvertu1v1rvertbackslashp1Sequals3} does not occur.  
\end{enumerate}

It follows that for every fixed $S$ there are exactly 
$10\cdot ((n-1)-2) \cdot \binom{(n-1)-2}{2}$ possibilities to position the two zeros 
indexed by $I\setminus S$ such that $\upX_{B} \cong \ref{item:comparisonproofcaseLhassize6:threeisolatedvertices}$. This completes the proof of \ref{numberofmatrixrealizationsoftype:threeisolatedvertices}.

As to \ref{numberofmatrixrealizationsoftype:oneadditionaledgeintersectingC4andtwoisolatedvertices}--\ref{numberofmatrixrealizationsoftype:twoadditionalnondisjointedgesdisjointfromC4} we begin by noting that for each of the four isomorphism types 
\ref{item:comparisonproofcaseLhassize6:oneadditionaledgeintersectingC4andtwoisolatedvertices}--\ref{item:comparisonproofcaseLhassize6:twoadditionalnondisjointedgesdisjointfromC4}, a necessary condition is that $\lvert \upV(\upX_{B})\setminus \upV(\upX_{B\mid_{S}}) \rvert = 3$. 
We can therefore prove \ref{numberofmatrixrealizationsoftype:oneadditionaledgeintersectingC4andtwoisolatedvertices}--\ref{numberofmatrixrealizationsoftype:twoadditionalnondisjointedgesdisjointfromC4} during one reexamination of the proof of \ref{numberofmatrixrealizationsoftype:threeisolatedvertices}. 

Since \ref{numberofmatrixrealizationsoftype:threeisolatedvertices:case:lvertu1v1rvertbackslashp1Sequals0} and \ref{numberofmatrixrealizationsoftype:threeisolatedvertices:case:lvertu1v1rvertbackslashp1Sequals3} are impossible, we only have to 
consider \ref{numberofmatrixrealizationsoftype:threeisolatedvertices:case:lvertu1v1rvertbackslashp1Sequals1} and \ref{numberofmatrixrealizationsoftype:threeisolatedvertices:case:lvertu1v1rvertbackslashp1Sequals2}. 
If \ref{numberofmatrixrealizationsoftype:threeisolatedvertices:case:lvertu1v1rvertbackslashp1Sequals1}, we know \eqref{eq:proofofcomparativecountingtheoremcase2firsteq} but 
this is not sufficient to rule out any of the types \ref{item:comparisonproofcaseLhassize6:oneadditionaledgeintersectingC4andtwoisolatedvertices}--\ref{item:comparisonproofcaseLhassize6:twoadditionalnondisjointedgesdisjointfromC4}. 

If \ref{numberofmatrixrealizationsoftype:threeisolatedvertices:case:lvertu1v1rvertbackslashp1Sequals1}.\ref{numberofmatrixrealizationsoftype:threeisolatedvertices:case:lvertu1v1rvertbackslashp1Sequals1:u1equalsv1andbracesu1v1bracesintersectp1Sisempty} we 
know that 
\begin{equation}\label{eq:knowledgeincase:numberofmatrixrealizationsoftype:threeisolatedvertices:case:lvertu1v1rvertbackslashp1Sequals1:u1equalsv1andbracesu1v1bracesintersectp1Sisempty}
u_1 = v_1,\; \{u_1,v_1\}\cap\upp_1(S) = \emptyset,\; 
u_2\neq v_2,\; \{u_2,v_2\}\cap\upp_2(S) = \emptyset\quad ,
\end{equation}
and will now consider the consequences of this for \ref{numberofmatrixrealizationsoftype:oneadditionaledgeintersectingC4andtwoisolatedvertices}--\ref{numberofmatrixrealizationsoftype:twoadditionalnondisjointedgesdisjointfromC4}.

\begin{enumerate}[label={\rm(\arabic{*})}]
\item Concerning contributions  to \ref{numberofmatrixrealizationsoftype:oneadditionaledgeintersectingC4andtwoisolatedvertices}, note that properties $\{u_1,v_1\}\cap\upp_1(S) = \emptyset$ and $\{u_2,v_2\}\cap\upp_2(S) = \emptyset$ make an edge intersecting $\upX_{B\mid_{S}} \cong C^4$ impossible, hence the case \ref{numberofmatrixrealizationsoftype:threeisolatedvertices:case:lvertu1v1rvertbackslashp1Sequals1}.\ref{numberofmatrixrealizationsoftype:threeisolatedvertices:case:lvertu1v1rvertbackslashp1Sequals1:u1equalsv1andbracesu1v1bracesintersectp1Sisempty} does not contribute\footnote{The fact that neither \ref{numberofmatrixrealizationsoftype:threeisolatedvertices:case:lvertu1v1rvertbackslashp1Sequals1}.\ref{numberofmatrixrealizationsoftype:threeisolatedvertices:case:lvertu1v1rvertbackslashp1Sequals1:u1equalsv1andbracesu1v1bracesintersectp1Sisempty} nor the corresponding subcase of \ref{numberofmatrixrealizationsoftype:threeisolatedvertices:case:lvertu1v1rvertbackslashp1Sequals2} (which due to symmetry was not spelled out in the proof of \ref{numberofmatrixrealizationsoftype:threeisolatedvertices} and therefore does not have a name) 
contribute to $\lvert(\upXul^{6,n,n})^{-1}\ref{item:comparisonproofcaseLhassize6:oneadditionaledgeintersectingC4andtwoisolatedvertices}\rvert$ is a reason why $\lvert(\upXul^{6,n,n})^{-1}\ref{item:comparisonproofcaseLhassize6:oneadditionaledgeintersectingC4andtwoisolatedvertices}\rvert$ is larger but not twice as large as 
$\lvert(\upXul^{6,n,n})^{-1}\ref{item:comparisonproofcaseLhassize6:threeisolatedvertices}\rvert$ even though in the cases where \ref{item:comparisonproofcaseLhassize6:oneadditionaledgeintersectingC4andtwoisolatedvertices} \emph{can} be realized the number of realizations is twice as large as for \ref{item:comparisonproofcaseLhassize6:threeisolatedvertices}.} to $\lvert(\upXul^{6,n,n})^{-1}\ref{item:comparisonproofcaseLhassize6:oneadditionaledgeintersectingC4andtwoisolatedvertices}\rvert$. 
\item Concerning contributions to \ref{numberofmatrixrealizationsoftype:oneadditionaldisjointedgeandoneisolatedvertex}, note that \eqref{eq:knowledgeincase:numberofmatrixrealizationsoftype:threeisolatedvertices:case:lvertu1v1rvertbackslashp1Sequals1:u1equalsv1andbracesu1v1bracesintersectp1Sisempty} implies that $\upX_{B} \cong \ref{item:comparisonproofcaseLhassize6:oneadditionaldisjointedgeandoneisolatedvertex}$ if and only if 
either ($B[u]\in \{\pm\}$ and $B[v] = 0$) or ($B[u] = 0$ and $B[v] \in \{\pm\}$). 
Each of these clauses corresponds to $2$ different $B$. It follows that if 
\ref{numberofmatrixrealizationsoftype:threeisolatedvertices:case:lvertu1v1rvertbackslashp1Sequals1}.\ref{numberofmatrixrealizationsoftype:threeisolatedvertices:case:lvertu1v1rvertbackslashp1Sequals1:u1equalsv1andbracesu1v1bracesintersectp1Sisempty}, then 
there are $4$-times as many realizations of type 
\ref{item:comparisonproofcaseLhassize6:oneadditionaldisjointedgeandoneisolatedvertex} as there are of type 
\ref{item:comparisonproofcaseLhassize6:threeisolatedvertices}. Therefore, 
if \ref{numberofmatrixrealizationsoftype:threeisolatedvertices:case:lvertu1v1rvertbackslashp1Sequals1}.\ref{numberofmatrixrealizationsoftype:threeisolatedvertices:case:lvertu1v1rvertbackslashp1Sequals1:u1equalsv1andbracesu1v1bracesintersectp1Sisempty}, 
there are exactly $4\cdot ((n-1)-2)\cdot\binom{(n-1)-2}{2}$ realizations of type 
\ref{item:comparisonproofcaseLhassize6:oneadditionaldisjointedgeandoneisolatedvertex} 
by $B\mid_{\{u,v\}}$. 
\item Concerning contributions to \ref{numberofmatrixrealizationsoftype:twoadditionaledgesonlyoneofthemdisjoint},  since properties $\{u_1,v_1\}\cap\upp_1(S) = \emptyset$ and $\{u_2,v_2\}\cap\upp_2(S) = \emptyset$ make an edge intersecting $\upX_{B\mid_{S}} \cong C^4$ impossible, the case \ref{numberofmatrixrealizationsoftype:threeisolatedvertices:case:lvertu1v1rvertbackslashp1Sequals1}.\ref{numberofmatrixrealizationsoftype:threeisolatedvertices:case:lvertu1v1rvertbackslashp1Sequals1:u1equalsv1andbracesu1v1bracesintersectp1Sisempty} does not contribute to \ref{numberofmatrixrealizationsoftype:twoadditionaledgesonlyoneofthemdisjoint}.
\item Concerning contributions to \ref{numberofmatrixrealizationsoftype:twoadditionalnondisjointedgesdisjointfromC4}, we see from \eqref{eq:knowledgeincase:numberofmatrixrealizationsoftype:threeisolatedvertices:case:lvertu1v1rvertbackslashp1Sequals1:u1equalsv1andbracesu1v1bracesintersectp1Sisempty} that $\upX_{B} \cong \ref{item:comparisonproofcaseLhassize6:twoadditionalnondisjointedgesdisjointfromC4}$ if and only if 
($B[u]\in \{\pm\}$ and  $B[v]\in\{\pm\}$), and there are $4$ different 
$B\mid_{\{u,v\}}\in\{0,\pm\}^{\{u,v\}}$ satisfying this. Therefore, if 
\ref{numberofmatrixrealizationsoftype:threeisolatedvertices:case:lvertu1v1rvertbackslashp1Sequals1}.\ref{numberofmatrixrealizationsoftype:threeisolatedvertices:case:lvertu1v1rvertbackslashp1Sequals1:u1equalsv1andbracesu1v1bracesintersectp1Sisempty}, 
there are exactly $4\cdot ((n-1)-2)\cdot\binom{(n-1)-2}{2}$ realizations of type 
\ref{item:comparisonproofcaseLhassize6:twoadditionalnondisjointedgesdisjointfromC4} 
by $B\mid_{\{u,v\}}$. 
\end{enumerate}

If \ref{numberofmatrixrealizationsoftype:threeisolatedvertices:case:lvertu1v1rvertbackslashp1Sequals1}.\ref{numberofmatrixrealizationsoftype:threeisolatedvertices:case:lvertu1v1rvertbackslashp1Sequals1:u1doesnotequalv1andbracesu1v1bracesintersectp1Sisasingleton}, then we know 
\begin{equation}\label{eq:knowledgeincase:numberofmatrixrealizationsoftype:threeisolatedvertices:case:lvertu1v1rvertbackslashp1Sequals1:u1doesnotequalv1andbracesu1v1bracesintersectp1Sisasingleton}
u_1\neq v_1,\; \lvert \{u_1,v_1\}\cap\upp_1(S) \rvert = 1,\; 
u_2\neq v_2,\; \{ u_2,v_2\}\cap\upp_2(S) = \emptyset\quad .
\end{equation}
and will now consider the consequences of this for \ref{numberofmatrixrealizationsoftype:oneadditionaledgeintersectingC4andtwoisolatedvertices}--\ref{numberofmatrixrealizationsoftype:twoadditionalnondisjointedgesdisjointfromC4}.
\begin{enumerate}[label={\rm(\arabic{*})}]
\item  Concerning contributions  to \ref{numberofmatrixrealizationsoftype:oneadditionaledgeintersectingC4andtwoisolatedvertices}, we can argue as follows: 
If $\lvert \{u_1,v_1\}\cap\upp_1(S) \rvert = 1$ is true as $u_1\in\upp_1(S)$, 
then there are exactly two $B\mid_{\{u,v\}}\in\{0,\pm\}^{\{u,v\}}$ with $\upX_{B}\cong \ref{item:comparisonproofcaseLhassize6:oneadditionaledgeintersectingC4andtwoisolatedvertices}$, 
namely those which satisfy ($B[u]\in\{\pm\}$ and $B[v] = 0$). If it is true 
as $v_1\in\upp_1(S)$, then again there are exactly two such $B\mid_{\{u,v\}}$, namely those 
which satisfy ($B[u] = 0$ and $B[v] \in \{\pm\}$). It follows that 
without having to reexamine the subcases \ref{numberofmatrixrealizationsoftype:threeisolatedvertices:case:lvertu1v1rvertbackslashp1Sequals1}.\ref{numberofmatrixrealizationsoftype:threeisolatedvertices:case:lvertu1v1rvertbackslashp1Sequals1:u1doesnotequalv1andbracesu1v1bracesintersectp1Sisasingleton}.\ref{numberofmatrixrealizationsoftype:threeisolatedvertices:case:lvertu1v1rvertbackslashp1Sequals1:u1doesnotequalv1andbracesu1v1bracesintersectp1Sisasingletonu1equalsa1}--\ref{numberofmatrixrealizationsoftype:threeisolatedvertices:case:lvertu1v1rvertbackslashp1Sequals1}.\ref{numberofmatrixrealizationsoftype:threeisolatedvertices:case:lvertu1v1rvertbackslashp1Sequals1:u1doesnotequalv1andbracesu1v1bracesintersectp1Sisasingleton}.\ref{numberofmatrixrealizationsoftype:threeisolatedvertices:case:lvertu1v1rvertbackslashp1Sequals1:u1doesnotequalv1andbracesu1v1bracesintersectp1Sisasingletonv1equalsc1} we know that there are twice as many realizations of \ref{item:comparisonproofcaseLhassize6:oneadditionaledgeintersectingC4andtwoisolatedvertices}  by $B\mid_{\{u,v\}}$ in the case 
\ref{numberofmatrixrealizationsoftype:threeisolatedvertices:case:lvertu1v1rvertbackslashp1Sequals1}.\ref{numberofmatrixrealizationsoftype:threeisolatedvertices:case:lvertu1v1rvertbackslashp1Sequals1:u1doesnotequalv1andbracesu1v1bracesintersectp1Sisasingleton} 
than of \ref{numberofmatrixrealizationsoftype:threeisolatedvertices}. Therefore, 
if \ref{numberofmatrixrealizationsoftype:threeisolatedvertices:case:lvertu1v1rvertbackslashp1Sequals1}.\ref{numberofmatrixrealizationsoftype:threeisolatedvertices:case:lvertu1v1rvertbackslashp1Sequals1:u1doesnotequalv1andbracesu1v1bracesintersectp1Sisasingleton},
then there are exactly $8\cdot ((n-1)-2)\cdot \binom{(n-1)-2}{2}$ realizations 
of \ref{item:comparisonproofcaseLhassize6:oneadditionaledgeintersectingC4andtwoisolatedvertices}  by $B\mid_{\{u,v\}}$. 
\item Concerning contributions to \ref{numberofmatrixrealizationsoftype:oneadditionaldisjointedgeandoneisolatedvertex}, we have to distinguish in what way property 
$\lvert \{ u_1,v_1 \} \cap \upp_1(S) \rvert = 1 $ in 
\eqref{eq:knowledgeincase:numberofmatrixrealizationsoftype:threeisolatedvertices:case:lvertu1v1rvertbackslashp1Sequals1:u1doesnotequalv1andbracesu1v1bracesintersectp1Sisasingleton} is satisfied. If $u_1\in\upp_1(S)$ but $v_1\notin\upp_1(S)$, then 
$\upX_{B} \cong \ref{numberofmatrixrealizationsoftype:oneadditionaldisjointedgeandoneisolatedvertex}$ if and only if $B[u]=0$ and $B[v]\in\{\pm\}$, hence in this case 
there exist  $2$ different $B\mid_{\{u,v\}}$ with $\upX_{B} \cong \ref{numberofmatrixrealizationsoftype:oneadditionaldisjointedgeandoneisolatedvertex}$. If $u_1\notin\upp_1(S)$ but $v_1\in\upp_1(S)$, then $\upX_{B} \cong \ref{numberofmatrixrealizationsoftype:oneadditionaldisjointedgeandoneisolatedvertex}$ if and only if $B[u]\in\{\pm\}$ 
and $B[v] = 0$, hence in this case there again exist  $2$ different $B\mid_{\{u,v\}}$ 
with $\upX_{B} \cong \ref{numberofmatrixrealizationsoftype:oneadditionaldisjointedgeandoneisolatedvertex}$. It follows that if \ref{numberofmatrixrealizationsoftype:threeisolatedvertices:case:lvertu1v1rvertbackslashp1Sequals1}.\ref{numberofmatrixrealizationsoftype:threeisolatedvertices:case:lvertu1v1rvertbackslashp1Sequals1:u1doesnotequalv1andbracesu1v1bracesintersectp1Sisasingleton}, then there there are $2$-times as many realizations of \ref{item:comparisonproofcaseLhassize6:oneadditionaldisjointedgeandoneisolatedvertex} than of \ref{item:comparisonproofcaseLhassize6:threeisolatedvertices} by $B\mid_{\{u,v\}}$. Therefore, if \ref{numberofmatrixrealizationsoftype:threeisolatedvertices:case:lvertu1v1rvertbackslashp1Sequals1}.\ref{numberofmatrixrealizationsoftype:threeisolatedvertices:case:lvertu1v1rvertbackslashp1Sequals1:u1doesnotequalv1andbracesu1v1bracesintersectp1Sisasingleton}, then there are exactly 
$2\cdot 4\cdot ((n-1)-2)\cdot \binom{(n-1)-2}{2} = 8\cdot (n-3)\cdot \binom{n-3}{2}$ 
realizations of \ref{item:comparisonproofcaseLhassize6:oneadditionaldisjointedgeandoneisolatedvertex} by $B\mid_{\{u,v\}}$. 
\item Concerning contributions to \ref{numberofmatrixrealizationsoftype:twoadditionaledgesonlyoneofthemdisjoint}, note that no matter how \eqref{eq:knowledgeincase:numberofmatrixrealizationsoftype:threeisolatedvertices:case:lvertu1v1rvertbackslashp1Sequals1:u1doesnotequalv1andbracesu1v1bracesintersectp1Sisasingleton} is satisfied, we have 
$\upX_{B} \cong \ref{numberofmatrixrealizationsoftype:twoadditionaledgesonlyoneofthemdisjoint}$ if and only if ($B[u]\in\{\pm\}$ and $B[v]\in\{\pm\}$). Hence, if \ref{numberofmatrixrealizationsoftype:threeisolatedvertices:case:lvertu1v1rvertbackslashp1Sequals1}.\ref{numberofmatrixrealizationsoftype:threeisolatedvertices:case:lvertu1v1rvertbackslashp1Sequals1:u1doesnotequalv1andbracesu1v1bracesintersectp1Sisasingleton}, 
then there are $4$-times as many realizations of \ref{numberofmatrixrealizationsoftype:twoadditionaledgesonlyoneofthemdisjoint} than there are of \ref{numberofmatrixrealizationsoftype:threeisolatedvertices}, that is, if \ref{numberofmatrixrealizationsoftype:threeisolatedvertices:case:lvertu1v1rvertbackslashp1Sequals1}.\ref{numberofmatrixrealizationsoftype:threeisolatedvertices:case:lvertu1v1rvertbackslashp1Sequals1:u1doesnotequalv1andbracesu1v1bracesintersectp1Sisasingleton}, then there are exactly 
$4\cdot 4\cdot ( (n-1)-2)\cdot \binom{(n-1)-2}{2} = 16\cdot (n-3)\cdot \binom{n-3}{2}$ 
realizations of type \ref{numberofmatrixrealizationsoftype:twoadditionaledgesonlyoneofthemdisjoint} by $B\mid_{\{u,v\}}$.
\item Concerning contributions to \ref{numberofmatrixrealizationsoftype:twoadditionalnondisjointedgesdisjointfromC4}, note that \eqref{eq:knowledgeincase:numberofmatrixrealizationsoftype:threeisolatedvertices:case:lvertu1v1rvertbackslashp1Sequals1:u1doesnotequalv1andbracesu1v1bracesintersectp1Sisasingleton} says that 
$u_1\neq v_1$ and $u_2\neq v_2$, and this makes it impossible to create a 
$2$-path outside of $\upX_{B\mid_S}\cong C^4$. Therefore, if \ref{numberofmatrixrealizationsoftype:threeisolatedvertices:case:lvertu1v1rvertbackslashp1Sequals1}.\ref{numberofmatrixrealizationsoftype:threeisolatedvertices:case:lvertu1v1rvertbackslashp1Sequals1:u1doesnotequalv1andbracesu1v1bracesintersectp1Sisasingleton}, there is no contribution to \ref{numberofmatrixrealizationsoftype:twoadditionalnondisjointedgesdisjointfromC4}. 
\end{enumerate}
We now take stock of what we found in the subcases 
\ref{numberofmatrixrealizationsoftype:threeisolatedvertices:case:lvertu1v1rvertbackslashp1Sequals1}.\ref{numberofmatrixrealizationsoftype:threeisolatedvertices:case:lvertu1v1rvertbackslashp1Sequals1:u1equalsv1andbracesu1v1bracesintersectp1Sisempty}
and \ref{numberofmatrixrealizationsoftype:threeisolatedvertices:case:lvertu1v1rvertbackslashp1Sequals1}.\ref{numberofmatrixrealizationsoftype:threeisolatedvertices:case:lvertu1v1rvertbackslashp1Sequals1:u1doesnotequalv1andbracesu1v1bracesintersectp1Sisasingleton}
in order to know what the entire case \ref{numberofmatrixrealizationsoftype:threeisolatedvertices:case:lvertu1v1rvertbackslashp1Sequals1} contributes to \ref{numberofmatrixrealizationsoftype:oneadditionaledgeintersectingC4andtwoisolatedvertices}--\ref{numberofmatrixrealizationsoftype:twoadditionalnondisjointedgesdisjointfromC4}.

Since \ref{numberofmatrixrealizationsoftype:threeisolatedvertices:case:lvertu1v1rvertbackslashp1Sequals1}.\ref{numberofmatrixrealizationsoftype:threeisolatedvertices:case:lvertu1v1rvertbackslashp1Sequals1:u1equalsv1andbracesu1v1bracesintersectp1Sisempty} 
did not contribute to  $\lvert(\upXul^{6,n,n})^{-1}\ref{item:comparisonproofcaseLhassize6:oneadditionaledgeintersectingC4andtwoisolatedvertices}\rvert$ but \ref{numberofmatrixrealizationsoftype:threeisolatedvertices:case:lvertu1v1rvertbackslashp1Sequals1}.\ref{numberofmatrixrealizationsoftype:threeisolatedvertices:case:lvertu1v1rvertbackslashp1Sequals1:u1doesnotequalv1andbracesu1v1bracesintersectp1Sisasingleton} did contribute 
$8\cdot (n-3)\cdot \binom{n-3}{2}$, it follows that if \ref{numberofmatrixrealizationsoftype:threeisolatedvertices:case:lvertu1v1rvertbackslashp1Sequals1}, then there are 
exactly $8\cdot (n-3)\cdot \binom{n-3}{2}$ realizations of type \ref{item:comparisonproofcaseLhassize6:oneadditionaledgeintersectingC4andtwoisolatedvertices} 
by $B\mid_{\{u,v\}}$. 

Since \ref{numberofmatrixrealizationsoftype:threeisolatedvertices:case:lvertu1v1rvertbackslashp1Sequals1}.\ref{numberofmatrixrealizationsoftype:threeisolatedvertices:case:lvertu1v1rvertbackslashp1Sequals1:u1equalsv1andbracesu1v1bracesintersectp1Sisempty} 
contributed $4\cdot (n-3) \cdot\binom{n-3}{2}$ to $\lvert(\upXul^{6,n,n})^{-1}\ref{item:comparisonproofcaseLhassize6:oneadditionaldisjointedgeandoneisolatedvertex}\rvert$ while \ref{numberofmatrixrealizationsoftype:threeisolatedvertices:case:lvertu1v1rvertbackslashp1Sequals1}.\ref{numberofmatrixrealizationsoftype:threeisolatedvertices:case:lvertu1v1rvertbackslashp1Sequals1:u1doesnotequalv1andbracesu1v1bracesintersectp1Sisasingleton} contributed 
$8\cdot (n-3)\cdot\binom{n-3}{2}$, it follows that if \ref{numberofmatrixrealizationsoftype:threeisolatedvertices:case:lvertu1v1rvertbackslashp1Sequals1}, then there are 
exactly $12\cdot (n-3) \cdot\binom{n-3}{2}$ realizations of type \ref{item:comparisonproofcaseLhassize6:oneadditionaldisjointedgeandoneisolatedvertex} by $B\mid_{\{u,v\}}$. 

Since \ref{numberofmatrixrealizationsoftype:threeisolatedvertices:case:lvertu1v1rvertbackslashp1Sequals1}.\ref{numberofmatrixrealizationsoftype:threeisolatedvertices:case:lvertu1v1rvertbackslashp1Sequals1:u1equalsv1andbracesu1v1bracesintersectp1Sisempty} 
did not contribute to  $\lvert(\upXul^{6,n,n})^{-1}\ref{item:comparisonproofcaseLhassize6:twoadditionaledgesonlyoneofthemdisjoint}\rvert$ but 
\ref{numberofmatrixrealizationsoftype:threeisolatedvertices:case:lvertu1v1rvertbackslashp1Sequals1}.\ref{numberofmatrixrealizationsoftype:threeisolatedvertices:case:lvertu1v1rvertbackslashp1Sequals1:u1doesnotequalv1andbracesu1v1bracesintersectp1Sisasingleton} did contribute $16 \cdot (n-3) \cdot \binom{n-3}{2}$, it follows that if 
\ref{numberofmatrixrealizationsoftype:threeisolatedvertices:case:lvertu1v1rvertbackslashp1Sequals1}, then there are exactly $16 \cdot (n-3) \cdot \binom{n-3}{2}$ realizations of type \ref{item:comparisonproofcaseLhassize6:twoadditionaledgesonlyoneofthemdisjoint} by $B\mid_{\{u,v\}}$. 

Since \ref{numberofmatrixrealizationsoftype:threeisolatedvertices:case:lvertu1v1rvertbackslashp1Sequals1}.\ref{numberofmatrixrealizationsoftype:threeisolatedvertices:case:lvertu1v1rvertbackslashp1Sequals1:u1equalsv1andbracesu1v1bracesintersectp1Sisempty} 
contributed $4\cdot (n-3)\cdot\binom{n-3}{2}$ to $\lvert(\upXul^{6,n,n})^{-1}\ref{item:comparisonproofcaseLhassize6:twoadditionalnondisjointedgesdisjointfromC4}\rvert$ while \ref{numberofmatrixrealizationsoftype:threeisolatedvertices:case:lvertu1v1rvertbackslashp1Sequals1}.\ref{numberofmatrixrealizationsoftype:threeisolatedvertices:case:lvertu1v1rvertbackslashp1Sequals1:u1doesnotequalv1andbracesu1v1bracesintersectp1Sisasingleton} did not contribute 
anything, it follows that if \ref{numberofmatrixrealizationsoftype:threeisolatedvertices:case:lvertu1v1rvertbackslashp1Sequals1}, then there are exactly 
$4\cdot (n-3)\cdot\binom{n-3}{2}$ realizations of type \ref{item:comparisonproofcaseLhassize6:twoadditionalnondisjointedgesdisjointfromC4} by $B\mid_{\{u,v\}}$. 

Since the case \ref{numberofmatrixrealizationsoftype:threeisolatedvertices:case:lvertu1v1rvertbackslashp1Sequals2}  is symmetric to the case \ref{numberofmatrixrealizationsoftype:threeisolatedvertices:case:lvertu1v1rvertbackslashp1Sequals1} via interchanging 
the subscripts $1$ and $2$, we will get the same contributions to \ref{numberofmatrixrealizationsoftype:oneadditionaledgeintersectingC4andtwoisolatedvertices}--\ref{numberofmatrixrealizationsoftype:twoadditionalnondisjointedgesdisjointfromC4} as in the 
case \ref{numberofmatrixrealizationsoftype:threeisolatedvertices:case:lvertu1v1rvertbackslashp1Sequals1}. We therefore have to double each of the four results found for 
\ref{numberofmatrixrealizationsoftype:threeisolatedvertices:case:lvertu1v1rvertbackslashp1Sequals1} to get the correct numbers of realizations of 
types \ref{item:comparisonproofcaseLhassize6:oneadditionaledgeintersectingC4andtwoisolatedvertices}--\ref{item:comparisonproofcaseLhassize6:twoadditionalnondisjointedgesdisjointfromC4}. This proves \ref{numberofmatrixrealizationsoftype:oneadditionaledgeintersectingC4andtwoisolatedvertices}--\ref{numberofmatrixrealizationsoftype:twoadditionalnondisjointedgesdisjointfromC4}.

As to \ref{numberofmatrixrealizationsoftype:fourisolatedvertices}, it suffices 
to note that the  two values of $B\mid_{\{u,v\}}$ are determined: since there does 
not exist an edge outside $\upX_{B\mid_S}$ $\cong$ $C^4$, they both must be zero. 
Therefore \ref{numberofmatrixrealizationsoftype:fourisolatedvertices} is the number 
of $\{u,v\}\in\binom{[n-1]^2}{2}$ such that $\upX_{B\mid_{S}\sqcup \{0\}^{\{u,v\}}} \cong 
\ref{item:comparisonproofcaseLhassize6:fourisolatedvertices}$. By definition of $S$ 
the latter is equivalent to saying that $\upX_{B\mid_{S}\sqcup \{0\}^{\{u,v\}}}$ has exactly 
eight vertices. It follows from Definition \ref{def:XBandecXB} that this is the case 
if and only if simultaneously
\begin{equation}\label{eq:conditioninproofof:numberofmatrixrealizationsoftype:fourisolatedvertices}
\lvert \{ u_1, v_1 \} \setminus \upp_1(S) \rvert = 2
\quad \text{and}\quad 
\lvert \{ u_2, v_2 \} \setminus \upp_2(S) \rvert = 2\quad .
\end{equation} 
Due to $u_1<v_1$, the number of $\{u_1,v_1\}\subseteq [n-1]$ 
with $\lvert \{ u_1, v_1 \}\setminus \upp_1(S) \rvert = 2$ is $\binom{n-3}{2}$. 
Since we only assume $u\prec v$ and hence both $u_2<v_2$ and $u_2>v_2$ are possible, 
for each of these $\binom{n-3}{2}$ different $\{u_1,v_1\}$ there are 
$2\cdot \binom{n-3}{2}$ different $\{u_2,v_2\}$ with 
$\lvert \{ u_2, v_2 \} \setminus \upp_2(S) \rvert = 2$. This proves 
\ref{numberofmatrixrealizationsoftype:fourisolatedvertices}.

As to \ref{numberofmatrixrealizationsoftype:oneadditionaldisjointedgeandtwoisolatedvertices} and \ref{numberofmatrixrealizationsoftype:twoadditionaldisjointedgesdisjointfromC4}, note that for both ismorphism types \ref{item:comparisonproofcaseLhassize6:oneadditionaldisjointedgeandtwoisolatedvertices} and \ref{item:comparisonproofcaseLhassize6:twoadditionaldisjointedgesdisjointfromC4} it is necessary 
that $X_{ B\mid_{S} \sqcup \{0\}^{\{u,v\}} } \cong \ref{item:comparisonproofcaseLhassize6:fourisolatedvertices}$. We can therefore prove \ref{numberofmatrixrealizationsoftype:oneadditionaldisjointedgeandtwoisolatedvertices} and \ref{numberofmatrixrealizationsoftype:twoadditionaldisjointedgesdisjointfromC4} by reexamining the proof of \ref{numberofmatrixrealizationsoftype:fourisolatedvertices}. In whatever way 
\eqref{eq:conditioninproofof:numberofmatrixrealizationsoftype:fourisolatedvertices} 
is satisfied,  there are exactly $4$ different $B\mid_{\{u,v\}}$ with 
$\upX_{B} \cong \ref{item:comparisonproofcaseLhassize6:oneadditionaldisjointedgeandtwoisolatedvertices}$, namely those satisfying 
\begin{equation}\label{eq:conditioninproofof:numberofmatrixrealizationsoftype:oneadditionaldisjointedgeandtwoisolatedvertices}
(B[u]\in\{\pm\}\; \text{and}\; B[v]=0) \quad \text{or}\quad 
(B[u]=0\; \text{and}\; B[v]\in\{\pm\})\quad , 
\end{equation}
but there are also $4$ different $B\mid_{\{u,v\}}$ with 
$\upX_{B} \cong \ref{item:comparisonproofcaseLhassize6:twoadditionaldisjointedgesdisjointfromC4}$, namely those satisfying 
\begin{equation}\label{eq:conditioninproofof:numberofmatrixrealizationsoftype:numberofmatrixrealizationsoftype:twoadditionaldisjointedgesdisjointfromC4}
B[u]\in\{\pm\}\; \text{and}\; B[v]\in\{\pm\} \quad .
\end{equation}
This proves both $\lvert(\upXul^{6,n,n})^{-1}\ref{item:comparisonproofcaseLhassize6:oneadditionaldisjointedgeandtwoisolatedvertices}\rvert =\lvert(\upXul^{6,n,n})^{-1}\ref{item:comparisonproofcaseLhassize6:twoadditionaldisjointedgesdisjointfromC4}\rvert = 4 \cdot \lvert(\upXul^{6,n,n})^{-1}\ref{item:comparisonproofcaseLhassize6:fourisolatedvertices}\rvert$, and therefore both \ref{numberofmatrixrealizationsoftype:oneadditionaldisjointedgeandtwoisolatedvertices} and \ref{numberofmatrixrealizationsoftype:twoadditionaldisjointedgesdisjointfromC4}. The proof of \ref{item:numberofrealizationsofnonforestswhenkequals6} is now complete. 
\end{proof}

The relations \ref{linearrelationbetweent2andt3:kequals5}--\ref{linearrelationbetweent16tot18:kequals6} in Lemma \ref{lem:relationsamongthenumbersofmatrixrealizations} 
give us a plausibility check (i.e. necessary conditions) for the explicit formulas 
$\lvert(\upXul^{6,n,n})^{-1}\ref{item:comparisonproofcaseLhassize6:oneisolatedvertex}\rvert$, $\dotsc$, $\lvert(\upXul^{6,n,n})^{-1}\ref{item:comparisonproofcaseLhassize6:twoadditionaldisjointedgesdisjointfromC4}\rvert$ that we found in \ref{numberofmatrixrealizationsoftype:oneisolatedvertex:kequals5}--\ref{numberofmatrixrealizationsoftype:twoadditionaldisjointedgesdisjointfromC4}. For brevity let $x:= n-3$ and 
$y := \binom{n-3}{2}$. Then, indeed,  the explicit 
formulas that we found in \ref{item:numberofrealizationsofnonforestswhenkequals5} 
and \ref{item:numberofrealizationsofnonforestswhenkequals6} pass the test: the 
formulas in \ref{item:numberofrealizationsofnonforestswhenkequals5} evidently 
satisfy \ref{linearrelationbetweent2andt3:kequals5} and \ref{linearrelationbetweent5andt7:kequals5}. Moreover, since  $(3^2 - 1) \cdot \bigl ( 8 x^2 + 8 y \bigr ) 
= 24 x^2 + 32 y + 8 x^2 + 16 y + 16 x^2 + 16 x^2 + 16 y$, the formulas 
\ref{numberofmatrixrealizationsoftype:twoisolatedvertices}--\ref{numberofmatrixrealizationsoftype:C4intersectingatwopathinitsinnervertex} satisfy \ref{linearrelationbetweent4tot10:kequals6}. Since $(3^2 - 1) \cdot 10 xy = 16 x y + 24 x y + 32 x y + 8 x y$, 
the formulas in \ref{numberofmatrixrealizationsoftype:threeisolatedvertices}--\ref{numberofmatrixrealizationsoftype:twoadditionalnondisjointedgesdisjointfromC4} 
satisfy \ref{linearrelationbetweent11tot15:kequals6}. Since 
$(3^2-1)\cdot 2y^2 = 8y^2 + 8y^2$, the formulas in \ref{numberofmatrixrealizationsoftype:fourisolatedvertices}--\ref{numberofmatrixrealizationsoftype:twoadditionaldisjointedgesdisjointfromC4} satisfy \ref{linearrelationbetweent16tot18:kequals6}.

\subsection{Counting failures of equality of $\Prob_{\chio}$ and $\Prob_{\lcf}$} 

While determining an absolute cardinality 
$\lvert(\upXul^{k,n,n})^{-1}(\mathfrak{X})\rvert$ seems to necessitate  work 
specifically depending on the isomorphism type $\mathfrak{X}$, the ratio of 
all \emph{balanced} matrix realizations to \emph{all} realizations 
is easy to compute since it is determined by the Betti number of $\mathfrak{X}$ 
alone. This is the content of \ref{eq:ratioofbalancedmatrixrealizationsofanisomorphismtypetoallrealizations}  in the following lemma: 

\begin{lemma}\label{lem:forfixedisomorphismtyperatioofbalancedrealizationsdependsonthebettinumberalone}
For every $(s,t)\in\Z_{\geq 2}^2$, every $0\leq k \leq (s-1)(t-1)$, every unlabelled 
bipartite graph $\mathfrak{X}$ and every $\beta\in \Z_{\geq 1}$, 
\begin{enumerate}[label={\rm(E\arabic{*})}]
\item\label{eq:ratioofbalancedmatrixrealizationsofanisomorphismtypetoallrealizations} $\lvert\{ B\in(\upXul^{k,s,t})^{-1}( \mathfrak{X}) \colon 
(\upX_B,\sigma_B)\;\mathrm{balanced}\} \rvert = (\tfrac12)^{\beta_1(\mathfrak{X})}
\cdot\lvert(\upXul^{k,s,t})^{-1} (\mathfrak{X})\rvert$ \quad , 
\item\label{eq:expansionofamatrixfailuresetwithapositiveratiobyisomorphismtypes} 
$\lvert\mathcal{F}_{\cdot 2^\beta}^{\mathrm{M}}(k,s,t)\rvert = 
\sum_{\mathfrak{X}\in\im(\upXul^{k,s,t})\colon \beta_1(\mathfrak{X}) = \beta}\; (\tfrac12)^\beta \cdot 
\bigl \lvert \bigl( \upXul^{k,s,t}\bigr)^{-1} (\mathfrak{X})\bigr \rvert$\quad ,
\item\label{eq:expansionofamatrixfailuresetwithzeroreatiobyisomorphismtypes} 
$\lvert\mathcal{F}_{\cdot 0}^{\mathrm{M}}(k,s,t)\rvert = 
\sum_{\mathfrak{X}\in\im(\upXul^{k,s,t})\colon \beta_1(\mathfrak{X}) \geq 1}\; 
\bigl (1-(\tfrac12)^{\beta_1(\mathfrak{X})} \bigr ) 
\cdot\lvert(\upXul^{k,s,t})^{-1} (\mathfrak{X})\rvert$ \quad . 
\end{enumerate} 
\end{lemma}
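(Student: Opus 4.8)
The plan is to derive all three identities \ref{eq:ratioofbalancedmatrixrealizationsofanisomorphismtypetoallrealizations}--\ref{eq:expansionofamatrixfailuresetwithzeroreatiobyisomorphismtypes} from the graph-theoretical characterization of $\Prob_{\chio}$ together with K\H{o}nig's counting lemma, with \ref{eq:ratioofbalancedmatrixrealizationsofanisomorphismtypetoallrealizations} doing all the real work and the other two being bookkeeping on top of it.

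First I would prove \ref{eq:ratioofbalancedmatrixrealizationsofanisomorphismtypetoallrealizations}. Fix an unlabelled bipartite $\mathfrak{X}$ with $\beta_1(\mathfrak{X}) = \beta$; if $(\upXul^{k,s,t})^{-1}(\mathfrak{X}) = \emptyset$ the identity is $0 = 0$, so assume it is nonempty. Partition $(\upXul^{k,s,t})^{-1}(\mathfrak{X})$ according to the value of the sign function, i.e. group together all $B$ sharing the same underlying graph $\upX_B$ (a \emph{labelled} representative of $\mathfrak{X}$) and the same support. Within each such block, $B \mapsto \sigma_B$ is a bijection onto $\{\pm\}^{\upE(\upX_B)}$, since by Definition \ref{def:XBandecXB} the nonzero entries of $B$ are exactly the signs assigned by $\sigma_B$ to the edges of $\upX_B$; hence each block has $2^{f_1(\upX_B)} = 2^{f_1(\mathfrak{X})}$ elements, of which, by \ref{item:numberofbalancedsignfunctions} in Lemma \ref{lem:equivalenceofexistenceofbconstantrpropervertex2coloringandcyclicallyrevenness} applied with $\mathfrak{X} := \upX_B$, exactly $2^{f_0(\upX_B) - \beta_0(\upX_B)} = 2^{f_1(\upX_B) - \beta_1(\upX_B)} = 2^{f_1(\mathfrak{X}) - \beta}$ give a balanced $(\upX_B,\sigma_B)$. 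So every block contributes the same fraction $2^{-\beta}$ of balanced members, and summing over blocks gives \ref{eq:ratioofbalancedmatrixrealizationsofanisomorphismtypetoallrealizations}. (Here I am using that $\beta_1$, $\beta_0$, $f_0$, $f_1$ are isomorphism invariants, so they may be read off $\mathfrak{X}$ rather than off the particular labelled $\upX_B$; this is the one routine point that needs to be stated carefully since $\upX_B$ depends on more than just $\mathfrak{X}$.)

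Next, \ref{eq:expansionofamatrixfailuresetwithapositiveratiobyisomorphismtypes}. By Theorem \ref{thm:graphtheoreticalcharacterizationofthechiomeasure}\ref{relationbeweenchiomeasureandlazycoinflipmeasuregovernedbyfirstbettinumber}, for $B$ with $(\upX_B,\sigma_B)$ balanced we have $\Prob_{\chio}[\mathcal{E}_B^{[n-1]^2}] = 2^{\beta_1(\upX_B)}\cdot\Prob_{\lcf}[\mathcal{E}_B^{[n-1]^2}]$, and for $B$ with $(\upX_B,\sigma_B)$ not balanced we have $\Prob_{\chio}[\mathcal{E}_B^{[n-1]^2}] = 0 \neq \Prob_{\lcf}[\mathcal{E}_B^{[n-1]^2}]$; consequently $B \in \mathcal{F}_{\cdot 2^\beta}^{\mathrm{M}}(k,s,t)$ iff $(\upX_B,\sigma_B)$ is balanced and $\beta_1(\upX_B) = \beta$ (recall $\beta \geq 1$, so the factor is $\neq 1$ and equality genuinely fails). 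Therefore $\mathcal{F}_{\cdot 2^\beta}^{\mathrm{M}}(k,s,t)$ is the disjoint union, over all $\mathfrak{X} \in \im(\upXul^{k,s,t})$ with $\beta_1(\mathfrak{X}) = \beta$, of $\{B \in (\upXul^{k,s,t})^{-1}(\mathfrak{X}) : (\upX_B,\sigma_B)\text{ balanced}\}$, and \ref{eq:ratioofbalancedmatrixrealizationsofanisomorphismtypetoallrealizations} turns the cardinality of each piece into $(\tfrac12)^\beta\lvert(\upXul^{k,s,t})^{-1}(\mathfrak{X})\rvert$, which is exactly \ref{eq:expansionofamatrixfailuresetwithapositiveratiobyisomorphismtypes}.

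Finally, \ref{eq:expansionofamatrixfailuresetwithzeroreatiobyisomorphismtypes}. By the dichotomy above, $B \in \mathcal{F}_{\cdot 0}^{\mathrm{M}}(k,s,t) = \mathcal{F}_{=0}^{\mathrm{M}}(k,s,t)$ iff $(\upX_B,\sigma_B)$ is not balanced; and a not-balanced $(\upX_B,\sigma_B)$ forces $\beta_1(\upX_B) \geq 1$ (a forest admits only balanced signings, by \ref{item:equivalenceofexistenceofbconstantrpropervertex2coloringandcyclicallyrevenness} and \ref{item:cardinalityofsetofbconstantrproper2colorings}, or simply because it has no circuit). Hence $\mathcal{F}_{\cdot 0}^{\mathrm{M}}(k,s,t)$ is the disjoint union over $\mathfrak{X} \in \im(\upXul^{k,s,t})$ with $\beta_1(\mathfrak{X}) \geq 1$ of the \emph{non}-balanced members of $(\upXul^{k,s,t})^{-1}(\mathfrak{X})$, whose number, by \ref{eq:ratioofbalancedmatrixrealizationsofanisomorphismtypetoallrealizations}, is $(1-(\tfrac12)^{\beta_1(\mathfrak{X})})\lvert(\upXul^{k,s,t})^{-1}(\mathfrak{X})\rvert$; summing gives \ref{eq:expansionofamatrixfailuresetwithzeroreatiobyisomorphismtypes}. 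I do not anticipate a genuine obstacle here — the only mild subtlety is making the block decomposition in the proof of \ref{eq:ratioofbalancedmatrixrealizationsofanisomorphismtypetoallrealizations} precise (one block per pair (labelled graph, nothing else — the support is the edge set, so it is redundant), and checking that the fibre $(\upXul^{k,s,t})^{-1}(\mathfrak{X})$ really is the disjoint union of the blocks indexed by the labelled representatives of $\mathfrak{X}$ that arise as some $\upX_B$ with $\dom(B)^{\breve{}}\setminus\dom(B)\setminus\{(s,t)\}$ of the right shape), after which everything is substitution.
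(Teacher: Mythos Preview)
Your approach is essentially the paper's: partition $(\upXul^{k,s,t})^{-1}(\mathfrak{X})$ by the underlying labelled graph (equivalently by support), apply \ref{item:numberofbalancedsignfunctions} to get the constant balanced fraction $2^{-\beta_1(\mathfrak{X})}$ on each block, and then read off \ref{eq:expansionofamatrixfailuresetwithapositiveratiobyisomorphismtypes} and \ref{eq:expansionofamatrixfailuresetwithzeroreatiobyisomorphismtypes} from Theorem~\ref{thm:graphtheoreticalcharacterizationofthechiomeasure}. One small slip: within a block the map $B\mapsto\sigma_B$ is generally \emph{not} a bijection onto $\{\pm\}^{\upE(\upX_B)}$, because fixing the labelled graph $\upX_B$ only pins down $\upp_1(I),\upp_2(I)$ and $\Supp(B)$, and several $k$-element domains $I$ can share these data (the paper accordingly sums over pairs $(\Dom(B),\Supp(B))$). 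This does not damage your argument, since each block is a disjoint union over the admissible $I$'s of pieces on which $B\mapsto\sigma_B$ \emph{is} bijective, so the balanced fraction remains $2^{-\beta}$ throughout.
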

\begin{proof}
If $\mathcal{M}$ is a set of matrices, let us define 
$\Dom(\mathcal{M}) := \{ \Dom(B)\colon B\in\mathcal{M}\}$ and 
$\Supp(\mathcal{M}) := \{ \Supp(B)\colon B\in\mathcal{M}\}$. Moreover, 
if $S$ is a set, $\mathcal{S}\subseteq \mathfrak{P}(S)$ a set of subsets and 
$U\in \mathfrak{P}(S)$ a subset, then $U\cap\mathcal{S} := 
\{ U\cap S\colon S\in\mathcal{S}\}$. Using these notations, we can prove 
\ref{eq:ratioofbalancedmatrixrealizationsofanisomorphismtypetoallrealizations} by 
the following calculation: for every unlabelled $\mathfrak{X}$ we have 
$\lvert\{$ $B$ $\in$ $(\upXul^{k,s,t})^{-1}$ $(\mathfrak{X})\colon$ 
$(\upX_B,\sigma_B)$ balanced $\}\rvert$ $=$ 
$\sum_{J\in \Supp( (\upXul^{k,s,t})^{-1}(\mathfrak{X}))}$ 
$\lvert \{$ $B$ $\in$  $(\upXul^{k,s,t})^{-1}(\mathfrak{X})\colon$ 
$(\upX_B,\sigma_B)$ balanced, $\Supp(B)$ $=$ $J$ $\} \rvert$
$=$ (\ref{item:numberofbalancedsignfunctions} in Lemma \ref{lem:equivalenceofexistenceofbconstantrpropervertex2coloringandcyclicallyrevenness}) $=$ 
$\sum_{J\in\Supp( (\upXul^{k,s,t})^{-1}(\mathfrak{X}))}$ 
$2^{\lvert J \rvert - \beta_1(\mathfrak{X})}$ $=$ $(\tfrac12)^{\beta_1(\mathfrak{X})}$
$\sum_{I\in\Dom((\upXul^{k,s,t})^{-1}(\mathfrak{X}))}$ 
$\sum_{J\in I\cap\Supp( (\upXul^{k,s,t})^{-1}(\mathfrak{X}))}$ $2^{\lvert J \rvert}$ $=$ (directly 
from the definitions) $=$ 
$(\tfrac12)^{\beta_1(\mathfrak{X})}$ $\lvert (\upXul^{k,s,t})^{-1}(\mathfrak{X})\rvert$. 
As to \ref{eq:expansionofamatrixfailuresetwithapositiveratiobyisomorphismtypes}, 
this is true since $\lvert\mathcal{F}_{\cdot 2^\beta}^{\mathrm{M}}(k,s,t)\rvert$ 
$=$ (\ref{relationbeweenchiomeasureandlazycoinflipmeasuregovernedbyfirstbettinumber} in Theorem \ref{thm:graphtheoreticalcharacterizationofthechiomeasure}) $=$ 
$\sum_{\mathfrak{X}\in\im(\upXul^{k,s,t})\colon\beta_1(\mathfrak{X})=\beta}$ 
$\lvert \{$ $B\in(\upXul^{k,s,t})^{-1}(\mathfrak{X})\colon$ $(\upX_B,\sigma_B)$ 
$\mathrm{balanced}$ $\}\rvert$ $=$ (by \ref{eq:ratioofbalancedmatrixrealizationsofanisomorphismtypetoallrealizations}) $=$ 
$\sum_{\mathfrak{X}\in\im(\upXul^{k,s,t})\colon\beta_1(\mathfrak{X})=\beta}$ 
$(\tfrac12)^\beta\cdot\bigl\lvert(\upXul^{k,s,t}\bigr)^{-1}(\mathfrak{X})\bigr\rvert$.
As to \ref{eq:expansionofamatrixfailuresetwithzeroreatiobyisomorphismtypes}, note 
that $\lvert\mathcal{F}_{\cdot 0}^{\mathrm{M}}(k,s,t)\rvert$ $=$ 
(\ref{characterizationofwhenchiomeasureispositive} in Theorem 
\ref{thm:graphtheoreticalcharacterizationofthechiomeasure}) $=$ 
$\sum_{\mathfrak{X}\in\im(\upXul^{k,s,t})\colon\beta_1(\mathfrak{X})\geq 1}$ $\lvert\{$ 
$B$ $\in$ $(\upXul^{k,s,t})^{-1}$ $(\mathfrak{X})\colon$ $(\upX_B,$ $\sigma_B)$ 
$\mathrm{not}$ $\mathrm{balanced}$ $\}\rvert$ $=$ 
(using \ref{eq:ratioofbalancedmatrixrealizationsofanisomorphismtypetoallrealizations}) $=$ $\sum_{\mathfrak{X}\in\im(\upXul^{k,s,t})\colon\beta_1(\mathfrak{X})\geq 1}$ 
$\bigl (1-(\tfrac12)^{\beta_1(\mathfrak{X})}\bigr) \cdot \lvert(\upXul^{k,s,t})^{-1} (\mathfrak{X})\rvert$. 
\end{proof}

The fewer the number $\dom(B)$ of entries specified, the larger an 
entry-specification event $\mathcal{E}_B^{[n-1]^2}$ is (as a set). Any two 
probability measures by definition agree on the largest possible event, the 
entire sample space. The following theorem explores to what extent 
$\Prob_{\chio}$ and $\Prob_{\lcf}$ agree on successively smaller entry-specification 
events, descending down to as much as six specifications. 

\begin{theorem}[number of exceptions to equality of 
$\Prob_{\chio}$ and $\Prob_{\lcf}$ on large entry-specification events]
\label{thm:comparativecountingtheorem}
In the following statements let $\emptyset\subseteq I \subseteq [n-1]^2$, 
$B\in \{0,\pm\}^I$ and $\mathcal{E}_B:=\mathcal{E}_B^{[n-1]^2}$.  
\begin{enumerate}[label={\rm(Ex\arabic{*})}, start=3]
\item\label{comparativecountingtheorem:item:uptto3entries} For each of 
the $\sum_{0\leq k \leq 3} 3^k \cdot \binom{(n-1)^2}{k}\sim \frac92 \cdot n^6$ 
possible events $\mathcal{E}_B$ with $0\leq \dom(B) \leq 3$, it 
is true that $\Prob_{\chio}[\mathcal{E}_B] = \Prob_{\lcf} [\mathcal{E}_B] 
= (\frac12)^{\dom(B)+\supp(B)}$. \\
\item\label{comparativecountingtheorem:item:fourentriesspecified}
Among the $3^4 \cdot \binom{(n-1)^2}{4} \sim \frac{27}{8} \cdot n^8$ possible events 
$\mathcal{E}_B$ with $\dom(B)=4$, there  are precisely  
$\lvert \mathcal{F}^{\mathrm{M}} (4,n) \rvert 
= 2^4 \cdot \binom{n-1}{2}\cdot \binom{n-1}{2} \sim 4\cdot n^4$ events 
for which $\Prob_{\chio}[\mathcal{E}_B] = \Prob_{\lcf} [\mathcal{E}_B]$ does not 
hold. Of these, we have $\tfrac{\lvert\mathcal{F}_{\cdot 0}^{\mathrm{M}}(4,n)\rvert}{\lvert\mathcal{F}^{\mathrm{M}}(4,n)\rvert} = \tfrac{\lvert\mathcal{F}_{\cdot 2}^{\mathrm{M}}(4,n)\rvert}{\lvert\mathcal{F}^{\mathrm{M}}(4,n)\rvert} = \tfrac12$.  
\item\label{comparativecountingtheorem:item:fiveentriesspecified}
Among the $3^5 \cdot \binom{(n-1)^2}{5} \sim \frac{81}{40} \cdot n^{10}$ 
different events $\mathcal{E}_B$ with $\dom(B) = 5$, there are precisely  
$\lvert \mathcal{F}^{\mathrm{M}}(5,n) \rvert = 
48\cdot ((n-1)^2 - 4) \cdot \binom{n-1}{2}\cdot \binom{n-1}{2}\sim 12\cdot n^6$ 
events for which $\Prob_{\chio}[\mathcal{E}_B] = \Prob_{\lcf} [\mathcal{E}_B]$ does 
not hold. Of these, we have 
$\tfrac{\lvert \mathcal{F}_{\cdot 0}^{\mathrm{M}}(5,n) \rvert}{\lvert \mathcal{F}^{\mathrm{M}}(5,n) \rvert} = \tfrac{\lvert \mathcal{F}_{\cdot 2}^{\mathrm{M}}(5,n) \rvert}{\lvert \mathcal{F}^{\mathrm{M}}(5,n) \rvert} = \tfrac12$. 
\item\label{comparativecountingtheorem:item:sixentriesspecified} Among the 
$3^6 \cdot \binom{(n-1)^2}{6} \sim \frac{81}{80} n^{12}$ different events 
$\mathcal{E}_B$ with $\dom(B) = 6$, there are precisely 
$\lvert \mathcal{F}^{\mathrm{M}}(6,n) \rvert = 
18 n^8 - 180 n^7 + \frac{1868}{3} n^6 - \frac{2176}{3} n^5 - 
\frac{754}{3} n^4  + \frac{428}{3} n^3 + \frac{8144}{3} n^2 - 
\frac{11536}{3} n + 1504 \sim 18n^8$ events for which 
$\Prob_{\chio}[\mathcal{E}_B] = \Prob_{\lcf} [\mathcal{E}_B]$ does not 
hold. Of these, we have $\lvert \mathcal{F}_{\cdot 0}^{\mathrm{M}}(6,n) \rvert = 
9 n^8 - 90 n^7 + \tfrac{934}{3} n^6 - 360 n^5 - \tfrac{449}{3} n^4 + 154 n^3 + 
\tfrac{3664}{3} n^2 - 1816 n + 720 \sim 9n^8$, 
$\lvert \mathcal{F}_{\cdot 2}^{\mathrm{M}}(6,n) \rvert =
9 n^8 - 90 n^7 + \tfrac{934}{3} n^6 - 368 n^5  - \tfrac{233}{3} n^4 - 94 n^3 + 
\tfrac{4888}{3} n^2 - 2136 n + 816 \sim 9n^8$, and 
$\lvert \mathcal{F}_{\cdot 4}^{\mathrm{M}}(6,n) \rvert$ $=$ 
$\tfrac83 n^5 - 24 n^4 + \tfrac{248}{3} n^3 - 136 n^2 + \tfrac{320}{3} n - 32 \sim 
\tfrac83 n^5$, hence in particular 
$\lim_{n\rightarrow\infty} \tfrac{\lvert \mathcal{F}_{\cdot 0}^{\mathrm{M}}(6,n) \rvert}{\lvert \mathcal{F}^{\mathrm{M}}(6,n) \rvert} = 
\lim_{n\rightarrow\infty} \tfrac{\lvert \mathcal{F}_{\cdot 2}^{\mathrm{M}}(6,n) \rvert}{\lvert \mathcal{F}^{\mathrm{M}}(6,n) \rvert} = \tfrac12$ and  
$\lim_{n\rightarrow\infty} \tfrac{\lvert \mathcal{F}_{\cdot 4}^{\mathrm{M}}(6,n) \rvert}{\lvert \mathcal{F}^{\mathrm{M}}(6,n) \rvert} = 0$.
\end{enumerate}
\end{theorem}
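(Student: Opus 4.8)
\emph{Proof proposal.}

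The plan is to run everything through four results already in place: (i) item~\ref{relationbeweenchiomeasureandlazycoinflipmeasuregovernedbyfirstbettinumber} of Theorem~\ref{thm:graphtheoreticalcharacterizationofthechiomeasure}, by which $\Prob_{\chio}[\mathcal{E}_B] = \Prob_{\lcf}[\mathcal{E}_B]$ fails precisely when $\beta_1(\upX_B)\geq 1$, the ratio being $2^{\beta_1(\upX_B)}$ whenever $\Prob_{\chio}[\mathcal{E}_B]>0$; (ii) the description of $\mathcal{F}^{\mathrm{M}}(k,n)$ as the disjoint union of the preimages $(\upXul^{k,n,n})^{-1}(\mathfrak{X})$ over the isomorphism types $\mathfrak{X}\in\mathcal{F}^{\mathrm{G}}(k,n)$ (Corollaries~\ref{cor:isomorphismtypesforwhichequalityofmeasuresofentryspecificationeventsfails} and~\ref{cor:partitionsoffailuresets}); (iii) the explicit cardinalities of these preimages in Theorem~\ref{thm:countingmatrixrealizations}; and (iv) the Betti-number split of Lemma~\ref{lem:forfixedisomorphismtyperatioofbalancedrealizationsdependsonthebettinumberalone}. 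Once these are invoked, the four items become pure arithmetic. One preliminary remark: distinct $B$ yield distinct events $\mathcal{E}_B$ (the set $\mathcal{E}_B$ recovers $\Dom(B)$ as the locus of coordinates constant over $\mathcal{E}_B$, and then $B$ itself), so the number of events with $\dom(B)=k$ is $3^k\binom{(n-1)^2}{k}$, and the stated asymptotics are read off from $\binom{(n-1)^2}{k}\sim n^{2k}/k!$.

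For item~\ref{comparativecountingtheorem:item:uptto3entries}: when $\dom(B)\leq 3$ the graph $\upX_B$ has at most three edges, hence is a forest, so $\beta_1(\upX_B)=0$ and $\Prob_{\chio}[\mathcal{E}_B]=\Prob_{\lcf}[\mathcal{E}_B]=(\tfrac12)^{\dom(B)+\supp(B)}$ by Lemma~\ref{lem:valueoflazycoinflipdistribution} (equivalently $\mathcal{F}^{\mathrm{M}}(k,n)=\emptyset$ for $k\leq 3$ by Corollary~\ref{cor:isomorphismtypesforwhichequalityofmeasuresofentryspecificationeventsfails}.\ref{item:failuresetofisomorphismtypesk3}). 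For items~\ref{comparativecountingtheorem:item:fourentriesspecified} and~\ref{comparativecountingtheorem:item:fiveentriesspecified}: Corollary~\ref{cor:partitionsoffailuresets}.\ref{it:partitionoffailuresets:kequals4}--\ref{it:partitionoffailuresets:kequals5} lists the types and Theorem~\ref{thm:countingmatrixrealizations} their sizes, giving $\lvert\mathcal{F}^{\mathrm{M}}(4,n)\rvert = \xi_n = 2^4\binom{n-1}{2}^2$ and, by summing the four formulas in Theorem~\ref{thm:countingmatrixrealizations}.\ref{item:numberofrealizationsofnonforestswhenkequals5}, $\lvert\mathcal{F}^{\mathrm{M}}(5,n)\rvert = \bigl(4(n-3)+8(n-3)+(n-3)^2+2(n-3)^2\bigr)\xi_n = 3(n-3)(n+1)\xi_n = 48\bigl((n-1)^2-4\bigr)\binom{n-1}{2}^2$. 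Since for $k\in\{4,5\}$ every occurring type has $\beta_1=1$ (Corollary~\ref{cor:ratiosandvaluesofpchioandplcffortheisomorphismtypesforwhichequalityfails}.\ref{it:decompositionsofgraphfailuresets}), item~\ref{eq:ratioofbalancedmatrixrealizationsofanisomorphismtypetoallrealizations} of Lemma~\ref{lem:forfixedisomorphismtyperatioofbalancedrealizationsdependsonthebettinumberalone} makes exactly half of each preimage balanced, whence $\lvert\mathcal{F}_{\cdot 0}^{\mathrm{M}}\rvert = \lvert\mathcal{F}_{\cdot 2}^{\mathrm{M}}\rvert = \tfrac12\lvert\mathcal{F}^{\mathrm{M}}\rvert$ in both cases; the event counts $3^4\binom{(n-1)^2}{4}$ and $3^5\binom{(n-1)^2}{5}$ and their asymptotics are immediate.

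The substantial item is~\ref{comparativecountingtheorem:item:sixentriesspecified}. By Corollary~\ref{cor:partitionsoffailuresets}.\ref{it:partitionoffailuresets:kequals6}, $\lvert\mathcal{F}^{\mathrm{M}}(6,n)\rvert$ is the sum of the nineteen explicit formulas~\ref{numberofmatrixrealizationsoftype:oneisolatedvertex}--\ref{numberofmatrixrealizationsoftype:twoadditionaldisjointedgesdisjointfromC4} of Theorem~\ref{thm:countingmatrixrealizations}. Put $x:=n-3$, $y:=\binom{n-3}{2}$, $\xi_n=2^4\binom{n-1}{2}^2=4(n-1)^2(n-2)^2$. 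The seventeen types other than $K^{2,3}$ and $C^6$ have counts equal to $\xi_n$ times a polynomial in $x,y$, and collecting monomials these add up to $(10x+72x^2+72y+90xy+18y^2)\,\xi_n$; the two exceptional types contribute $2^7\binom{n-1}{2}\binom{n-1}{3}$ and $2^6\lvert\Cir(6,n)\rvert = 2^6\cdot 6\binom{n-1}{3}^2$ (using $\lvert\Cir(6,n)\rvert = 6\binom{n-1}{3}^2$ from Lemma~\ref{lem:numberofmatrixcircuitsofgivenlengthwithingivencartesianproduct}). Substituting and expanding gives the claimed degree-$8$ polynomial, with leading term $18y^2\xi_n\sim 18n^8$. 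For the refinement: $K^{2,3}$ — connected with $f_0=5$, $f_1=6$, hence $\beta_1=2$ — is the only one of the nineteen types of Corollary~\ref{cor:isomorphismtypesforwhichequalityofmeasuresofentryspecificationeventsfails}.\ref{item:failuresetofisomorphismtypesk6} with $\beta_1\neq 1$ (cf.\ Corollary~\ref{cor:ratiosandvaluesofpchioandplcffortheisomorphismtypesforwhichequalityfails}.\ref{it:decompositionsofmatrixfailuresets}), so item~\ref{eq:ratioofbalancedmatrixrealizationsofanisomorphismtypetoallrealizations} of Lemma~\ref{lem:forfixedisomorphismtyperatioofbalancedrealizationsdependsonthebettinumberalone} gives $\lvert\mathcal{F}_{\cdot 4}^{\mathrm{M}}(6,n)\rvert = (\tfrac12)^2\cdot 2^7\binom{n-1}{2}\binom{n-1}{3} = 2^5\binom{n-1}{2}\binom{n-1}{3} = \tfrac83(n-1)^2(n-2)^2(n-3)$, while items~\ref{eq:expansionofamatrixfailuresetwithapositiveratiobyisomorphismtypes}--\ref{eq:expansionofamatrixfailuresetwithzeroreatiobyisomorphismtypes} give $\lvert\mathcal{F}_{\cdot 2}^{\mathrm{M}}(6,n)\rvert = \tfrac12\bigl(\lvert\mathcal{F}^{\mathrm{M}}(6,n)\rvert - 2^7\binom{n-1}{2}\binom{n-1}{3}\bigr)$ and $\lvert\mathcal{F}_{\cdot 0}^{\mathrm{M}}(6,n)\rvert = \lvert\mathcal{F}^{\mathrm{M}}(6,n)\rvert - \lvert\mathcal{F}_{\cdot 2}^{\mathrm{M}}(6,n)\rvert - \lvert\mathcal{F}_{\cdot 4}^{\mathrm{M}}(6,n)\rvert$. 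Expanding these two produces the stated degree-$8$ polynomials; since their difference $\lvert\mathcal{F}_{\cdot 0}^{\mathrm{M}}(6,n)\rvert - \lvert\mathcal{F}_{\cdot 2}^{\mathrm{M}}(6,n)\rvert = \tfrac34\cdot 2^7\binom{n-1}{2}\binom{n-1}{3}$ and $\lvert\mathcal{F}_{\cdot 4}^{\mathrm{M}}(6,n)\rvert$ both have degree $5$, whereas $\lvert\mathcal{F}^{\mathrm{M}}(6,n)\rvert$ has degree $8$, the ratios $\lvert\mathcal{F}_{\cdot 0}^{\mathrm{M}}(6,n)\rvert/\lvert\mathcal{F}^{\mathrm{M}}(6,n)\rvert$ and $\lvert\mathcal{F}_{\cdot 2}^{\mathrm{M}}(6,n)\rvert/\lvert\mathcal{F}^{\mathrm{M}}(6,n)\rvert$ tend to $\tfrac12$ and $\lvert\mathcal{F}_{\cdot 4}^{\mathrm{M}}(6,n)\rvert/\lvert\mathcal{F}^{\mathrm{M}}(6,n)\rvert$ to $0$; the total count $3^6\binom{(n-1)^2}{6}\sim\tfrac{81}{80}n^{12}$ is immediate.

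The only genuine difficulty is bookkeeping: the nineteen-term sum and the subsequent polynomial expansions are error-prone. Two internal checks keep the risk down: the linear relations~\ref{linearrelationbetweent2andt3:kequals5}--\ref{linearrelationbetweent16tot18:kequals6} of Lemma~\ref{lem:relationsamongthenumbersofmatrixrealizations}, which the individual type counts feeding the sum must satisfy (and which are already verified in the excerpt), and the self-consistency $\lvert\mathcal{F}_{\cdot 0}^{\mathrm{M}}(6,n)\rvert + \lvert\mathcal{F}_{\cdot 2}^{\mathrm{M}}(6,n)\rvert + \lvert\mathcal{F}_{\cdot 4}^{\mathrm{M}}(6,n)\rvert = \lvert\mathcal{F}^{\mathrm{M}}(6,n)\rvert$; a brute-force enumeration at one small value of $n$ pins down the low-order coefficients. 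Everything else is a mechanical application of the cited structural results.
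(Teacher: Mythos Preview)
Your proof is correct. The difference from the paper is in how the raw counts $\lvert\mathcal{F}^{\mathrm{M}}(5,n)\rvert$ and $\lvert\mathcal{F}^{\mathrm{M}}(6,n)\rvert$ are obtained. You sum the type-by-type preimage sizes of Theorem~\ref{thm:countingmatrixrealizations}; the paper instead argues directly, \emph{without} invoking Theorem~\ref{thm:countingmatrixrealizations} at all. For $k=5$ it observes that any $I\in\binom{[n-1]^2}{5}$ contains at most one matrix-$4$-circuit, so one simply multiplies $2^4\binom{n-1}{2}^2$ by the $3\cdot((n-1)^2-4)$ free choices for the fifth entry. For $k=6$ it uses a short inclusion--exclusion: writing $h_{\geq}(n)=2^4\binom{n-1}{2}^2\cdot 3^2\binom{(n-1)^2-4}{2}$ for the number of $B$ containing a marked matrix-$4$-circuit, it subtracts $3\cdot h_{K^{2,3}}(n)$ (since $K^{2,3}$ is the unique type with more than one $C^4$, and it has exactly three), then adds $h_{K^{2,3}}(n)$ and $h_{C^6}(n)$ back. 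The paper actually records your approach too, in \S\ref{subsubsec:alternativechecks}, as an ``inclusion-exclusion-free (but, all told, much more laborious) alternative route'' that serves as a consistency check on both sides. The Betti-number split you use for the $\mathcal{F}_{\cdot\ell}$ pieces via Lemma~\ref{lem:forfixedisomorphismtyperatioofbalancedrealizationsdependsonthebettinumberalone} is exactly what the paper does as well.
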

\begin{proof}
The total numbers of entry specification events mentioned at the beginning of 
\ref{comparativecountingtheorem:item:fourentriesspecified}--\ref{comparativecountingtheorem:item:sixentriesspecified}, and all the asymptotic equalities are easily 
checked, so we do not have to say more about them. 

As to \ref{comparativecountingtheorem:item:uptto3entries}, this follows 
immediately from \ref{item:failuresetofisomorphismtypesk3} in Corollary 
\ref{cor:isomorphismtypesforwhichequalityofmeasuresofentryspecificationeventsfails}. 
As to \ref{comparativecountingtheorem:item:fourentriesspecified}, the claimed 
value of $\lvert \mathcal{F}^{\mathrm{M}}(4,n)\rvert$ is true by 
\ref{it:partitionoffailuresets:kequals4} in Corollary 
\ref{cor:partitionsoffailuresets} combined with Lemma \ref{lem:numberofmatrixcircuitsofgivenlengthwithingivencartesianproduct} and the obvious fact that 
$\lvert(\upXul^{4,n,n})^{-1}\ref{item:comparisonproofcaseLhassize4:4circuit}\rvert = 
2^4\cdot \lvert \Cir(4,n)\rvert$. The claimed ratios can be deduced as follows: note 
that $\{ \mathfrak{X}\in\im(\upXul^{4,n,n})\colon\beta_1(\mathfrak{X})\geq 1\} = 
\{ \ref{item:comparisonproofcaseLhassize4:4circuit} \}$ by Corollary \ref{cor:isomorphismtypesforwhichequalityofmeasuresofentryspecificationeventsfails}, hence $\lvert\mathcal{F}_{\cdot 0}^{\mathrm{M}}(4,n)\rvert$ $=$ (by \ref{eq:expansionofamatrixfailuresetwithzeroreatiobyisomorphismtypes}) $=$ $\bigl(1-(\tfrac12)^{\beta_1\ref{item:comparisonproofcaseLhassize4:4circuit}}\bigr) \cdot 
\lvert (\upXul^{4,n,n})^{-1}\ref{item:comparisonproofcaseLhassize4:4circuit} \rvert$ 
$=$ $\tfrac12$ $\cdot$ $\lvert (\upXul^{4,n,n})^{-1}\ref{item:comparisonproofcaseLhassize4:4circuit} \rvert$ $=$ (by \ref{it:partitionoffailuresets:kequals4} in 
Corollary \ref{cor:partitionsoffailuresets}) $=$ 
$\tfrac12\cdot \lvert\mathcal{F}^{\mathrm{M}}(4,n)\rvert$, which together with 
the equation $\mathcal{F}^{\mathrm{M}}(4,n) = \mathcal{F}_{\cdot 0}^{\mathrm{M}}(4,n) 
\sqcup \mathcal{F}_{\cdot 2}^{\mathrm{M}}(4,n)$ from 
\ref{it:decompositionsofmatrixfailuresets} in Corollary \ref{cor:ratiosandvaluesofpchioandplcffortheisomorphismtypesforwhichequalityfails} proves both 
$\lvert\mathcal{F}_{\cdot 0}^{\mathrm{M}}(4,n)\rvert / \lvert\mathcal{F}^{\mathrm{M}}(4,n)\rvert$ $=$ $\tfrac12$ and $\lvert\mathcal{F}_{\cdot 2}^{\mathrm{M}}(4,n)\rvert /
\lvert\mathcal{F}^{\mathrm{M}}(4,n)\rvert$ $=$ $\tfrac12$.  

As to \ref{comparativecountingtheorem:item:fiveentriesspecified}, the number 
stated first is obvious. The claimed value of 
$\lvert \mathcal{F}^{\mathrm{G}}(5,n)\rvert$ can be deduced as follows: by 
\ref{relationbeweenchiomeasureandlazycoinflipmeasuregovernedbyfirstbettinumber} in 
Theorem \ref{thm:graphtheoreticalcharacterizationofthechiomeasure} we have 
$\Prob_{\chio}[\mathcal{E}_B] \neq \Prob_{\lcf}[\mathcal{E}_B]$ if and only if 
$\beta_1(\upX_{B}) > 0$. Since due to Definition \ref{def:XBandecXB} we have 
$0\leq f_1(\upX_{B}) \leq \dom(B) = \lvert I \rvert = 5$, it is 
easy to see that $\beta_1(\upX_B) \leq 1$. Therefore 
$\Prob_{\chio}[\mathcal{E}_B] \neq \Prob_{\lcf}[\mathcal{E}_B]$ if and only if 
$C^4\hookrightarrow \upX_{B}$. The latter property is equivalent to the existence 
of a matrix-$4$-circuit $S\subseteq I$ with $S\subseteq\Supp(B)$. Note that every 
$I\in\binom{[n-1]^2}{5}$ contains at most one matrix-$4$-circuit. Therefore, the 
number of all $I\in\binom{[n-1]^2}{5}$ with 
$\Prob_{\chio}[\mathcal{E}_B] \neq \Prob_{\lcf}[\mathcal{E}_B]$ is equal to the 
number of all matrix-$4$-circuits $S\in \binom{[n-1]^2}{4}$ with 
$S\subseteq\Supp(B)$, multiplied by the number of possibilities to choose an 
arbitrary position $u\in [n-1]^2\setminus S$ and an arbitrary $B[u]\in \{0,\pm\}$, 
i.e. $2^4\cdot \binom{n-1}{2}^2 \cdot 3 \cdot ((n-1)^2-4) 
= 48 \cdot ((n-1)^2-4)\cdot\binom{n-1}{2}^2$. This proves the second claim 
in \ref{comparativecountingtheorem:item:fiveentriesspecified}. The claimed ratios 
can be deduced as follows: note that 
$\{ \mathfrak{X}\in\im(\upXul^{5,n,n})\colon\beta_1(\mathfrak{X})\geq 1\} = 
\{ \ref{item:comparisonproofcaseLhassize6:oneisolatedvertex}, 
\ref{item:comparisonproofcaseLhassize6:oneadditionaledgeintersectingC4}, 
\ref{item:comparisonproofcaseLhassize6:twoisolatedvertices}, 
\ref{item:comparisonproofcaseLhassize6:C4withoneadditionaldisjointedge}
\}$ by Corollary \ref{cor:isomorphismtypesforwhichequalityofmeasuresofentryspecificationeventsfails}, hence $\lvert\mathcal{F}_{\cdot 0}^{\mathrm{M}}(5,n)\rvert$ $=$ (by \ref{eq:expansionofamatrixfailuresetwithzeroreatiobyisomorphismtypes}) $=$ 
$\bigl(1-(\tfrac12)^{\beta_1\ref{item:comparisonproofcaseLhassize6:oneisolatedvertex}}\bigr) \cdot 
\lvert (\upXul^{5,n,n})^{-1}\ref{item:comparisonproofcaseLhassize6:oneisolatedvertex} \rvert$ $+$ 
$\bigl(1-(\tfrac12)^{\beta_1\ref{item:comparisonproofcaseLhassize6:oneadditionaledgeintersectingC4}}\bigr) \cdot \lvert (\upXul^{5,n,n})^{-1}\ref{item:comparisonproofcaseLhassize6:oneadditionaledgeintersectingC4} \rvert$ 
$+$ 
$\bigl(1-(\tfrac12)^{\beta_1\ref{item:comparisonproofcaseLhassize6:twoisolatedvertices}}\bigr) \cdot \lvert (\upXul^{5,n,n})^{-1}\ref{item:comparisonproofcaseLhassize6:twoisolatedvertices} \rvert$ $+$
$\bigl(1-(\tfrac12)^{\beta_1\ref{item:comparisonproofcaseLhassize6:C4withoneadditionaldisjointedge}}\bigr) \cdot \lvert (\upXul^{5,n,n})^{-1}\ref{item:comparisonproofcaseLhassize6:C4withoneadditionaldisjointedge} \rvert$  
$=$ 
$\tfrac12$ $\bigl ($ $\lvert (\upXul^{5,n,n})^{-1}\ref{item:comparisonproofcaseLhassize6:oneisolatedvertex} \rvert$ $+$
$\lvert (\upXul^{5,n,n})^{-1}\ref{item:comparisonproofcaseLhassize6:oneadditionaledgeintersectingC4} \rvert$ $+$
$\lvert (\upXul^{5,n,n})^{-1}\ref{item:comparisonproofcaseLhassize6:twoisolatedvertices}\rvert$ $+$
$\lvert (\upXul^{5,n,n})^{-1}\ref{item:comparisonproofcaseLhassize6:C4withoneadditionaldisjointedge} \rvert$ $\bigr)$ $=$ (by \ref{it:partitionoffailuresets:kequals5} in 
Corollary \ref{cor:partitionsoffailuresets}) $=$ 
$\tfrac12\cdot \lvert\mathcal{F}^{\mathrm{M}}(5,n)\rvert$, which together with 
the equation $\mathcal{F}^{\mathrm{M}}(5,n) = \mathcal{F}_{\cdot 0}^{\mathrm{M}}(5,n) 
\sqcup \mathcal{F}_{\cdot 2}^{\mathrm{M}}(5,n)$ from 
\ref{it:decompositionsofmatrixfailuresets} in Corollary \ref{cor:ratiosandvaluesofpchioandplcffortheisomorphismtypesforwhichequalityfails} proves both 
$\lvert\mathcal{F}_{\cdot 0}^{\mathrm{M}}(5,n)\rvert / \lvert\mathcal{F}^{\mathrm{M}}(5,n)\rvert$ $=$ $\tfrac12$ and $\lvert\mathcal{F}_{\cdot 2}^{\mathrm{M}}(5,n)\rvert /
\lvert\mathcal{F}^{\mathrm{M}}(5,n)\rvert$ $=$ $\tfrac12$.

As to \ref{comparativecountingtheorem:item:sixentriesspecified}, the claimed 
value of $\lvert \mathcal{F}^{\mathrm{G}}(6,n)\rvert$ can be deduced as follows: 
using \ref{item:failuresetofisomorphismtypesk6} in Corollary 
\ref{cor:isomorphismtypesforwhichequalityofmeasuresofentryspecificationeventsfails}, 
and inspecting the list of isomorphism types in Lemma 
\ref{lem:bipartitenonforestsorderedbytheirfvectors}, we know that equality of the 
measures fails if and only  $C^4\hookrightarrow \upX_B$ or 
$C^6\hookrightarrow \upX_B$. To count the events for which this is true let us define 
$h_{C^6}(n):=\lvert \{ B \in \{0,\pm\}^I\colon 
I\in \binom{[n-1]^2}{6},\quad C^6 \hookrightarrow \upX_B\} \rvert$, 
$h_{K^{2,3}}(n) := \lvert \{ B \in \{0,\pm\}^I\colon 
I\in \binom{[n-1]^2}{6},\quad K^{2,3}\hookrightarrow \upX_{B}\}\rvert$ and 
$h_{C^4, \neg K^{2,3}}(n) := \lvert \{ B \in \{0,\pm\}^I\colon 
I\in \binom{[n-1]^2}{6},\quad C^4\hookrightarrow \upX_{B},\quad 
K^{2,3} \not\hookrightarrow \upX_{B}\}\rvert$. 
Since for a graph $X$ with at most six edges the properties 
$C^6\hookrightarrow X$, $K^{2,3}\hookrightarrow X$, and $C^4\hookrightarrow X$ 
but $K^{2,3} \not\hookrightarrow X$ are mutually exclusive, it follows that  
\begin{equation}
\label{eq:numberofentryspecificationeventswithunequalmeasuressizeLis6}
\lvert \mathcal{F}^{\mathrm{M}}(6,n)\rvert := 
h_{C^6}(n) + h_{C^4,\ \neg K^{2,3}}(n) + h_{K^{2,3}}(n)\quad .
\end{equation}
Lemma \ref{lem:numberofmatrixcircuitsofgivenlengthwithingivencartesianproduct} 
implies that $h_{C^6}(n) = 2^6\cdot 3!\cdot \binom{n-1}{3}^2$, the factor of $2^6$ 
accounting for the fact that the property $C^6\hookrightarrow \upX_{B}$ is 
indifferent to the choice of the six signs in $B$. Moreover, evidently, 
$h_{K^{2,3}}(n) = 2\cdot 2^6 \cdot \binom{n-1}{3} \cdot \binom{n-1}{2}$, 
where the first $2$ accounts for the two possibilities 
$\lvert \upp_1(I) \rvert = 2$ and $\lvert \upp_2(I) \rvert = 3$ 
or 
$\lvert \upp_1(I) \rvert = 3$ and $\lvert \upp_2(I) \rvert = 2$. 

In order to compute $h_{C^4,\ \neg K^{2,3}}(n)$, we will employ a simple 
inclusion-exclusion-argument: for any of the $\binom{n-1}{2}^2$ choices 
$1\leq i_1 < i_2 \leq n-1$ and $1\leq j_1 < j_2 \leq n-1$ for the position of 
a matrix-$4$-circuit $S = \{(i_1,j_1), (i_1,j_2), (i_2,j_1), (i_2,j_2)\}$, the number 
of distinct $I\subseteq [n-1]^2$ with $\lvert I\rvert =6$ such that $B$ has an 
matrix-$4$-circuit \emph{at least} at the positions $(i_1,j_1)$, $(i_1,j_2)$, 
$(i_2,j_1)$ and $(i_2,j_2)$ is $ 2^4 \cdot  3^2 \cdot \binom{(n-1)^2-4}{2}$, 
where the factor $2^4$ accounts for the mandatory ($\pm$)-values of the $4$-circuit 
entries, the factor $3^2$ accounts for the arbitrary $\{0,\pm\}$-values of the 
two non-$4$-circuit-entries and the factor $\binom{(n-1)^2-4}{2}$ accounts for the 
arbitrary positions of the two non-$4$-circuit-entries in $[n-1]^2\setminus S$. 
Summing this expression over all $\binom{n-1}{2}^2$ possible choices of $S$, we 
obtain $h_{\geq}(n) := 2^4 \cdot \binom{n-1}{2}^2 \cdot  3^2 \cdot 
\binom{(n-1)^2-4}{2}$. But this is not $h_{C^4,\ \neg K^{2,3}}(n)$ yet: from the list in 
Lemma \ref{lem:bipartitenonforestsorderedbytheirfvectors} we see that there is 
precisely one type which contains more than one copy of $C^4$, namely $K^{2,3}$, 
which contains exactly three copies. Therefore, in $h_{\geq}(n)$ every $I$ with 
$\upX_{B} \cong K^{2,3}$ has been counted exactly three times, and we did not 
overcount any of the realizations of the other isomorphism types. Thus, in order to 
arrive at $h_{C^4,\ \neg K^{2,3}}(n)$, we have to subtract three times $h_{K^{2,3}}(n)$. 
Hence, according to \cite{mathematica6.0}, 
{\scriptsize
\begin{equation}\label{eq:numberofmatriceswith6elementdomainandassociatedgraphwithC4butnotK23:viaexclusioninclusion}
h_{C^4,\ \neg K^{2,3}}(n) 
= h_{\geq}(n) - 3\cdot h_{K^{2,3}}(n) = 18 n^8 - 180 n^7 + 612 n^6 - 608 n^5 
- 774 n^4 + 1348 n^3 + 1200 n^2 - 2864 n + 1248. 
\end{equation} 
}
Substituting this into 
\eqref{eq:numberofentryspecificationeventswithunequalmeasuressizeLis6}, one indeed 
arrives at the claimed value of $\lvert \mathcal{F}^{\mathrm{M}}(6,n)\rvert$.

As to $\lvert\mathcal{F}_{\cdot 0}^{\mathrm{M}}(6,n)\rvert$, we can use 
\ref{eq:expansionofamatrixfailuresetwithapositiveratiobyisomorphismtypes} 
in Lemma \ref{lem:forfixedisomorphismtyperatioofbalancedrealizationsdependsonthebettinumberalone} to calculate $\lvert \mathcal{F}_{\cdot 0}^{\mathrm{M}}(6,n)\rvert$ $=$ 
$\sum_{\mathfrak{X}\in \im(\upXul^{6,n,n})\colon \beta_1(\mathfrak{X}) = 1}$ 
$(1-(\tfrac12)^1)$ $\cdot$ $\lvert (\upX^{6,n,n})^{-1}(\mathfrak{X}) \rvert$ $+$ 
$\sum_{\mathfrak{X}\in \im(\upXul^{6,n,n})\colon\beta_1(\mathfrak{X}) = 2}$ 
$(1-(\tfrac12)^2)$ $\cdot$ $\lvert (\upX^{6,n,n})^{-1}(\mathfrak{X}) \rvert$
$=$ (from the list in Lemma \ref{lem:bipartitenonforestsorderedbytheirfvectors}) $=$ 
 $\tfrac12$ $\cdot$ $\sum_{\mathfrak{X}\in \{\ref{item:comparisonproofcaseLhassize6:oneisolatedvertex},\dotsc,\ref{item:comparisonproofcaseLhassize6:twoadditionaldisjointedgesdisjointfromC4}\}\setminus\{\ref{item:comparisonproofcaseLhassize6:XBLisomorphictoK23}\}}$ $\lvert (\upX^{6,n,n})^{-1}(\mathfrak{X}) \rvert$ $+$ $\tfrac34$ $\cdot$ $\lvert (\upX^{6,n,n})^{-1}\ref{item:comparisonproofcaseLhassize6:XBLisomorphictoK23} \rvert$ $=$ $\tfrac12 \cdot(h_{C^4,\, \neg K^{2,3}}(n) + 
h_{C^6}(n)) + \tfrac34\cdot h_{K^{2,3}}(n)$, and it can be checked that this equals 
the claimed value of $\lvert \mathcal{F}_{\cdot 0}^{\mathrm{M}}(6,n)\rvert$.

Similarly, $\lvert \mathcal{F}_{\cdot 2}^{\mathrm{M}}(6,n)\rvert$ $=$ 
$\sum_{\mathfrak{X}\in \im(\upXul^{6,n,n})\colon \beta_1(\mathfrak{X}) = 1}$ $(\tfrac12)^1$ $\cdot$ 
$\lvert (\upX^{6,n,n})^{-1}(\mathfrak{X}) \rvert$ $=$ (from the list in 
Lemma \ref{lem:bipartitenonforestsorderedbytheirfvectors}) $=$ 
 $(\tfrac12)$ $\cdot$ $\sum_{\mathfrak{X}\in \{\ref{item:comparisonproofcaseLhassize6:oneisolatedvertex},\dotsc,\ref{item:comparisonproofcaseLhassize6:twoadditionaldisjointedgesdisjointfromC4}\}\setminus\{\ref{item:comparisonproofcaseLhassize6:XBLisomorphictoK23}\}}$ $\lvert (\upX^{6,n,n})^{-1}(\mathfrak{X}) \rvert$ $=$ $\tfrac12$ $($ $h_{C^4,\, \neg K^{2,3}}(n)$ $+$ $h_{C^6}(n)$ $)$, 
and it can be checked that this is equal to the value of 
$\lvert \mathcal{F}_{\cdot 2}^{\mathrm{M}}(6,n)\rvert$ which is claimed in 
\ref{comparativecountingtheorem:item:sixentriesspecified}. Finally, 
$\lvert\mathcal{F}_{\cdot 4}^{\mathrm{M}}(6,n)\rvert$ $=$ 
$\sum_{\mathfrak{X}\in \im(\upXul^{6,n,n})\colon\beta_1(\mathfrak{X})=2)}$ $(\tfrac12)^2$ $\cdot$ 
$\lvert (\upX^{6,n,n})^{-1}(\mathfrak{X}) \rvert$ $=$ (from the list in 
Lemma \ref{lem:bipartitenonforestsorderedbytheirfvectors}) $=$ 
 $(\tfrac14)$ $\cdot$ $\lvert (\upX^{6,n,n})^{-1}\ref{item:comparisonproofcaseLhassize6:XBLisomorphictoK23} \rvert$ $=$ $\tfrac14\cdot h_{K^{2,3}}$, and this equals the 
value of $ \lvert \mathcal{F}_{\cdot 4}^{\mathrm{M}}(6,n)\rvert$ claimed in 
\ref{comparativecountingtheorem:item:sixentriesspecified}. 
\end{proof}

\subsubsection{Alternative checks using Theorem \ref{thm:countingmatrixrealizations}}\label{subsubsec:alternativechecks}

We did not need Theorem \ref{thm:countingmatrixrealizations} in our proof of 
Theorem \ref{thm:comparativecountingtheorem}. It can, nevertheless, provide 
additional security since via Corollary \ref{cor:partitionsoffailuresets} and 
Theorem \ref{thm:countingmatrixrealizations} one may take an 
inclusion-exclusion-free (but, all told, much more laborious) alternative route 
to the claimed values of $\lvert \mathcal{F}^{\mathrm{M}}(5,n)\rvert$ and 
$\lvert \mathcal{F}^{\mathrm{M}}(6,n)\rvert$. As to the claimed value 
of $\lvert\mathcal{F}^{\mathrm{G}}(5,n)\rvert$, by 
\ref{it:partitionoffailuresets:kequals5} in Corollary 
\ref{cor:partitionsoffailuresets} combined with Theorem 
\ref{thm:countingmatrixrealizations} we have 
$\lvert\mathcal{F}^{\mathrm{G}}(5,n)\rvert$ $=$ 
\ref{numberofmatrixrealizationsoftype:oneisolatedvertex:kequals5} $+$ 
\ref{numberofmatrixrealizationsoftype:oneadditionaledgeintersectingC4:kequals5} 
$+$ \ref{numberofmatrixrealizationsoftype:twoisolatedvertices:kequals5} $+$ 
\ref{numberofmatrixrealizationsoftype:C4withoneadditionaldisjointedge:kequals5} 
$=$ $2^4\cdot \binom{n-1}{2}^2\cdot 
\bigl (  4\cdot (n-3) + 8 \cdot (n-3) + 1\cdot (n-3)^2 + 2\cdot (n-3)^2\bigr)$ $=$ 
$48\cdot ((n-1)^2 - 4)\cdot \binom{n-1}{2} \cdot \binom{n-1}{2}$. 

As to the claimed value of $\lvert\mathcal{F}^{\mathrm{G}}(6,n)\rvert$, by 
\ref{it:partitionoffailuresets:kequals6} in Corollary \ref{cor:partitionsoffailuresets}, the function  $\lvert\mathcal{F}^{\mathrm{G}}(6,n)\rvert$ equals the sum of the 
nineteen functions which were found in \ref{numberofmatrixrealizationsoftype:oneisolatedvertex}--\ref{numberofmatrixrealizationsoftype:twoadditionaldisjointedgesdisjointfromC4} of \ref{item:numberofrealizationsofnonforestswhenkequals6} in Theorem \ref{thm:countingmatrixrealizations}, and 
one can check (e.g. with \cite{mathematica6.0}) that indeed 
$\lvert\mathcal{F}^{\mathrm{G}}(6,n)\rvert = 
\sum_{2\leq k \leq 20} (\mathrm{m}6.\mathrm{t}k) = 
18n^8 - 180n^7 + \tfrac{1868}{3}n^6 - \tfrac{2176}{3}n^5 - \tfrac{754}{3}n^4 + 
\tfrac{428}{3}n^3 - \tfrac{8144}{3}n^2 - \tfrac{11536}{3}n + 1504$. 

Incidentally, let us note that by summing all functions in 
\ref{numberofmatrixrealizationsoftype:oneisolatedvertex}--\ref{numberofmatrixrealizationsoftype:twoadditionaldisjointedgesdisjointfromC4} except 
\ref{numberofmatrixrealizationsoftype:XBLisomorphictoK23} and 
\ref{numberofmatrixrealizationsoftype:threeisolatedvertices} 
(i.e. by summing seventeen functions) one may also check the equation
$h_{C^4,\ \neg K^{2,3}}(n) = h_{\geq}(n) - 3\cdot h_{K^{2,3}}(n) = 
18 n^8 - 180 n^7 + 612 n^6 - 608 n^5 - 774 n^4 + 1348 n^3 + 1200 n^2 - 2864 n + 1248$ 
claimed in the proof above.

\subsubsection{Quantitatively dominant graph-theoretical reasons for 
$\Prob_{\chio}\neq\Prob_{\lcf}$}

Let us note that Theorem \ref{thm:countingmatrixrealizations} 
tells us that of the $\lvert \mathcal{F}^{\mathrm{M}}(6,n) \rvert\in 
\Omega_{n\to\infty} (n^8)$ six-element-entry-specifications which cause non-agreement 
of $\Prob_{\chio}$ and $\Prob_{\lcf}$, most of the failures are concentrated at only 
three out of the nineteen isomorphism types in 
\ref{item:numberofrealizationsofnonforestswhenkequals6}: only the types \ref{item:comparisonproofcaseLhassize6:fourisolatedvertices}, \ref{item:comparisonproofcaseLhassize6:oneadditionaldisjointedgeandtwoisolatedvertices} and 
\ref{item:comparisonproofcaseLhassize6:twoadditionaldisjointedgesdisjointfromC4} 
have a preimage under $\upXul^{6,n,n}$ which is of size $\Omega_{n\to\infty}(n^8)$.  
The quantitative domination of these isomorphism types is, however, a rather slow 
one in that 
\[ \tfrac{\sum_{2\leq k \leq 17} \lvert(\upXul^{6,n,n})^{-1}(\mathrm{t}k)\rvert}{\lvert(\upXul^{6,n,n})^{-1}\ref{item:comparisonproofcaseLhassize6:fourisolatedvertices}\rvert + \lvert(\upXul^{6,n,n})^{-1}\ref{item:comparisonproofcaseLhassize6:oneadditionaldisjointedgeandtwoisolatedvertices}\rvert + \lvert(\upXul^{6,n,n})^{-1}\ref{item:comparisonproofcaseLhassize6:twoadditionaldisjointedgesdisjointfromC4}\rvert} \in \Theta(n^{-1})\quad .\]

\subsubsection{Estimate of the number of failures of equality of $\Prob_{\chio}$ and 
$\Prob_{\lcf}$ for events $\mathcal{E}_B$ with $B\in\{0,\pm\}^I$ and 
$I\in\binom{[n-1]^2}{k}$ and $k$ general}
 
\begin{proposition}[for fixed $k$ the measures 
$\Prob_{\chio}$ and $\Prob_{\lcf}$ agree for almost all entry-specifications]
\label{prop:roughestimationofnumberoffailureevents}
For every fixed $k\geq 1$ we have 
$\lvert \mathcal{F}^{\mathrm{M}}(k,n)\rvert$ $/$ $\lvert \{ B\in \{0,\pm\}^I,\; 
I\in\binom{[n-1]^2}{k}\}\rvert$ $\in$ 
$\mathcal{O}_{n\to\infty}(n^{-2})$\quad .
\end{proposition}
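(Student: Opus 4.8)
The plan is to reduce the statement to a counting argument about the associated bipartite graphs $\upX_B$, for which the key tool is item \ref{relationbeweenchiomeasureandlazycoinflipmeasuregovernedbyfirstbettinumber} in Theorem \ref{thm:graphtheoreticalcharacterizationofthechiomeasure}, which says that $B\in\mathcal{F}^{\mathrm{M}}(k,n)$ if and only if $\beta_1(\upX_B)\geq 1$, i.e.\ if and only if $\upX_B$ contains a circuit. So the first step is to observe that
\begin{equation*}
\lvert \mathcal{F}^{\mathrm{M}}(k,n)\rvert =
\bigl\lvert \{ B\in\{0,\pm\}^I\colon I\in\tbinom{[n-1]^2}{k},\ \upX_B\ \text{not a forest}\}\bigr\rvert
\leq \bigl\lvert \{ B\in\{0,\pm\}^I\colon I\in\tbinom{[n-1]^2}{k},\ C^4\hookrightarrow\upX_B\ \text{or}\ C^{2j}\hookrightarrow\upX_B\ \text{for some}\ 2\leq j\leq \lfloor k/2\rfloor\}\bigr\rvert,
\end{equation*}
since every circuit in a bipartite graph is an even circuit of length at least $4$, and a graph with at most $k$ edges can only contain circuits of length $\leq k$.

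Next I would bound the number of $B$ whose $\upX_B$ contains a fixed even circuit. For a matrix-$2j$-circuit $L\subseteq[n-1]^2$ (with $2\leq j\leq\lfloor k/2\rfloor$), the event $C^{2j}\hookrightarrow\upX_B$ with $L\subseteq\Supp(B)$ forces the $2j$ entries indexed by $L$ to lie in $\{\pm\}$, and the remaining $k-2j$ positions of $I$ can be chosen freely among $[n-1]^2\setminus L$ with arbitrary $\{0,\pm\}$-values. By Lemma \ref{lem:numberofmatrixcircuitsofgivenlengthwithingivencartesianproduct}, $\lvert\Cir(2j,n)\rvert = \binom{n-1}{j}^2\cdot\frac{j!(j-1)!}{2}\in O(n^{2j})$. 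Hence the number of such $B$ (for this value of $j$) is at most
\begin{equation*}
\lvert\Cir(2j,n)\rvert\cdot 2^{2j}\cdot 3^{k-2j}\cdot\binom{(n-1)^2-2j}{k-2j}
\in O(n^{2j})\cdot O(1)\cdot O\!\left(n^{2(k-2j)}\right) = O\!\left(n^{2k-2j}\right)\subseteq O\!\left(n^{2k-4}\right),
\end{equation*}
the worst case being $j=2$. Summing over the finitely many $j$ with $2\leq j\leq\lfloor k/2\rfloor$ (a constant number of terms, since $k$ is fixed) and over the choice of which $2j$ of the $k$ specified positions carry the circuit (again $O(1)$ choices for fixed $k$), one obtains $\lvert\mathcal{F}^{\mathrm{M}}(k,n)\rvert\in O(n^{2k-4})$.

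Finally, the denominator is $\lvert\{B\in\{0,\pm\}^I,\ I\in\binom{[n-1]^2}{k}\}\rvert = 3^k\binom{(n-1)^2}{k}\in \Theta(n^{2k})$, so the ratio is $O(n^{2k-4})/\Theta(n^{2k}) = O(n^{-4})\subseteq O(n^{-2})$, which is even slightly stronger than claimed and in any case suffices. (The cases $k=1,2,3$ are covered trivially since then $\mathcal{F}^{\mathrm{M}}(k,n)=\emptyset$ by \ref{item:failuresetofisomorphismtypesk3} in Corollary \ref{cor:isomorphismtypesforwhichequalityofmeasuresofentryspecificationeventsfails}, making the ratio $0$.) The only mild subtlety — and the step I would be most careful about — is the double-counting when $\upX_B$ contains several circuits: since I am proving an \emph{upper} bound on $\lvert\mathcal{F}^{\mathrm{M}}(k,n)\rvert$ via a union bound over circuit positions, overcounting is harmless and no inclusion–exclusion correction is needed, so the argument stays clean; one just has to phrase the reduction as an inequality rather than an equality.
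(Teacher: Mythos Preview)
Your proof is correct and follows essentially the same route as the paper: reduce $B\in\mathcal{F}^{\mathrm{M}}(k,n)$ to the existence of a matrix-circuit in $\Supp(B)$ via \ref{relationbeweenchiomeasureandlazycoinflipmeasuregovernedbyfirstbettinumber}, then apply a union bound over circuit positions, counting with Lemma~\ref{lem:numberofmatrixcircuitsofgivenlengthwithingivencartesianproduct} and the obvious $2^{2j}\cdot 3^{k-2j}\cdot\binom{(n-1)^2-2j}{k-2j}$ factor for the remaining positions and values. The paper's proof is literally the same computation.

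There is one noteworthy difference: the paper sums over $1\leq j\leq\lfloor k/2\rfloor$ and therefore bounds the numerator by $O(n^{2k-2})$, yielding only the stated $O(n^{-2})$. You correctly restrict to $j\geq 2$ (bipartite graphs have no $2$-circuit), so your dominant term is $j=2$ and you obtain $O(n^{2k-4})$, hence $O(n^{-4})$. This is genuinely sharper, and in fact it answers affirmatively the first question the paper raises in its concluding remarks (``Is it true that $\lvert\mathcal{F}^{\mathrm{M}}(k,n)\rvert / \lvert\{B\in\{0,\pm\}^I,\ I\in\binom{[n-1]^2}{k}\}\rvert\in\mathcal{O}_{n\to\infty}(n^{-4})$ for every fixed $k$?''). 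Your parenthetical remark about ``the choice of which $2j$ of the $k$ specified positions carry the circuit'' is redundant (the sum over $L\in\Cir(2j,n)$ together with the binomial factor already accounts for this), but since you only claim an $O(1)$ contribution it does no harm.
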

\begin{proof}
We will estimate numerator and denominator of this fraction separately. The 
denominator is equal to $3^k\cdot \binom{(n-1)^2}{k} \in \Omega_{n\to\infty}(n^{2k})$. 
Moreover, a very rough estimate suffices to obtain a bound on the numerator which 
nevertheless is sufficiently small to prove that the ratio vanishes: 
$\lvert \mathcal{F}^{\mathrm{M}}(k,n) \rvert$ 
$\By{Theorem \ref{thm:graphtheoreticalcharacterizationofthechiomeasure}.\ref{relationbeweenchiomeasureandlazycoinflipmeasuregovernedbyfirstbettinumber}}{=}$ 
$\lvert \{$ $B$ $\in$ $\{0,\pm\}^I\colon$ 
$I\in\binom{[n-1]^2}{k}$, $B$ contains a matrix-circuit $\} \rvert$  
$=$ $\bigl \lvert$ $\bigcup_{1\leq j \leq \lfloor \tfrac{k}{2} \rfloor}$ 
$\bigcup_{L\in\Cir(2j,n)}$ $\{$ $B$ $\in$ $\{0,\pm\}^I\colon$ $I\in\binom{[n-1]^2}{k}$, 
$L\subseteq \Supp(B)$ $\}$ $\bigr \rvert$ $\leq$ 
$\sum_{1\leq j \leq \lfloor \tfrac{k}{2} \rfloor}$ $\sum_{L\in\Cir(2j,n)}$ 
$\lvert \{$ $B$ $\in$ $\{0,\pm\}^I\colon$ $I$ $\in$ $\binom{[n-1]^2}{k}$, 
$L$ $\subseteq$ $\Supp(B)$ $\} \rvert$ $=$ $\sum_{1\leq j \leq \lfloor \tfrac{k}{2} \rfloor}$ 
$2^{2j}$ $\cdot$ $3^{k-2j}$ $\cdot$ $\binom{(n-1)^2-2j}{k-2j}$ $\cdot$ 
$\lvert \Cir(2j,n) \rvert$ $\By{Lemma \ref{lem:numberofmatrixcircuitsofgivenlengthwithingivencartesianproduct}}{=}$ $\sum_{1\leq j \leq \lfloor \tfrac{k}{2} \rfloor}$  $2^{2j}$ 
$\cdot$ $3^{k-2j}$ $\cdot$ $\binom{(n-1)^2-2j}{k-2j}$ $\cdot$ $\binom{n-1}{j}^2$ 
$\cdot$ $\frac{j!(j-1)!}{2}$ $\in$ $\sum_{1\leq j \leq \lfloor \tfrac{k}{2} \rfloor}$ 
$\mathcal{O}_{n\to\infty}(1)$ $\cdot$ $\mathcal{O}_{n\to\infty}(n^{2k-4j})$ $\cdot$ 
$\mathcal{O}_{n\to\infty}(n^{2j})$ $\cdot$ $\mathcal{O}_{n\to\infty}(1)$  $\subseteq$ 
$\sum_{1\leq j \leq \lfloor \tfrac{k}{2} \rfloor}$ $\mathcal{O}_{n\to\infty}(n^{2k-2j})$ 
$\subseteq$ $\mathcal{O}_{n\to\infty}(n^{2k-2})$.
\end{proof}

\section{Connection to counting singular $\{\pm\}$-matrices}

\subsection{Basic connections}\label{subsec:basicconnections}

The Lemmas \ref{lem:chiocondensationaffectsrankintheleastpossibleway} and 
\ref{lem:uniformmeasureofranksetaschiomeasureofranksetoneranklower}, which are 
consequences of Chio's identity \ref{lem:chioidentity}, are the basic reason why 
the measure $\Prob_{\chio}$ is relevant for the study of singular $\pm$-matrices.

\begin{lemma}[Chio condensation affects rank to the least possible degree]
\label{lem:chiocondensationaffectsrankintheleastpossibleway}
For every integral domain $R$, every $(s,t)\in \Z_{\geq 2}^2$ and 
every $A\in R^{[s]\times [t]}$ with $a_{s,t}\neq 0$ we have 
$\rk(\tfrac12\upC_{(s,t)}(A)) = \rk(A)-1$.
\end{lemma}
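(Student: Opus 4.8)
The plan is to deduce the rank statement directly from Chio's identity (Lemma \ref{lem:chioidentity}), applied not to $A$ itself but to all its square submatrices, using the characterization of rank as the maximal size of a nonvanishing minor (legitimate since $R$ is an integral domain, so determinantal rank, row rank and column rank coincide). First I would set $n_0 := \rk(A)$ and recall that $\tfrac12\upC_{(s,t)}(A) = (\tfrac12 \det\bigl(\begin{smallmatrix} a_{i,j} & a_{i,t} \\ a_{s,j} & a_{s,t}\end{smallmatrix}\bigr))_{(i,j)\in [s-1]\times [t-1]}$ is an $(s-1)\times(t-1)$ matrix over the field of fractions of $R$, and that $\upC_{(s,t)}(A) = 2\cdot\tfrac12\upC_{(s,t)}(A)$ has the same rank, so it suffices to compute $\rk(\upC_{(s,t)}(A))$.

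The key observation is the following submatrix version of Chio's identity. Fix $1\leq r$ and any row index set $R' = \{i_1 < \dots < i_r\}\subseteq [s-1]$ and column index set $C' = \{j_1 < \dots < j_r\}\subseteq [t-1]$. Consider the $(r+1)\times(r+1)$ submatrix $A'$ of $A$ obtained by taking rows $R'\cup\{s\}$ and columns $C'\cup\{t\}$, so that $A'$ has pivot entry $a_{s,t}\neq 0$ in its bottom-right corner. Applying Lemma \ref{lem:chioidentity} to the $(r+1)\times(r+1)$ matrix $A'$ (with $n = r+1$) gives $\det(\upC_{(r+1,r+1)}(A')) = a_{s,t}^{r-1}\det(A')$. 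But $\upC_{(r+1,r+1)}(A')$ is precisely the $r\times r$ submatrix of $\upC_{(s,t)}(A)$ with rows $R'$ and columns $C'$, by Definition \ref{def:chiocondensation} (the $2\times 2$ determinant defining the $(i,j)$-entry of $\upC_{(s,t)}(A)$ depends only on $a_{i,j}, a_{i,t}, a_{s,j}, a_{s,t}$, which are exactly the entries used when forming the Chio condensate of $A'$). Hence, since $a_{s,t}\neq 0$ and $R$ is a domain, for every such $R', C'$ the $r\times r$ minor of $\upC_{(s,t)}(A)$ at $(R',C')$ vanishes if and only if the $(r+1)\times(r+1)$ minor of $A$ at $(R'\cup\{s\},\, C'\cup\{t\})$ vanishes.

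From this equivalence I would extract the rank statement in two steps. For the upper bound $\rk(\upC_{(s,t)}(A))\leq \rk(A)-1$: if $r\geq \rk(A)$, then every $(r+1)\times(r+1)$ minor of $A$ vanishes, in particular those using row $s$ and column $t$, so every $r\times r$ minor of $\upC_{(s,t)}(A)$ vanishes, giving $\rk(\upC_{(s,t)}(A)) < r$; taking $r = \rk(A)$ yields $\rk(\upC_{(s,t)}(A)) \leq \rk(A)-1$. For the lower bound $\rk(\upC_{(s,t)}(A))\geq \rk(A)-1$: here the only subtlety is that a nonvanishing $\rk(A)\times\rk(A)$ minor of $A$ need not use row $s$ and column $t$. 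To handle this, I would first use that row $s$ of $A$ is nonzero (as $a_{s,t}\neq 0$) together with the fact that $\rk(A) = \rk(A)$ to choose a nonsingular $\rk(A)\times\rk(A)$ submatrix of $A$; by elementary linear algebra over the fraction field one may enlarge/modify the choice so that some nonsingular $\rk(A)\times\rk(A)$ submatrix does include row $s$ and column $t$ (this works precisely because $a_{s,t}\neq 0$ guarantees row $s$ is not in the span forced to be excluded — I would argue this by extending a basis of the row space in a way that starts from row $s$, which is possible since row $s\neq 0$, and dually for columns). Once such a submatrix $A'$ with nonzero $\det(A')$ and pivot $a_{s,t}$ is secured, the displayed identity gives a nonvanishing $(\rk(A)-1)\times(\rk(A)-1)$ minor of $\upC_{(s,t)}(A)$, hence $\rk(\upC_{(s,t)}(A))\geq \rk(A)-1$. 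Combining the two bounds gives $\rk(\tfrac12\upC_{(s,t)}(A)) = \rk(\upC_{(s,t)}(A)) = \rk(A)-1$.

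\textbf{Main obstacle.} The routine direction is the upper bound; the genuine point requiring care is the lower bound, specifically the claim that when $a_{s,t}\neq 0$ one can always find a maximal nonsingular square submatrix of $A$ that simultaneously contains row $s$ and column $t$. I expect this to be the crux: it is a short but not entirely mechanical linear-algebra argument (extend a basis of the row space starting from the nonzero row $s$, and a basis of the column space starting from column $t$, then check the resulting square submatrix is nonsingular and has $a_{s,t}$ in its pivot position), and one must be slightly careful that the two extensions are compatible. Everything else — the submatrix form of Chio's identity and the minor bookkeeping — is a direct application of Lemma \ref{lem:chioidentity}.
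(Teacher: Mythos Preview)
Your proposal is correct and essentially matches the paper's proof: both reduce to Chio's identity applied to square submatrices containing row $s$ and column $t$, use the determinantal characterization of rank over an integral domain, and identify as the crux the existence of a maximal nonsingular submatrix that includes row $s$ and column $t$. The paper handles that crux by starting from an arbitrary maximal nonsingular submatrix and applying the Steinitz exchange lemma twice (first to swap in row $s$, then column $t$), which is a clean way to realize exactly the ``compatibility'' step you flagged; your basis-extension argument works too once one checks that independent row- and column-bases always cut out a nonsingular submatrix.
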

\begin{proof}
If $\rk(A) = 1$, then obviously $\upC_{(s,t)}(A) = \{0\}^{[s-1]\times [t-1]}$ and the 
claim is true.  We may therefore assume that $r:=\rk(A)\geq 2$. By the equality 
of rank and determinantal rank over integral domains 
(cf. e.g. \cite[Corollary 2.29(2)]{MR1181420}) there 
exists $S\in\binom{[s]}{r}$ and $T\in \binom{[t]}{r}$ such that 
$\det(A\mid_{S\times T}) \neq 0$. If $s\notin S$, then by temporarily passing to the 
field of fractions of $R$ we may appeal to Steinitz' exchange lemma for vector spaces 
to prove the existence of at least one $i_0\in S$ such that 
$\det\bigl ( A\mid_{((S\setminus \{i_0\})\sqcup \{s\})\times T} \bigr )  \neq 0$. 
Analogously for $t\notin T$. Therefore we may assume that $s\in S$ and $t\in T$. Hence 
$S\times T = ((S\setminus\{s\})\times (T\setminus\{t\}))^{\breve{}}$ and therefore 
$\upC_{(s,t)}(A\mid_{S\times T})$ is defined. By Lemma \ref{lem:chioidentity} we know 
that $\det(\upC_{(s,t)}(A\mid_{S\times T})) = 
a_{s,t}^{r-2} \cdot \det(A\mid_{S\times T}) \neq 0$, the latter since $R$ is an 
integral domain and $a_{s,t}\neq 0$ by assumption. Since $\upC_{(s,t)}(A\mid_{S\times T}) = 
\upC_{(s,t)}(A)\mid_{(S\setminus\{s\})\times(T\setminus\{t\})} \in R^{(r-1)\times (r-1)}$, and 
by the equality of rank and determinantal rank, this implies 
$\rk(\upC_{(s,t)}(A)) \geq r-1$. On the other hand we also have 
$\rk(\upC_{(s,t)}(A)) \leq r-1$. To see this, it suffices to note that every 
$r\times r$ submatrix of $\upC_{(s,t)}(A)$ is the Chio condensate of an 
$(r+1)\times(r+1)$ submatrix of $A$, hence by Chio's identity a 
nonvanishing $r\times r$ minor of $\upC_{(s,t)}(A)$ would imply a nonvanishing 
$(r+1)\times (r+1)$ minor of $A$, contrary to the assumption of $\rk(A) = r$. 
\end{proof}

\begin{lemma}\label{lem:uniformmeasureofranksetaschiomeasureofranksetoneranklower}
$\Prob\bigl [ \Ra_r(\{\pm\}^{[s]\times[t]})\bigr ] = 
\Prob_{\chio} \bigl [ \Ra_{r-1}(\{0,\pm\}^{[s-1]\times[t-1]})\bigr ]$ for every 
$(s,t)\in\Z_{\geq 2}^2$ and $1\leq r \leq \min(s,t)$. 
\end{lemma}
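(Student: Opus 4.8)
The plan is to derive this from Lemma~\ref{lem:chiocondensationaffectsrankintheleastpossibleway} by carefully unwinding the definition of $\Prob_{\chio}$ as a normalized counting measure and using that the Chio map $\tfrac12\upC_{(s,t)}^{[s]\times[t]}$ has fibres of constant size. Concretely, Definition~\ref{def:measurePchio} with $I:=[s-1]\times[t-1]$ (so that $\breve I = [s]\times[t]$) tells us that for any $\mathcal{B}\subseteq\{0,\pm\}^{[s-1]\times[t-1]}$ we have $\Prob_{\chio}[\mathcal{B}] = 2^{-st}\cdot\lvert(\tfrac12\upC_{(s,t)})^{-1}(\mathcal{B})\rvert$, where I write $\tfrac12\upC_{(s,t)}$ for $\tfrac12\upC_{(s,t)}^{[s]\times[t]}\colon\{\pm\}^{[s]\times[t]}\to\{0,\pm\}^{[s-1]\times[t-1]}$. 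Taking $\mathcal{B} := \Ra_{r-1}(\{0,\pm\}^{[s-1]\times[t-1]})$, the right-hand side of the lemma equals $2^{-st}\cdot\lvert(\tfrac12\upC_{(s,t)})^{-1}(\Ra_{r-1}(\{0,\pm\}^{[s-1]\times[t-1]}))\rvert$, and the left-hand side is, by definition of the uniform measure on $\{\pm\}^{[s]\times[t]}$, equal to $2^{-st}\cdot\lvert\Ra_r(\{\pm\}^{[s]\times[t]})\rvert$. So after cancelling the common factor $2^{-st}$ it suffices to prove the purely combinatorial identity
\begin{equation}
\bigl\lvert \Ra_r(\{\pm\}^{[s]\times[t]}) \bigr\rvert
= \bigl\lvert (\tfrac12\upC_{(s,t)})^{-1}\bigl(\Ra_{r-1}(\{0,\pm\}^{[s-1]\times[t-1]})\bigr) \bigr\rvert \quad .
\end{equation}

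The first key step is to observe that every $A\in\{\pm\}^{[s]\times[t]}$ has $a_{s,t}\neq 0$ (indeed $a_{s,t}\in\{\pm\}$), so Lemma~\ref{lem:chiocondensationaffectsrankintheleastpossibleway} applies to \emph{every} sign matrix $A$ and gives $\rk(\tfrac12\upC_{(s,t)}(A)) = \rk(A)-1$. Hence for each $A\in\{\pm\}^{[s]\times[t]}$ we have the equivalence $\rk(A)=r \iff \rk(\tfrac12\upC_{(s,t)}(A))=r-1$, i.e.\ $A\in\Ra_r(\{\pm\}^{[s]\times[t]})$ if and only if $\tfrac12\upC_{(s,t)}(A)\in\Ra_{r-1}(\{0,\pm\}^{[s-1]\times[t-1]})$. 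This says precisely that $\Ra_r(\{\pm\}^{[s]\times[t]})$ \emph{equals} the preimage $(\tfrac12\upC_{(s,t)})^{-1}(\Ra_{r-1}(\{0,\pm\}^{[s-1]\times[t-1]}))$ as subsets of $\{\pm\}^{[s]\times[t]}$ --- not merely that they have the same cardinality. So the combinatorial identity above holds trivially, being an equality of sets.

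Putting the pieces together: $\Prob[\Ra_r(\{\pm\}^{[s]\times[t]})] = 2^{-st}\lvert\Ra_r(\{\pm\}^{[s]\times[t]})\rvert = 2^{-st}\lvert(\tfrac12\upC_{(s,t)})^{-1}(\Ra_{r-1}(\{0,\pm\}^{[s-1]\times[t-1]}))\rvert = \Prob_{\chio}[\Ra_{r-1}(\{0,\pm\}^{[s-1]\times[t-1]})]$, where the first equality is the definition of the uniform measure, the middle equality is the set equality just established, and the last equality is Definition~\ref{def:measurePchio} specialized to $I=[s-1]\times[t-1]$. I do not anticipate a genuine obstacle here; the only points requiring a sentence of care are (a) checking that $\breve I = [s]\times[t]$ when $I=[s-1]\times[t-1]$ (this is noted right after Definition~\ref{def:chioextensionofaset}), so that the domain and codomain of $\tfrac12\upC_{(s,t)}^{\breve I}$ are exactly $\{\pm\}^{[s]\times[t]}$ and $\{0,\pm\}^{[s-1]\times[t-1]}$ as needed, and (b) making explicit that $\Prob_{\chio}[\mathcal{B}]$ for a \emph{set} $\mathcal{B}$ (rather than a singleton) is $2^{-st}$ times the total number of sign-matrix preimages of elements of $\mathcal{B}$, which is immediate from the summation in \eqref{eq:definitionmeasurePchio}. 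The real content of the lemma is entirely carried by Lemma~\ref{lem:chiocondensationaffectsrankintheleastpossibleway}, which has already been proved.
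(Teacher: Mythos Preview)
Your proof is correct and follows essentially the same approach as the paper: both arguments reduce the claim to Lemma~\ref{lem:chiocondensationaffectsrankintheleastpossibleway} (applicable because every $A\in\{\pm\}^{[s]\times[t]}$ has $a_{s,t}\neq 0$) and then unwind the definitions of $\Prob$ and $\Prob_{\chio}$. Your observation that $\Ra_r(\{\pm\}^{[s]\times[t]})$ actually \emph{equals} $(\tfrac12\upC_{(s,t)})^{-1}(\Ra_{r-1}(\{0,\pm\}^{[s-1]\times[t-1]}))$ as a set is a slightly cleaner way of phrasing the same step the paper carries out via a sum over $B$; incidentally, the ``fibres of constant size'' remark in your opening plan is not needed and not used in your actual argument.
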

\begin{proof}
This follows from the calculation 
$\Prob$ $\bigl [$ $\Ra_r($ $\{\pm\}^{[s]\times[t]})$ $\bigr ]$ $=$ $\frac{1}{2^{s\cdot t}}$ 
$\lvert\{$ $A$ $\in$ $\{\pm\}^{[s]\times [t]}\colon$ $\rk(A)$ $=$ $r$ $\}\rvert$ 
$\By{Lemma \eqref{lem:chiocondensationaffectsrankintheleastpossibleway}}{=}$ 
$\frac{1}{2^{s\cdot t}}$ $\sum_{B\in\{0,\pm\}^{[s-1]\times[t-1]}\colon\rk(B) = r-1}$ $\lvert ($ 
$\tfrac12\upC_{(s,t)}$ $)^{-1}$ $(B)$ $\rvert$ 
$\By{Definition \ref{eq:definitionmeasurePchio}}{=}$ 
$\Prob_{\chio}$ $[$ $\Ra_{r-1}($ $\{$ $0,$ $\pm$ $\}^{[s-1]\times [t-1]}$ $)$ $]$. Note, 
incidentally, that with the third equality sign, many zero-summands are introduced. 
\end{proof}

\begin{corollary}\label{cor:uniformmeasureofranksublevelsetaschiomeasureofranksublevelsetoneranklower}
$\Prob\bigl [ \Ra_{\mathcal{R}}(\{\pm\}^{[s]\times[t]})\bigr ] = 
\Prob_{\chio} \bigl [ \Ra_{\mathcal{R}}(\{0,\pm\}^{[s-1]\times[t-1]})\bigr ]$ for every 
$(s,t)\in\Z_{\geq 2}^2$ and every $\mathcal{R}\in\mathfrak{P}([\min(s,t)]\sqcup\{0\})$.
\end{corollary}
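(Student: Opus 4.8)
The plan is to reduce this to the single-rank statement in Lemma~\ref{lem:uniformmeasureofranksetaschiomeasureofranksetoneranklower} by decomposing both sides into disjoint rank-level-sets and summing. First I would note that by Definition~\ref{def:ranksets} the set $\Ra_{\mathcal{R}}(\{\pm\}^{[s]\times [t]})$ is by definition the disjoint union $\bigsqcup_{r\in\mathcal{R}} \Ra_r(\{\pm\}^{[s]\times [t]})$; since $\Prob$ is a measure,
\begin{equation}
\Prob\bigl[\Ra_{\mathcal{R}}(\{\pm\}^{[s]\times [t]})\bigr] = \sum_{r\in\mathcal{R}} \Prob\bigl[\Ra_r(\{\pm\}^{[s]\times [t]})\bigr]\quad .
\end{equation}

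The only slightly delicate point is the index $r=0$: if $0\in\mathcal{R}$, the corresponding summand is $\Prob[\Ra_0(\{\pm\}^{[s]\times [t]})]$, and since a $\{\pm\}$-matrix never has rank $0$ (it has a nonzero entry, hence rank at least $1$, as $(s,t)\in\Z_{\geq 2}^2$ forces the matrix to be nonempty), this summand is $0$ and can simply be dropped. Thus I may assume $\mathcal{R}\subseteq [\min(s,t)]$ without loss of generality, i.e. $1\leq r\leq\min(s,t)$ for each remaining $r$. For each such $r$, Lemma~\ref{lem:uniformmeasureofranksetaschiomeasureofranksetoneranklower} gives $\Prob[\Ra_r(\{\pm\}^{[s]\times [t]})] = \Prob_{\chio}[\Ra_{r-1}(\{0,\pm\}^{[s-1]\times [t-1]})]$, so the sum above equals $\sum_{r\in\mathcal{R},\, r\geq 1} \Prob_{\chio}[\Ra_{r-1}(\{0,\pm\}^{[s-1]\times [t-1]})]$.

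Finally I would reassemble the right-hand side: the sets $\Ra_{r-1}(\{0,\pm\}^{[s-1]\times [t-1]})$ for distinct $r$ are pairwise disjoint (they are distinct rank-level-sets), so by additivity of $\Prob_{\chio}$ this sum equals $\Prob_{\chio}[\bigsqcup_{r\in\mathcal{R},\, r\geq 1}\Ra_{r-1}(\{0,\pm\}^{[s-1]\times [t-1]})]$. It remains to observe that this disjoint union is exactly $\Ra_{\mathcal{R}}(\{0,\pm\}^{[s-1]\times [t-1]})$: indeed, if $0\notin\mathcal{R}$ the shift $r\mapsto r-1$ is a bijection between $\mathcal{R}$ and the index set $\{r-1 : r\in\mathcal{R}\}$ appearing in $\Ra_{\mathcal{R}}(\{0,\pm\}^{[s-1]\times [t-1]})$ once one unwinds Definition~\ref{def:ranksets} together with Lemma~\ref{lem:chiocondensationaffectsrankintheleastpossibleway} (which is the engine behind Lemma~\ref{lem:uniformmeasureofranksetaschiomeasureofranksetoneranklower} and guarantees the rank drops by exactly one); and if $0\in\mathcal{R}$ the dropped summand contributes $\Ra_{-1}=\emptyset$, consistent with the convention. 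The only genuine care needed — and the main (very mild) obstacle — is bookkeeping around the index $r=0$ and making sure the re-indexing matches the definition of $\Ra_{\mathcal{R}}$ on the condensate side; everything else is pure additivity of a measure over a finite disjoint union.
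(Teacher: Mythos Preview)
Your approach---decompose $\Ra_{\mathcal{R}}$ into rank-level-sets, apply Lemma~\ref{lem:uniformmeasureofranksetaschiomeasureofranksetoneranklower} to each, and sum---is exactly what the paper's one-line proof (``Immediate from Lemma~\ref{lem:uniformmeasureofranksetaschiomeasureofranksetoneranklower} and Definition~\ref{def:ranksets}'') intends, and your handling of the $r=0$ case is fine.

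The gap is in your final paragraph. You correctly arrive at
\[
\Prob_{\chio}\Bigl[\,\bigsqcup_{r\in\mathcal{R},\,r\ge1}\Ra_{r-1}\bigl(\{0,\pm\}^{[s-1]\times[t-1]}\bigr)\Bigr],
\]
but then assert that this disjoint union \emph{is} $\Ra_{\mathcal{R}}(\{0,\pm\}^{[s-1]\times[t-1]})$. It is not: under Definition~\ref{def:ranksets} (read with $U=\{0,\pm\}$), $\Ra_{\mathcal{R}}(\{0,\pm\}^{[s-1]\times[t-1]})=\bigsqcup_{r\in\mathcal{R}}\Ra_r$, with no index shift. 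Lemma~\ref{lem:chiocondensationaffectsrankintheleastpossibleway} does not build a shift into the \emph{definition} of $\Ra_{\mathcal{R}}$; it only tells you that Chio condensation drops the rank by one. What your computation actually proves is
\[
\Prob\bigl[\Ra_{\mathcal{R}}(\{\pm\}^{[s]\times[t]})\bigr]=\Prob_{\chio}\bigl[\Ra_{\mathcal{R}-1}(\{0,\pm\}^{[s-1]\times[t-1]})\bigr],
\qquad \mathcal{R}-1:=\{r-1:r\in\mathcal{R},\,r\ge1\}.
\]
Taking the statement literally with the same $\mathcal{R}$ on both sides gives a false identity (e.g.\ $s=t=n$, $\mathcal{R}=\{0,\dots,n-1\}$: the left side is $\Prob[\Ra_{<n}]<1$, the right side is $\Prob_{\chio}$ of all $(n{-}1)\times(n{-}1)$ matrices, which is $1$). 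In fact the paper's own applications (Corollary~\ref{cor:implicationofbourgainvuwoodtheoremonchiomeasure} and Proposition~\ref{prop:equivalentnewformulationofoldconjecture}) use exactly the shifted version $\Prob[\Ra_{<n}(\{\pm\}^{[n]^2})]=\Prob_{\chio}[\Ra_{<n-1}(\{0,\pm\}^{[n-1]^2})]$, so the corollary as printed carries a typo on the right-hand side. Your argument is correct once the target is stated with the shift; the attempted ``unwinding'' to remove the shift is the part that fails.
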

\begin{proof}
Immediate from Lemma \ref{lem:uniformmeasureofranksetaschiomeasureofranksetoneranklower} and Definition \ref{def:ranksets}.
\end{proof}

\begin{corollary}\label{cor:implicationofbourgainvuwoodtheoremonchiomeasure}
$\Prob_{\chio} [ \Ra_{<n-1}(\{0,\pm\}^{[n-1]^2}) ] \leq (1/\sqrt{2} + o(1))^n$ 
for $n\rightarrow \infty$.
\end{corollary}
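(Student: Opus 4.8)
The plan is to deduce Corollary \ref{cor:implicationofbourgainvuwoodtheoremonchiomeasure} almost immediately from the machinery already set up, by chaining Corollary \ref{cor:uniformmeasureofranksublevelsetaschiomeasureofranksublevelsetoneranklower} with the Bourgain--Vu--Wood bound for the uniform distribution. The key observation is that the index shift in Corollary \ref{cor:uniformmeasureofranksublevelsetaschiomeasureofranksublevelsetoneranklower} converts a statement about $\Prob_{\chio}$ on $(n-1)\times(n-1)$-matrices of rank $<n-1$ into a statement about $\Prob$ on $n\times n$-matrices of rank $<n$, i.e. about $\Prob[\Ra_{<n}(\{\pm\}^{[n]^2})]$, which is exactly the quantity controlled by \eqref{thm:bourgainvuwood:uniformdistribution} in Theorem \ref{thm:bourgainvuwood}.

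Concretely, first I would apply Corollary \ref{cor:uniformmeasureofranksublevelsetaschiomeasureofranksublevelsetoneranklower} with $s:=t:=n$ and with the rank set $\mathcal{R}:=\{0,1,\dotsc,n-2\}$, which is a legitimate element of $\mathfrak{P}([\min(s,t)]\sqcup\{0\}) = \mathfrak{P}([n]\sqcup\{0\})$. By Definition \ref{def:ranksets} we have $\Ra_{\mathcal{R}}(\{0,\pm\}^{[n-1]^2}) = \Ra_{<n-1}(\{0,\pm\}^{[n-1]^2})$ on the $\Prob_{\chio}$-side, while on the $\Prob$-side $\Ra_{\mathcal{R}}(\{\pm\}^{[n]^2})$ is the set of $A\in\{\pm\}^{[n]^2}$ with $\rk(A)\leq n-2$, which is a subset of $\Ra_{<n}(\{\pm\}^{[n]^2}) = \{A\in\{\pm\}^{[n]^2}\colon\det(A)=0\}$. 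Hence
\begin{equation}
\Prob_{\chio}[\Ra_{<n-1}(\{0,\pm\}^{[n-1]^2})] = \Prob[\Ra_{\mathcal{R}}(\{\pm\}^{[n]^2})] \leq \Prob[\{A\in\{\pm\}^{[n]^2}\colon\det(A)=0\}]\quad .
\end{equation}
By \eqref{thm:bourgainvuwood:uniformdistribution} in Theorem \ref{thm:bourgainvuwood}, the right-hand side is at most $(1/\sqrt{2}+o(1))^n$ as $n\rightarrow\infty$, which gives the claim.

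An even cleaner route, avoiding the inequality altogether, is to take $\mathcal{R}:=\{0,1,\dotsc,n-1\}$ instead: then $\Ra_{\mathcal{R}}(\{0,\pm\}^{[n-1]^2})$ is still all of $\Ra_{<n-1}(\{0,\pm\}^{[n-1]^2})$ since a $(n-1)\times(n-1)$-matrix has rank at most $n-1$ anyway, so including $r=n-1$ changes nothing; but on the $\Prob$-side $\Ra_{\mathcal{R}}(\{\pm\}^{[n]^2})$ becomes exactly $\{A\in\{\pm\}^{[n]^2}\colon \rk(A)\leq n-1\} = \{A\colon\det(A)=0\}$, so Corollary \ref{cor:uniformmeasureofranksublevelsetaschiomeasureofranksublevelsetoneranklower} yields an \emph{equality} $\Prob_{\chio}[\Ra_{<n-1}(\{0,\pm\}^{[n-1]^2})] = \Prob[\{A\in\{\pm\}^{[n]^2}\colon\det(A)=0\}]$, and the desired bound is then literally a restatement of \eqref{thm:bourgainvuwood:uniformdistribution}. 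I would present this second version in the writeup. There is essentially no obstacle here: the only thing to be careful about is the bookkeeping of the rank-set indices under the $(n-1)\mapsto n$ shift — making sure that ``rank strictly less than $n-1$'' on the condensate side lines up with ``singular'' (rank $\leq n-1$, equivalently $\det = 0$) on the original side — and checking that the chosen $\mathcal{R}$ genuinely lies in the domain $\mathfrak{P}([\min(s,t)]\sqcup\{0\})$ demanded by Corollary \ref{cor:uniformmeasureofranksublevelsetaschiomeasureofranksublevelsetoneranklower}.
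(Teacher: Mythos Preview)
Your approach matches the paper's one-line proof (combine Corollary~\ref{cor:uniformmeasureofranksublevelsetaschiomeasureofranksublevelsetoneranklower} with \eqref{thm:bourgainvuwood:uniformdistribution}), and your first version is fine.

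The second ``cleaner'' version, however, contains a slip: with $\mathcal{R}=\{0,1,\dotsc,n-1\}$, the set $\Ra_{\mathcal{R}}(\{0,\pm\}^{[n-1]^2})$ is \emph{not} $\Ra_{<n-1}(\{0,\pm\}^{[n-1]^2})$ but the entire space $\{0,\pm\}^{[n-1]^2}$, since including $r=n-1$ adds precisely all the full-rank $(n-1)\times(n-1)$ matrices. The clean equality you are after really comes from the index shift in Lemma~\ref{lem:uniformmeasureofranksetaschiomeasureofranksetoneranklower}: summing $\Prob[\Ra_r(\{\pm\}^{[n]^2})]=\Prob_{\chio}[\Ra_{r-1}(\{0,\pm\}^{[n-1]^2})]$ over $r=1,\dotsc,n-1$ gives $\Prob[\Ra_{<n}(\{\pm\}^{[n]^2})]=\Prob_{\chio}[\Ra_{<n-1}(\{0,\pm\}^{[n-1]^2})]$ directly (using that no $\{\pm\}$-matrix has rank~$0$), after which \eqref{thm:bourgainvuwood:uniformdistribution} finishes. (Note that the stated form of Corollary~\ref{cor:uniformmeasureofranksublevelsetaschiomeasureofranksublevelsetoneranklower}, with the same $\mathcal{R}$ on both sides, is itself a typographical slip in the paper; compare its use in Proposition~\ref{prop:equivalentnewformulationofoldconjecture}, where the shift $\Ra_{<n-1}\leftrightarrow\Ra_{<n}$ is applied.)
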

\begin{proof}
By combining Lemma \ref{cor:uniformmeasureofranksublevelsetaschiomeasureofranksublevelsetoneranklower} with \eqref{thm:bourgainvuwood:uniformdistribution} in 
Theorem \ref{thm:bourgainvuwood}. 
\end{proof}

\subsection{Sign functions which are both singular and balanced}
\label{subsec:countingsingularandcyclicallyrevenedgecolourings}

\begin{definition}[$G_{s,t}$]
For every $(s,t)\in\Z_{\geq 2}^2$ define $G_{s,t} := \bigl(\bigoplus_{1\leq i \leq s-1}\Z/2\bigr) \oplus \bigl(\bigoplus_{1\leq j\leq t-1}\Z/2\bigr)$.
\end{definition}

We will use the following group actions. Informally, $\alpha_X$ is the action by 
switching signs of edges simultaneously in all `stars' centered at those vertices 
for which $g$ has nonzero components. 

\begin{definition}\label{def:thetwoswitchinggroupactions}
For every $(s,t)\in\Z_{\geq 2}^2$ define the group action 
$\alpha_{s,t}\colon G_{s,t} \longrightarrow \Sym(\{0,\pm\}^{[s-1]\times[t-1]})$, 
$((g_i)_{i\in [s-1]} , (g_j)_{j\in [t-1]} ) \longmapsto 
\binom{\{0,\pm\}^{[s-1]\times [t-1]}\to\{0,\pm\}^{[s-1]\times [t-1]}}{(b_{i,j})_{(i,j)\in {[s-1]\times [t-1]}} \mapsto (-1)^{g_i}\cdot (-1)^{g_j} \cdot b_{i,j}}$. For every $X\in \BG_{s,t}$ 
define the group action $\alpha_X\colon G_{s,t} \longrightarrow 
\Sym( \{\pm\}^{\upE(X)})$ which is defined by $(\alpha_X(g)(\sigma))(e) := 
(-1)^{g_i}\cdot (-1)^{g_j} \cdot \sigma(e)$ for every $\sigma\in\{\pm\}^{\mathrm{E}(X)}$ 
and every $e = \{ (i,t), (s,j)\} \in \upE(X)$. 
\end{definition}

Note that neither $\alpha_{s,t}$ nor $\alpha_X$ are faithful group actions. More 
precisely, not only is both $\ker(\alpha_{s,t})$ and $\ker(\alpha_X)$ a 
$2$-element set, but $\alpha_{s,t}$ and $\alpha_X$ are both double-covers onto their 
images. We could construct a faithful action by making an arbitrary choice of 
a $\iota\in [s-1]\cup [t-1]$ and then refraining from switching at this index 
(analogously, by  making an arbitrary choice of a star in $X$ and then refraining 
from switching that particular star). 

Balancedness is a very `rigid' property of an edge-signing in that it is 
determined by the signing of an arbitrary spanning tree: 

\begin{lemma}[rigidity of balanced edge signings]\label{lem:setofbalancedsigningsdeterminedbysigningsofarbitraryspanningtree}
For every connected graph $X$ and every spanning tree $T$ of $X$, there is a 
bijection $\{\pm\}^{\upE(T)} \leftrightarrow 
\{ \sigma\in \{\pm\}^{\upE(X)}\colon \text{$(X,\sigma)$ balanced}\}$. 
\end{lemma}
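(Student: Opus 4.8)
The plan is to exhibit the bijection explicitly by sending a signing of the spanning tree $T$ to the unique balanced extension to all of $X$, and conversely restricting a balanced signing of $X$ to $T$. First I would note that since $T$ is a tree it has no circuits, so $S_{\bal}(T) = \{\pm\}^{\upE(T)}$; every $\tau\in\{\pm\}^{\upE(T)}$ is vacuously balanced. The substantive claim is then that each $\tau$ has exactly one extension $\sigma\in\{\pm\}^{\upE(X)}$ with $(X,\sigma)$ balanced. For existence, I would invoke K{\H{o}}nig's lemma (Lemma \ref{lem:equivalenceofexistenceofbconstantrpropervertex2coloringandcyclicallyrevenness}, item \ref{item:equivalenceofexistenceofbconstantrpropervertex2coloringandcyclicallyrevenness} together with \ref{item:cardinalityofsetofbconstantrproper2colorings}): build a $(\tau,-)$-constant, $(\tau,+)$-proper vertex-$2$-colouring $\upc$ of $T$ by the obvious greedy propagation along the tree starting from a root vertex (this is where connectedness and the tree structure are used — the colour is forced edge by edge, with no consistency obstruction because there are no circuits), then define $\sigma(e) := +$ if $\upc(u)\neq\upc(v)$ and $\sigma(e) := -$ if $\upc(u)=\upc(v)$, for every $e = uv\in \upE(X)$. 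By construction $\upc\in\Col(X,\sigma)$, so $\Col(X,\sigma)\neq\emptyset$, hence $(X,\sigma)$ is balanced by \ref{item:equivalenceofexistenceofbconstantrpropervertex2coloringandcyclicallyrevenness}, and $\sigma\mid_{\upE(T)} = \tau$ since $\upc$ was chosen to be $(\tau,-)$-constant and $(\tau,+)$-proper on $T$.

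For uniqueness, suppose $\sigma_1,\sigma_2\in\{\pm\}^{\upE(X)}$ are both balanced and both restrict to $\tau$ on $\upE(T)$. Since $X$ is connected, by K{\H{o}}nig's lemma each $\sigma_i$ admits a colouring $\upc_i\in\Col(X,\sigma_i)$; fixing a root vertex $r$ and using $\beta_0(X)=1$ together with \ref{item:cardinalityofsetofbconstantrproper2colorings}, exactly two such colourings exist for each, differing by a global sign flip, so I may normalise $\upc_1(r)=\upc_2(r)$. Now $\upc_1$ and $\upc_2$ are both determined on all of $X$ by their value at $r$ together with the common restriction of the signing to $T$ (propagate along the unique tree path from $r$), so $\upc_1=\upc_2=:\upc$. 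But then for every edge $e=uv\in\upE(X)$, balancedness forces $\sigma_i(e)$ to be $+$ exactly when $\upc(u)\neq\upc(v)$ (this is the content of the definition of $\Col$), so $\sigma_1=\sigma_2$. Hence the tree-restriction map $S_{\bal}(X)\to\{\pm\}^{\upE(T)}$ is injective, and combined with the existence argument it is a bijection.

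Alternatively, and perhaps more cleanly, I would deduce the result by a counting argument from the lemmas already available: by \ref{item:numberofbalancedsignfunctions} in Lemma \ref{lem:equivalenceofexistenceofbconstantrpropervertex2coloringandcyclicallyrevenness} we have $\lvert S_{\bal}(X)\rvert = 2^{f_0(X)-\beta_0(X)} = 2^{f_0(X)-1}$ since $X$ is connected, and a spanning tree $T$ has $f_1(T) = f_0(T)-1 = f_0(X)-1$ edges, so $\lvert\{\pm\}^{\upE(T)}\rvert = 2^{f_0(X)-1} = \lvert S_{\bal}(X)\rvert$; thus the injective restriction map between two finite sets of equal cardinality is automatically a bijection, and I only need the uniqueness (injectivity) half of the argument above. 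The main obstacle is the uniqueness/injectivity step: making precise that a balanced signing of $X$ is recovered from its restriction to $T$. The cleanest route is the colouring argument — a balanced signing corresponds (up to the global $\Z/2$ ambiguity killed by fixing $\upc(r)$) to a vertex-$2$-colouring, and a vertex-$2$-colouring of a connected graph is determined by its value at one vertex together with which edges are ``monochromatic'', and the monochromaticity pattern on a spanning tree already pins down the colouring everywhere. Everything else is routine.
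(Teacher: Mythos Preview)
Your proof is correct and takes a genuinely different route from the paper's own sketch. The paper argues uniqueness via the fundamental circuit of each non-tree edge $e$ in $T\cup\{e\}$ (which forces the sign of $e$), and argues existence by a greedy edge-by-edge extension, relying on the fact that in a balanced signed graph all paths between two fixed vertices carry the same sign product. You instead route everything through the vertex-$2$-colouring characterization of balancedness (K{\H{o}}nig's lemma, already stated in the paper as Lemma~\ref{lem:equivalenceofexistenceofbconstantrpropervertex2coloringandcyclicallyrevenness}): build a colouring on $T$ compatible with $\tau$, then read off $\sigma$ on all of $\upE(X)$ from that colouring. Your uniqueness argument is likewise colouring-based rather than circuit-based. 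This is cleaner in the present paper's context because it reuses \ref{item:equivalenceofexistenceofbconstantrpropervertex2coloringandcyclicallyrevenness} and \ref{item:cardinalityofsetofbconstantrproper2colorings} directly instead of invoking the separate path-sign-invariance fact. Your alternative counting shortcut via \ref{item:numberofbalancedsignfunctions} is the most economical of all: once injectivity of restriction is established, equality of cardinalities $2^{f_0(X)-1}$ finishes immediately. The paper's approach has the mild advantage of being self-contained (it does not appeal to the colouring lemma) and of making explicit \emph{which} extension one gets edge by edge; yours has the advantage of brevity and of fitting naturally into the paper's existing toolkit.
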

\begin{proof}[Sketch of proof] 
Since the balance-preserving sign of every edge $e\in\upE(X)\setminus\upE(T)$ is 
determined by the unique circuit in $\upE(T)\cup\{e\}$, for every given 
$\sigma\in \{\pm\}^{\upE(T)}$, there is at most one balanced extension of $\sigma$, 
i.e. at most one $\tilde{\sigma}\in \{\pm\}^{\upE(X)}$ with 
$\sigma = \tilde{\sigma}\mid_{\upE(T)}$ and $(X,\tilde{\sigma})$  balanced. Moreover, 
this extension can be constructed in the obvious `greedy' way by 
successively adding in the elements of 
$\upE(X)\setminus\upE(T)$ in an arbitrary order while at each step of the 
construction choosing the sign of the added edge so as to avoid non-balanced 
circuits. That this is indeed possible can be proved by an induction on the 
number $\lvert\upE(X)\setminus\upE(T)\rvert$ of edges to be added. A key 
observation (routine to prove and known since at least \cite[Theorem 2]{MR0067468}) 
is that at each step of the construction, for each pair of vertices 
either \emph{all} paths within the partially constructed graphs 
which have these two vertices as endvertices have sign $(-)$ or \emph{all} such 
paths have sign $(+)$, and therefore the greedy construction never stalls. 
\end{proof}

\begin{lemma}\label{lem:starswitchingistransitiveonbalancedsignings}
For every graph $X$ the restriction $\im(\alpha_X)\mid_{S_{\bal}(X)}$ is a 
transitive permutation group on $S_{\bal}(X)$. 
\end{lemma}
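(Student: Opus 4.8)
The plan is to reduce the claim to the connected case and then combine the rigidity of balanced signings (Lemma \ref{lem:setofbalancedsigningsdeterminedbysigningsofarbitraryspanningtree}), the counting of balanced signings (\ref{item:numberofbalancedsignfunctions} in Lemma \ref{lem:equivalenceofexistenceofbconstantrpropervertex2coloringandcyclicallyrevenness}), and an orbit-counting argument for the star-switching action.

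First I would observe that the empty and single-vertex cases are trivial (there $S_{\bal}(X)$ has one element), so assume $f_1(X)\geq 1$. Next I would verify that $\im(\alpha_X)$ genuinely acts on $S_{\bal}(X)$, i.e. that switching the signs of all edges at a set of vertices preserves balancedness: for any circuit $C$, each vertex of $X$ that lies on $C$ flips the signs of exactly two edges of $C$ (or none), so the parity of $f_1^{(-)}(C,\sigma)$ is unchanged; vertices not on $C$ affect no edge of $C$. Hence $\alpha_X(g)$ maps $S_{\bal}(X)$ to itself for every $g\in G_{s,t}$, and the restriction is a well-defined permutation group on $S_{\bal}(X)$.

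Then I would establish transitivity. Reduce to $X$ connected first: $S_{\bal}(X)=\prod_c S_{\bal}(X_c)$ over connected components $X_c$ (by Lemma \ref{lem:onepointwedgepreservesbalancedness} iterated, or directly since a circuit lies in a single component), the group $G_{s,t}$ decomposes compatibly, and a product of transitive actions is transitive, so it suffices to treat $X$ connected. For $X$ connected, fix a spanning tree $T$. Given two balanced signings $\sigma_1,\sigma_2\in S_{\bal}(X)$, I claim there is $g\in G_{s,t}$ with $\alpha_X(g)(\sigma_1)=\sigma_2$. By Lemma \ref{lem:setofbalancedsigningsdeterminedbysigningsofarbitraryspanningtree}, a balanced signing is determined by its restriction to $\upE(T)$, so it is enough to find $g$ with $(\alpha_X(g)(\sigma_1))\mid_{\upE(T)}=\sigma_2\mid_{\upE(T)}$ — the balanced extension is then forced to agree with $\sigma_2$ on all of $\upE(X)$ because switching preserves balancedness. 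Now finding such a $g$ on a tree is the classical fact that for a tree every sign pattern on the edges can be reached from any other by vertex switching: root $T$, process vertices in order of increasing distance from the root, and at each non-root vertex $v$ decide whether to switch at $v$ according to whether $\sigma_1$ and $\sigma_2$ currently disagree on the edge from $v$ to its parent; since switching at $v$ toggles precisely that edge among the already-processed tree edges (the other incident tree edges lead to not-yet-processed descendants), this greedily corrects each tree edge exactly once. This produces the required $g\in G_{s,t}$ (each index in $[s-1]\sqcup[t-1]$ corresponds to a vertex of $X$, and we set the component to $1$ exactly for the switched vertices).

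The main obstacle is purely bookkeeping: matching the index set $[s-1]\sqcup[t-1]$ of $G_{s,t}$ with the vertex set $\upV(X)\subseteq\{(i,t)\colon i\in[s-1]\}\sqcup\{(s,j)\colon j\in[t-1]\}$ of a graph in $\BG_{s,t}$, and keeping straight that $\alpha_X(g)$ acts on an edge $e=\{(i,t),(s,j)\}$ by the factor $(-1)^{g_i}(-1)^{g_j}$, so that switching at one endpoint flips the edge and switching at both leaves it unchanged — which is exactly what makes the greedy tree argument go through. None of this requires real computation; the only genuine input beyond Lemma \ref{lem:setofbalancedsigningsdeterminedbysigningsofarbitraryspanningtree} is the elementary observation that vertex switching toggles tree edges one at a time when processed from a root outward.
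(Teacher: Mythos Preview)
Your proposal is correct and follows essentially the same route as the paper's sketch: reduce to connected components, pick a spanning tree, use the rigidity lemma (Lemma~\ref{lem:setofbalancedsigningsdeterminedbysigningsofarbitraryspanningtree}) to reduce transitivity on $S_{\bal}(X)$ to transitivity on $\{\pm\}^{\upE(T)}$, and handle the latter by a root-outward greedy switching argument. You supply more detail than the paper (the explicit check that switching preserves balancedness and the concrete greedy procedure on the tree), but the strategy is the same.
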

\begin{proof}[Sketch of proof]
One way to look at this is as `making use of the rigidity of balanced signings': we 
can choose an arbitrary spanning tree $T_i$ for each connected component $X_i$ 
of $X$, then show that $\im(\alpha_X)\mid_{S_{\bal}(X)}$ is transitive on the set 
$\{\pm\}^{\upE(T_i)}$ of all edge-signings of $T_i$, and then appeal to 
Lemma \ref{lem:setofbalancedsigningsdeterminedbysigningsofarbitraryspanningtree} 
which says that this transitivity already implies transitivity on the full 
set $S_{\bal}(X)$. 
\end{proof}

Given a $\{0,1\}$-matrix, it can be possible to increase its $\Z$-rank by choosing 
signs for the entries.  If we require the signed matrix to be \emph{balanced}, 
however, the rank must stay the same. This follows quickly from the 
graph-theoretical considerations above: 

\begin{proposition}[all balanced signings of a $\{0,1\}$-matrix have the 
same rank]\label{prop:allbalancedsigningshavethesamerank}
Let $B\in \{0,1\}^{[s-1]\times [t-1]}$. Let $\tilde{B}$ be an arbitrary 
`balanced signing of $B$', i.e. $\tilde{B} \in\{0,\pm\}^{[s-1]\times [t-1]}$, 
$\Supp(\tilde{B}) = \Supp(B)$ and 
$(\upX_{\tilde{B}},\sigma_{\tilde{B}}) = (\upX_{B},\sigma_{\tilde{B}})$ is a balanced 
signed graph. Then $\rk(\tilde{B}) = \rk(B)$. 
\end{proposition}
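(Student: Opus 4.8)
The plan is to reduce the rank statement to the case of a connected graph and then exploit the rigidity of balanced signings established in Lemma \ref{lem:setofbalancedsigningsdeterminedbysigningsofarbitraryspanningtree} together with the transitivity statement of Lemma \ref{lem:starswitchingistransitiveonbalancedsignings}. First I would observe that the rank of $\tilde{B}$ is unchanged under the action $\alpha_{s,t}$ of Definition \ref{def:thetwoswitchinggroupactions}: multiplying row $i$ by $(-1)^{g_i}$ and column $j$ by $(-1)^{g_j}$ is an invertible diagonal scaling on both sides and hence preserves rank over any integral domain. Moreover, such a row/column sign-switch on $\tilde{B}$ corresponds exactly, via Definition \ref{def:XBandecXB}, to applying the star-switching action $\alpha_{\upX_B}$ to the sign function $\sigma_{\tilde{B}}$, while leaving $\upX_B=\upX_{\tilde{B}}$ (hence $\Supp$) untouched.

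Next I would handle the connected case. Suppose $\upX_B$ is connected (equivalently, after discarding all-zero rows and columns, $B$ has a `connected' support pattern). By Lemma \ref{lem:starswitchingistransitiveonbalancedsignings}, $\im(\alpha_{\upX_B})\mid_{S_{\bal}(\upX_B)}$ acts transitively on the set $S_{\bal}(\upX_B)$ of all balanced signings. In particular, the all-$+$ signing $\sigma\equiv +$ is balanced (every circuit has zero $(-)$-edges), so it lies in $S_{\bal}(\upX_B)$, and by transitivity every balanced signing $\sigma_{\tilde{B}}$ is of the form $\alpha_{\upX_B}(g)(\sigma_{B})$ where $\sigma_B\equiv+$. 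Translating back to matrices: $\tilde{B}$ is obtained from the genuine $\{0,1\}$-matrix $B$ (whose associated sign function is $\equiv+$) by a row/column sign-switch $\alpha_{s,t}(g)$. Since that switch preserves rank, $\rk(\tilde{B})=\rk(B)$.

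For the general (disconnected) case I would decompose. Let $\upX_B$ have connected components $X_1,\dotsc,X_c$ (ignoring isolated vertices, which correspond to all-zero rows or columns of $B$ and contribute nothing to the rank of either matrix). The index sets $\upp_1(\Supp(B))$ and $\upp_2(\Supp(B))$ partition accordingly, so after a simultaneous row and column permutation $B$ is block-diagonal with blocks $B^{(1)},\dotsc,B^{(c)}$, and correspondingly $\tilde{B}$ is block-diagonal with blocks $\tilde{B}^{(1)},\dotsc,\tilde{B}^{(c)}$; here $\rk(B)=\sum_{\ell}\rk(B^{(\ell)})$ and $\rk(\tilde{B})=\sum_{\ell}\rk(\tilde{B}^{(\ell)})$. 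By Lemma \ref{lem:onepointwedgepreservesbalancedness} (or directly from the definition of balancedness, since every circuit of $\upX_B$ lies in a single component) each signed block $(\upX_{B^{(\ell)}},\sigma_{\tilde{B}^{(\ell)}})$ is itself balanced, and each $\upX_{B^{(\ell)}}$ is connected, so the connected case gives $\rk(\tilde{B}^{(\ell)})=\rk(B^{(\ell)})$ for every $\ell$. Summing over $\ell$ yields $\rk(\tilde{B})=\rk(B)$.

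The only genuinely delicate point is the bookkeeping between the matrix picture and the bipartite-graph picture — making sure that a row/column sign-switch on $\tilde{B}$ really is the star-switching action $\alpha_{\upX_B}$ on $\sigma_{\tilde{B}}$ and that it fixes $\Supp$, and that the block decomposition by connected components is faithfully mirrored on both sides. This is entirely routine given Definitions \ref{def:XBandecXB} and \ref{def:thetwoswitchinggroupactions}, so I would state it carefully but not belabor it; the substantive input (transitivity of star-switching on balanced signings) is already packaged in Lemma \ref{lem:starswitchingistransitiveonbalancedsignings}.
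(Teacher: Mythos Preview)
Your proof is correct and follows essentially the same route as the paper: both $\sigma_B$ and $\sigma_{\tilde B}$ are balanced, so by Lemma~\ref{lem:starswitchingistransitiveonbalancedsignings} some $g\in G_{s,t}$ carries one to the other via $\alpha_X$, which on the matrix side means $\alpha_{s,t}(g)(\tilde B)=B$, and this action preserves rank. Your decomposition into connected components is unnecessary, however, since Lemma~\ref{lem:starswitchingistransitiveonbalancedsignings} is already stated and proved for arbitrary (not just connected) graphs; the paper accordingly applies it in one stroke without the block-diagonal detour.
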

\begin{proof}
Since both $(\upX_B,\sigma_B)$ and $(\upX_{\tilde{B}},\sigma_{\tilde{B}})$ are balanced, 
by Lemma \ref{lem:starswitchingistransitiveonbalancedsignings} there exists 
$g\in G_{s,t}$ such that $\alpha_X(g)(\sigma_{\tilde{B}}) = \sigma_B$. In view of 
Definition \ref{def:thetwoswitchinggroupactions} and 
Definition \ref{def:XBandecXB},  this implies $\alpha_{s,t}(g)(\tilde{B}) = B$. 
Since $\alpha_{s,t}$ obviously keeps the rank invariant, the claim is proved. 
\end{proof}

We will now use the knowledge established so far to analyse the 
tempting `absolute' route of using Corollary \ref{cor:uniformmeasureofranksublevelsetaschiomeasureofranksublevelsetoneranklower} and then partitioning according to isomorphism type of the associated bipartite graph. The conclusion is that this will lead us 
onto a well-beaten path (counting singular $\{0,1\}$-matrices):

\begin{proposition}[on rank-level-sets, the Chio measure agrees with the uniform 
measure after forgetting the signs]\label{prop:chiomeasureasuniformmeasureonchiosets}
Let $(s,t)\in\Z_{\geq 2}^2$ and $\mathcal{R}\in\mathfrak{P}(\{1,\dotsc,\min(s,t)\})$. 
Then 
\begin{equation}
\Prob_{\chio}[\Ra_{\mathcal{R}}(\{0,\pm\}^{[s-1]\times [t-1]})] = 
\Prob[\Ra_{\mathcal{R}}(\{0,1\}^{[s-1]\times [t-1]})] \quad .
\end{equation}
\end{proposition}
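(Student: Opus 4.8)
The plan is to assemble the statement from three facts already in hand: that $\Prob_{\chio}$ puts positive mass only on \emph{balanced} configurations (item \ref{characterizationofwhenchiomeasureispositive} of Theorem \ref{thm:graphtheoreticalcharacterizationofthechiomeasure}, used with $I=J=[s-1]\times[t-1]$, so that the event $\mathcal{E}_B^J$ is the singleton $\{B\}$); that a balanced $\{0,\pm\}$-matrix has the same $\Z$-rank as its unsigned shadow (Proposition \ref{prop:allbalancedsigningshavethesamerank}); and that the sign-forgetting Chio measure $\Prob_{\chio}^{\lvert\cdot\rvert,I}$ \emph{is} the uniform measure on $\{0,1\}^I$ (Corollary \ref{cor:signforgettingchiomeasure}). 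Write $I:=[s-1]\times [t-1]$, so that $\breve{I}=[s]\times[t]$ and $\lvert\breve{I}\rvert = st$. First I would expand the left-hand side as $\Prob_{\chio}[\Ra_{\mathcal{R}}(\{0,\pm\}^I)] = \sum_{B\in\{0,\pm\}^I,\ \rk(B)\in\mathcal{R}}\Prob_{\chio}[\{B\}]$ and note, via item \ref{characterizationofwhenchiomeasureispositive}, that the only nonzero summands come from $B$ with $(\upX_B,\sigma_B)$ balanced.

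Next I would re-index that sum by the unsigned shadow $\lvert B\rvert\in\{0,1\}^I$, the entrywise absolute value of $B$. Since $\Dom(\lvert B\rvert)=\Dom(B)$ and $\Supp(\lvert B\rvert)=\Supp(B)$, Definition \ref{def:XBandecXB} gives $\upX_{\lvert B\rvert}=\upX_B$, so $B$ is precisely a ``balanced signing of $\lvert B\rvert$'' in the sense of Proposition \ref{prop:allbalancedsigningshavethesamerank} exactly when $(\upX_B,\sigma_B)$ is balanced; and for such $B$ that proposition yields $\rk(B)=\rk(\lvert B\rvert)$. Hence, among balanced $B$, the condition $\rk(B)\in\mathcal{R}$ is equivalent to $\rk(\lvert B\rvert)\in\mathcal{R}$. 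Collecting the balanced terms according to the value $C:=\lvert B\rvert$, and then harmlessly reinstating the nonbalanced $B$ (which contribute $0$) so that the inner sum runs over \emph{all} $B$ with $\lvert B\rvert=C$, I get
\begin{equation*}
\Prob_{\chio}[\Ra_{\mathcal{R}}(\{0,\pm\}^I)] = \sum_{\substack{C\in\{0,1\}^I\\ \rk(C)\in\mathcal{R}}}\ \sum_{\substack{B\in\{0,\pm\}^I\\ \lvert B\rvert = C}}\Prob_{\chio}[\{B\}].
\end{equation*}
Now I would substitute Definition \ref{def:measurePchio} in the special case $I=J$, namely $\Prob_{\chio}[\{B\}] = 2^{-st}\,\lvert(\tfrac12\upC_{(s,t)}^{\breve{I}})^{-1}(\{B\})\rvert$, and observe that as $B$ ranges over the matrices with $\lvert B\rvert = C$ the fibres $(\tfrac12\upC_{(s,t)}^{\breve{I}})^{-1}(\{B\})$ partition $\{A\in\{\pm\}^{\breve{I}}\colon \lvert\tfrac12\upC_{(s,t)}(A)\rvert = C\} = (\lvert\tfrac12\upC_{(s,t)}^{\breve{I}}\rvert)^{-1}(\{C\})$; hence the inner sum collapses to $2^{-st}\,\lvert(\lvert\tfrac12\upC_{(s,t)}^{\breve{I}}\rvert)^{-1}(\{C\})\rvert = \Prob_{\chio}^{\lvert\cdot\rvert,I}[\{C\}]$.

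Summing over $C$ then gives $\Prob_{\chio}[\Ra_{\mathcal{R}}(\{0,\pm\}^I)] = \Prob_{\chio}^{\lvert\cdot\rvert,I}[\Ra_{\mathcal{R}}(\{0,1\}^I)]$, and Corollary \ref{cor:signforgettingchiomeasure} identifies $\Prob_{\chio}^{\lvert\cdot\rvert,I}$ with the uniform measure $\Prob_{0,1}^I$ on $\{0,1\}^I$, which is exactly the measure denoted $\Prob$ on the right-hand side of the proposition; this finishes the proof. I do not expect a real obstacle here: the only genuine mathematical content is the rank stability of balanced re-signings, and that is Proposition \ref{prop:allbalancedsigningshavethesamerank}, itself resting on the transitivity of star-switching on balanced signings (Lemma \ref{lem:starswitchingistransitiveonbalancedsignings}). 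The one place calling for care is purely organizational — getting the two re-indexings right, in particular making sure the nonbalanced (zero-mass) matrices are dropped and reinstated consistently so that the double sum is a clean partition.
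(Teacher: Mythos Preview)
Your proof is correct. The route differs from the paper's in one organizational choice: the paper partitions the balanced $B$ by the \emph{isomorphism type} $\mathfrak{X}=\ul(\upX_B)$, then plugs in the explicit singleton mass from Corollary~\ref{cor:quickconsequencesofthecharacterizations}\ref{cor:chiomeasureofasinglematrix} (namely $2^{-st+\beta_0(\mathfrak{X})+1}$), counts balanced signings via \ref{item:numberofbalancedsignfunctions}, and simplifies using $f_0(\mathfrak{X})=(s-1)+(t-1)$. You instead partition by the \emph{unsigned shadow} $C=\lvert B\rvert$ and invoke Corollary~\ref{cor:signforgettingchiomeasure} as a black box. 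Your version is cleaner precisely because Corollary~\ref{cor:signforgettingchiomeasure} has already absorbed the computation with \ref{item:numberofbalancedsignfunctions} and the $f_0$-simplification; the paper in effect redoes that computation inline. Both rest on the same two substantive ingredients, \ref{characterizationofwhenchiomeasureispositive} and Proposition~\ref{prop:allbalancedsigningshavethesamerank}.
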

\begin{proof}
This follows from the calculation
\begin{align}
\Prob_{\chio}[\Ra_{\mathcal{R}}(\{0,\pm\}^{[s-1]\times [t-1]})] 
& \By{\text{\tiny (\ref{characterizationofwhenchiomeasureispositive} in 
Theorem \ref{thm:graphtheoreticalcharacterizationofthechiomeasure})}}{=}
\Prob_{\chio} \bigl [\bigl \{ B\in \{0,\pm\}^{[s-1]\times [t-1]}\colon 
\parbox{0.15\linewidth}{\tiny $\rk(B) \in \mathcal{R}$, 
$(\upX_B,\sigma_B)$ balanced} \bigr \} \bigr ] \notag \\
& = \sum_{\mathfrak{X}\in\ul(\BG_{s,t})} \Prob_{\chio} \bigl [\bigl 
\{ B\in \{0,\pm\}^{[s-1]\times [t-1]}\colon 
\parbox{0.16\linewidth}{\tiny $\rk(B) \in \mathcal{R}$, 
$\upX_B\cong \mathfrak{X}$, \\ $(\upX_B,\sigma_B)$ balanced} \bigr \} \bigr ] \notag \\
\text{\tiny (by \ref{cor:chiomeasureofasinglematrix} in Corollary \ref{cor:quickconsequencesofthecharacterizations})} 
& = \sum_{\mathfrak{X}\in\ul(\BG_{s,t})} 2^{-st + \beta_0(\mathfrak{X}) + 1} \cdot 
\lvert \{ \text{ \tiny $B\in \{0,\pm\}^{[s-1]\times [t-1]}\colon$ } 
\parbox{0.16\linewidth}{\tiny $\rk(B)\in\mathcal{R}$, $\upX_B\cong \mathfrak{X}$, \\ 
$(\upX_B,\sigma_B)$ balanced} \bigr \} \rvert \notag \\
\parbox{0.2\linewidth}{\tiny (by \ref{item:numberofbalancedsignfunctions} in 
Lemma \ref{lem:equivalenceofexistenceofbconstantrpropervertex2coloringandcyclicallyrevenness} \\ and Proposition \ref{prop:allbalancedsigningshavethesamerank})} & = 
\sum_{\mathfrak{X}\in\ul(\BG_{s,t})} 2^{f_0(\mathfrak{X})-st + 1} \cdot \lvert 
\{ \text{ \scriptsize $B\in \{0,1\}^{[s-1]\times [t-1]}\colon 
\rk(B)\in\mathcal{R},\; \upX_B\cong \mathfrak{X}$ }\} \rvert \notag \\
\parbox{0.2\linewidth}{\tiny ( since $f_0(\mathfrak{X})$ is equal \\ 
to $(s-1)+(t-1)$ for \\ 
every $\mathfrak{X}\in\ul(\BG_{s,t})$ )} & = \sum_{\mathfrak{X}\in\ul(\BG_{s,t})} 
2^{-(s-1)(t-1)} \cdot \lvert \{ \text{ \scriptsize $B\in \{0,1\}^{[s-1]\times [t-1]}\colon 
\rk(B)\in\mathcal{R},\; \upX_B\cong \mathfrak{X}$ }\} \rvert \notag  \\ 
& = (\tfrac12)^{(s-1)(t-1)}\cdot \lvert \{ B\in \{0,1\}^{[s-1]\times[t-1]}\colon
\rk(B)\in\mathcal{R} \}\rvert  \notag \\
 & = \Prob[\Ra_{\mathcal{R}}(\{0,1\}^{[s-1]\times[t-1]})]\quad .\notag 
\end{align}
The proof of Proposition \ref{prop:chiomeasureasuniformmeasureonchiosets} is now 
complete. 
\end{proof}

It should be noted that the special case 
$\Prob[\Ra_{<n}(\{\pm\}^{[n]^2})] = \Prob[\Ra_{<n-1}(\{0,1\}^{[n-1]^2})]$ seems well-known 
(the author does not have an explicit reference corroborating this, but there are 
articles in which this is implicit (e.g. \cite{MR2216479}).

\subsection{A relative point of view}

What really appears to promise progress on Conjecture \ref{conj:socalledfolkloreconjecture} is to use the theorem of Bourgain--Vu--Wood to reach a more relative 
vantage point: 

\begin{proposition}[relative formulations of Conjecture \ref{conj:socalledfolkloreconjecture}]\label{prop:equivalentnewformulationofoldconjecture}
The following statements are equivalent: 
{\scriptsize
\begin{enumerate}[label={\rm(Q\arabic{*})}]
\item\label{lem:equivalentproblem:it:folkloreconjecture} 
$\Prob\bigl [ \Ra_{<n}(\{\pm\}^{[n]^2}) \bigr ] 
\leq (\frac12+o_{n\to\infty}(1))^n$
\item\label{eq:equivalentformulationoftheconjecturewithabbreviations}
$\Prob_{\chio}\bigl [ \Ra_{<n-1} ( \{0,\pm\}^{[n-1]^2} )\bigr ] 
\leq (\frac12+o_{n\to\infty}(1)) \cdot \Prob_{\lcf} \bigl [ \Ra_{<n-1}( \{0,\pm\}^{[n-1]^2} )\bigr ]$
\item\label{conj:relativizedconjecture}
$\sum_{ B'\ \in \  \Ra_{<n-1}( \{0,\pm\}^{[n-1]^2} ) } \Prob_{\chio}[B'] \leq  
\bigl(\frac12+o_{n\to\infty}(1)\bigr) \cdot \sum_{ B''\ \in \  \Ra_{<n-1}( \{0,\pm\}^{[n-1]^2} ) } 
\Prob_{\lcf} \bigl [ B'' \bigr ]$
\item\label{arelativizedformulation} 
$\lvert \{ B'\in\{0,1\}^{[n-1]^2}\colon \rk(B') < n-1 \} \rvert \leq 
(\tfrac12 + o_{n\to\infty}(1)) \cdot \sum_{B\in\{0,1\}^{[n-1]^2}} (\tfrac12)^{\supp(B)} \cdot 
\Biggl \lvert  \Biggl \{ \parbox{0.2\linewidth}{$B''\in\{0,\pm\}^{[n-1]^2}\colon$ \\ 
$\Supp(B'')=\Supp(B)$, $\rk(B'') < n-1$ }  \Biggr \} \Biggr \rvert$
\end{enumerate}
}
\end{proposition}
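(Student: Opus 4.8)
The plan is to prove the cycle of equivalences \ref{lem:equivalentproblem:it:folkloreconjecture} $\Leftrightarrow$ \ref{eq:equivalentformulationoftheconjecturewithabbreviations} $\Leftrightarrow$ \ref{conj:relativizedconjecture} $\Leftrightarrow$ \ref{arelativizedformulation} by reducing each biconditional to an identity between the quantities appearing on the two sides; in every case the asymptotic factor $(\tfrac12 + o(1))$ is untouched, so the content is purely the underlying (exact) equalities. First I would dispatch \ref{lem:equivalentproblem:it:folkloreconjecture} $\Leftrightarrow$ \ref{eq:equivalentformulationoftheconjecturewithabbreviations}: the left-hand side of \ref{eq:equivalentformulationoftheconjecturewithabbreviations} equals $\Prob[\Ra_{<n}(\{\pm\}^{[n]^2})]$ by Corollary \ref{cor:uniformmeasureofranksublevelsetaschiomeasureofranksublevelsetoneranklower} (with $\mathcal{R} = \{0,1,\dotsc,n-1\}$, $s=t=n$), and by Theorem \ref{thm:bourgainvuwood}, equation \eqref{thm:bourgainvuwood:lazycoinflip}, one has $\Prob_{\lcf}[\Ra_{<n-1}(\{0,\pm\}^{[n-1]^2})] = \Prob_{\lcf}[\{B\in\{0,\pm\}^{[n-1]^2}\colon \det(B)=0\}] \sim (\tfrac12 + o(1))^{n-1}$, which is the same order of decay as $(\tfrac12 + o(1))^n$; hence dividing through by this positive quantity turns the absolute bound \ref{lem:equivalentproblem:it:folkloreconjecture} into the relative bound \ref{eq:equivalentformulationoftheconjecturewithabbreviations} and conversely.

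Next, \ref{eq:equivalentformulationoftheconjecturewithabbreviations} $\Leftrightarrow$ \ref{conj:relativizedconjecture} is essentially a matter of unfolding notation: by Definition \ref{def:measurePchio} (more precisely by the remark following it, extended to the event $\Ra_{<n-1}$ which is a union of singletons $\{B'\}$) we have $\Prob_{\chio}[\Ra_{<n-1}(\{0,\pm\}^{[n-1]^2})] = \sum_{B'\in\Ra_{<n-1}(\{0,\pm\}^{[n-1]^2})}\Prob_{\chio}[B']$, and likewise $\Prob_{\lcf}[\Ra_{<n-1}(\{0,\pm\}^{[n-1]^2})] = \sum_{B''\in\Ra_{<n-1}(\{0,\pm\}^{[n-1]^2})}\Prob_{\lcf}[B'']$ since $\Prob_{\lcf}$ is a measure; so the two statements are literally the same inequality written in two ways.

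Finally, for \ref{conj:relativizedconjecture} $\Leftrightarrow$ \ref{arelativizedformulation} I would transform each side separately into the form appearing in \ref{arelativizedformulation}. For the left side, Proposition \ref{prop:chiomeasureasuniformmeasureonchiosets} with $\mathcal{R} = \{1,\dotsc,n-1\}$, $s=t=n$, gives $\Prob_{\chio}[\Ra_{<n-1}(\{0,\pm\}^{[n-1]^2})] = \Prob[\Ra_{<n-1}(\{0,1\}^{[n-1]^2})] = (\tfrac12)^{(n-1)^2}\cdot \lvert\{B'\in\{0,1\}^{[n-1]^2}\colon \rk(B') < n-1\}\rvert$ (note $\Ra_{<n-1}$ excludes rank exactly $n-1$, so the rank set is $\{0,1,\dotsc,n-2\}\subseteq\{1,\dotsc,n-1\}\cup\{0\}$; the rank-$0$ matrix is the zero matrix and is covered either way). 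For the right side, I would expand $\Prob_{\lcf}[\Ra_{<n-1}(\{0,\pm\}^{[n-1]^2})] = \sum_{B''\in\{0,\pm\}^{[n-1]^2}\colon \rk(B'') < n-1}\Prob_{\lcf}[B'']$, use Lemma \ref{lem:valueoflazycoinflipdistribution} in the form $\Prob_{\lcf}[B''] = (\tfrac12)^{(n-1)^2 + \supp(B'')}$, and then group the matrices $B''$ according to their support $\Supp(B'') =: \Supp(B)$, indexed by $B\in\{0,1\}^{[n-1]^2}$: this yields $\sum_{B\in\{0,1\}^{[n-1]^2}}(\tfrac12)^{(n-1)^2}(\tfrac12)^{\supp(B)}\cdot\lvert\{B''\in\{0,\pm\}^{[n-1]^2}\colon \Supp(B'')=\Supp(B),\ \rk(B'')<n-1\}\rvert$. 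Cancelling the common factor $(\tfrac12)^{(n-1)^2}$ from both sides of \ref{conj:relativizedconjecture} produces exactly \ref{arelativizedformulation}. The only point requiring a little care — and the step I expect to be the mildest obstacle — is keeping the rank thresholds straight: $\Prob_{\chio}$ and $\Prob$ on $\{0,1\}$-matrices shift rank by one relative to $\{\pm\}^{[n]^2}$ (Lemma \ref{lem:chiocondensationaffectsrankintheleastpossibleway}), so I must consistently use $\Ra_{<n-1}$ on the $(n-1)\times(n-1)$ side against $\Ra_{<n}$ on the $n\times n$ side, and check that Proposition \ref{prop:chiomeasureasuniformmeasureonchiosets} (whose $\mathcal{R}$ ranges over $\{1,\dotsc,\min(s,t)\}$, not including $0$) still applies since the rank-$0$ case contributes the single zero matrix to both counts and can be added back harmlessly.
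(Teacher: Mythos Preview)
Your proposal is correct and follows essentially the same route as the paper's proof: Corollary \ref{cor:uniformmeasureofranksublevelsetaschiomeasureofranksublevelsetoneranklower} together with \eqref{thm:bourgainvuwood:lazycoinflip} for \ref{lem:equivalentproblem:it:folkloreconjecture} $\Leftrightarrow$ \ref{eq:equivalentformulationoftheconjecturewithabbreviations}, definition-unfolding for \ref{eq:equivalentformulationoftheconjecturewithabbreviations} $\Leftrightarrow$ \ref{conj:relativizedconjecture}, and Proposition \ref{prop:chiomeasureasuniformmeasureonchiosets} on the left plus Lemma \ref{lem:valueoflazycoinflipdistribution} and grouping by support on the right for \ref{conj:relativizedconjecture} $\Leftrightarrow$ \ref{arelativizedformulation}. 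Your explicit flagging of the rank-$0$ boundary case in Proposition \ref{prop:chiomeasureasuniformmeasureonchiosets} is a nice touch that the paper's proof passes over silently.
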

\begin{proof}
As to the equivalence 
\ref{lem:equivalentproblem:it:folkloreconjecture} $\Leftrightarrow$ 
\ref{eq:equivalentformulationoftheconjecturewithabbreviations}, 
if \ref{lem:equivalentproblem:it:folkloreconjecture}, then 
$\Prob_{\chio}\bigl [ \Ra_{<n-1} ( \{0,\pm\}^{[n-1]^2} )\bigr ]$ 
$\By{Corollary \eqref{cor:uniformmeasureofranksublevelsetaschiomeasureofranksublevelsetoneranklower}}{=}$ 
$\Prob \bigl [ \Ra_{<n} ( \{\pm\}^{[n]^2} )\bigr ]$ 
$\By{\ref{lem:equivalentproblem:it:folkloreconjecture}}{\leq} $
$(\frac12 + o_{n\to\infty}(1))^n$ $=$ $(\frac12 + o_{n\to\infty}(1))\cdot$ 
$(\frac12 + o_{n\to\infty}(1))^{n-1}$ $\By{Theorem \ref{thm:bourgainvuwood}}{\sim}$ 
$(\frac12 + o_{n\to\infty}(1))$ $\cdot$ $\Prob_{\lcf} \bigl[\Ra_{<n-1}( \{0,\pm\}^{[n-1]^2} )\bigr ]$, 
which is \ref{eq:equivalentformulationoftheconjecturewithabbreviations}. As to the 
converse, \ref{eq:equivalentformulationoftheconjecturewithabbreviations} implies 
$\Prob \bigl [ \Ra_{<n} ( \{\pm\}^{[n]^2} )\bigr ]$ 
$\By{Corollary \eqref{cor:uniformmeasureofranksublevelsetaschiomeasureofranksublevelsetoneranklower}}{=}$ $\Prob_{\chio}\bigl [ \Ra_{<n-1} ( \{0,\pm\}^{[n-1]^2} )\bigr ]$ 
$\By{\ref{eq:equivalentformulationoftheconjecturewithabbreviations}}{\leq}$ 
$(\frac12 + o_{n\to\infty}(1))$ $\cdot$ $\Prob_{\lcf} \bigl [ \Ra_{<n-1}( \{0,\pm\}^{[n-1]^2} )\bigr]$ 
$\By{Theorem \ref{thm:bourgainvuwood}}{\sim}$ $(\frac12 + o_{n\to\infty}(1))\cdot$ 
$(\frac12 + o_{n\to\infty}(1))^{n-1}$ $=$  $(\frac12 + o_{n\to\infty}(1))^n$, which is \ref{lem:equivalentproblem:it:folkloreconjecture}.
The equivalence \ref{eq:equivalentformulationoftheconjecturewithabbreviations} 
$\Leftrightarrow$ \ref{conj:relativizedconjecture} is obvious. As to 
\ref{conj:relativizedconjecture} $\Leftrightarrow$ \ref{arelativizedformulation}, 
note that 
\ref{conj:relativizedconjecture} $\By{Proposition \ref{prop:chiomeasureasuniformmeasureonchiosets}}{\Leftrightarrow}$
$\sum$ $\llbracket$ $B'$ $\in$ $\{0,1\}^{[n-1]^2}\colon$ $\rk(B')$ $<$ $n-1$ 
$\rrbracket$ $\Prob[B']$ $\leq$ $\bigl(\frac12+o_{n\to\infty}(1)\bigr)$ $\cdot$ 
$\sum$ $\llbracket$ $B''$ $\in$ $\Ra_{<n-1}( \{0,\pm\}^{[n-1]^2} )$ $\rrbracket$ 
$\Prob_{\lcf}$ $\bigl [$ $B''$ $\bigr ]$ 
$\Leftrightarrow$ $\lvert \{$ $B'$ $\in$ $\{0,1\}^{[n-1]^2}\colon$ $\rk(B')$ 
$<$ $n-1$ $\} \rvert$ 
$\leq$ $(\tfrac12 + o_{n\to\infty}(1))$ $\cdot$ $\sum_{B''\in\{0,\pm\}^{[n-1]^2}\colon\rk(B')<n-1}$ 
$(\tfrac12)^{\supp(B'')}$
$\Leftrightarrow$
$\lvert \{$ $B'$ $\in$ $\{0,1\}^{[n-1]^2}\colon$ $\rk(B')$ $<$ $n-1$ $\} \rvert$ $\leq$ 
$(\tfrac12 + o_{n\to\infty}(1))$ $\cdot$ $\sum_{B\in\{0,1\}^{[n-1]^2}}$ 
$\sum$ $\llbracket$ $B''$ $\in$ $\{0,\pm\}^{[n-1]^2}\colon$ $\Supp(B'')$ $=$ 
$\Supp(B)$, $\rk(B'')$ $<$ $n-1$ $\rrbracket$  $(\tfrac12)^{\supp(B'')}$ 
$\Leftrightarrow$
$\lvert\{$ $B'$ $\in$ $\{0,1\}^{[n-1]^2}\colon$ $\rk(B') < n-1$ $\}$ $\rvert$ 
 $\leq$ $(\tfrac12 + o_{n\to\infty}(1))$ $\cdot$ $\sum_{B\in\{0,1\}^{[n-1]^2}}$ 
$(\tfrac12)^{\supp(B)}$ 
$\cdot$ $\lvert$ $\{$ $B''$ $\in$ $\{0,\pm\}^{[n-1]^2}\colon$ $\Supp(B'')=\Supp(B)$, 
$\rk(B'')$ $<$ $n-1$ $\}\rvert$ $\Leftrightarrow$ \ref{arelativizedformulation}.
\end{proof}

Note the `relativizing' effect of having two sums over the same index set on either 
side of an (conjectured) inequality: thanks to commutativity of addition one may 
go about pitting (collections of) unequally indexed summands on both sides of 
\ref{conj:relativizedconjecture} against one another, in the hope of finding a 
rearragement that allows one to prove the inequality without any a priori knowledge 
about the size of the index set of the sums. Of course, \emph{if} 
\ref{lem:equivalentproblem:it:folkloreconjecture} is true, \emph{then} the 
inequality is true for \emph{every} permutation of the summands but the point is 
that this is not known and that it would suffice to prove the existence of only 
one suitable rearrangement of the summands to prove (or maybe disprove) 
Conjecture \ref{lem:equivalentproblem:it:folkloreconjecture}. 

\subsubsection{The inequality \ref{eq:equivalentformulationoftheconjecturewithabbreviations} must fail on entry-specification events} 

Let us remark that in view of the formula 
\ref{relationbeweenchiomeasureandlazycoinflipmeasuregovernedbyfirstbettinumber} in 
Theorem \ref{thm:graphtheoreticalcharacterizationofthechiomeasure} we find ourselves 
in a slightly ironic situation: while 
\ref{eq:equivalentformulationoftheconjecturewithabbreviations}, which speaks about 
the $\Prob_{\chio}$-measure of the (rather mysterious) event 
$\Ra_{<n-1}(\{0,\pm\}^{[n-1]^2})$, may well be true, it cannot possibly be true in a 
non-trivial way (left-hand side nonzero) on any of the (rather simple) 
entry-specification events. 

\subsubsection{Worst possible failure of \ref{eq:equivalentformulationoftheconjecturewithabbreviations} for singleton events}\label{worstpossiblefailure}
Already in Corollary \ref{cor:recipe} we have seen examples that 
the inequality \ref{eq:equivalentformulationoftheconjecturewithabbreviations} can fail when the 
event $\Ra_{<n-1}(\{0,\pm\}^{[n-1]^2})$ is replaced by other events---the failure 
seeming more likely and more severe as the events get smaller. We will now see  
that \ref{eq:equivalentformulationoftheconjecturewithabbreviations} fails arbitrarily 
badly on every atom of the measure space we are dealing with (i.e. a singleton 
event $\{B\}$ with $B\in \{0,\pm\}^{[n-1]^2}$ and $\Prob_{\chio}[B] > 0$). For such 
events the ratio of $\Prob_{\chio}$ and $\Prob_{\lcf}$ diverges as quickly 
as $2^{n^2}$ when $n\rightarrow \infty$, 
while \ref{eq:equivalentformulationoftheconjecturewithabbreviations} asserts a 
bounded ratio as $n\rightarrow \infty$. 

By \ref{relationbeweenchiomeasureandlazycoinflipmeasuregovernedbyfirstbettinumber} 
in Theorem \ref{thm:graphtheoreticalcharacterizationofthechiomeasure}, we know
$\Prob_{\chio}[ \mathcal{E}_B^J ]/\Prob_{\lcf} [\mathcal{E}_B^J] = 2^{\beta_1( \upX_B )}$. 
Therefore, to determine the maximum of $\Prob_{\chio}[ \mathcal{E}_B^J ]/\Prob_{\lcf} [\mathcal{E}_B^J]$ over all entry specification events $\mathcal{E}_B^J$ it suffices to 
determine the maximum of $\beta_1( \upX_B )$ over all bipartite graphs 
$\upX_B\in \BG_{n,n}$ with $B\in \{0,\pm\}^{[n-1]^2}$. The Betti number 
$\beta_1(X) = f_1(X) - f_0(X) + \beta_0(X)$ as a function of $X\in\BG_{n,n}$ attains 
a unique maximum at $X = K^{n-1,n-1}$. The corresponding value is 
$(n-1)^2 - 2(n-1) + 1 = (n-2)^2$. Since $K^{n-1,n-1}$ can indeed occur as $\upX_B$ with 
$B$ $\in$ $\Ra_{<n-1}(\{0,\pm\}^{[n-1]^2})$ $\cap$ 
$\im( \tfrac12 \upC_{(n,n)}\colon$ $\{\pm\}^{[n]^2}$ $\rightarrow$ 
$\{0,\pm\}^{[n-1]^2} )$, it follows that for every fixed $n$, the maximum of 
$\Prob_{\chio}[ \mathcal{E}_B^J ] / \Prob_{\lcf} [\mathcal{E}_B^J]$ over all 
$\mathcal{E}_B^J$ with $\emptyset\neq I \subseteq J \subseteq [n-1]^2$ and 
$B\in\{0,\pm\}^I \cap \im( \tfrac12 \upC_{(n,n)}\colon 
\{\pm\}^{[n]^2} \rightarrow \{0,\pm\}^{[n-1]^2} )$ is $2^{(n-2)^2}\sim 2^{n^2}$. 
Since Proposition \ref{prop:allbalancedsigningshavethesamerank} implies that every 
$B\in\{0,\pm\}^I \cap \im( \tfrac12 \upC_{(n,n)}\colon 
\{\pm\}^{[n]^2} \rightarrow \{0,\pm\}^{[n-1]^2} )$ which realizes the maximum, 
i.e. $\upX_B\cong K^{n-1,n-1}$, has rank $1$, it follows that $2^{(n-2)^2}$ is also the 
maximum of $\Prob_{\chio}[ \mathcal{E}_B^J ] / \Prob_{\lcf} [\mathcal{E}_B^J]$ over all 
$\mathcal{E}_B^J$ with $\emptyset\neq I \subseteq J \subseteq [n-1]^2$ and 
$B\in\Ra_{<n-1}(\{0,\pm\}^{[n-1]^2}) \cap \im( \tfrac12 \upC_{(n,n)}\colon 
\{\pm\}^{[n]^2} \rightarrow \{0,\pm\}^{[n-1]^2} )$.

\subsubsection{Extent of failure of \ref{eq:equivalentformulationoftheconjecturewithabbreviations} on those $B$ 
which are Chio condensates of random $A\in\{\pm\}^{[n]^2}$} 

Let us have a quick informal look at the typical value of the ratio 
$\Prob_{\chio}[B]$ and $\Prob_{\lcf}[B]$ if $\{0,\pm\}^{[n-1]^2}\ni B := \upC_{(n,n)}(A)$ 
with $A\in\{\pm\}^{[n]^2}$ chosen uniformly at random. Of course, such a $B$ is 
(by Theorem \ref{thm:bourgainvuwood} and Lemma \ref{lem:chiocondensationaffectsrankintheleastpossibleway}) asymptotically almost surely \emph{not} an element of 
$\Ra_{<n-1}(\{0,\pm\}^{[n-1]^2})$. 

By Corollary \ref{cor:whatrandombipartitegraphisX12CnnAforrandomA}, 
for  $A\in\{\pm\}^{[n]^2}$ chosen uniformly at random, the graph $\upX_B$ is a 
random bipartite graph with partition classes of $n-1$ vertices on either side 
and having i.i.d. edges with probability $\frac12$. Since this is very far above 
the threshold proved by Frieze \cite{MR829352} for hamiltonicity of a random 
bipartite graph, it follows that (for \emph{very} strong reasons) a.a.s. 
$\beta_0(\upX_B) = 1$. As to $f_1( \upX_B )$, a standard argument using Chernoff's 
bound shows that for every $\epsilon>0$ a.a.s. (and approaching $1$ exponentially 
fast) 
$(\frac12-\epsilon)\cdot (n-1)^2 \leq f_1( \upX_B ) \leq (\frac12 + \epsilon) \cdot 
(n-1)^2$. For simplicity let us pretend that $f_1( \upX_B ) = \frac12 (n-1)^2$ 
exactly. Then $\Prob_{\chio}[ B ] / \Prob_{\lcf} [B] = 2^{\beta_1(\upX_B)} = 
2^{f_1(\upX_B) - f_0(\upX_B) + \beta_0(\upX_B)} = 2^{\frac12 n^2 - 3n + \frac32} \sim 2^{\frac12 n^2} = 
\sqrt{2^{n^2}} \rightarrow \infty$ as $n\rightarrow \infty$, and we have learned that 
the worst-case failure-ratio of \ref{eq:equivalentformulationoftheconjecturewithabbreviations} found 
in \ref{worstpossiblefailure} arises roughly by squaring the failure-ratio for a 
$B = \tfrac12\upC_{(n,n)}(A)$ with $A\in\{\pm\}^{[n]^2}$ random.

\section{Concluding questions}

Let us close with three questions: 

\subsection{What to make of the $k$-wise independence?}

Note that Theorem \ref{thm:comparativecountingtheorem} in particular says that 
one application of $\tfrac12 \upC_{(n,n)}$ to a $\{\pm\}$-valued $n\times n$-matrix 
yields a $\{0,\pm\}$-matrix whose entries are distributed 
as $\frac14,\frac12,\frac14$ but are merely $3$-wise stochastically independent 
(in the sense of \cite[Definition 2.4, p.209]{MR2410304}) (and `almost' $k$-wise, 
the `almost' quantified exactly for $k\in \{4,5,6\}$ in Theorem \ref{thm:comparativecountingtheorem} and quantified roughly for general $k$ in 
Proposition \ref{prop:roughestimationofnumberoffailureevents}). There appears to be 
ongoing research on how partial independence relates to full independence, a keyword 
being `$k$-wise independence'. To give only 
one very recent (the topic has been studied at least since the 1980s) example, which 
appears to summarize well the overall spirit, here is a quote from the abstract 
of \cite{BenjaminiGurelGurevichPeled}: 
{\scriptsize
\begin{quote}
We pursue a systematic study of the following problem. Let 
$f\colon \{0,1\}^n\rightarrow \{0,1\}$ be a (usually monotone) boolean function 
whose behaviour is well understood when the input bits are identically independently 
distributed. What can be said about the behaviour of the function when the input 
bits are not completely independent, but only $k$-wise independent [...]? How high 
should $k$ be so that any $k$-wise independent distribution ``fools'' the function, 
i.e. causes it to behave nearly the same as when the bits are completely independent?
\end{quote}
}
The author wonders whether the theory of $k$-wise stochastical independence has any 
bearing on the present problem. In particular, can the proof of Bourgain--Vu--Wood 
for $\Prob_{\lcf}$ be deconstructed and somehow reassembled for $\Prob_{\chio}$, aided 
by knowledge about $k$-wise independence?

\subsection{More accurate estimations?} 

Note that Theorem \ref{thm:comparativecountingtheorem}.\ref{comparativecountingtheorem:item:fourentriesspecified}---\ref{comparativecountingtheorem:item:sixentriesspecified} teaches us that, asymptotically, the ratio 
$\lvert \mathcal{F}^{\mathrm{M}}(k,n)\rvert$ $/$ $\lvert \{ B\in \{0,\pm\}^I,\; 
I\in\binom{[n-1]^2}{k}\}\rvert$ for $k\in \{4,5,6\}$ takes values 
$4 n^4  / \frac{27}{8} n^8$ $=$ $\frac{32}{27}$ $n^{-4}$ $\in$ 
$[$ $1.1 n^{-4},$ $1.2 n^{-4}$ $]$, 
$12 n^6 / \frac{81}{40} n^{10}$ $=$ $\frac{480}{81}$ $n^{-4}$ $\in$ $[$ $5.9 n^{-4},$ 
$6.0$ $n^{-4}$ $]$ and 
$18 n^8 / \frac{81}{80} n^{12}$ $=$ $\frac{1440}{81}$ $n^{-4}$ $\in$ $[$ $17.7$ 
$n^{-4},$ $17.8$ $n^{-4}$ $]$ which are all---although of course growing 
with $k$---vanishing with the same speed 
$\mathcal{O}_{n\to\infty}(n^{-4})$. This shows that the rough bound in Proposition 
\ref{prop:roughestimationofnumberoffailureevents} is not an asymptotically tight 
one. This of course raises two questions: \emph{Is it true that 
$\lvert \mathcal{F}^{\mathrm{M}}(k,n)\rvert$ $/$ $\lvert \{ B\in \{0,\pm\}^I,\; 
I\in\binom{[n-1]^2}{k}\}\rvert$ $\in\mathcal{O}_{n\to\infty}(n^{-4})$ for 
every fixed $k$?} Moreover, note that while Proposition 
\ref{prop:roughestimationofnumberoffailureevents} shows that 
$\lvert \mathcal{F}^{\mathrm{M}}(k,n)\rvert$ $/$ $\lvert \{ B\in \{0,\pm\}^I,\; 
I\in\binom{[n-1]^2}{k}\}\rvert$ vanishes as $n\to\infty$ for every fixed $k$, 
the discussion in \ref{worstpossiblefailure} shows that for the extreme case 
of $k=(n-1)^2$, i.e. if the entire matrix $B\in \{0,\pm\}^{[n-1]^2}$ is specified, 
$\Prob_{\chio}[\mathcal{E}_B^{[n-1]^2}] \neq \Prob_{\lcf}[\mathcal{E}_B^{[n-1]^2}]$ for the 
vast majority of $B\in \{0,\pm \}^{[n-1]}$. In between these two 
extremes, i.e. almost sure agreement as apposed to almost sure non-agreement of 
$\Prob_{\chio}$ and $\Prob_{\lcf}$, there should be a tipping point. This raises the 
question: \emph{For what order of growth $k=k(n)$ does 
$\lvert \mathcal{F}^{\mathrm{M}}(k,n)\rvert$ $/$ $\lvert \{ B\in \{0,\pm\}^I,\; 
I\in\binom{[n-1]^2}{k}\}\rvert$ first become bounded away from zero? And for what 
order does it first tilt in favour of the non-agreement events?} 

\subsection{Hidden connections to the Guralnick--Mar{\'o}ti-theorem?}

If $\sigma\colon G\rightarrow \Aut_K(V)$ is a representation of a group $G$ on 
a $K$-vector space $V$, then for every $g\in G$ let $\Fix_V(g)$ denote the 
\emph{fixed-point space of $g$}, i.e. the $K$-linear subspace 
$\{v\in V\colon \sigma(g)(v) = v \}$. 

In recent times there have been advances (\cite{MR1726791}, \cite{MR2231893}, 
\cite{MR2669683}) concerning the problem of bounding averages of dimensions of 
fixed-point spaces by a fraction of the dimension of the representation, leading 
to a full proof (and in more general form) by R. M. Guralnick and 
A. Mar{\'o}ti \cite{MR2735760} of a 1966 conjecture of P. M. Neumann: 

\begin{theorem}[Guralnick--Mar{\'o}ti \cite{MR2735760}, Theorem 1.1]
\label{thm:guralnickmarotitheorem}
For every finite group $G$ with smallest prime factor of $\lvert G \rvert$ 
denoted by $p$, every field $K$, every finite-dimensional $K$-vector space $V$, 
every homomorphism $\sigma\colon G\rightarrow \Aut_K(V)$, and 
every normal subgroup $N$ of $G$ which does not have a trivial composition 
factor on $V$, 
\begin{equation}\label{eq:guralnickmarotibound}
\frac{1}{\lvert N \rvert} \ \sum_{\tilde{g} \in N \cdot g}\ \dim_K(\Fix_V(\tilde{g}))
\leq \frac{1}{p}\ \dim_K(V) \quad .
\end{equation}
\end{theorem}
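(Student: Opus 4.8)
The plan is to follow the strategy of Guralnick and Mar\'oti, which assembles and sharpens the partial results of P.\,M. Neumann, Isaacs, Keller, Gluck--Magaard and Liebeck--Shalev; I should say at the outset that this result lies entirely outside the Chio-condensation machinery developed above, and that I do not see any route to it that avoids the classification of finite simple groups. The argument is, in essence, a long induction on $\lvert G\rvert$ whose cases are organised by the structure of the $G$-module $V$. First I would make the standard reductions: replacing $G$ by $G/\ker(\sigma)$ and noting that the hypothesis on $N$ persists, I may assume $\sigma$ faithful; since $\dim\Fix_V(\cdot)$ and $\tfrac1p\dim V$ are both additive over a composition series, and a composition factor on which $N$ is trivial is excluded by hypothesis (and on it $N$ acts through a proper quotient, so the inductive hypothesis applies), I may assume $V$ is an irreducible $G$-module; and by passing to $\langle N,g\rangle$ I may assume $G=\langle N,g\rangle$, so that $G/N$ is cyclic and it suffices to bound the single coset average $\tfrac1{\lvert N\rvert}\sum_{\tilde g\in Ng}\dim\Fix_V(\tilde g)$.

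Next comes Clifford theory. With $V$ irreducible, either $V$ is induced from a module of a proper subgroup $H<G$, in which case a Mackey-type formula expresses $\dim\Fix_V(\tilde g)$ through fixed points of $\tilde g$ on the coset space $G/H$ together with fixed-space dimensions inside $H$, and one closes the induction on $\lvert G\rvert$ --- here the constant $1/p$ with $p$ the smallest prime divisor of $\lvert G\rvert$ is exactly what is needed, since a $p'$-element acting on $G/H$ can fix only a $1/p$-fraction; or $V$ is primitive. For primitive $V$ I would invoke the structure theory of primitive linear groups: modulo scalars, $G$ has a unique minimal normal subgroup which is either (A) elementary abelian of order $r^{2m}$, forcing $G$ into the normaliser of a symplectic-type $r$-group acting on $V\cong K^{r^m}$, or (B) a product $S_1\times\cdots\times S_t$ of isomorphic nonabelian simple groups, forcing a tensor-induced decomposition $V=U_1\otimes\cdots\otimes U_t$. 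Case (A) is explicit: the relevant elements act with eigenspaces of dimension at most $r^{m-1}$ off a controlled set, giving the bound $\tfrac1r\dim V\le\tfrac1p\dim V$ after averaging over the coset, and for the solvable analogue Feit--Thompson allows one to assume all prime divisors are $\ge 3$ when $p\ge 3$, which makes the Gluck--Magaard estimate comfortable. Case (B) with $t\ge 2$ is a tensor-product estimate: if $\tilde g$ permutes the factors with $c$ cycles then $\Fix_V(\tilde g)$ embeds into a tensor product of $c$ fixed spaces of composed elements, and a multiplicativity argument reduces it to the one-factor case on each orbit, with room to spare because permuting factors is itself fixed-point poor.

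The hard part --- and the part that genuinely needs the classification --- is Case (B) with $t=1$: $G$ is almost simple modulo scalars, $S\trianglelefteq G/Z\le\Aut(S)$, and one must show the coset average of $\dim\Fix_V(\tilde g)$ is at most $\tfrac12\dim V$ in general, and at most $\tfrac1p\dim V$ when every relevant element order is at least $p\ge 3$, with a short explicit list of exceptions absorbed by the averaging. This splits along the CFSG families: for $S$ alternating, via Young's branching rule for Specht modules and the cycle structure of permutations; for $S$ of Lie type in defining characteristic, via the classification of unipotent and semisimple classes together with Steinberg's tensor product theorem and Liebeck--Shalev-type bounds $\dim\Fix_V(g)\le\tfrac12\dim V+O(\mathrm{rank})$, sharpened by hand in the low-rank and exceptional cases; for $S$ of Lie type in cross characteristic, via lower bounds on the smallest faithful degree versus the maximal eigenspace data; and for the $26$ sporadic groups by inspection of the ordinary and Brauer character tables. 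I expect the alternating and classical subcases to be the main obstacle: forcing the sharp constants $\tfrac12$ and $\tfrac1p$ uniformly requires pinning down exactly the near-extremal configurations --- transvection-like elements for classical groups, near-transpositions for $S_n$ --- and checking that in each of them the coset being averaged over still contains enough elements of large order to pull the mean below the threshold. Finally I would reassemble the pieces, verifying that the additive bookkeeping over composition factors and the index-and-degree inequalities used in the Clifford step all respect the convention that $p$ is the smallest prime divisor of $\lvert G\rvert$, which yields \eqref{eq:guralnickmarotibound}.
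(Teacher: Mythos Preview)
The paper does not prove this theorem at all: it is stated with attribution to Guralnick and Mar\'oti \cite{MR2735760} and used only as a point of speculative comparison in the concluding questions section. There is no proof in the paper to compare your proposal against.

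Your sketch is a reasonable outline of the actual Guralnick--Mar\'oti strategy (reduction to irreducible modules, Clifford theory, the primitive case split via the classification), and you are right that the result lies outside the scope of the Chio-condensation material. But since the paper merely cites the theorem, the appropriate ``proof'' here is simply a reference to \cite{MR2735760}, not a reconstruction of their argument.
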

Although the resemblance is likely to be merely superficial, the author cannot 
help being intrigued by the similarity of this inequality to 
\ref{conj:relativizedconjecture}, together with the fact that both in 
\ref{conj:relativizedconjecture} and in \eqref{eq:guralnickmarotibound} there can be 
zero-summands on the left-hand side. Moreover, when trying to combine Chio 
condensation of sign matrices with group actions, one gets the impression that 
groups of even order (i.e. $p=2$) play a natural role. One goal along these lines 
is to discover a vector space avatar of the lazy coin flip  measure. Via 
Theorem \ref{thm:graphtheoreticalcharacterizationofthechiomeasure} the author 
found the following formula (which is of course easy to check directly): for 
every $B\in \{0,\pm\}^{[n-1]^2}$ we have
\begin{equation}
\Prob_{\lcf}[B] = (\tfrac12)^{(n-1)^2}\cdot 
(\tfrac12)^{\dim_{\Z/2}( \upB^1(\upX_B;\; \Z/2) \oplus \upZ_1(\upX_B;\; \Z/2) ) } \quad . 
\end{equation} 
This suggests studying group actions on the direct sum  
$\upB^1(\upX_{\frac12\upC_{(n,n)}(A)};\Z/2) \oplus \upZ_1(\upX_{\frac12\upC_{(n,n)}(A)};\Z/2)$. 
A sensible first choice are those actions which are induced by the `natural' 
$\lvert \det(\cdot)\rvert$-preserving (hence intransitive) group actions on 
$\{\pm\}^{[n]^2}$ (a merit of those `standard actions' is that Chio condensation 
commutes with the corresponding actions on $\{0,\pm\}^{[n-1]^2}$). So far the 
author did not detect much resonance with Theorem \ref{thm:guralnickmarotitheorem}. 
Mirage or more? 

\bibliographystyle{amsplain} 
\bibliography{HEINIGchiocondensationandrandomsignmatrices20110808}

\end{document}